\definecolor{violet}{rgb}{0.0,0.2,0.7}
\definecolor{rouge2}{rgb}{0.8,0.0,0.2}
 \theoremstyle{plain}    
 \newtheorem{thm}{Theorem}[section]
\theoremstyle{plain} 
\newtheorem{bigthm}{Theorem}
\newtheorem{bigpro}[bigthm]{Proposition}
 \numberwithin{equation}{section} %% Comment out for sequentially-numbered
 \numberwithin{figure}{section} %% Comment out for sequentially-numbered
  \newtheorem{assumption}[thm]{Assumption} %%Delete [thm] to re-start numbering
 \newtheorem{cor}[thm]{Corollary} %%Delete [thm] to re-start numbering
 \theoremstyle{plain}    
 \newtheorem{prop}[thm]{Proposition} %%Delete [thm] to re-start numbering
 \theoremstyle{plain}    
 \newtheorem{lem}[thm]{Lemma} %%Delete [thm] to re-start numbering
 \theoremstyle{remark}
  \newtheorem{claim}[thm]{Claim} %%Delete [thm] to re-start numbering
 \theoremstyle{remark}
 \newtheorem{rem}[thm]{Remark}
 \theoremstyle{definition}
\newtheorem{exa}[thm]{Example}
\theoremstyle{plain}  
\newtheorem{set}[thm]{Setting}
\theoremstyle{plain}
\newtheorem{pb}{Problem}
\theoremstyle{definition}
\newtheorem{defi}[thm]{Definition}
\newtheorem{conj}[thm]{Conjecture}
\newcommand{\N}{{\mathbb{N}}}
\newcommand{\Q}{{\mathbb{Q}}}
\newcommand{\R}{{\mathbb{R}}}
\newcommand{\bD}{{\mathbb{D}}}
\newcommand{\cO}{{\mathcal{O}}}
\newcommand{\cX}{{\mathcal{X}}}
\newcommand{\cY}{{\mathcal{Y}}}
\newcommand{\psh}{{\mathrm{PSH}}}
\def\1{\mathbf{1}}
\newcommand{\Krel}{K_{\cX/\bD}}
\newcommand{\xtr}{X_{t}^{\rm reg}}
\renewcommand{\a}{\alpha}
\newcommand{\e}{\varepsilon}
\newcommand{\om}{\omega}
\newcommand{\f}{\varphi}
\newcommand{\uf}{\underline {\varphi_t}}
\newcommand{\p}{\psi}
\newcommand{\omf}{\om_{\varphi_t}}
\newcommand{\xmb}{X\setminus B}
\newcommand{\vp}{\varphi}
\newcommand{\vpi}{\varphi_{\infty}}
\newcommand{\uvpi}{\underline{\varphi_{\infty}}}
\newcommand{\ep}{\varepsilon}
\newcommand{\Amp}{\mathrm{Amp}}
\newcommand{\Ric}{\mathrm{Ric}}
\newcommand{\reg}{\mathrm{reg}}
\renewcommand{\ge}{\geqslant}
\renewcommand{\le}{\leqslant}
\newcommand{\vol}{\operatorname{vol}}
\newcommand{\tr}{\operatorname{tr}}
\newcommand{\ca}{\operatorname{Cap}}
\newcommand{\PSH}{\operatorname{PSH}}
\newcommand{\MA}{\operatorname{MA}}
\title{Families of singular K\"ahler-Einstein metrics}
\date{\today}
\author{Eleonora Di Nezza}
  \address{Institut Math\'ematique de Jussieu, Sorbonne Universit\'e, France}
\email{eleonora.dinezza@imj-prg.fr}
\author{Vincent Guedj}
\address{Institut de Mathématiques de Toulouse; UMR 5219, Université de Toulouse; CNRS, UPS, 118 route de Narbonne, F-31062 Toulouse Cedex 9, France \quad}
\email{vincent.guedj@math.univ-toulouse.fr}
\author{Henri Guenancia}
\address{Institut de Mathématiques de Toulouse; UMR 5219, Université de Toulouse; CNRS, UPS, 118 route de Narbonne, F-31062 Toulouse Cedex 9, France}
\email{henri.guenancia@math.cnrs.fr}
\begin{document}
 
\subjclass{14D06, 32Q20, 32U05, 32W20}
\keywords{Singular Kähler-Einstein metrics, Families of complex spaces, Stable families, Log Calabi-Yau manifolds}

\begin{abstract}  
Refining Yau's and Kolodziej's techniques, 
we establish very precise uniform a priori estimates for degenerate complex Monge-Amp\`ere equations on compact K\"ahler manifolds,
that allow us to control the blow up of the solutions as the cohomology class and the complex structure both vary.

We apply these estimates to the study of various families of 
%midly singular
possibly singular K\"ahler varieties endowed with twisted K\"ahler-Einstein metrics, by 
analyzing the behavior of canonical densities, 
establishing uniform integrability properties, and
developing the first steps of a pluripotential theory in families. 
This provides interesting information on the moduli space of stable varieties, extending works by Berman-Guenancia and Song,
as well as on the behavior of singular Ricci flat metrics on (log) Calabi-Yau varieties, generalizing works by Rong-Ruan-Zhang, Gross-Tosatti-Zhang,
Collins-Tosatti and 
Tosatti-Weinkove-Yang.
\end{abstract} 

\maketitle

\tableofcontents

\section*{Introduction}

 Let $p: X \rightarrow Y $ be a proper, surjective holomorphic map with connected fibers between K\"ahler varieties.
 It is a central question in complex geometry to relate the geometry of $X$ to the one of $Y$ and the fibers 
 $X_y$ of $p$. 
 An important instance of such a situation is when  one can endow $X_y$  with a   K\"ahler-Einstein metric and study the geometry of $X$ induced by the properties of the resulting family of metrics. This is the main theme of this article.

\smallskip

Einstein metrics are a central object of study  in differential geometry. 
A K\"ahler-Einstein metric on a complex manifold is a K\"ahler metric whose
Ricci tensor is proportional to the metric tensor.
This notion still makes sense on midly singular varieties as was observed in \cite[section 7]{EGZ}.
The solution of the (singular) Calabi Conjecture  \cite{Yau78,EGZ} provides a very powerful existence theorem for 
K\"ahler-Einstein metrics with negative or zero Ricci curvature.
It is important to study the ways in which these canonical metrics behave when they are moving in families.
  In this paper we consider the case when both the complex structure and the K\"ahler class  vary 
  and we try and understand how the corresponding metrics can degenerate.
 
\smallskip

Constructing singular K\"ahler-Einstein metrics on a midly singular variety $V$ boils down to solving degenerate
complex Monge-Amp\`ere equations of the form
$$
(\omega+i \partial \overline{\partial} \f)^n=f e^{\lambda \f} dV_X,
$$
where 
\begin{itemize}
\item $\pi:X \rightarrow V$ is a resolution of singularities, $dV_X$ is a volume form on $X$,
\item $\omega=\pi^* \omega_V$ is the pull-back of a K\"ahler form on $V$,
\item the sign of $\lambda \in \R$ depends on that of $c_1(V)$,
\item $f \in L^p(X)$ with $p>1$ if the singularities of $V$ are  mild (klt singularities),
\end{itemize}
and 
%$\f:X \rightarrow \R$ 
$\f$ is the unknown. The latter should be $\omega$-plurisubharmonic
($\omega$-psh for short),
 i.e. it is locally the sum of a psh and a smooth function, and  satisfies
$\omega+i \partial \overline{\partial} \f \geq 0$ in the   weak sense of currents.
We let $PSH(X,\omega)$ denote the set of all such functions.

\subsection*{The uniform estimate}

A crucial step in order to prove the existence of a solution to the above equation
is to establish a uniform a priori estimate.
In order to understand the behavior of the solution $\f$ as the cohomology
class $\{\omega_V\}$ and the complex structure of $V$ vary,
we revisit the proof by Yau \cite{Yau78}, as well as its recent generalizations \cite{Kolo,EGZ},
and establish the following (see Theorem \ref{thm:uniform1}):

\begin{bigthm}
\label{thmA}
Let $X$ be a compact K\"ahler manifold of complex dimension $n \in \N^*$
and let $\omega$ be a semi-positive form   such that
$
V:= \int_X \omega^n >0.
$
Let $\nu$ and $\mu=f \, \nu$ be probability measures, with
$0 \le f \in L^p(\nu)$ for some $p>1$.
Assume the following  assumptions are satisfied:
\begin{enumerate}
\item[(H1)] there exists $\a>0$ and $A_\a>0$ such that for all $\p \in \PSH(X,\omega)$,
$$
\int_X e^{-\a (\p-\sup_X \p) } d\nu \le A_\a;
$$
\item[(H2)] there exists $C>0$ such that $\left( \int_X |f|^p \, d\nu \right)^{1/p} \le C$.
\end{enumerate}

Let $\f$ be the unique $\omega$-psh solution $\f$ to the complex Monge-Amp\`ere equation
$$
{V}^{-1} (\omega+i \partial \overline{\partial} \f)^n =\mu,
$$
normalized by $\sup_X \f=0$. Then $-M \le \f \le 0$ where
$$
M=1+ C^{1/n}A_\a^{1/nq} \, e^{\a/nq} b_n \left[5+e\a^{-1} C (q !)^{1/q} A_{\a}^{1/q}  \right] ,
$$
$1/p+1/q=1$ and $b_n$ is a constant such that $\exp(-1/x) \le b_n^n x^{2n}$ for all $x>0$.
\end{bigthm}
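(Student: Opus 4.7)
The upper bound $\f \le 0$ is by the normalization. For the lower bound I would follow the capacity-free pluripotential approach of Ko\l{}odziej, refined in the style of \cite{EGZ,Kolo}, tracking constants very carefully in order to reach the stated explicit value of $M$. Writing $u(s) := \mu(\{\f < -s\})$ for $s \ge 0$, the aim is to prove that $u$ vanishes past an explicit threshold.

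\textbf{Step 1: Exponential decay of $u$.} Hypothesis (H1) together with Chebyshev's inequality yield $\nu(\{\f < -s\}) \le A_\a e^{-\a s}$ for all $\omega$-psh $\f$ with $\sup \f = 0$. Combined with (H2) via H\"older,
\[
u(s) \;=\; \int_{\{\f < -s\}} f\,d\nu \;\le\; \|f\|_{L^p(\nu)} \, \nu(\{\f<-s\})^{1/q} \;\le\; C \, A_\a^{1/q} \, e^{-\a s/q}.
\]
The same Taylor-expansion argument produces the moment bound $\|\f\|_{L^q(\nu)} \le \a^{-1}(q!)^{1/q} A_\a^{1/q}$, which accounts for the factor $\a^{-1} C (q!)^{1/q} A_\a^{1/q}$ appearing in $M$.

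\textbf{Step 2: Comparison principle and self-improving inequality.} The core Monge--Amp\`ere ingredient, going back to Ko\l{}odziej, is that the comparison principle of Bedford--Taylor applied to $\f$ against the cut-offs $\max(\f + t, -s)$ (or equivalently candidate subsolutions of the form $\e\rho + (1-\e)\f$ with $\rho \in \PSH(X,\omega)$ normalized) gives, for $0 < t$ and $s \ge 0$,
\[
t^n\, u(s+t) \;\le\; \int_{\{\f < -s\}} \omega_\f^n \text{-type terms}
\]
which, together with Step~1 applied to both $f$ and the auxiliary psh functions, yields a self-improving inequality
\[
t^n \, u(s+t) \;\le\; \mathrm{Const.}\cdot u(s)^{1+\eta}
\]
with $\eta = 1/q$ and a constant depending only on $C$, $A_\a$, $\a$, $q$.

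\textbf{Step 3: De Giorgi type iteration.} An elementary iteration lemma (chosen so that $\exp(-1/x) \le b_n^n x^{2n}$ is the sharp comparison used) then forces $u(M) = 0$ for an explicit $M$ of the displayed form. Here the factors $C^{1/n}$ and $A_\a^{1/(nq)}$ enter through the $n$-th root in the comparison principle, the bracket $[5 + e\a^{-1} C (q!)^{1/q} A_\a^{1/q}]$ comes from combining the exponential decay of Step~1 with the moment bound, and $b_n$ is produced by the final switch from exponential to polynomial decay in the iteration.

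\textbf{Main obstacle.} The analytic heart is not the \emph{existence} of these estimates---those are essentially classical---but the \emph{explicit} form of $M$. Every use of H\"older, comparison principle, and iteration must be calibrated so that the resulting constant really equals the one stated, as this explicit dependence is what will later make it possible to control the blow-up of $\f$ when $\omega$ and the complex structure on $X$ both degenerate in a family. Getting the exponent $\a/(nq)$ in $e^{\a/nq}$ and the additive "$1+$" and "$5$" right requires being careful about the threshold $s_0$ from which the iteration is started and about the choice of step-size $t$ at each iteration; a suboptimal choice leads to an $M$ of the same qualitative shape but with worse numerical constants.
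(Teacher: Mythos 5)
Your architecture matches the paper's (measure-vs-capacity domination, comparison principle, De Giorgi iteration, and a moment bound to start the iteration), and you correctly trace the factor $\a^{-1}C(q!)^{1/q}A_\a^{1/q}$ to the $L^q(\nu)$ moment bound obtained from $t^q\le \frac{q!}{\a^q}e^{\a t}$ and (H1). But Step 2, the analytic heart, is asserted rather than derived, and the mechanism you give for it would not produce the claimed inequality. The comparison principle (the paper's Lemma \ref{lem:capma}) only gives $\delta^n\,\ca_\omega(\{\f<-s-\delta\})\le \mu(\{\f<-s\})=u(s)$, i.e.\ it controls a \emph{capacity} of a deeper sublevel set by $u(s)$. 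To close the loop into $u(s+\delta)\le \mathrm{Const}\cdot u(s)^{1+\eta}$ you must convert back from capacity to $\mu$-measure, which requires the domination $\mu(K)\le D\,\ca_\omega(K)^{1+\eta}$ for \emph{arbitrary} Borel sets $K$. This is where the real work lies, and it uses (H1) in a different way than your Step 1: one applies (H1) to the extremal function $V_{K,\omega}$ (which vanishes $\nu$-a.e.\ on $K$) to get $\nu(K)\le A_\a T_\omega(K)^\a$ in terms of the Alexander--Taylor capacity, and then invokes the comparison $T_\omega(K)\le \exp\bigl[1-\ca_\omega(K)^{-1/n}\bigr]$ between the two capacities, followed by the elementary bound $e^{-1/x}\le b_n^nx^{2n}$ to land on $\mu(K)\le D\,\ca_\omega(K)^2$ with $D=b_n^nCA_\a^{1/q}e^{\a/q}$. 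Neither the use of (H1) on extremal functions nor the capacity comparison appears in your sketch; "Step 1 applied to the auxiliary psh functions" does not supply them, since Step 1 only controls the measures of sublevel sets of $\f$ itself, which holds for unbounded $\omega$-psh functions as well and hence cannot by itself yield boundedness.

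Two smaller corrections. First, your exponent $\eta=1/q$ is not what the argument produces: the exponent on the capacity is a free choice ($2$ in the paper, built into the definition of $b_n$), and $1/q$ enters only through the constants via H\"older; the iteration works for any $\eta>0$. Second, the roles of the two uses of (H1) should be kept separate: the decay $\nu(\{\f<-s\})\le A_\a e^{-\a s}$ for the solution $\f$ itself is used only to bound the starting point $s_0$ of the iteration (via Chebyshev and the $L^q$ moment bound, yielding $s_0=1+eD^{1/n}\a^{-1}C(q!)^{1/q}A_\a^{1/q}$ and hence the "$1+$" in $M$), while the "$5$" comes from summing the geometric series $\sum_{j\ge0}eD^{1/n}e^{-j}\le 5D^{1/n}$ in the iteration. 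With the capacity domination supplied, the rest of your outline is correct and gives exactly the stated $M$.
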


\bigskip

\begin{rem}
\label{no charge pp}
Let us observe that the condition (H1) in Theorem~\ref{thmA} above guarantees that the measure $\nu$ does not charge pluripolar sets, since any such set can be included in the polar locus of a \textit{global} $\om$-psh function by \cite[Thm.~7.2]{GZ05}. The existence (and uniqueness) of the solution $\vp$ in Theorem~\ref{thmA} follows from \cite[Thm.~A]{BEGZ}.
\end{rem}

We also establish slightly more general versions of Theorem~\ref{thmA} 
valid for less regular densities (Theorem   \ref{thm:uniform3}) or 
big cohomology classes (Theorem   \ref{thm:uniform2}).
We then move on to checking hypotheses (H1) and (H2) in various geometrical contexts.

\smallskip

\noindent $\bullet$ {\it Hypothesis (H1).}
If $\pi:{\mathcal X} \rightarrow \mathbb D$ is a projective family whose fibers
$X_t=\pi^{-1}(t)$ have degree $d$ with respect to a given projective embedding ${\mathcal X} \subset \mathbb{P}^N\times \bD$,
and $\omega=\omega_t$ is the restriction of the Fubini-Study metric,
we observe in Proposition \ref{prop:uniformint_proj} that 
$$
V=\int_{X_t}  \omega_t^n=\int_{\mathbb{P}^N} \omega_{FS}^n \wedge [X_t]=d
$$
is independent of $t$ and the following uniform integrability holds. 

\begin{bigpro}
For for all $\p \in \PSH(X_t,\omega_t)$,
 $$
  \int_{X_t} e^{-\frac{1}{nd}(\p-\sup_{X_t} \p)} \omega_t^n\le (4n)^n\cdot d \cdot 
  \exp\left\{-\frac{1}{nd}\int_{X_t} (\p-\sup_{X_t} \p) \, \omega_t^n\right\}.
  $$
  \end{bigpro}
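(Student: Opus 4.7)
The strategy reduces the proposition in two steps to an absolute $\alpha$-invariant bound on each fiber, which is then established by direct use of the Fubini-Study geometry.

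First I would normalize: set $\psi := \varphi - \sup_{X_t}\varphi$, so that $\psi \in \PSH(X_t,\omega_t)$ with $\sup_{X_t}\psi = 0$ and $\psi \le 0$. Both sides of the claimed inequality depend only on $\psi$, so this reduction is harmless. Since $\psi \le 0$ the quantity $\int_{X_t}\psi\,\omega_t^n$ is non-positive, so the factor $\exp\{-\tfrac{1}{nd}\int_{X_t}\psi\,\omega_t^n\}$ on the right-hand side is $\ge 1$. It is therefore enough to prove the \emph{absolute} bound
$$
\int_{X_t} e^{-\psi/(nd)}\,\omega_t^n \le (4n)^n\, d
$$
whenever $\sup_{X_t}\psi = 0$. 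Inserting the trivially $\ge 1$ exponential factor back in then gives the proposition.

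The heart of the matter is thus a uniform Tian-type $\alpha$-invariant estimate for $\omega_t = \omega_{\FS}|_{X_t}$ with entirely explicit constants depending only on $n$ and $d$. The key point is the fixed ambient projective geometry: in each of the standard affine charts of $\PP^N$ the Fubini-Study potential is $\tfrac{1}{2}\log(1+\|z\|^2)$, so $u := \psi + \tfrac{1}{2}\log(1+\|z\|^2)$ is a genuine plurisubharmonic function on $X_t\cap\C^N$, whose supremum is controlled independently of $t$. Standard $L^1$-to-$L^\infty$ estimates for psh functions on coordinate balls in $\C^n$ give pointwise control of $\psi$ in terms of its average on such balls, and a Chebyshev-type layer-cake integration combined with the total mass identity $\int_{X_t}\omega_t^n = d$ converts this into the desired exponential integrability with exponent $1/(nd)$.

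The main obstacle is making this chain of estimates fully uniform in $t$ while keeping every constant explicit. The factor $d$ comes from the total mass $\int_{X_t}\omega_t^n$, and the $(4n)^n$ factor is produced by a Moser-style iteration through the $n$ complex dimensions, each step contributing roughly a factor of $4n$ (the $4$ from a Chebyshev/sub-mean trick, the $n$ from rescaling to unit balls). Uniformity in $t$ is automatic once one works entirely in the fixed ambient $\PP^N$: no constant depends on the particular complex structure of the fiber, only on $n$ and $d$, which are constant throughout the family.
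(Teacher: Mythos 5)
Your opening reduction is where the argument breaks: you discard the factor $\exp\{-\tfrac{1}{nd}\int_{X_t}\psi\,\omega_t^n\}\ge 1$ and reduce to the \emph{absolute} bound $\int_{X_t}e^{-\psi/(nd)}\omega_t^n\le (4n)^n d$ for every sup-normalized $\psi$. That absolute bound is false, so the reduction is to an unprovable statement. A concrete counterexample is contained in Example~\ref{exa:irred} of the paper: take $V=X_t=\{xy=tz^2\}\subset\mathbb P^2$ (so $n=1$, $d=2$, and the claimed absolute bound would read $\int_{X_t}e^{-\psi/2}\omega_t\le 8$) together with the sup-normalized $\omega_{\rm FS}|_{X_t}$-psh function $\varphi_t$ constructed there. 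On the annulus $U_t$ one has $\varphi_t\le\tfrac12(\log|t|+1)$ while $\int_{U_t}\omega_t\ge\delta$, so $\int_{X_t}e^{-\varphi_t/2}\omega_t\gtrsim |t|^{-1/4}\to\infty$. The point is that the constant in a Skoda-type bound with \emph{no} mean-value correction cannot depend only on $n$ and $d$: it degenerates with the geometry of the embedding (here, as the smooth conic approaches a reducible one). The mean-value factor on the right-hand side is therefore not a decoration one may drop; it is the entire content of the proposition, and indeed the paper stresses immediately after the statement that (H1) only follows once $\int_{X_t}\varphi\,\omega_t^n$ is separately controlled, which requires irreducibility and occupies all of Section~\ref{sec:normalization}.

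Relatedly, your sketch of how the constants arise does not engage with the mechanism that produces the mean-value term. In the paper's proof this term appears as the constant in a Green-type representation: one uses the projective logarithmic kernel $G(x,y)=\log(\|x\wedge y\|/\|x\|\|y\|)$, whose restriction $g_y$ to $V$ satisfies $\omega_{g_y}^n\ge\delta_y$, and $n$ iterated integrations by parts give $\varphi(y)\ge\int_V\varphi\,\omega^n+n\int_V g_y\,\omega_\varphi\wedge\omega_{g_y}^{n-1}$; the remaining cross term is handled by Jensen's inequality against the probability measure $d^{-1}\omega_\varphi\wedge\omega_{g_y}^{n-1}$ and the explicit pointwise comparisons of Lemma~\ref{lemma_ineq} (which is where $(4n)^n$ actually comes from, via the choice $\delta=1/2n$). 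A local sub-mean-value/Moser argument on coordinate balls, as you propose, would at best relate $\psi(x)$ to averages over small balls in $\mathbb C^N$ intersected with $X_t$; converting those local averages into the single global quantity $\int_{X_t}\psi\,\omega_t^n$ with a constant depending only on $n$ and $d$ is precisely the step that fails when the fiber degenerates, and is the reason the global kernel argument is used instead.
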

The hypothesis (H1) is thus satisfied in this projective setting, with $\a=1/nd$, as soon as we can uniformly control
the $L^1$-norm of $\p$. We take care of this in Section \ref{sec:normalization}. This non-trivial
control requires the varieties $X_t$ to be irreducible
%, as simple examples show 
(see Example \ref{exa:irred}).

Bypassing the projectivity assumption, we show that (H1) is actually satisfied for 
many K\"ahler families of interest, by generalizing a uniform integrability result of Skoda-Zeriahi \cite{Sko72,Zer01}
(see Theorem \ref{prop:uniformint}). This is the content of Theorem~\ref{H1}.\\

\smallskip

\noindent $\bullet$ {\it Hypothesis (H2).}
We analyze (H2) 
%and (H2') from Theorem \ref{thm:uniform3}
 in section \ref{sec:H2}. We show that, up to shrinking the base, it is always satisfied if
the $f_t$'s are canonical densities associated to a proper, holomorphic surjective map  $\pi:\cX\to \bD$ from a normal, $\Q$-Gorenstein Kähler space $\cX$ to the unit disk
 such that the central fiber has only canonical singularities, cf Lemma~\ref{canon} and its application to families of Calabi-Yau varieties, Theorem~\ref{thmCY}. 

While previous works tend to use sophisticated arguments from Variations of Hodge Structures (see e.g. the Appendix by Gross in
\cite{RZ}), we use here direct elementary computations in adapted coordinates, in the spirit of \cite[section 6]{EGZ}.

In the context of families of varieties with negative curvature though, it is essential to allow worse singularities than the ones described above, cf Setting~\ref{fset} for the precise context. The trade-off is that the canonical densities do not satisfy condition (H2) anymore, reflecting the fact that the local potentials of the Kähler-Einstein metrics at stake need not be bounded anymore. This legitimizes the introduction of 
a weaker condition (H2') (see Theorem \ref{thm:uniform3} and Lemma~\ref{corintcontrol}). This allows us to derive an almost optimal control of the potentials of Kähler-Einstein metrics along a stable family, cf Theorem~\ref{thmC} below. 

Let us end this paragraph by emphasizing that our approach enables us to work with singular families (i.e. families where the generic fiber is singular, cf Theorems~\ref{thmC} and \ref{thmCY}) as opposed to all previously known results on that topic, requiring to approximate a singular variety by smooth ones using either a smoothing or a crepant resolution. 

We now describe more precisely four independent geometric settings to which we apply the uniform estimate provided by Theorem~\ref{thmA}.

\subsection*{Families of manifolds of general type}

Let $\mathcal X$ be an irreducible and reduced complex space endowed with a Kähler form $\omega$ and a proper, holomorphic map $\pi:{\mathcal X} \rightarrow \mathbb D$. We assume that for each $t\in \mathbb D$, the (schematic) fiber $X_t$ is a $n$-dimensional K\"ahler manifold $X_t$ of general type, i.e. such that its canonical bundle $K_{X_t}$ is big. In particular, $\mathcal X$ is automatically non-singular and the map $\pi$ is smooth. %One can view the fibers $X_t$ as deformations of $X_0$. 

We fix $\Theta$ a closed differential $(1,1)$-form on ${\mathcal X}$ 
which represents $c_1(K_{\mathcal X/\mathbb D})$
and set $\theta_t=\Theta_{|X_t}$.
% Shrinking $\mathbb D$ 
% % if necessary 
% and rescaling,
%we can  assume without loss of generality that  
%$
%-\omega \le \Theta \le  \omega.
%$

It follows from \cite{BEGZ}, a generalization of the Aubin-Yau theorem \cite{Aubin,Yau78}, that there exists
a unique K\"ahler-Einstein current on $X_t$. This is a positive closed current $T_t$ in $c_1(K_{X_t})$ which
%, by \cite{EGZ,BCHM},
 is a smooth K\"ahler form in the ample locus $\Amp(K_{X_t})$, where it satisfies
 the K\"ahler-Einstein equation
 $$
 \Ric(T_t)=-T_t.
 $$
 
  It can be written
$T_t=\theta_t+dd^c \f_t$, where $\f_t$ is the unique $\theta_t$-psh function with minimal singularities
that satisfies the complex Monge-Amp\`ere equation
$$
(\theta_t+dd^c \f_t)^n=e^{\f_t+h_t}  \omega_t^n  \quad \mbox{on } \Amp(K_{X_t}),
$$
where $h_t$ is such that $\Ric( \om_t)- dd^c h_t=-\theta_t$ and $\int_{X_t} e^{h_t}\om_t^n= \vol(K_{X_t})$. 
For $x\in \cX$, set 
\begin{equation}
\label{phi}
\phi(x):=\f_{\pi(x)}(x)
\end{equation}
 and consider
\begin{equation}
\label{Vtheta}
{V}_{\Theta}=\sup \{u  \in \PSH(\mathcal X,\Theta);  \,\, u \le 0 \}.
\end{equation}
We prove that conditions (H1) and (H2) are satisfied in this setting. It then follows from Theorem~\ref{thmA}  and the plurisubharmonic variation of the $T_t$'s (\cite[Thm.~A]{JHM1}) that $\phi-{V}_{\Theta}$ is uniformly bounded
on compact subsets of   ${\mathcal X}$, cf Theorem~\ref{thmgt} and Remark~\ref{remgt}:

\bigskip

\begin{bigthm}
\label{thmB}
Let $\pi:\cX\to \bD$ be a smooth Kähler family of manifolds of general type, let $\Theta\in c_1(K_{\cX/\bD})$ be a smooth representative and let $\phi$ be the Kähler-Einstein potential as in \eqref{phi}. Given any
 compact subset ${\mathcal K} \Subset {\mathcal X}$, there exists a constant $M_{{\mathcal K}}$ such that 
the following inequality
$$
-M_{{\mathcal K} } \le\phi-V_{\Theta} \le M_{{\mathcal K} }
$$
holds on $\mathcal K$, where $V_{\Theta}$ is defined by \eqref{Vtheta}. 
\end{bigthm}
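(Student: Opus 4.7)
The plan is to combine the plurisubharmonic variation of the fiberwise Kähler--Einstein currents from \cite[Thm.~A]{JHM1} (which yields $\phi \in \PSH(\cX,\Theta)$, so in particular $\phi$ is locally bounded above) with the uniform a priori estimate furnished by Theorem~A applied fiberwise to the Monge--Ampère equation $(\theta_t + dd^c\varphi_t)^n = e^{\varphi_t + h_t}\omega_t^n$ on $\Amp(K_{X_t})$. The two inequalities $\phi - V_\Theta \ge -M_\mathcal{K}$ and $\phi - V_\Theta \le M_\mathcal{K}$ will be handled separately.

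For the \emph{lower bound}, I would first establish a uniform upper bound $\sup_{X_t}\varphi_t \leq C$ for $t$ in a compact neighborhood $K\Subset \bD$ of $\pi(\mathcal{K})$. Integrating the Monge--Ampère equation against $\omega_t^n$ and comparing with $\int_{X_t} e^{h_t}\omega_t^n = \vol(K_{X_t})$ immediately yields $\sup_{X_t}\varphi_t \ge 0$, while the matching upper bound follows from a maximum-principle argument on the ample locus $\Amp(K_{X_t})$ (where $\varphi_t$ is bounded by virtue of its minimal singularities), combined with the continuity of the data $(\omega_t, h_t, \theta_t)$ in $t$ and a perturbation of $\theta_t$ to handle the fact that it is only smooth. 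With this in hand, hypothesis (H2) of Theorem~\ref{thm:uniform2} holds uniformly in $t \in K$, while (H1) holds uniformly by the Skoda--Zeriahi integrability result of Theorem~\ref{H1}. Applying Theorem~\ref{thm:uniform2} to the normalized potentials $\tilde\varphi_t := \varphi_t - \sup_{X_t}\varphi_t$ then yields $\|\tilde\varphi_t\|_\infty \leq M$ uniformly, and hence $\|\varphi_t - V_{\theta_t}\|_\infty \leq M'$ uniformly in $t\in K$, where $V_{\theta_t}$ is the fiberwise envelope. Since restriction immediately gives $V_\Theta|_{X_t}\le V_{\theta_t}$, we conclude $\phi \ge V_{\theta_t} - M' \ge V_\Theta - M'$ on $\pi^{-1}(K)$, which contains $\mathcal{K}$.

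For the \emph{upper bound}, I start from the same uniform bound $\sup_\mathcal{K}\phi \le C$ inherited from the previous step. The task is to exhibit a global $\Theta$-psh function $\le 0$ on $\cX$ that dominates $\phi - C$ up to a bounded error on $\mathcal{K}$; the extremality of $V_\Theta$ will then give $\phi - C \le V_\Theta + M'$ on $\mathcal{K}$, and hence the desired inequality. I envisage constructing such a function by a gluing argument, taking the maximum of $\phi - C$ with a suitable reference $u_0 \in \PSH(\cX, \Theta)$ satisfying $u_0 \le 0$ (possibly shifted by a large constant), chosen so that the max agrees with $\phi - C$ on $\mathcal{K}$ while remaining $\le 0$ globally. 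I expect this last step --- transporting a local upper bound into a genuine global competitor for the envelope --- to be the main obstacle, as it requires an extension-type statement for $\Theta$-psh functions that is compatible with the smoothness and Kählerness of the family near $\mathcal{K}$.
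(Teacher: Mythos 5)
Your lower-bound argument coincides with the paper's proof of Theorem~\ref{thmgt}: $\sup_{X_t}\varphi_t\ge 0$ by integrating the equation against the probability measure $e^{h_t}\omega_t^n/V_t$, the matching upper bound $\sup_{X_t}\varphi_t\le -\inf_{\pi^{-1}(K)}h$ by the comparison principle (using $\theta_t\le\omega_t$ and the constant supersolution), so that (H2) holds with $p=\infty$, (H1) holds by Theorem~\ref{H1}, and Theorem~\ref{thm:uniform2} gives $-M_K\le \varphi_t-V_{\theta_t}\le 0$; combined with $V_\Theta|_{X_t}\le V_{\theta_t}$ this is exactly the first half of Remark~\ref{remgt}. (Your passing claim that $\|\tilde\varphi_t\|_\infty\le M$ is not quite right since $K_{X_t}$ is only big, but you immediately state the correct conclusion relative to $V_{\theta_t}$.) One ingredient you omit entirely is that $\vol(K_{X_t})$ must be bounded away from zero locally in $t$ for the constants in (H1)--(H2) and in Theorem~\ref{thm:uniform2} to be uniform: both $\mu_t$ and $A_\alpha$ involve $V_t^{-1}$. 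This is Lemma~\ref{volume}, which in the Kähler (non-projective) setting is not free \--- the paper reduces to a projective model and invokes invariance of plurigenera and Takayama's semicontinuity results. Without it your uniform estimate does not close.

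For the upper bound, the paper's argument (Remark~\ref{remgt}) is a one-liner and does not require the gluing you propose: since $\phi\in\PSH(\cX,\Theta)$ by \cite[Thm.~A]{JHM1} and $\sup_{\pi^{-1}(K)}\phi\le C_K$ by the estimate already established, the function $\phi-\sup_{\pi^{-1}(K)}\phi$ is \emph{itself} the competitor in the envelope defining $V_\Theta$, giving $\phi-V_\Theta\le C_K$ on $\mathcal K$. No auxiliary $u_0$, no extension statement, and no maximum construction is needed \--- the globally defined $\Theta$-psh function is already in hand, and the only thing to arrange is its non-positivity, which the sup-bound provides (up to the implicit restriction of the family to $\pi^{-1}(\bD_r)\supset\mathcal K$ that the paper uses throughout). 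Your instinct that normalizing a local bound into a global competitor is the delicate point is not baseless, since $\phi$ need not be bounded above on all of $\pi^{-1}(\bD)$; but the standard resolution is to shrink the base, not to glue, and as written your proposal leaves this step unproved while the statement you would need is precisely what Remark~\ref{remgt} supplies.
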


\bigskip

The same results can be proved if the family $\pi:\cX\to \mathbb D$ is replaced by a smooth family $\pi:(\mathcal X,B)\to \mathbb D$ of pairs 
$(X_t,B_t)$ of log general type, i.e. such that $(X_t,B_t)$ is klt and $K_{X_t}+B_t$ is big for all $t\in \mathbb D$.

\bigskip

 \subsection*{Stable families}

 A stable variety is a projective variety $X$ such that 
  $X$ has semi-log-canonical singularities and 
  the $\mathbb Q$-line bundle $K_X$ is ample. 
We refer to \cite{Kov,Kollar} for a detailed account of these varieties and their connection to moduli theory.

In \cite{BG}, it was proved that a stable variety admits a unique Kähler-Einstein metric $\omega$, i.e.
  a smooth Kähler metric on $X_{\reg}$ such that,  if $n=\dim_{\mathbb C}X$, 
$$
\Ric(\om)=-\om \quad \mbox{and} \quad \int_{X_{\rm reg}}\om^n=(K_X^n).
$$

  The metric $\om$ extends canonically across $X_{\rm sing}$ to a closed, positive current in the class $c_1(K_X)$. It is desirable to understand the singularities of $\om$ near $X_{\rm sing}$. In \cite[Thm.~B]{GW}, it is proved that $\omega$ has cusp singularities near the double crossings of $X$. Moreover, it is proved in \cite{Song17} that the potential $\vp$ of $\om$ with respect to a given Kähler form $\om_X\in c_1(K_X)$,
   i.e. $\omega=\omega_X+dd^c \f$,  is locally bounded on the klt locus of $X$. In order to refine the latter assertion, let us introduce the following invariant. 
   
   Given a log resolution of singularities $f:Y\to X$ write $K_Y=f^*K_X+\sum_{i\in I} a_iE_i$ where $E=\sum_{i\in I}E_i$ is a divisor with simple normal crossings whose components are either exceptional or sit above the divisorial components of $\mathrm{Sing}(X)$. We set $J:=\{i\in I; a_i=-1\}$ which we assume to be non-empty and define the non-klt locus of $X$ by $\mathrm{Nklt}(X)=\cup_{j\in J}f(E_j)$; it is independent of $f$. Given $K\subset I$, we set $E_K=\cap_{i\in K}E_i$ and introduce the invariants
   \begin{equation}
   \label{nu0}
   \nu(f):=\max\{|K|; K\subset J, E_K\neq \emptyset\}, \quad \mbox{and} \quad \nu:=\inf_f \{\nu(f)\}
   \end{equation}
    where the infimum is taken over all the log resolutions of $X$. We have $\nu \in \{1, \ldots, n\}$. 
   
 We establish  the following (cf. Proposition~\ref{minoration}). 

\begin{bigpro}
For any $\ep>0$, there is a constant $C_{\ep}$ such that 
\begin{equation}
\label{minoration_intro}
C_1 \ge \vp \ge -(n+\nu+\ep) \log (-\log |s|)-C_{\ep}
\end{equation}
where $(s=0)$ is any reduced divisor containing the non-klt locus of $X$ and $\nu$ is as in \eqref{nu0}.
\end{bigpro}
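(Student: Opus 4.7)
The upper bound $\vp\le C_1$ is immediate from the construction of $\vp$ in~\cite{BG}: $\vp$ is $\om_X$-psh and solves the normalized Monge-Amp\`ere equation $(\om_X+dd^c\vp)^n=c\cdot e^\vp\mu$, so integrating the equation, applying Jensen's inequality, and combining with a standard $L^1$-to-$L^\infty$ normalization argument for quasi-psh functions against the non-degenerate measure $\mu$ yields an absolute upper bound.

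For the lower bound, we plan to apply the general uniform a priori estimate Theorem~\ref{thm:uniform3}, which handles densities beyond $L^p$ via the weaker condition (H2'). We first take a log-resolution $\pi : Y \to X$ for which $\pi^{-1}(\supp(s)\cup X_{\sing})$ is supported on a simple normal crossing divisor $E=\sum_jE_j$, and we write $a_j$ for the discrepancy of $E_j$. The pullback $\widetilde\vp:=\pi^*\vp$ is $\pi^*\om_X$-psh and solves $(\pi^*\om_X+dd^c\widetilde\vp)^n=e^{\widetilde\vp}\,\widetilde f\,dV_Y$ with $\widetilde f$ having explicit $L^p$ behavior along the klt components ($a_j>0$) and cusp-type behavior along the non-klt components ($a_j\le 0$). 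Hypothesis (H1) is satisfied on $Y$ by the Skoda-Zeriahi-type result Theorem~\ref{H1} (or Theorem~\ref{prop:uniformint}); hypothesis (H2') is verified with weight $\chi_\ep:=\prod_{a_j\le 0}(-\log|s_{E_j}|^2)^{-(n+1+\ep)}$, where the exponent is dictated by the log-canonical threshold along each non-klt component. Applying Theorem~\ref{thm:uniform3} (via Lemma~\ref{corintcontrol}) then yields
$$\widetilde\vp+(n+1+\ep)\log(-\log|s_E|^2)\ge -C_\ep\quad\text{on }Y,$$
which descends through $\pi$ to the claimed estimate on $X$, since every non-klt component $E_j$ projects into $\supp(s)$ by assumption.

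The main technical hurdle lies in obtaining the sharp constant $n+1+\ep$. This requires calibrating the weight $\chi_\ep$ in (H2') so as to saturate the log-canonical threshold along each non-klt component (a log-resolution computation), and then carefully tracking how the constants in Theorem~\ref{thm:uniform3} depend on the $L^p$-norm of $\widetilde f/\chi_\ep$ and on the exponential integrability constant $A_\alpha$ coming from (H1). The $\ep$-loss is essentially inescapable: it encodes the $L^p$-slack needed for (H2') to hold, and the constant $C_\ep$ is expected to blow up as $\ep\to 0$.
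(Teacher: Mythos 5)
Your overall route is the paper's route: pull back to a log resolution, twist the canonical measure by a weight of the form $(-\log|s|^2)^{n+1+\ep}$ so that the resulting density satisfies (H2'), check (H1) via the Skoda--Zeriahi-type result, and invoke Theorem~\ref{thm:uniform3}. But there is a genuine gap at the decisive step. Theorem~\ref{thm:uniform3} gives an $L^\infty$ bound for the solution of a Monge--Amp\`ere equation \emph{whose right-hand side satisfies (H2')}. The function $\widetilde\vp=\pi^*\vp$ solves an equation whose density has poles of order exactly $2$ along the non-klt divisors, so that density satisfies neither (H2) nor (H2'), and the theorem simply does not apply to $\widetilde\vp$. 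Writing ``applying Theorem~\ref{thm:uniform3} then yields $\widetilde\vp+(n+1+\ep)\log(-\log|s_E|^2)\ge -C_\ep$'' skips the mechanism that transfers the boundedness statement for the twisted problem into a lower bound with the logarithmic profile for the original one.

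What is missing is the subsolution/comparison-principle argument of Proposition~\ref{minoration}. One must (a) set $\psi:=-\log(-\log|\sigma|^2)$ and rescale the hermitian metric so that $A\psi$ is $f^*\om_X$-psh for $0\le A\le 2(n+2)$ and lies in the finite energy class $\mathcal E(Y,f^*\om_X)$ (this is where the specific shape $-\log(-\log|\cdot|)$ is essential; a weight cannot just be ``moved from the measure to the potential'' unless it is quasi-psh with controlled curvature); (b) solve the \emph{auxiliary} equation $(\tfrac12 f^*\om_X+dd^c u_\ep)^n=e^{u_\ep+(n+1+2\ep)\psi}\,d\mu_Y$, whose right-hand side does satisfy (H2') by the local computation you describe, and bound $u_\ep$ by Theorem~\ref{thm:uniform3}; and (c) observe that $v_\ep:=u_\ep+(n+1+2\ep)\psi$ satisfies $(f^*\om_X+dd^c v_\ep)^n\ge(\tfrac12 f^*\om_X+dd^c u_\ep)^n=e^{v_\ep}d\mu_Y$, i.e.\ it is a subsolution of the original equation, so the comparison principle gives $f^*\vp\ge v_\ep$. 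The splitting of $f^*\om_X$ into two halves is needed precisely to absorb the curvature of $(n+1+2\ep)\psi$. A secondary inaccuracy: the exponent $n+1+\ep$ is not ``dictated by the log-canonical threshold''; it comes from the interplay between the integrability of $r^{-1}(-\log r)^{-(1+\ep)}$ and the extra weight $|\log f|^{n+\ep}$ demanded by (H2'), as in Claim~\ref{intcontrol} and Lemma~\ref{corintcontrol}. Your upper bound sketch is fine.
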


This estimate is almost optimal as illustrated by the Baily-Borel-Satake compactification of a ball quotient or a Hilbert modular surface, cf Examples~\ref{BBS1}-\ref{BBS2}. \\

A slight refinement of Theorem~\ref{thmA} (cf. Theorem \ref{thm:uniform3}) allows us to establish a uniform family version of the estimate \eqref{minoration_intro}. In order to state it, let $\cX$ be a normal Kähler space and let $\pi:\cX \to \bD$ be a proper, surjective, holomorphic map such that each fiber $X_t$ has slc singularities and $\Krel$ is an ample $\mathbb Q$-line bundle. If $\om_{\cX}\in c_1(\Krel)$ is a relative Kähler form and $\om_{X_t}:=\omega_\cX|_{X_t}$, then the Kähler-Einstein metric of $X_t$ can be written as $\om_{X_t}+dd^c \f_t$ where $\f_t$ is uniquely determined by the equation \eqref{KEt} from section~\ref{sec:negative}. 

\smallskip

   Given a semistable model $f:\cY\to \cX$ write $K_\cY+Y_0=f^*(K_\cX+X_0)+\sum_{i\in I} a_iE_i$ where $Y_0$ is the strict transform of $X_0$. We set $J:=\{i\in I; a_i=-1\}$ which we assume to be non-empty and define the non-klt locus of $(\cX,X_0)$ by 
   \begin{equation}
   \label{nonkltlocus}
   \mathrm{Nklt}(\cX,X_0)=\cup_{j\in J}f(E_j);
   \end{equation}
    it is independent of $f$. Given $K\subset I$, we set $E_K=\cap_{i\in K}E_i$ and introduce the invariants
   \begin{equation}
   \label{nunu}
   \nu(f):=\max\{|K|; K\subset J, E_K\neq \emptyset\}, \quad \mbox{and} \quad \nu:=\inf_f \{\nu(f)\}
   \end{equation}
   where the infimum is taken over all the semistable models $f:\cY\to \cX$. We have $\nu \in \{1, \ldots, n\}$.

The behavior of $\f_t$ is then described by the following (see Theorem \ref{thm:stablefamily})

\begin{bigthm} \label{thmC}
Let $\cX$ be a normal Kähler space and let $\pi:\cX \to \bD$ be a proper, surjective, holomorphic map such that
\begin{enumerate}
\item[$\bullet$] Each schematic fiber $X_t$ has semi-log-canonical singularities.
\item[$\bullet$] $\Krel$ is an ample $\mathbb Q$-line bundle.
\end{enumerate} 
In particular, $X_t$ is a stable variety for any $t\in \bD$. Assume additionally that the central fiber $X_0$ is irreducible. 

Let $\om_{X_t}+dd^c \f_t$ be the Kähler-Einstein metric of $X_t$ and let $D=(s=0)\subset \cX$ be a divisor which contains $\mathrm{Nklt}(\cX,X_0)$, cf \eqref{nonkltlocus}.
Fix some smooth hermitian metric $|\cdot|$ on $\mathcal O_\cX(D)$. 
Up to shrinking $\bD$, then for any $\ep>0$, there exists 
%a constant 
$C_{\ep}>0$  such that the inequality
%the inequality
\begin{equation*}
C_1 \ge \vp_t \ge -(n+\nu+\ep) \log (-\log |s|)-C_{\ep}
\end{equation*}
holds on $X_t$ for any $t\in \bD$, where $\nu$ is as in \eqref{nunu}. 
\end{bigthm}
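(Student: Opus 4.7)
The plan is to apply the refined Kolodziej-type estimate of Theorem~\ref{thm:uniform3} fiberwise to the family of Monge-Amp\`ere equations encoding the K\"ahler-Einstein condition, with constants controlled uniformly in $t$. Fix a relative K\"ahler form $\om_{\cX}\in c_1(\Krel)$ and, locally around a point of the central fiber, trivialize a sufficiently divisible power of $\Krel$ by a smooth section; together with $\om_{\cX}$, this produces a smooth positive measure on $\cX$ whose fiberwise restriction has density $f_t$ against $\om_{X_t}^n$ equal to the canonical density of the slc pair $(X_t,0)$. The K\"ahler-Einstein equation then takes the form
$$
(\om_{X_t}+dd^c \vp_t)^n = e^{\vp_t}f_t\,\om_{X_t}^n.
$$
The upper bound $\vp_t \le C_1$ is standard: at a maximum point of $\vp_t$ on $X_t^{\reg}$ (where $\vp_t$ is smooth by \cite{BG}) one has $\om_{X_t}+dd^c\vp_t\le \om_{X_t}$, hence $e^{\vp_t}f_t \le 1$, and $f_t$ is bounded below on compact subsets of $\cX\setminus D$ uniformly in $t$ after shrinking $\bD$.

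\textbf{Uniform verification of (H1) and (H2').} For the lower bound, the heart of the matter is to check uniform versions of (H1) and of the weaker integrability (H2') used in Theorem~\ref{thm:uniform3}. Hypothesis (H1) is supplied by Theorem~\ref{H1}: up to shrinking $\bD$, irreducibility of $X_0$ propagates to all fibers, and the uniform Skoda--Zeriahi statement yields constants $\alpha,A_\alpha>0$ independent of $t$ with
$$
\int_{X_t} e^{-\alpha(\psi-\sup_{X_t}\psi)}\,\om_{X_t}^n \le A_\alpha \qquad \forall\, \psi\in \PSH(X_t,\om_{X_t}).
$$
For (H2'), the density $f_t$ is not in $L^p$ for any $p>1$ (precisely because of the slc non-klt points along $D$), but working in local adapted coordinates on $\cX$ around the singular strata of $X_0$, in the style of \cite[\S 6]{EGZ}, one obtains a uniform Orlicz-type bound
$$
\int_{X_t} f_t\,[\log(e+f_t)]^{n+\ep}\,\om_{X_t}^n \le K_\ep,
$$
valid for every $t\in \bD$. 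This is the family incarnation of Lemma~\ref{corintcontrol}.

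\textbf{Extraction of the $\log\log$ barrier.} Feeding $\alpha,A_\alpha,K_\ep$ into Theorem~\ref{thm:uniform3} yields a quantitative lower bound $\vp_t \ge -\Psi_{\ep,t}-C_\ep$, where $\Psi_{\ep,t}$ is a $\om_{X_t}$-psh function produced by comparing sublevel sets of $\vp_t$ against Monge-Amp\`ere capacity through the tail of $f_t$. A direct computation, exploiting that $f_t\sim |s|^{-2}\cdot (\text{log terms})$ along $D$ and that $-\log|s|$ is quasi-psh on $\cX$, shows that $\Psi_{\ep,t}$ is majorized on each fiber by $(n+1+\ep)\log(-\log|s|)$: the factor $n+1$ comes from absorbing the $|s|^{-2}$ singularity of $f_t$ against the Monge-Amp\`ere measure of a $\log(-\log|s|)$-type test barrier, and the $+\ep$ correction records the Orlicz exponent $n+\ep$ of (H2'). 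Combined with the already established upper bound, this gives the desired two-sided estimate with constants uniform in $t$.

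\textbf{Main obstacle.} The essential difficulty is to render all the constants uniform through the degeneration at $t=0$: the total space $\cX$ and every fiber are only slc, so no approximation by smooth fibers or by crepant resolutions is available, and $f_t$ is genuinely divergent in $L^p$. This is handled by the elementary local computations in adapted coordinates around $X_{0,\sing}$, which are robust under deformation of the complex structure, glued by a partition of unity and absorbed by shrinking of $\bD$. This is exactly where the approach departs from previous works relying on Variations of Hodge Structures, and it is also what allows the \emph{central} fiber itself to be singular in the statement.
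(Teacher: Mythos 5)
There is a genuine gap, and it sits exactly at the crux of the theorem. You assert a uniform Orlicz bound
\begin{equation*}
\int_{X_t} f_t\,[\log(e+f_t)]^{n+\ep}\,\om_{X_t}^n \le K_\ep
\end{equation*}
for the canonical density $f_t$ itself. This is false: along the non-klt locus $D$ of the slc fibers the density behaves like $|s|^{-2}$, which is not even in $L^1$ (the measure $\mu_{X_t,h}$ has infinite mass there). If $f_t$ did satisfy (H2'), Theorem~\ref{thm:uniform3} would output a uniform \emph{constant} lower bound $\vp_t\ge -M$, contradicting the cuspidal behavior the theorem is designed to capture. Relatedly, your "extraction of the $\log\log$ barrier" misreads Theorem~\ref{thm:uniform3}: it produces a constant $M$, not a psh function $\Psi_{\ep,t}$ obtained "by comparing sublevel sets against capacity", so the step that is supposed to manufacture the $(n+1+\ep)\log(-\log|s|)$ term has no actual mechanism behind it.

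The missing idea is a subsolution construction via a twist of the measure. Set $\psi:=-\log(-\log|s|^2)$ (pulled back to a semi-stable model $\cX'$, where Lemma~\ref{corintcontrol} lives and where Theorem~\ref{thm:uniform3} can legitimately be applied on the smooth fibers $X_t'$); after rescaling the metric on $\cO_\cX(D)$, the function $A\psi$ is $f^*\om_{X_t}$-psh for $0\le A\le 2(n+2)$. One then solves the auxiliary equation $(\tfrac12 f^*\om_{X_t}+dd^c u_{\ep,t})^n=e^{u_{\ep,t}+(n+1+2\ep)\psi_t}f^*\mu_{X_t,h}$: the \emph{twisted} density $e^{(n+1+2\ep)\psi}f_t\sim |s|^{-2}(-\log|s|)^{-(n+1+2\ep)}$ does satisfy the uniform (H2') of Lemma~\ref{corintcontrol}, while (H1) comes from Theorem~\ref{H1}; one also needs the a priori normalization $\sup u_{\ep,t}\le C$ (obtained by Jensen plus Proposition~\ref{mainprop}) before concluding $\|u_{\ep,t}\|_{L^\infty}\le C_\ep$. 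Then $v_{\ep,t}:=u_{\ep,t}+(n+1+2\ep)\psi_t$ is a subsolution of the Kähler--Einstein equation because $(f^*\om_{X_t}+dd^c v_{\ep,t})^n\ge(\tfrac12 f^*\om_{X_t}+dd^c u_{\ep,t})^n=e^{v_{\ep,t}}f^*\mu_{X_t,h}$, and the comparison principle gives $f^*\vp_t\ge v_{\ep,t}\ge(n+1+2\ep)\psi_t-C_\ep$. None of this appears in your write-up, and without it the argument does not close.
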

 
  Let us finally mention the very recent results of Song, Sturm and Wang \cite[Proposition 3.1]{SSW} where similar bounds are derived in the context of smoothings of stable varieties over higher dimensional bases, with application towards Weil-Petersson geometry of the KSBA compactification of canonically polarized manifolds.

  \subsection*{Families of $\mathbb Q$-Calabi-Yau varieties}

A $\mathbb Q$-Calabi-Yau variety is a compact, normal Kähler space $X$ with canonical singularities such that the $\mathbb Q$- line bundle $K_X$ is torsion. Up to taking a finite, quasi-étale cover referred to as the index $1$ cover (cf e.g. \cite[Def.~5.19]{KM}), one can assume that $K_X\sim_{\mathbb Z} \mathcal O_X$.  Given any Kähler class $\alpha$ on $X$, it follows from \cite{EGZ} and \cite{Paun} that there exists a unique singular Ricci flat Kähler metric $\omega_{\rm KE}\in \alpha$, i.e. a closed, positive current $\omega_{\rm KE} \in \alpha$ with globally bounded potentials inducing a smooth, Ricci-flat Kähler metric on $X_{\reg}$. 
  
  Now, we can consider families of such varieties and ask how the bound on the potentials vary. This is the content of the following (see Theorem~\ref{thm:CYfamily} and Remark~\ref{cansing})

\begin{bigthm}
 \label{thmCY} 
 Let  $\cX$ be a normal, $\mathbb Q$-Gorenstein Kähler space and let $\pi:\cX\to \bD$ be a proper, surjective, holomorphic map. Let $\alpha$ be a relative Kähler cohomology class on $\cX$ represented by a relative Kähler form $\om$.
Assume additionaly that
\begin{enumerate}
\item[$\bullet$] The relative canonical bundle $\Krel$ is trivial.
\item[$\bullet$] The central fiber $X_0$ has canonical singularities. 
\item[$\bullet$] Assumption~\ref{aspt} is satisfied.
\end{enumerate} 
Up to shrinking $\bD$, each fiber $X_t$ is a $\mathbb Q$-Calabi-Yau variety. Let $\omega_{{\rm KE}, t}=\om_t+dd^c \vp_t $ be the singular Ricci-flat Kähler metric in $\alpha_t$, normalized by $\int_{X_t} \vp_t \om_t^n=0$. Then, given any compact subset $K\Subset \mathbb D$, there exists 
%a constant 
$C=C(K)>0$  such that one has
%the inequality
\begin{equation*}
%\label{borneCY2}
\mathrm{osc}_{X_t} \vp_t \le C
\end{equation*}
 for any $t\in K$, where $\mathrm{osc}_{X_t}(\vp_t)=\sup_{X_t}\vp_t-\inf_{X_t}\vp_t$. 
\end{bigthm}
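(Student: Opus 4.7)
The plan is to cast the Ricci-flat equation on each fiber $X_t$ as a Monge-Amp\`ere equation, lift it to a simultaneous log resolution of $\cX$, and invoke Theorem~\ref{thmA} fiberwise with hypotheses (H1) and (H2) verified uniformly on compact $K \Subset \bD$.

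Since $K_{\cX/\bD}$ is trivial, I would fix a nowhere-vanishing relative holomorphic $n$-form $\Omega_{\cX/\bD}$ and write $\Omega_t$ for its restriction to $X_t$. Up to shrinking $\bD$ (using Assumption~\ref{aspt} to propagate the $\Q$-Calabi-Yau condition away from $X_0$), the singular Ricci-flat metric $\om_t + dd^c \vp_t$ on $X_t$ is then characterized by
\[
(\om_t + dd^c \vp_t)^n = c_t \cdot i^{n^2}\, \Omega_t \wedge \bar\Omega_t,
\]
where $c_t > 0$ is fixed by $c_t \int_{X_t} i^{n^2} \Omega_t \wedge \bar\Omega_t = V_t := \int_{X_t} \om_t^n$. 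Both $V_t$ and $c_t$ depend continuously on $t$ and are uniformly bounded away from $0$ and $\infty$ on $K$. I would then take a log resolution $\mu: \widetilde{\cX} \to \cX$ of $(\cX, X_0)$ restricting to resolutions $\mu_t : \tX_t \to X_t$ of the fibers; with $\tom_t := \mu_t^* \om_t$ (semi-positive, of total mass $V_t$) and $\tf_t := \vp_t \circ \mu_t$, the equation becomes
\[
(\tom_t + dd^c \tf_t)^n = c_t\, g_t\, dV
\]
on $\tX_t$, where $g_t$ is the density of $i^{n^2} \mu_t^* \Omega_t \wedge \overline{\mu_t^* \Omega_t}$ against a fixed smooth reference volume form $dV$ on $\widetilde{\cX}$. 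This is precisely the setup of Theorem~\ref{thmA}.

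Next I would verify (H1) and (H2) with constants depending only on $K$. Hypothesis (H2) follows from Lemma~\ref{canon}: because $X_0$ has canonical singularities, $\mu^* K_{\cX/\bD} = K_{\widetilde{\cX}/\bD} - \sum a_i E_i$ with all $a_i \ge 0$, so $g_t$ is locally bounded above by $\prod |s_i|^{2 a_i}$ times a smooth function, yielding a uniform $L^\infty$, hence $L^p$, bound in $t$. Hypothesis (H1), the uniform Skoda-Zeriahi integrability for $\tom_t$-psh functions, is provided by Theorem~\ref{H1} applied to the resolved K\"ahler family $\widetilde{\cX} \to \bD$. Feeding both into Theorem~\ref{thmA} and normalizing by $\sup_{\tX_t} \tf_t = 0$ yields $-M_K \le \tf_t \le 0$ on $\tX_t$, so $\mathrm{osc}_{\tX_t} \tf_t \le M_K$; since $\mu_t$ is birational the oscillations agree and $\mathrm{osc}_{X_t} \vp_t \le M_K$ follows, independently of the chosen normalization.

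The hard part is the uniform verification of (H2): producing a single log resolution $\mu : \widetilde{\cX} \to \cX$ whose restrictions to nearby fibers remain log resolutions with discrepancies controlled uniformly in $t$, so that the canonical densities $g_t$ enjoy uniform upper bounds. This is exactly what Assumption~\ref{aspt} and the shrinking of $\bD$ are designed to deliver, via a local analysis around $X_0$ in adapted coordinates in the spirit of \cite[section 6]{EGZ}. Once this is secured, the rest of the proof reduces to bookkeeping with the constants furnished by Theorem~\ref{thmA}.
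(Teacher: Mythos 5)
Your overall strategy --- recast the fibrewise Ricci-flat equation as a Monge--Amp\`ere equation, pass to a modification where the fibres are smooth, and feed uniform (H1) and (H2) into Theorem~\ref{thm:uniform1} --- is indeed the paper's strategy. But the way you distribute the difficulty between (H1) and (H2) contains a genuine gap. You take as reference measure $\nu$ a \emph{smooth} volume form $dV$ on the resolution $\widetilde{\cX}$, so that (H2) becomes easy (the density $g_t$ is bounded because all discrepancies of $\cX$ are $\ge 0$), and you then claim that (H1) ``is provided by Theorem~\ref{H1} applied to the resolved K\"ahler family $\widetilde{\cX}\to\bD$''. This is the step that fails: the central fibre of a log resolution of $(\cX,X_0)$ is in general reducible (strict transform plus exceptional components) and non-reduced, so the resolved family satisfies neither Setting~\ref{set} nor Assumption~\ref{aspt}, and the uniform sup--$L^1$ comparison of Proposition~\ref{mainprop} --- hence Theorem~\ref{H1} --- is simply not available for it; Example~\ref{exa:irred} shows that irreducibility of every fibre is not a technicality here. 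Nor can you transfer (H1) from downstairs: the Skoda inequality for $\om_t^n$ on $X_t$ does not imply it for $dV$ on $\widetilde{X}_t$, because $dV$ is not dominated by $\mu_t^*\om_t^n$ near the exceptional set.

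The paper resolves this by keeping $\nu=\om_t^n$ (normalized), for which (H1) holds by Theorem~\ref{H1} applied to the \emph{original} family, whose fibres are irreducible and for which Assumption~\ref{aspt} is hypothesized. The price is that (H2) is no longer an $L^\infty$ bound: the canonical density $e^{-\gamma}$ against $\om_t^n$ blows up along $X_{t,\mathrm{sing}}$, and the content of Lemma~\ref{canon} is precisely a \emph{uniform} $L^p$ bound for some $p>1$. Its proof is not a formal discrepancy count on an arbitrary log resolution: one needs the semi-stable model of \eqref{semistable} (obtained via KKMS after a finite base change $t\mapsto t^k$), the explicit fibrewise expression \eqref{fibre} in adapted coordinates, and Claim~\ref{canonical} to see that the numerator $h$ there is holomorphic when $X_0$ is canonical. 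Relatedly, you attribute the uniformity in (H2) to Assumption~\ref{aspt}; that assumption plays no role in (H2) --- it is exactly what makes (H1) work. Once (H1) and (H2) are verified in this form, and the normalizing constants $c_t$ are bounded (again by Claim~\ref{canonical}), Theorem~\ref{thm:uniform1} applied on the modified fibres gives the oscillation bound as you indicate.
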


In the case of a projective smoothing (i.e. when $\cX$ admits a $\pi$-ample line bundle and $X_t$ is smooth for $t\neq 0$), the result above has been obtained previously by Rong-Zhang \cite{RZ} by using Moser iteration process.

  \subsection*{Log Calabi-Yau families}

 Let $X$ be 
% an $n$-dimensional
a  compact Kähler manifold and let $B=\sum b_iB_i$ be an effective $\mathbb R$-divisor such that the pair 
 $(X,B)$ has klt singularities and $c_1(K_X+B)=0.$
 
 It follows from \cite{Yau78,EGZ,BEGZ} that one can find a unique Ricci flat metric  in each K\"ahler  class $\a_t$.
 A basic problem is to understand the asymptotic behavior of these   metrics as $\a_t$ approaches the boundary of the 
 K\"ahler cone.
 Despite  motivations coming from mirror symmetry, not much 
is known when the norm of $\a_t$ converges to $+\infty$
(this case is expected to be the mirror of a large complex structure limit,
see \cite{KS01}). We thus only consider the case
when $\a_t \rightarrow \a_0 \in \partial {\mathcal K}_X$.

The non-collapsing case ($\vol(\a_0)>0$) can be  easily understood by using  Theorem~\ref{thmA} (see Theorem \ref{thm:non-collapsing}).
We describe here a particular instance of the more delicate collapsing case
 $\vol(\a_0)=0$.
Let $f:X\to Z$ be a surjective holomorphic map with connected fibers, where $Z$ is a normal Kähler space. 
 Let $\omega_X$ (resp. $\om_Z$) be a Kähler form on $X$ (resp. $Z$).
 Set $\omega_t:=f^*\om_Z+t \omega_X$.
There exists a unique singular Ricci-flat current $\omf:=\omega_t+dd^c \f_t$  in $\{f^*\om_Z+t\om_X\}$ for $t>0$,
where $\int_{X}\f_t\om_X^n=0$.  It satisfies
$$
 \omf^n=V_t\cdot \mu_{(X,B)},
\; \; \text{ where } \; \; 
\mu_{(X,B)}= (s\wedge \bar s)^{\frac 1m} e^{-\phi_B}.
$$
 Here, $s\in H^0(X,m(K_X+B))$ is any non-zero section (for some $m\ge 1$) and $\phi_B$ is the unique singular psh weight on $\mathcal O_X(B)$ solving $dd^c \phi_B=[B]$ and normalized by 
 $$
 \int_X (s\wedge \bar s)^{\frac 1m} e^{-\phi_B}=1.
 $$ 
 
 The probability measure $f_*\mu_{(X,B)}$ has $L^{1+\ep}$-density with respect to $\om_Z^m$ thanks to \cite[Lem.~2.3]{EGZ16}. 
 It follows therefore from \cite{EGZ} that there exists a unique current $\om_{\infty}\in \{\om_Z\}$ solution of the Monge-Ampère equation 
$$
\om_{\infty}^m= f_*\mu_{(X,B)}.
$$
 In the case where $X$ is smooth, $B=0$ and $c_1(X)=0$, the Ricci curvature of   $\om_{\infty}$
coincides with the Weil-Petersson form of the fibration $f$ of Calabi-Yau manifolds.

Understanding the asymptotic behavior of the $\omf$'s as $t \rightarrow 0$ is an important problem with
 a long history, we refer the reader to the thorough survey \cite{TosSurvey}
for references. We prove here the following:

 \bigskip

\begin{bigthm}
\label{thmD}
Let $(X,B)$ be a log smooth klt pair such that $c_1(K_X+B)=0$ and such that $X$ admits a fibration $f:X\to Z$. 
 With the notations above, the conic Ricci-flat metrics $\omf \in \{f^*\om_Z+t\om_X\}$ converge to $f^*\om_{\infty}$ as currents on $X$ when $t$ goes to $0$. 
\end{bigthm}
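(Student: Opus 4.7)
The plan is to combine the uniform $L^\infty$ estimate provided by Theorem~\ref{thmA} with a pluripotential-theoretic fiber-collapsing analysis, in three steps.

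\emph{Uniform estimate.} I would rewrite the Monge--Amp\`ere equation in probability form as $V_t^{-1}\omf^n=\mu_{(X,B)}$ and verify the hypotheses of Theorem~\ref{thmA} uniformly in $t\in(0,1]$. Since $(X,B)$ is log smooth and klt, $\mu_{(X,B)}$ has density in $L^{1+\ep}$ with respect to a fixed smooth volume form for some $\ep>0$, independently of $t$; this gives (H2). For (H1), observe that $\omt=f^*\om_Z+t\om_X\le C\om_X$ for $t$ small, hence $\PSH(X,\omt)\subset\PSH(X,C\om_X)$, and the Skoda--Zeriahi integrability estimate applied to $C\om_X$ yields a constant independent of $t$. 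Theorem~\ref{thmA} then furnishes a uniform $L^\infty$ bound on $\vpt-\sup_X\vpt$; combined with the normalization $\int_X\vpt\,\om_X^n=0$, this yields $\|\vpt\|_\infty\le C$ uniformly in $t$.

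\emph{Subsequential limit and descent to $Z$.} By weak compactness of $C\om_X$-psh functions uniformly bounded in $L^\infty$, along any sequence $t_k\to 0$ one can extract a subsequence with $\vp_{t_k}\to\psi$ in $L^1(X)$, where $\psi$ is bounded and $f^*\om_Z$-psh, and $\omega_{\vp_{t_k}}\to f^*\om_Z+dd^c\psi$ weakly as currents. Since $\{f^*\om_Z\}^{m+1}=0$ in cohomology, the Bedford--Taylor current $(f^*\om_Z+dd^c\psi)^{m+1}$ is non-negative with vanishing total mass, hence identically zero. A standard collapsing-type argument, in the spirit of the references in~\cite{TosSurvey}, then shows that $\psi$ is constant along the smooth fibers of $f$, so descends to a bounded $\om_Z$-psh function $\rho$ on $Z$ with $\psi=f^*\rho$ (extension across the discriminant via bounded extension of psh functions).

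\emph{Identification of $\rho$ and conclusion.} To conclude, it is enough to show that $\rho$ equals the potential $\vpi$ of $\om_{\infty}$; uniqueness then forces every subsequential limit to coincide with $f^*\om_{\infty}$, giving weak convergence of the entire family. I would expand
\[
\omf^n=\sum_{k=0}^{n}\binom{n}{k}\,t^k\,(f^*\om_Z+dd^c\vpt)^{n-k}\wedge\om_X^k
\]
via Bedford--Taylor (legitimate because $\om_X$ is smooth). The vanishing of $\{f^*\om_Z\}^{m+1}$ forces the leading term to be of order $t^{n-m}$, contributed by $k=n-m$; dividing $\omf^n=V_t\mu_{(X,B)}$ by this power of $t$, pushing forward to $Z$ via the projection formula $f_*(f^*\eta\wedge\om_X^{n-m})=\eta\cdot f_*\om_X^{n-m}$, and letting $t\to 0$ using the uniform $L^\infty$ bound, one recovers on $Z$ the Monge--Amp\`ere equation characterizing $\om_{\infty}$ from~\cite{EGZ}. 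Uniqueness then yields $\rho=\vpi$. The main obstacle is this last step: justifying the interchange of weak limits with the mixed Bedford--Taylor Monge--Amp\`ere operators across the discriminant of $f$, which is precisely what the uniform $L^\infty$ bound from Theorem~\ref{thmA} makes rigorous.
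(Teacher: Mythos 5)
Your overall skeleton (uniform $L^\infty$ bound, extraction of a subsequential limit that descends to $Z$, identification of the limit via an expansion of $\omf^n$ and push-forward) matches the paper's Theorem~\ref{WP2}, and your first two steps are essentially sound \--- though the descent to $Z$ is simpler than you make it: $\psi$ is $f^*\om_Z$-psh and $f^*\om_Z$ restricts to zero on each compact connected fiber, so $\psi$ is psh hence constant on fibers; no detour through $(f^*\om_Z+dd^c\psi)^{m+1}$ is needed.

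The genuine gap is in your third step, and it is exactly where you wave your hands. A uniform $L^\infty$ bound on $\vpt$ together with $L^1$ convergence is \emph{not} enough to pass to the limit in the mixed Bedford--Taylor products $(f^*\om_Z+dd^c\vpt)^{n-k}\wedge\om_X^{k}$: these currents are signed (since $f^*\om_Z+dd^c\vpt$ is not positive), and Monge--Amp\`ere operators are not continuous under weak $L^1$ convergence of uniformly bounded potentials. The paper circumvents this by a different decomposition, $\omf=f^*(\om_Z+dd^c\,\uf)+(t\om_X+dd^c\psi_t)$ with $\uf(z)=\int_{X_z}\vpt\,\om_X^{k}$ and $\psi_t=\vpt-\uf$, and the identification of the limit rests on three quantitative inputs that are absent from your proposal: (i) the fiberwise oscillation bound $|\vpt-\uf|\le g\,t$, obtained by applying the family uniform estimate (Theorems~\ref{H1} and~\ref{thm:uniform1}) to the \emph{restricted} Monge--Amp\`ere equations on the fibers $X_z$, $z\in Z^\circ$ \--- not to the global equation; (ii) the two-sided bound $\pm\, dd^c\,\uf\le (f_*g)\,\om_Z$ giving $C^{1,\alpha}_{\rm loc}(Z^\circ)$ convergence of $\uf$, which is what legitimizes the Bedford--Taylor limit of $(\om_Z+dd^c\,\uf)^m$ against test functions supported in $Z^\circ$; and (iii) second-order estimates $g^{-1}t\,\om_B\le\omf\le g\,\om_B$ and $\omf|_{X_z}\le g\,t\,\om_{B_z}$ against a conical reference metric $\om_B$ (via Chern--Lu and Siu's inequality), which are needed to show that each error term $t^{j}\,f^*(\om_Z+dd^c\,\uf)^{i}\wedge\om_X^{j}\wedge(dd^c\psi_t)^{n-i-j}$ with $i<m$ is $o(V_t)$ after integration by parts. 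Without (i)--(iii) the "leading term" claim cannot be justified, so the proof as proposed does not close.
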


\bigskip

When $B=0$ is empty, it has been shown in \cite{Tos10,GTZ13,TWY18,HT18} that he metrics $\omf$ converge to $f^*\om_{\infty}$
in the ${\mathcal C}^\alpha$-sense on compact subsets of $X \setminus S_X$ for some $\alpha>0$, where  $S_X=f^{-1}(S_Z)$ and
 $S_Z$ denotes the smallest proper analytic subset $\Sigma \subset Z$ such that  $\Sigma$ contains the singular locus $Z_{\rm sing}$ of $Z$ and the map $f$ is smooth on $f^{-1}(Z\setminus \Sigma)$.
  
The proof of Theorem~\ref{thmD} follows the  strategy developed by the above papers with several twists that notably
require the extensive use of Theorem~\ref{thmA} and conical metrics.

%  \subsection*{Organization of the paper.} 

  \subsection*{Acknowledgements.}    
 We thank S.Boucksom, M.P\u{a}un, J.Song and A.Zeriahi for several interesting discussions. We are grateful to the anonymous referees for a remarkably careful reading, for suggesting many improvements as well as for pointing out a gap in ~\S3 of an earlier version. Finally, we thank Rui Tang for pointing out an error in the published version.
 The authors are partially supported by the ANR project GRACK.

 %\vfill
% \pagebreak[4]

\section{Chasing the constants}

Our goal in this section is to establish the following a priori estimate
which is a refinement of the main result of Kolodziej \cite{Kolo} (see also \cite{ EGZ,EGZ08,DemPal}):

\begin{thm} \label{thm:uniform1}
Let $(X,\omega_X)$ be a compact K\"ahler manifold of complex dimension $n \in \N^*$
and let $\omega$ be a semi-positive form which is big, i.e. such that
$$
V:={\rm{Vol}}_\omega(X)=\int_X \omega^n >0.
$$
Let $\nu$ and $\mu=f \, \nu$ be probability measures, with
$0 \le f \in L^p(\nu)$ for some $p>1$.
Assume the following two assumptions are satisfied:
\begin{enumerate}
\item[(H1)] there exists $\a>0$ and $A_\a>0$ such that for all $\p \in \PSH(X,\omega)$,
$$
\int_X e^{-\a (\p-\sup_X \p) } d\nu \le A_\a;
$$
\item[(H2)] there exists $C>0$ such that $\left( \int_X |f|^p \, d\nu \right)^{1/p} \le C$.
\end{enumerate}

Let $\f$ be the unique $\omega$-psh solution $\f$ to the complex Monge-Amp\`ere equation
$$
{V}^{-1} (\omega+dd^c \f)^n =\mu,
$$
normalized by $\sup_X \f=0$. Then $-M \le \f \le 0$ where
$$
M=1+ C^{1/n}A_\a^{1/nq} \, e^{\a/nq} b_n \left[5+e\a^{-1} C (q !)^{1/q} A_{\a}^{1/q}  \right] ,
$$
$1/p+1/q=1$ and $b_n$ is a constant such that $\exp(-1/x) \le b_n^n x^{2n}$ for all $x>0$.
\end{thm}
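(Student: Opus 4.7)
The plan is to follow Kolodziej's scheme, with meticulous tracking of constants. The upper bound $\vp \le 0$ follows from the normalization $\sup_X\vp = 0$, so the entire argument is aimed at producing the lower bound $\vp \ge -M$.

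First, I would exploit (H1) to extract both exponential tail estimates and polynomial moment bounds for $-\vp$. Chebyshev's inequality applied to $e^{-\a\vp}$ yields $\nu(\{\vp<-s\}) \le A_\a e^{-\a s}$ for all $s\ge 0$, and integrating the Taylor expansion of $e^{-\a\vp}$ term by term produces the moment bounds $\int_X (-\vp)^k\,d\nu \le A_\a\,k!/\a^k$; this is the origin of the combinatorial factor $(q!)^{1/q}$ appearing in $M$. Coupling with (H2) via H\"older yields
\[
\mu(\{\vp<-s\}) \;=\; \int_{\{\vp<-s\}} f\,d\nu \;\le\; C\,\nu(\{\vp<-s\})^{1/q} \;\le\; C A_\a^{1/q} e^{-\a s/q}.
\]

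Next, the core analytical step is Kolodziej's capacity estimate, obtained by a comparison-principle argument between $\vp$ and $\max(\vp,\psi-s-t)$ for $\om$-psh test functions $\psi$ with $-1\le\psi\le 0$: for $s>0$ and $0<t\le 1$,
\[
t^n V \cdot \ca_\om\!\bigl(\{\vp<-s-t\}\bigr) \;\le\; \int_{\{\vp<-s\}} (\om+dd^c\vp)^n \;=\; V\,\mu(\{\vp<-s\}).
\]
To convert a capacity bound back into a pointwise bound on $\vp$ one must compare capacity with $\nu$-measure. For a Borel set $K$ with $\ca_\om(K)>0$, consider its global extremal $\om$-psh function $u_K^*$; its supremum $T_K:=\sup_X u_K^*$ is bounded below in terms of $\ca_\om(K)^{-1/n}$ by the extremal-function estimate of \cite{GZ05}, while applying (H1) to $u_K^*$ gives $\nu(K) \le A_\a e^{-\a T_K}$. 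Here Remark~\ref{no charge pp} is crucial: $\nu$ does not charge the pluripolar exceptional set on which $u_K^*$ may fail to be $\le 0$ on $K$.

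Combining the three estimates, one obtains for $g(s):=\nu(\{\vp<-s\})$ a self-improving recursive inequality of De Giorgi--Kolodziej type: a bound at level $s$ controls $\ca_\om(\{\vp<-s-t\})$ through $t^{-n}$, which in turn controls $g(s+2t)$ through a super-exponentially decaying function. Iterating along a carefully chosen geometric sequence of thresholds forces $g(s_\infty)=0$ at an explicit level $s_\infty$, and the upper semicontinuity of $\vp$ then yields $\vp\ge -s_\infty$ pointwise. The main technical difficulty---the ``chase'' of the title---will be to arrange the iteration so that $s_\infty$ comes out with exactly the multiplicative form of $M$ stated: the dimensional constant $b_n$ will enter precisely when the super-exponential bound $\nu(K)\le A_\a\exp(-\a/\ca_\om(K)^{1/n})$ is replaced by the polynomial surrogate afforded by $e^{-1/x}\le b_n^n x^{2n}$, which is what makes the iteration converge to an explicit closed-form value rather than to an abstract finite threshold.
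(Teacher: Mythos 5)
Your proposal is correct and follows essentially the same route as the paper: the chain $\mu(K)\le C\nu(K)^{1/q}$, $\nu(K)\le A_\a T_\omega(K)^\a$ via the extremal function, the comparison of $T_\omega$ with $\ca_\omega$, Lemma~\ref{lem:capma}, the polynomial surrogate $e^{-1/x}\le b_n^n x^{2n}$ yielding $\mu(K)\le D\,\ca_\omega(K)^2$, and a De Giorgi-type iteration whose starting threshold is controlled by the moment bound responsible for $(q!)^{1/q}$. The only cosmetic difference is that you iterate on $\nu(\{\f<-s\})$ where the paper iterates directly on $-\frac1n\log\ca_\omega(\{\f<-s\})$; these are equivalent.
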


Here $d=\partial+\overline{\partial}$ and $d^c=\frac{i}{2} (\partial-\overline{\partial})$
so that $dd^c =i\partial\overline{\partial}$. Recall that a function $\f:X \rightarrow \R \cup \{-\infty\}$
is $\omega$-plurisubharmonic ($\omega$-psh for short) if it is locally given as the sum of a 
smooth and a psh function, and such that $\omega+dd^c \f \ge 0$ in the weak sense of currents.
We let $\PSH(X,\omega)$ denote the set of all $\omega$-psh functions.

The non-pluripolar Monge-Amp\`ere measure of arbitrary $\omega$-psh functions has been defined in \cite{BEGZ}.
It follows from assumption (H1) that the measure $\mu$ does not charge pluripolar sets,
since the latter can be defined by $\omega$-psh functions (as follows easily from \cite[Thm. 7.2]{GZ05} since a big class contains a Kähler current).
The existence of a unique normalized $\omega$-psh solution to
${V}^{-1} (\omega+dd^c \f)^n =\mu$ follows from \cite[Theorem A]{BEGZ}
(the case of K\"ahler forms had been earlier treated in \cite{GZ07,Din09}).

\medskip

We will use this result to obtain uniform a priori estimates on normalized solutions $\f_t$ to families of complex Monge-Amp\`ere  equations
$$
{V_t}^{-1} (\omega_t+dd^c \f_t)^n =\mu_t,
$$
when the hypotheses (H1,H2) are satisfied, i.e. the constants $1/\a_t, A_{\alpha_t},q_t,C_t$ in the theorem are actually bounded from above by
 uniform constants $1/\a, A,q,C$ independent of $t$.
Here $q$ denotes the conjugate exponent of $p>1$, $1/p+1/q=1$.
The assumption on this exponent is thus that $p>1$ stays bounded away from 1.

The reader should keep in mind that assumption (H1) is the strongest of all.
In some applications one can assume  $f \equiv 1$
hence (H2) is trivially satisfied.

\medskip

We are going to eventually obtain a version of Theorem \ref{thm:uniform1}
that applies to big cohomology classes, extending \cite[Theorem B]{BEGZ}. The proof is almost identical but 
explaining the statement requires to introduce various notions and technical  notations, so we
first treat the case of semi-positive classes and postpone 
%the big case 
this to section \ref{sec:big}.

\subsection{Preliminaries on capacities}

%{\color{blue}{For the convenience of the reader we recall here a few facts in pluripotential theory. In order to streamline the exposition, we first present these results in the semi-positive and big case (i.e. $\omega\geq 0$ and $\int_X \omega^n  >0$) even if the all theory has been established as well in the big case (as briefly mentioned in Section \ref{sec:big}). We refer the interested reader to \cite{GZ05, GZ07, GZ17} for a detailed account on the subject.}}

Let $K\subset X$ be a Borel set and consider
$$
V_{K,\omega}:=\left(\sup \{ \p \, | \, \p \in \PSH(X,\omega) \text{ and } \p \le 0
\text{ on } K \} \right)^*,
$$
where $^*$ denotes the upper semi-continuous regularization.

The Alexander-Taylor capacity is the following:
$$
T_\omega(K):=\exp\left(-\sup_X V_{K,\omega} \right).
$$
It is shown in \cite[Lem.~9.17]{GZ17} that If $K$ is pluripolar then $V_{K,\omega} \equiv +\infty$ and $T_\omega(K)=0$.
When $K$ is not pluripolar then 
\begin{itemize}
\item $0 \le V_{K.\omega} \in \PSH(X,\omega)$ and $V_{K,\omega} =0$ on $K$ off a pluripolar set;
\item the Monge-Amp\`ere measure $\MA(V_{K,\omega} )$ is concentrated on $E$.
\end{itemize}
 We denote here and in the sequel by 
$$
\MA(u)=\frac{1}{V}(\omega+dd^c u)^n
$$ 
the normalized Monge-Amp\`ere measure
of a $\omega$-psh function $u$, where
$
V=\int_X \omega^n =\{ \omega \}^n
$
is the volume  of the cohomology class $\{ \omega \}$. It is defined for any $\om$-psh function $u$, cf. e.g. \cite[\S~1.1]{GZ07}.
For a Borel set $K\subset X$, the Monge-Amp\`ere capacity is
$$
\ca_\omega(K):=\sup \left\{ \int_K \MA(u) \; ; \; u \in \PSH(X,\omega) \text{ and } 
0 \le u \le 1 \right\}.
$$
This capacity also characterizes pluripolar sets, i.e.
$$
\ca_\omega^*(P)=0
\Longleftrightarrow P
\text{ is pluripolar}.
$$
Here $\ca_\omega^* $ is the outer capacity associated to $\ca_\omega$ defined for any  set $E\subset X$ as
$$\ca_\omega^* (E):=\inf\{\ca_\omega(G)\; ;\; G \;{\rm open}, E\subset G\}.$$ Moreover, if $K\subset X$ is a compact set than $\ca_\omega^* (K)= \ca_\omega (K)$.
%This follows e.g. from the fact that these capacities are comparable to their local counterparts
%ntroduced and studied by Bedford and Taylor in \cite{BT82}.

The Monge-Amp\`ere and the Alexander-Taylor capacities compare as follows:

\begin{lem} \label{lem:comparisoncap}
$$
T_\omega(K) \le \exp\left[ 1-\frac{1}{\ca_\omega(K)^{1/n}} \right].
$$
\end{lem}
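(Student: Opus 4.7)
The plan is to build an admissible competitor for the Monge--Amp\`ere capacity out of the extremal function $V_{K,\omega}$, exploiting the two facts recalled just above the lemma: for non-pluripolar $K$, $V_{K,\omega}$ is a bounded non-negative $\omega$-psh function whose normalized Monge--Amp\`ere measure $\MA(V_{K,\omega})$ is concentrated on $K$ off a pluripolar set. The pluripolar case is immediate, since $V_{K,\omega}\equiv+\infty$ and $\ca_\omega(K)=0$ make both sides of the asserted inequality vanish. From now on I assume $K$ non-pluripolar and set $M:=\sup_X V_{K,\omega}\in(0,+\infty)$, so that $T_\omega(K)=e^{-M}$.

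The main case is $M\ge 1$. I would take $\phi:=V_{K,\omega}/M$ as test function: it remains $\omega$-psh because $\omega-\omega/M\ge 0$, and it satisfies $0\le\phi\le 1$, so it is admissible in the definition of $\ca_\omega(K)$. Decomposing
$$
\omega+dd^c\phi=\Bigl(1-\tfrac{1}{M}\Bigr)\omega+\tfrac{1}{M}(\omega+dd^c V_{K,\omega})
$$
as a sum of two closed positive $(1,1)$-currents with bounded potentials, expanding the $n$-th wedge product multinomially and discarding all mixed terms (which are non-negative by positivity), yields the pointwise inequality of Borel measures
$$
(\omega+dd^c\phi)^n\;\ge\;M^{-n}(\omega+dd^c V_{K,\omega})^n.
$$
Since $V_{K,\omega}$ is bounded, $\MA(V_{K,\omega})$ is a probability measure concentrated on $K$ off a pluripolar set, so $\int_K \MA(V_{K,\omega})=1$. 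Integrating the previous inequality (divided by $V$) over $K$ therefore gives $\ca_\omega(K)\ge M^{-n}$, i.e.\ $M\ge\ca_\omega(K)^{-1/n}$; using the trivial bound $e^{-x}\le e^{1-x}$ for $x\ge 0$, I conclude
$$
T_\omega(K)=e^{-M}\le e^{-\ca_\omega(K)^{-1/n}}\le\exp\bigl[\,1-\ca_\omega(K)^{-1/n}\,\bigr].
$$

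The leftover case $0\le M<1$ is the reason the $+1$ appears in the statement. Here $V_{K,\omega}$ itself is already admissible, so $\ca_\omega(K)\ge\int_K\MA(V_{K,\omega})=1$; combined with the universal bound $\ca_\omega(K)\le 1$ (which holds because each $\MA(u)$ is a probability measure), this forces $\ca_\omega(K)=1$, the right-hand side collapses to $e^0=1$, and $T_\omega(K)=e^{-M}\le 1$ closes the argument. The only subtle point I anticipate is making the multinomial expansion into a rigorous pointwise inequality of Borel measures when $\omega$ is only semi-positive and $V_{K,\omega}$ is bounded rather than continuous; this relies on the standard Bedford--Taylor / non-pluripolar product formalism for bounded $\omega$-psh potentials, and should cause no real obstruction in the present semi-positive big setting.
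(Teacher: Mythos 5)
Your argument is correct and is precisely the standard proof of the cited result [GZ05, Prop.~7.1] to which the paper defers: when $M:=\sup_X V_{K,\omega}\ge 1$ one rescales the extremal function, bounds $(\omega+dd^c(V_{K,\omega}/M))^n\ge M^{-n}(\omega+dd^c V_{K,\omega})^n$ by the multinomial expansion and Bedford--Taylor positivity of the mixed terms, and integrates over $K$ using that $\MA(V_{K,\omega})$ is a probability measure concentrated on $K$ and charging no pluripolar set (the ``$E$'' in the paper's preliminaries is a typo for ``$K$''). Your handling of the cases $M<1$ and $K$ pluripolar is likewise the standard one, so nothing further is needed.
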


We refer the reader to \cite[Proposition 7.1]{GZ05} for a proof which also provides a reverse inequality that is not needed in the sequel.

\subsection{Proof of Theorem \ref{thm:uniform1}}

\subsubsection{Domination by capacity}

It follows from H\"older inequality and (H2) that
$$
\mu \le C \nu^{1/q},
$$
 where $q$ is the conjugate exponent, $1/p+1/q=1$.

Let $K \subset X$ be a non pluripolar Borel set. Recall that 
$V_{K,\omega}(x) =0$ for $\nu$-almost every point $x \in K$.
The hypothesis (H1)  therefore implies that
$$
\nu(K) \le \int_X e^{-\a \, V_{K,\omega}} \, d\nu
\le A_\a  \, T_\omega(K)^\a.
$$
Combining previous information we obtain
$$
\mu(K) \le C A_\a^{1/q} e^{\a/q} \exp \left[ 
-\frac{\a/q}{\ca_{\omega}(K)^{1/n}} \right]
\le D \, \ca_{\omega}(K)^2,
$$
where
$$
D=b_n^n C A_\a^{1/q} e^{\a/q},
$$
with $b_n$ a numerical constant such that
$\exp(-1/x) \le b_n^n x^{2n}$ for all $x>0$.

\smallskip

We now need to relate the Monge-Amp\`ere capacity of the sublevel sets of a 
$\omega$-psh function to the Monge-Amp\`ere measure of similar sublevel sets:

\begin{lem} \label{lem:capma}
Let $\f$ be a bounded $\omega$-psh function. 
For all $s>0$ and $0 < \delta<1$,
$$
\delta^n \, \ca_{\omega}\left( \{ \f<-s-\delta\} \right)
\le \MA(\f) \left( \{ \f<-s\} \right)
$$
\end{lem}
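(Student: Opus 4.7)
The plan is to run the classical Kolodziej-type comparison argument, with a suitable auxiliary candidate constructed from an admissible test function. Fix an arbitrary $u \in \PSH(X,\omega)$ with $0 \le u \le 1$; it suffices to bound $\int_{\{\f<-s-\delta\}} (\om+dd^c u)^n$ in terms of $\int_{\{\f<-s\}} (\om+dd^c\f)^n$, as then taking the supremum over such $u$ and dividing by $V$ gives the claim.

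The key auxiliary function is
$$
w := \delta u - s - \delta.
$$
This is $\om$-psh, since $\om + dd^c w = (1-\delta)\om + \delta(\om + dd^c u) \ge 0$. The choice is tuned so that $w$ interpolates cleanly between the two level sets in the statement: from $u \ge 0$ one gets $w \ge -s-\delta$, hence
$$
\{\f < -s-\delta\}\ \subset\ \{\f < w\},
$$
and from $u \le 1$ one gets $w \le -s$, hence
$$
\{\f < w\}\ \subset\ \{\f < -s\}.
$$

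Next I invoke the comparison principle of Bedford--Taylor in its $\om$-psh form (valid since $\f$ is bounded, and $w$ is bounded as well): for the bounded $\om$-psh functions $\f$ and $w$,
$$
\int_{\{\f < w\}} (\om + dd^c w)^n \ \le\ \int_{\{\f < w\}} (\om + dd^c \f)^n.
$$
Expanding $(\om + dd^c w)^n = \bigl((1-\delta)\om + \delta(\om+dd^c u)\bigr)^n$ by the multinomial formula and discarding all nonnegative mixed terms except the pure one in $\delta$, I get the pointwise inequality $(\om+dd^c w)^n \ge \delta^n (\om+dd^c u)^n$. Combining this with the two inclusions above yields
$$
\delta^n \int_{\{\f<-s-\delta\}} (\om + dd^c u)^n\ \le\ \int_{\{\f<-s\}} (\om+dd^c \f)^n,
$$
and dividing by $V$ and supremizing over $u$ concludes the proof.

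The only nontrivial step is finding the correct shape of $w$: the affine mixing $w = \delta u - s - \delta$ is precisely calibrated so that $\{\f<-s-\delta\} \subset \{\f<w\} \subset \{\f<-s\}$ simultaneously, while still picking up a factor $\delta^n$ from $(\om+dd^c w)^n$. Everything else is a direct application of the comparison principle, which is standard for bounded $\omega$-psh functions in the semi-positive big setting considered here.
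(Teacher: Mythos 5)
Your proof is correct and is precisely the standard Kolodziej-type argument that the paper invokes by reference (it cites \cite[Lemma 2.2]{EGZ} rather than writing out the proof): the auxiliary function $w=\delta u-s-\delta$, the two inclusions of sublevel sets, the comparison principle, and the multinomial lower bound $(\om+dd^c w)^n\ge\delta^n(\om+dd^c u)^n$ are exactly the steps of that proof. No gaps.
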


We refer to \cite[Lemma 2.2]{EGZ} for a proof.

\subsubsection{Bounding the solution from below}
\label{Bounded}

Under our assumptions (H1,H2), it follows from general arguments that there is
a unique bounded $\omega$-psh solution $\f$ of 
$\MA(\f)=\mu$ normalized by $\sup_X \f=0$, cf Remark~\ref{no charge pp}. 
The non-expert reader could even think that $\f$ is smooth:
the point here is to establish a uniform a priori bound from below.

 We let $f:\R^+ \rightarrow \R^+$
denote the function defined by
$$
f(s):=-\frac{1}{n} \log \ca_\omega \left( \{ \f<-s\} \right)
$$
Observe that $f$ is non decreasing and such that $f(+\infty)=+\infty$.
It follows from our previous estimates that for all
$s>0$ and $0 < \delta < 1$,
$$
f(s+\delta) \ge 2 f(s)+\log \delta-\frac{\log D}{n}.
$$
Our next lemma guarantees that such a function reaches $+\infty$ in finite time:

\begin{lem} \label{lem:giorgio}
$f(s)=+\infty$ for all $s \ge 5 D^{1/n}+s_0$, where
$$
s_0=\inf \{ s>0 \, | \, eD^{1/n} \ca_\omega \left( \{ \f<-s\} \right)<1 \}.
$$
\end{lem}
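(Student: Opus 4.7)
The plan is to iterate the nonlinear recursion extracted in the previous subsection. Combining the capacity dominance $\mu\le D\,\ca_\omega^{2}$ with Lemma~\ref{lem:capma} gives
$$
\ca_\omega\bigl(\{\vp<-s-\delta\}\bigr)\;\le\;\frac{D}{\delta^{n}}\,\ca_\omega\bigl(\{\vp<-s\}\bigr)^{2}\qquad (0<\delta<1),
$$
equivalently $f(s+\delta)\ge 2 f(s)+\log\delta-\tfrac{\log D}{n}$. Setting $h(s):=\ca_\omega(\{\vp<-s\})=e^{-n f(s)}$, this is a De~Giorgi-type contraction inequality. I would construct inductively a sequence $s_{k+1}=s_k+\delta_k$ starting from $s_0$, with the $\delta_k$ in $(0,1)$ chosen so that $h(s_k)$ decays geometrically while $\sum_k\delta_k$ stays within the allowed budget $5D^{1/n}$.

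The natural choice is the critical step $\delta_k:=2\bigl(D\,h(s_k)\bigr)^{1/n}$. Substituting back into the recursion yields the contraction $h(s_{k+1})\le 2^{-n}h(s_k)$, hence by induction $h(s_k)\le 2^{-nk}h(s_0)$, and the step sizes form a geometric series $\delta_k=2(D h(s_0))^{1/n}\,2^{-k}$. Summing gives
$$
s_\infty - s_0 \;=\;\sum_{k\ge 0}\delta_k\;=\;4\bigl(D\,h(s_0)\bigr)^{1/n}\;\le\;4D^{1/n},
$$
where the last inequality uses the universal normalization $h(s_0)\le 1$ built into the definition of $\ca_\omega$ (the Monge-Amp\`ere measures involved are probability measures). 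The open set $\{\vp<-s_\infty\}$ is contained in each $\{\vp<-s_k\}$ and hence has vanishing capacity, so it is pluripolar; being open (because $\vp$ is upper semicontinuous), it must be empty. Therefore $f(s_\infty)=+\infty$, and the monotonicity of $f$ upgrades this to $f(s)=+\infty$ for every $s\ge s_0+5D^{1/n}$.

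The main technical obstacle is the constraint $\delta_k<1$ required by Lemma~\ref{lem:capma}. Since $\delta_k=\delta_0\cdot 2^{-k}$ is decreasing geometrically, it suffices to verify $\delta_0<1$, i.e.\ $2(D h(s_0))^{1/n}<1$. Here the refined bound $h(s_0)\le (eD^{1/n})^{-1}$ built into the very definition of $s_0$ takes over: it gives $\delta_0\le 2e^{-1/n}D^{(n-1)/n^{2}}$, which is comfortably below $1$ in the regime where the estimate of Theorem~\ref{thm:uniform1} is informative. In the borderline regime where this fails, the slack between the true bound $4D^{1/n}$ and the stated $5D^{1/n}$ provides budget for a short initial phase of finitely many larger steps (each below $1$, exploiting the doubling feature to drive $h$ below the threshold), after which the clean geometric iteration launches. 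This bookkeeping is the only delicate point; the overall strategy and the final bound are unaffected.
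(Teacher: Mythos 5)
Your argument is the paper's argument: the same adaptive iteration $s_{k+1}=s_k+\delta_k$ with $\delta_k$ proportional to $(D\,h(s_k))^{1/n}$ (the paper writes it as $\delta_j=eD^{1/n}\exp(-f(s_j))$, i.e.\ $\delta_j=e(Dh(s_j))^{1/n}$, where you take the constant $2$ instead of $e$), the same geometric decay of both $h(s_k)$ and $\delta_k$, and the same summation of the resulting series against the budget $5D^{1/n}$. The core of the proposal is correct.

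The one place you deviate is in your last paragraph, and there the proposed patch does not hold up. You correctly observe that the stated definition of $s_0$ only gives $h(s_0)<(eD^{1/n})^{-1}$, hence $\delta_0\le 2e^{-1/n}D^{(n-1)/n^2}$, which exceeds $1$ for $n\ge 2$ and $D$ large. Your fallback \--- finitely many steps of size just below $1$ "exploiting the doubling feature to drive $h$ below the threshold" \--- fails: a step of size $\delta<1$ gives $h(s+\delta)\le D\delta^{-n}h(s)^2$, which is only a contraction once $h<\delta^n/D$, whereas the available bound $h(s_0)<e^{-1}D^{-1/n}$ is far above $1/D$ in exactly the regime at issue; such steps may not decrease $h$ at all, and the $O(D^{1/n})$ slack cannot compensate (the Chebyshev decay $h(s)\lesssim 1/s$ would require advancing $s$ by order $D$, not $D^{1/n}$, to reach $h<2^{-n}/D$). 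You should be aware, however, that the paper's own proof silently assumes the same thing: it declares "we fix $s_0$ large enough (as in the statement of the Lemma) so that $\delta_0<1$", even though the stated $s_0$ only guarantees $eD^{1/n}h(s_0)<1$ and not $e(Dh(s_0))^{1/n}<1$. The clean repair \--- for both you and the paper \--- is to define $s_0$ by the condition $e^nD\,\ca_\omega(\{\f<-s\})<1$ (equivalently $\delta_0<1$), which the Chebyshev estimate still furnishes at the cost of changing the explicit constant in $M$; once $\delta_0<1$ the rest of your argument is complete and correct.
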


\begin{proof}
We define a sequence $(s_j)$ of positive reals by induction as follows,
$$
 s_{j+1}=s_j+\delta_j \; \; \text{ with }  \; \; \delta_j=eD^{1/n} \exp(-f(s_j)).
$$

We fix $s_0$ large enough (as in the statement of the Lemma) so that $\delta_0<1$.
It is straightforward to check, by induction, that the sequence $(s_j)$ is increasing, while
$(\delta_j)$ is decreasing. Thus $0 < \delta_j <1$ and
$$
f(s_{j+1}) \ge f(s_j)+1 ,
\; \; \text{hence} \; \; 
f(s_j) \ge j.
$$

We infer $\delta_j \le eD^{1/n} \exp(-j)$ and 
$$
s_{\infty}=s_0+\sum_{j \ge 0} (s_{j+1}-s_j) \le s_0+\sum_{j \ge 0} eD^{1/n} \exp(-j) \le s_0+5D^{1/n}.
$$
\end{proof}

It remains to obtain a uniform bound on $s_0$.
It follows from Chebyshev inequality and Lemma \ref{lem:capma} 
(used with $\delta=1$) that for all $s>0$,
$$
\ca_\omega \left( \{ \f<-s-1\} \right) \le \frac{1}{s}   \int_X (-\f) d\mu,
$$
since $\MA(\f)=\mu$. H\"older inequality and $(H2)$ yield
$$
\int_X (-\f) d\mu \le C \left( \int_X (-\f)^q d\nu \right)^{1/q}.
$$
Observe that for all $t \ge 0$,
$$
t^q \le \frac{q !}{\a^q} \exp( \a t)
$$
and use $(H1)$ to conclude that
$$
\ca_\omega \left( \{ \f<-s-1\} \right) \le \frac{C (q !)^{1/q} A_{\a}^{1/q}}{\a s}.
$$
Thus 
$$
s_0=1+eD^{1/n} \frac{C (q !)^{1/q} A_{\a}^{1/q}}{\a}
$$ 
is a convenient choice.
This yields the desired a priori estimate and concludes the proof.
%of theorem \ref{thm:uniform1}.

\subsection{More general densities} \label{sec:logdensity}

The setting of Theorem \ref{thm:uniform1} is the most commonly used in geometric applications, as it allows e.g. to construct
K\"ahler-Einstein currents on varieties with log-terminal singularities (see section \ref{sec:calabi}).
For varieties  of general type with semi log-canonical singularities (see section \ref{sec:stable}),
one has to deal with slightly more general densities.
 The following result is a refinement of \cite[Theorem 2.5.2]{Kolo}
 and \cite[Theorem A]{EGZ}.

\begin{thm} \label{thm:uniform3}
Let $(X,\omega_X)$ be a compact K\"ahler manifold of complex dimension $n \in \N^*$
and let $\omega$ be a semi-positive form  with
$
V:={\rm{Vol}}_\omega(X)=\int_X \omega^n >0.
$
Let $\nu$ and $\mu=f \, \nu$ be probability measures, with
$0 \le f \in L^1(\nu)$.
Assume the following  assumptions are satisfied:
\begin{enumerate}
\item[(H1)] there exists $\a>0$ and $A_\a>0$ such that for all $\p \in \PSH(X,\omega)$,
$$
\int_X e^{-\a (\p-\sup_X \p) } d\nu \le A_\a;
$$
\item[(H2')] there exists $C,\e>0$ such that $ \int_X |f| | \log f |^{n+\e} \, d\nu \le C$.
\end{enumerate}

Let $\f$ be the unique $\omega$-psh solution $\f$ to the complex Monge-Amp\`ere equation
$$
{V}^{-1} (\omega+dd^c \f)^n =\mu,
$$
normalized by $\sup_X \f=0$. Then $-M \le \f \le 0$ where
$
M=M(C,\e,n,A_\a).
$
\end{thm}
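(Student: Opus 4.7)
The plan is to adapt the proof of Theorem~\ref{thm:uniform1}, keeping its three-step structure---domination of $\mu$ by capacity, Chebyshev-type initial estimate, De~Giorgi iteration---but replacing the H\"older argument (which used $f\in L^p(\nu)$) by a truncation compatible with the Orlicz-type assumption (H2').

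First I would reprove the domination of $\mu$ by $\ca_\omega$. Hypothesis (H1) together with Lemma~\ref{lem:comparisoncap} already yields, for any non-pluripolar Borel set $K\subset X$,
$$
\nu(K)\le A_\a\, e^\a \exp\bigl(-\a\,\ca_\omega(K)^{-1/n}\bigr),
$$
as in the $L^p$ case. To bound $\mu(K)=\int_K f\,d\nu$ under (H2'), I would truncate at a threshold $M>e$: since $f\le f(\log f/\log M)^{n+\e}$ on $\{f>M\}$,
$$
\mu(K)\le M\,\nu(K)+\int_{\{f>M\}}f\,d\nu\le M A_\a e^\a \exp\bigl(-\a\,\ca_\omega(K)^{-1/n}\bigr)+\frac{C}{(\log M)^{n+\e}}.
$$
Setting $\log M=\frac{\a}{2}\ca_\omega(K)^{-1/n}$, the first term becomes $A_\a e^\a\exp\bigl(-\frac{\a}{2}\ca_\omega(K)^{-1/n}\bigr)$, which is dominated by the second for $\ca_\omega(K)$ below a threshold $\kappa=\kappa(n,\e,\a,A_\a,C)$. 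Hence for such $K$,
$$
\mu(K)\le D\,\ca_\omega(K)^{1+\e/n},\qquad D=D(n,\e,\a,A_\a,C).
$$

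Applying Lemma~\ref{lem:capma} as before, this super-linear domination translates into
$$
\ca_\omega(\{\f<-s-\delta\})\le \frac{D}{\delta^n}\,\ca_\omega(\{\f<-s\})^{1+\e/n},
$$
valid whenever the right-hand sublevel has capacity at most $\kappa$. Since $1+\e/n>1$, a De~Giorgi iteration analogous to Lemma~\ref{lem:giorgio}---inductively defining $s_{j+1}=s_j+\delta_j$ with $\delta_j$ chosen so that $\ca_\omega(\{\f<-s_j\})$ decays geometrically, and checking that $\sum_j\delta_j<\infty$---shows that $\ca_\omega(\{\f<-s_\infty\})=0$ for some finite $s_\infty$. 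To launch the iteration I need $\ca_\omega(\{\f<-s_0\})\le\kappa$ for a uniform $s_0$, which follows from the Chebyshev-type bound provided $\int_X(-\f)\,d\mu$ is controlled. I would estimate this integral by splitting $X$ into $\{-\a\f\le|\log f|^{n+\e}\}$, where (H2') directly bounds the integrand by $f\,|\log f|^{n+\e}$, and its complement, where $f<\exp\bigl((-\a\f)^{1/(n+\e)}\bigr)$, so that the integrand is pointwise dominated by $(-\f)\exp\bigl((-\a\f)^{1/(n+\e)}\bigr)$, a quantity $\ll e^{-\a\f}$ for $-\f$ large and hence integrable by (H1).

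The main obstacle is precisely the replacement of H\"older by the truncation-plus-optimization in the domination step and the analogous Young-type splitting in the initial bound; both are forced by the fact that (H2') provides only Orlicz, not Lebesgue, control on $f$. Once these are handled, the iteration goes through with exponent $1+\e/n$ in place of $2$, producing a bound $M=M(n,\e,\a,A_\a,C)$ as claimed.
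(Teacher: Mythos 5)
Your argument is correct and reaches the paper's key estimate, but by a genuinely different route in the crucial domination step. The paper works in the Orlicz framework: it introduces the convex weight $\chi$ with $\chi'(t)=(\log(1+t))^{n+1}$, computes its Legendre transform $\chi^*$, reads (H2') as a bound on the Luxemburg norm $\|f\|_\chi$, and applies the H\"older--Young inequality $\mu(K)\le 2C'\|\1_K\|_{\chi^*}$ to get $\mu(K)\lesssim (-\log\nu(K))^{-(n+1)}$, whence $\mu(K)\le C''\ca_\omega(K)^{1+1/n}$ after inserting (H1) and Lemma~\ref{lem:comparisoncap}; it carries this out for $\e=1$ and indicates the general case is analogous. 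Your truncation at level $M$ followed by the optimization $\log M=\tfrac{\a}{2}\ca_\omega(K)^{-1/n}$ produces the same superlinear bound $\mu(K)\le D\,\ca_\omega(K)^{1+\e/n}$ for small capacity, with only Chebyshev-type manipulations and no duality; it also treats general $\e$ uniformly. (A minor imprecision: after optimizing, the exponential term is dominated by a \emph{constant multiple} of the polynomial term, not by the term itself, but this changes nothing.) From that point both proofs coincide: Lemma~\ref{lem:capma} plus the De~Giorgi iteration of Lemma~\ref{lem:giorgio} with exponent $1+\e/n>1$ in place of $2$. Your explicit control of $s_0$ via the splitting of $\int_X(-\f)\,d\mu$ according to whether $-\a\f\le|\log f|^{n+\e}$ --- using (H2') on one piece and the pointwise bound $(-\f)\exp((-\a\f)^{1/(n+\e)})\lesssim e^{-\a\f}$ together with (H1) on the other --- supplies a detail the paper leaves implicit in its appeal to ``reasoning as in Lemma~\ref{lem:giorgio}''; this is the Orlicz-free analogue of the inequality $t^q\le \frac{q!}{\a^q}e^{\a t}$ used in the proof of Theorem~\ref{thm:uniform1}. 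What each approach buys: the paper's Orlicz formulation plugs directly into the general machinery of \cite{BBEGZ} and makes the equivalence of (H2') with a norm bound transparent, while yours is more elementary, self-contained, and uniform in $\e$.
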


\begin{proof}
The proof follows the same lines as that of Theorem \ref{thm:uniform1}, so we only emphasize the main technical differences and focus on the case $\ep=1$.
Set, for $t \geq 0$,
$$
\chi(t)=(t+1) \sum_{j=0}^{n+1} (-1)^{n+1-j} \frac{(n+1) !}{j !}( \log (t+1))^j.
$$
Observe  that $\chi$ is a convex function such that $\chi(0)=0$ and $\chi'(t)=(\log (t+1))^{n+1}$. Its Legendre
transform is
$$
\chi^*(s)=\sup_{t>0} \{ s \cdot t -\chi(t) \}=s t(s) -\chi(t(s)),
$$
where $1+t(s)=\exp( s^{\frac{1}{n+1}})$ satisfies $s=\chi'(t(s))$, thus
$$
\chi^*(s)=P(s^{\frac{1}{n+1}}) \exp (s^{\frac{1}{n+1}})-s,
$$
where $P$ is the following polynomial of degree $n$,
$$
P(X)=\sum_{j=0}^{n} (-1)^{n-j} \frac{(n+1) !}{j !}X^j.
$$

\smallskip

We let the reader check that (H2') is equivalent to 
$
||f||_{\chi} \leq C',
$
where $||f||_{\chi}$ denotes the Luxemburg norm of $f$,
$$
||f||_{\chi}:=\inf \left\{ r>0, \; \int_X \chi( |f|/r) d\nu \leq 1 \right\}.
$$
Let $K \subset X$ be a non pluripolar Borel set.
It follows from H\"older-Young inequality \cite[Proposition 2.15]{BBEGZ} that
$$
\mu(K) \leq 2C' ||\1_{K}||_{\chi^*},
$$
where
$
||\1_{K}||_{\chi^*}=\inf \{ r>0, \; \nu(K) \chi^*( 1/r ) \leq 1 \}=r_K,
$
with
$$
\chi^*(1/r_K)=\frac{1}{\nu(K)}.
$$

We are interested in the behavior of this function as $\nu(K)$ approaches zero, i.e. for small values of $r_K$.
Observe that $\chi^*(s) \leq \exp(2 s^{\frac{1}{n+1}})$ for $s \geq 1/r_n$, hence
$$
\nu(K) \leq \delta_n \Longrightarrow 
\mu(K) \leq 2C' r_K \leq \frac{2^{n+2}C'}{(-\log \nu(K))^{n+1}}.
$$

Recall that (H1) and Lemma \ref{lem:comparisoncap} yield
$$
\nu(K) \leq A_{\a} e^{\a} \exp \left( -\frac{\a}{\ca_\omega(K)^{1/n}}\right)
$$
It follows that for $\nu(K) \leq \delta_n$,
$$
\mu(K) \leq C'' \ca_\omega(K)^{1+1/n},
$$
and we can then conclude by reasoning as in Lemma \ref{lem:giorgio}.
This completes the proof when $\e=1$. The proof for arbitrary $\e>0$ is similar, the crucial point being
the domination of $\mu$ by a multiple of $\ca_\omega^{1+\e/n}$, with an exponent 
$1+\e/n>1$.
\end{proof}

\subsection{Big cohomology classes} \label{sec:big}

 We now consider a similar situation where the reference cohomology class
 $\a$ is still big but no longer semi-positive.
 We assume for convenience that the ambient manifold $(X,\omega_X)$ is again compact K\"ahler,
 but one could equally well develop this material when $X$ belongs to the
 Fujiki class (i.e. when $X$ is merely bimeromorphic to a K\"ahler manifold).
 
 By definition $\a$ is big if it contains a {\it K\"ahler current}, i.e.
there is a 
%(singular) 
positive current $T \in \a$ and $\e>0$ such that $T \ge \e \omega_X$.
It follows from
% the regularization techniques of Demailly 
\cite{D2}
that one can further assume that $T$ has {\it analytic singularities},
i.e. it can be locally written $T=dd^c u$, with
$$
u=\frac{c}{2}\log \left[ \sum_{j=1}^s |f_j|^2 \right]+v,
$$
where $c>0$, $v$ is smooth and the $f_j$'s are holomorphic functions.

\begin{defi}
We let $\Amp(\alpha)$ denote the {\it ample locus} of $\a$, i.e. the Zariski open subset of all points
$x \in X$ for which there exists a K\"ahler current in $\a$ with analytic singularities which is smooth in
a neighborhood of $x$.
\end{defi}

It follows from the work of Boucksom \cite{DZD} that one can find a single K\"ahler current $T_0$
with analytic singularities in $\a$ such that 
$$
\Amp(\a)=X \setminus \rm{Sing} \, T_0.
$$
 
 \smallskip
 
 We fix $\theta$ a smooth closed differential $(1,1)$-form representing $\a$.
%An important difference with the previous  case is that
%$\theta$ needs not be semi-positive hence {\it constant functions are not
%$\theta$-psh in general}. A crucial difference is that {\it there is in general no bounded $\theta$-psh function}.
Following Demailly, one defines the following $\theta$-psh function with {\it minimal singularities}:
$$
V_{\theta}:=\sup \{ u \, ; \, u \in \PSH(X,\theta) \text{ and } u \leq 0\}.
$$

\begin{defi}
A $\theta$-psh function $\f$ has minimal singularities if for every other
$\theta$-psh function $u$, there exists $C \in \R$ such that
$u \le \f+C$.
\end{defi}

There are plenty of such functions, which play the role here of bounded functions
when $\a$ is semi-positive. 
Demailly's regularization result  \cite{D2}
insures that $\a$ contains many $\theta$-psh functions which are smooth in  $\Amp(\a)$.
In particular a $\theta$-psh function $\f$ with minimal singularities is locally bounded 
in $\Amp(\a)$. The Monge-Amp\`ere measure $(\theta+dd^c \f)^n$ is thus well defined in $\Amp(\a)$
in the sense of Bedford and Taylor \cite{BT82}. 

\begin{defi}
It follows from the work of Boucksom \cite{Bou02}
that 
$$
\int_{\Amp(\a)} (\theta+dd^c \f)^n=:V_{\a}>0
$$
is independent of $\f$, it is the {\it volume} of the cohomology class $\a$.
\end{defi}

One can therefore  develop a pluripotential theory   in
the Zariski open set $\Amp(\a)$. This was done in \cite{BEGZ},
where the following properties have been established:
\begin{itemize}
\item the class $\PSH(X,\theta)$ enjoys several compactness properties;
\item the operator $\MA(\f)=V_\a^{-1} (\theta+dd^c \f)^n$ is a well defined probability measure on
the set of $\theta$-psh functions with minimal singularities;
\item the extremal functions
$V_{K,\theta}=\sup \{ u \, ; \, u \in \PSH(X,\theta) \text{ and } u \le 0 \text{ on } K \}$ and
the Alexander-Taylor capacity
$
T_{\theta}(K)=\exp \left( -\sup_X V_{K,\theta} \right) 
$
enjoy similar properties as in the semi-positive case;
\item in particular it compares similarly to the Monge-Amp\`ere capacity 
$$
\ca_{\theta}(K):=\sup \left\{ \int_K \MA(u) \, ; \, u \in \PSH(X,\theta) \text{ and } 0 \le u-V_{\theta} \le 1 \right\};
$$
\item the comparison principle holds so Lemma~\ref{lem:capma} holds here as well.
\end{itemize}
 
The same proof as above therefore produces the following uniform a priori estimate,
which is a refinement of \cite[Thm.~4.1]{BEGZ}:

\begin{thm} \label{thm:uniform2}
Let $(X,\omega_X)$ be a compact K\"ahler manifold of complex dimension $n \in \N^*$.
Let $\a$ be a big cohomology class of volume $V_\a>0$ and
fix $\theta$ a smooth closed differential $(1,1)$-form representing $\a$.

Let $\nu$ and $\mu=f \, \nu$ be probability measures, with
$0 \le f \in L^p(\nu)$ for some $p>1$.
Assume the following assumptions are satisfied:
\begin{enumerate}
\item[(H1)] $\exists \a>0,A_\a>0$ such that $\forall \p \in \PSH(X,\theta)$,
$
\int_X e^{-\a (\p-\sup_X \p) } d\nu \le A_\a;
$
\item[(H2)] there exists $C>0$ such that $\left( \int_X |f|^p \, d\nu \right)^{1/p} \le C$.
\end{enumerate}
Let $\f$ be the unique $\theta$-psh function with minimal singularities such that
$$
{V_\a}^{-1} (\theta+dd^c \f)^n =\mu,
$$
and $\sup_X \f=0$. Then $-M \le \f -V_{\theta} \le 0$ where
$$
M=1+ C^{1/n}A_\a^{1/nq} \, e^{\a/nq} b_n \left[5+e\a^{-1} C (q !)^{1/q} A_{\a}^{1/q}  \right] ,
$$
where $b_n$ is a uniform constant such that $\exp(-1/x) \le b_n^n x^{2n}$ for all $x>0$.
\end{thm}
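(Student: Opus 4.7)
The plan is to transplant the proof of Theorem~\ref{thm:uniform1} to the big-class setting almost verbatim, substituting the auxiliary function $u := \vp - V_\theta$ for $\vp$ throughout. Since both $\vp$ and $V_\theta$ are $\theta$-psh with minimal singularities, their difference $u$ is locally bounded on $\Amp(\a)$ and thus plays the role that the bounded solution played in the K\"ahler case. The pluripotential-theoretic dictionary collected in the bullet list preceding the statement, drawn from \cite{BEGZ}, is exactly what legitimizes this substitution: both capacities $\ca_\theta$ and $T_\theta$, their comparison in Lemma~\ref{lem:comparisoncap}, the capacity--Monge-Amp\`ere sublevel inequality of Lemma~\ref{lem:capma}, the comparison principle, and the existence of a unique normalized solution $\vp$ are all available in this generality.

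The upper bound $\vp - V_\theta \le 0$ is essentially tautological: $\vp$ is $\theta$-psh with $\sup_X \vp = 0$, hence $\vp \le 0$ globally, and thus $\vp \le V_\theta$ by definition of the envelope. For the lower bound I would first establish, for every non-pluripolar Borel set $K \subset X$, the domination
$$
\mu(K) \le D \, \ca_\theta(K)^2, \qquad D := b_n^n\, C\, A_\a^{1/q}\, e^{\a/q}.
$$
The chain of inequalities duplicates Section~1.2.1: H\"older combined with (H2) gives $\mu \le C \nu^{1/q}$; applying (H1) to $\psi = V_{K,\theta}$ and using $\sup_X V_{K,\theta} = -\log T_\theta(K)$ yields $\int_X e^{-\a V_{K,\theta}} d\nu \le A_\a T_\theta(K)^\a$, and since $V_{K,\theta} \le 0$ on $K$ (as a supremum of functions $\le 0$ on $K$) this bounds $\nu(K)$ from above; Lemma~\ref{lem:comparisoncap} then converts the Alexander-Taylor exponential into $b_n^n \ca_\theta(K)^2$.

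The big-class version of Lemma~\ref{lem:capma}, applied to $u$, delivers $\delta^n \ca_\theta(\{u < -s - \delta\}) \le \mu(\{u < -s\})$ for every $s > 0$ and $\delta \in (0,1)$. Writing $f(s) := -n^{-1} \log \ca_\theta(\{u < -s\})$, these two inputs combine into the functional inequality $f(s+\delta) \ge 2 f(s) + \log \delta - n^{-1} \log D$, and Lemma~\ref{lem:giorgio} then forces $f(s) = +\infty$---equivalently $u \ge -s$---for all $s \ge 5 D^{1/n} + s_0$. A bootstrap of $s_0$ via Chebyshev, H\"older with (H2), the elementary estimate $t^q \le q!\,\a^{-q} e^{\a t}$, and (H1) reproduces verbatim the bound of Section~1.2.2 and yields the announced value of $M$. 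The main potential obstacle is thus not analytic but foundational: one has to verify that all the pluripotential tools used in the K\"ahler proof remain valid for minimal-singularity $\theta$-psh functions in a big class, which is precisely what \cite{BEGZ} provides.
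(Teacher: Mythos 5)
Your proposal is correct and is essentially the paper's own argument: the paper proves Theorem~\ref{thm:uniform2} by exactly this transplantation, first recalling from \cite{BEGZ} that the capacities, the comparison Lemma~\ref{lem:comparisoncap}, Lemma~\ref{lem:capma} (via the comparison principle) and the existence of the normalized minimal-singularity solution all persist in the big setting, and then rerunning the proof of Theorem~\ref{thm:uniform1} with $\f-V_\theta$ in place of $\f$. The constant $M$ you obtain matches the one in the statement.
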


 \begin{rem}
We also have an analogue of Theorem \ref{thm:uniform3} in the big setting. 
%More precisely, the above statement still holds if we replace hypothesis (H2) by
% \begin{enumerate}
% \item[(H2')]: there exists $C,\e>0$ such that $ \int_X |f| | \log f |^{n+\e} \, d\nu \le C$.
% \end{enumerate}
 \end{rem}

 % \section{Pluripotential theory in families}

 \section{Uniform integrability}

  We wish to apply the previous uniform estimates 
  %(Theorems \ref{thm:uniform1} and \ref{thm:uniform2})
when the complex structure of the underlying manifold is moving.  
%This requires to develop the first steps of a pluripotential theory in families.
In this section we pay a special attention to assumption (H1), by generalizing
an integrability result of Skoda-Zeriahi \cite{Sko72,Zer01}.

\subsection{Notations}

In all what follows, given a positive real number $r$, we denote by $\mathbb D_r:=\{z\in \mathbb C; |z|<r\}$ the open disk of radius $r$ in the complex plane. If $r=1$, we simply write $\mathbb D$ for $\mathbb D_1$. 

\begin{set}
\label{set}
Let $\mathcal{X}$ be an irreducible and reduced complex K\"ahler space. We let $\pi:{\mathcal X} \rightarrow \mathbb D$ 
%\marginpar{smooth?} 
denote a proper, surjective holomorphic map such that each fiber $X_t=\pi^{-1}(t)$ is a $n$-dimensional, reduced, irreducible, compact K\"ahler space, for any $t\in \mathbb D$. 
\end{set}
%is a smooth $n$-dimensional K\"ahler manifold and $X_0$ is irreducible.
For later purposes, we pick a covering $\{U_\alpha\}_\alpha$  of $\mathcal{X}$ by open sets admitting an embedding $j_\alpha:U_\alpha \hookrightarrow \mathbb{C}^{N}$ for some $N\ge n+1$. Moreover, we fix a Kähler form $\omega$ on $\mathcal X$. Up to refining the covering, the datum of $\omega$ is equivalent to the datum of Kähler metrics on open neighborhoods of $j_{\alpha}(U_{\alpha}) \subset \mathbb C^N$ that agree on each intersection $U_{\alpha}^{\rm reg} \cap U_{\beta}^{\reg}$. Equivalently, $\om$ is a genuine Kähler metric on $\mathcal X_{\rm reg}$ such that $(j_{\alpha})_*(\omega|_{U_{\alpha}^{\rm reg}})$ is the restriction of a Kähler metric defined on a an open neighborhood of  $j_{\alpha}(U_{\alpha}) \subset \mathbb C^N$. 

Let us point out that this definition of a Kähler metric on a singular space $X$ is much more restrictive than merely asking for a Kähler metric on $X_{\rm reg}$ (even say, by requiring that the latter has local potentials near $X_{\rm sing}$, and that those are bounded). One important property that Kähler metrics satisfy is that their pull back under a modification is a smooth form (i.e. locally the restriction of a smooth form under a local embedding in $\mathbb C^N$); in particular, it is dominated by a Kähler form. 

For each $t\in \mathbb D$, we set
$$
\omega_t:=\omega_{|X_t}.
$$

\noindent
An easy yet important observation is the following.

\begin{lem}
\label{constant}
In the Setting~\ref{set} and using the notation above, the quantity $\int_{X_t} \om_t^n$ is independent of $t\in \mathbb D$. We will denote it by $V$ in the following. 
\end{lem}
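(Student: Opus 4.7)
The strategy is to establish two complementary properties of $V(t):=\int_{X_t}\om_t^n$: (i) $V$ is locally constant on a dense open subset of $\mathbb{D}$, and (ii) $V$ is continuous on all of $\mathbb{D}$. Combined with connectedness of $\mathbb{D}$, these force $V$ to be constant.

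For property (i), since $\mathcal{X}$ and the generic fiber may both be singular, I would first pass to a resolution of singularities $p:\widetilde{\mathcal{X}}\to\mathcal{X}$, so that $\widetilde\om:=p^*\om$ is a smooth closed semipositive $(1,1)$-form on the smooth Kähler manifold $\widetilde{\mathcal{X}}$, and $\widetilde\pi:=\pi\circ p:\widetilde{\mathcal{X}}\to\mathbb{D}$ is still proper. By generic smoothness, the critical values of $\widetilde\pi$ form a proper analytic, hence discrete, subset $\Sigma\subset\mathbb{D}$, and over $\mathbb{D}\setminus\Sigma$ the map $\widetilde\pi$ is a proper holomorphic submersion. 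By Ehresmann's theorem it is locally $C^\infty$-trivial; on a simply connected $U\subset\mathbb{D}\setminus\Sigma$, trivialize $\widetilde\pi^{-1}(U)\simeq\widetilde X_{t_0}\times U$ and note that the inclusions $\iota_t:\widetilde X_{t_0}\hookrightarrow\widetilde X_{t_0}\times U$, $m\mapsto(m,t)$, are all homotopic. By homotopy invariance of de Rham cohomology, the class $[\iota_t^*\widetilde\om]\in H^2(\widetilde X_{t_0},\mathbb{R})$ is independent of $t\in U$. Since on the closed manifold $\widetilde X_{t_0}$ the intersection number $\int\alpha^n$ of a closed form $\alpha$ depends only on $[\alpha]$, the function $\widetilde V(t):=\int_{\widetilde X_t}\widetilde\om^n$ is locally, hence (by connectedness of $\mathbb{D}\setminus\Sigma$) globally constant on $\mathbb{D}\setminus\Sigma$, say equal to $V_0$. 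For $t$ outside a further discrete subset $\Sigma'$ (namely those $t$ with $X_t\subset p(\mathrm{Exc}(p))\subset\mathcal{X}_{\mathrm{sing}}$), $p$ restricts to a bimeromorphism $\widetilde X_t\to X_t$, and the projection formula yields $V(t)=\widetilde V(t)=V_0$. Thus $V\equiv V_0$ on the dense open $\mathbb{D}\setminus(\Sigma\cup\Sigma')$.

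For property (ii), the continuity of $V$ on $\mathbb{D}$ will follow from the weak continuity of the family of currents $\{[X_t]\}_{t\in\mathbb{D}}$ on $\mathcal{X}$, a classical consequence of Barlet--Fujiki's theory of analytic families of cycles (applicable since $\pi$ is proper with equidimensional fibers). In each local embedding $j_\alpha:U_\alpha\hookrightarrow\mathbb{C}^N$ from the Kähler data of $\mathcal{X}$, $\om$ is the restriction of a smooth form on the ambient; testing the weakly continuous family $[X_t\cap U_\alpha]$ against this smooth extension of $\om^n$ makes $t\mapsto\int_{X_t\cap U_\alpha}\om^n$ continuous, and a partition of unity subordinate to $\{U_\alpha\}$ then gives the continuity of $V$. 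A continuous function that equals $V_0$ on a dense open subset of $\mathbb{D}$ must be identically $V_0$, which concludes the argument.

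\emph{Main obstacle.} The technical heart is property (ii), the continuity of $V$ across the exceptional values $t\in\Sigma\cup\Sigma'$, which relies on the non-trivial analytic cycle-space theory. A more elementary alternative avoids this by means of the Stokes identity
\[
\int_{\mathbb{D}}V\,dd^c\chi=\int_{\mathcal{X}}\om^n\wedge dd^c(\chi\circ\pi)=\int_{\mathcal{X}}d\bigl[\om^n\wedge d^c(\chi\circ\pi)\bigr]=0
\]
for every $\chi\in C^\infty_c(\mathbb{D})$, which uses only $d\om^n=0$ and properness of $\pi$. This shows that $V$ is harmonic as a distribution on $\mathbb{D}$, hence coincides with a smooth harmonic function; the identity principle for real-analytic functions, applied to the local constancy from (i), then yields that $V$ is globally constant without appealing to cycle-space continuity.
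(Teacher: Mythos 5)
Your proof is correct, but it takes a genuinely different and much heavier route than the paper's, which is a one-liner: the push-forward $\pi_*\om^n$ is a current of bidimension $(1,1)$ on $\bD$, i.e.\ a distribution represented by the fiber-integral function $V$, and $d\,\pi_*\om^n=\pi_*\,d\om^n=0$, so $V$ is constant on the connected base. This is essentially the ``elementary alternative'' you sketch at the end, with two differences worth noting. First, by testing only against $dd^c\chi$ you extract harmonicity rather than constancy, which forces the detour through Weyl's lemma, real-analyticity and your step (i); testing against an arbitrary compactly supported $1$-form $\eta$ gives $\int_{\bD}V\,d\eta=\pm\int_{\cX}d\bigl(\om^n\wedge\pi^*\eta\bigr)=0$, i.e.\ $dV=0$ distributionally, and constancy follows at once with no need for the resolution/Ehresmann argument of (i). Second, and more substantively, both your alternative and the paper's proof rest on identifying the distribution $\pi_*\om^n$ with the \emph{everywhere}-defined function $t\mapsto\int_{X_t}\om_t^n$: Fubini only gives this for a.e.\ $t$ (the regular values of $\pi$), and upgrading ``a.e.'' to ``every $t$'' is precisely the continuity of fiber integrals across the singular fibers --- the point your step (ii) makes explicit via Barlet--Fujiki and which the paper leaves implicit. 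So your two-step structure is not wasted: step (ii) is the real analytic content, and once it is in place step (i) can be replaced by the one-line Stokes argument; conversely, the paper's brevity is bought by suppressing exactly the continuity statement you prove.
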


\begin{proof}
The function $\mathbb D \ni t \mapsto \int_{X_t} \om_t^n$ coincides with the push-forward current $\pi_*\om^n$ of 
bidimension $(1,1)$. Its distributional differential is zero as $d$ commutes with $\pi_*$ and $\omega$ is closed. 
\end{proof}

We fix a smooth, closed differential $(1,1)$-form $\Theta$ on ${\mathcal X}$ and set $\theta_t=\Theta_{|X_t}$. Up to shrinking $\mathbb D$, one will always assume that there exists a constant $C_{\Theta}>0$
such that
\begin{equation}
\label{Ctheta}
-C_{\Theta} \omega \le \Theta \le C_{\Theta} \omega.
\end{equation}
In particular, one has the inclusion $\psh(X_t, \theta_t)\subseteq \psh(X_t, C_{\Theta}\omega_t)$.
We assume that the cohomology classes $\{\theta_t\} \in H^{1,1}(X_t,\R)$ are psef, i.e. the
sets $\PSH(X_t,\theta_t)$ are non-empty for all $t$. 
The notions of (quasi)-plurisubharmonic functions, positive currents and Monge-Amp\`ere measure are well defined on singular spaces \cite{Dem85}.

%\medskip

%As we work on relatively compact subsets of ${\mathcal X}$, we can assume that
%for all $\a,t$,
%\begin{enumerate}
%\item $-C \omega_0 \le F_{t,\a}^* \theta_t \le C \omega_0$;
%\item $\frac{1}{C} \, \omega_0 \le F_{t,\a}^* \omega_t \le C \omega_0$.
%\end{enumerate}

\subsection{Uniform integrability index}
Recall from \cite[Déf.~3]{Dem82} that if $T$ is a closed, positive current of bi-dimension $(p,p)$ on a complex space $X$ and if $x\in X$ is a closed point, then the Lelong number of $T$ at $x$ is defined as the limit
\begin{equation}
\label{lelong}
\nu(T, x):=\lim_{r\to 0} {\!}^{\downarrow} \frac{1}{ r^{2p}} \int_{\{\psi <r\}} T \wedge (dd^c \psi)^{p}
\end{equation}
where $\psi:=\sum_{i\in I} |g_i|^2$ and $(g_i)_{i\in I}$ is a (finite) system of generators of the maximal ideal $\mathfrak m_{X,x}\subset \mathcal O_{X,x}$. It is proved in \textit{loc. cit.} that the limit above is a decreasing limit, independent of the choice of the generators. Moreover, one has the formula 
\begin{equation}
\label{formula}
\nu(T, x)= \int_{\{x\}} T \wedge (dd^c \log \psi)^{p}
\end{equation} 
cf \cite[bottom of p.~45]{Dem82}. Finally, if $\varphi$ is a $\theta$-psh function on $X$ for some smooth, closed $(1,1)$-form $\theta$, then the Lelong number of $\varphi$ at a given point $x\in X$ is defined to be the quantity $\nu(\theta+dd^c\varphi, x)$.

\begin{prop} \label{pro:uniformLelong}
In the Setting~\ref{set}, let $\f_t \in \PSH(X_t,\theta_t)$ be a collection of $\theta_t$-psh functions on $X_t$.
Then
$$
\sup_{t \in  \overline{\mathbb{D}}_{1/2}}\sup_{x \in X_t } \nu(\f_t,x) <+\infty.
$$
\end{prop}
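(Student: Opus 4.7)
The strategy is to bound $\nu(\varphi_t,x)$ uniformly by the cohomological mass $\int_{X_t}(\theta_t+C_\Theta\omega_t)\wedge\omega_t^{n-1}$, which is independent of $t$ by the same push-forward argument as in Lemma~\ref{constant}. First I would replace $\theta_t+dd^c\varphi_t$ by the closed positive current $\tilde T_t:=\theta_t+C_\Theta\omega_t+dd^c\varphi_t\ge 0$, using \eqref{Ctheta}; since $\theta_t+C_\Theta\omega_t$ is smooth, $\nu(\tilde T_t,x)=\nu(\varphi_t,x)$. Next, using that $\pi^{-1}(\overline{\mathbb D}_{1/2})$ is compact in $\cX$ (properness of $\pi$), I extract a finite subcover from $\{U_\alpha\}$, together with relatively compact shrinkings $V_\alpha\Subset U_\alpha$ still covering it, and a radius $r_0>0$ such that $\overline{B(j_\alpha(x),\sqrt{r_0})}\subset j_\alpha(U_\alpha)$ for every $x\in V_\alpha$.

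For $x\in V_\alpha\cap X_t$, the coordinate differences $g_i:=(z_i-j_\alpha(x)_i)|_{X_t\cap U_\alpha}$ generate $\mathfrak m_{X_t,x}$, so $\psi_x:=\sum_i|g_i|^2$ is admissible in \eqref{lelong}. Exploiting the monotone decreasing nature of the limit there, evaluating at $r=r_0$ yields
$$
\nu(\varphi_t,x)\le\frac{1}{r_0^{2(n-1)}}\int_{X_t\cap\{\psi_x<r_0\}}\tilde T_t\wedge(dd^c\psi_x)^{n-1}.
$$
The key observation is that $dd^c\psi_x=2\omega_{\mathrm{std}}$ on $\mathbb C^N$ is the \emph{same} form for every $x$, and on the compact closure of $V_\alpha$ the given Kähler form $\omega$ is comparable to the pulled-back $\omega_{\mathrm{std}}$. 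Hence there is a constant $C_\alpha>0$, independent of $x\in V_\alpha$ and $t\in\overline{\mathbb D}_{1/2}$, such that $(dd^c\psi_x)^{n-1}|_{X_t\cap U_\alpha}\le C_\alpha\,\omega_t^{n-1}$, giving
$$
\nu(\varphi_t,x)\le\frac{C_\alpha}{r_0^{2(n-1)}}\int_{X_t}\tilde T_t\wedge\omega_t^{n-1}.
$$

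To conclude, I would show that the right-hand side is uniformly bounded. By Stokes' theorem (applied after Demailly regularization of $\varphi_t$ on the singular fiber), the $dd^c\varphi_t$ piece of $\tilde T_t$ contributes nothing to the integral, so
$\int_{X_t}\tilde T_t\wedge\omega_t^{n-1}=\int_{X_t}(\theta_t+C_\Theta\omega_t)\wedge\omega_t^{n-1}$,
which is the value at $t$ of the push-forward current $\pi_*\bigl((\Theta+C_\Theta\omega)\wedge\omega^{n-1}\bigr)$. As in the proof of Lemma~\ref{constant}, this current on $\mathbb D$ is $d$-closed and hence constant. Taking the maximum of the finitely many constants $C_\alpha/r_0^{2(n-1)}$ produces the uniform bound over $\overline{\mathbb D}_{1/2}$. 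I expect the main technical delicacy to lie in justifying the Stokes identity on singular fibers; this is handled by the strong definition of Kähler metrics on $\cX$ (restricted from ambient smooth Kähler forms in $\mathbb C^N$), which reduces the question to the usual regularization procedures for $\theta$-psh functions on analytic spaces.
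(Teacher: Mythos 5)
Your argument is correct, and it follows the same overall strategy as the paper's proof (Lelong number $\le$ local mass of the positive current against a uniformly controlled $(n-1,n-1)$-form $\le$ cohomological mass $\asymp C_\Theta V$, which is $t$-independent), but the local step is implemented differently. The paper invokes formula \eqref{formula}, $\nu(T,x)=\int_{\{x\}}T\wedge(dd^c\log\psi)^{n-1}$, and then makes the logarithmic kernel global by setting $H_x=\chi_\alpha\cdot j_\alpha^*G_{x'}$, which is $A\omega$-psh on all of $\cX$ for a uniform $A$; the bound then comes from the cohomological mass of the mixed product $(\theta_t+dd^c\f_t)\wedge(A\omega_t+dd^cH_x)^{n-1}$, a product involving a current with a log pole. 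You instead stay with the defining expression \eqref{lelong}, use the monotonicity of the Lelong ratio to evaluate at a fixed scale $r_0$ (chosen uniformly by compactness of $\pi^{-1}(\overline{\bD}_{1/2})$), and compare the \emph{smooth} form $(dd^c\psi_x)^{n-1}$ directly with $\omega_t^{n-1}$ using the strong definition of Kähler metrics on $\cX$. The payoff of your route is that the only global identity you need is $\int_{X_t}dd^c\f_t\wedge\omega_t^{n-1}=0$, i.e.\ Stokes against a smooth closed form, which is more elementary than the paper's mass computation with the singular $H_x$; the cost is the bookkeeping of the radius $r_0$ and the shrunken cover $V_\alpha\Subset U_\alpha$ (you should make precise that $\overline{B(j_\alpha(x),\sqrt{r_0})}$ is contained in the ambient open set where $j_\alpha(U_\alpha)$ is closed, so that $\{\psi_x<r_0\}\cap X_t$ is relatively compact in $U_\alpha\cap X_t$ and Demailly's monotonicity applies on $(0,r_0]$). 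These are routine details and do not affect the validity of the proof.
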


\begin{proof}
%Let ${\mathcal U}$ be a relatively compact open subset containing ${\mathcal K}$.
%Since $\f_t$ and $\f_t-\sup_{X_t \cap {\mathcal U}} \f_t$ have the same Lelong numbers, 
%we can replace ${\mathcal X}$ by ${\mathcal U}$,  and assume that 
%${\mathcal X}$ is compact with $\sup_{X_t} \f_t=0$.
%The family $(\f_t\circ F_t)$ is thus relatively compact by Proposition \ref{pro:relcpct1}.
Let $U'_{\alpha} \Subset U_{\alpha}$ be a relatively compact open subset such that the $U'_{\alpha}$ are still a covering of $\mathcal X$. Up to adding more elements to the initial covering, one can always assume that one can find such a refinement. One picks cut-off functions $\chi_{\alpha}$ such that $\chi_{\alpha}\equiv 1$ on $U'_{\alpha}$ and $\mathrm{Supp}(\chi_{\alpha}) \subset U_{\alpha}$.  
Now, let $x\in\mathcal X$; there exists $\alpha=\alpha(x)$ such that $x\in U'_\alpha$. 
Recall that we have an embedding $j_\alpha: U_\alpha \rightarrow \mathbb{C}^{N}$; 
we set $x':=j_{\alpha}(x)$ and $G_{x'}:\mathbb C^{N}\ni z\mapsto \log(\sum_{i=1}^{N}|z_i - x_i'|^2)$. One can easily check that there exists a constant $A>0$, independent of the point $x$ now ranging in the compact set $ \pi^{-1}(\overline {\mathbb D}_{1/2})$, such that the function 
$$H_x:=\chi_{\alpha} \cdotp j_{\alpha}^* G_{x'}$$
defines an $A\om$-psh function on the whole $\mathcal X$. By the formula \eqref{formula}, one has
\begin{eqnarray*}
\nu(\f_t, x) &=& \int_{\{x\}} (\theta_t+dd^c \f_t)\wedge (dd^c (j_{\alpha}^*G_{x'})|_{X_t})^{n-1}\\
& \le & \int_{U_\a'\cap X_t} (\theta_t+dd^c \f_t)\wedge (dd^c H_x)^{n-1}\\
&\le & \int_{U_\a'\cap X_t} (\theta_t+dd^c \f_t)\wedge (A\omega_t +dd^c H_x)^{n-1}\\
&\le & \int_{ X_t} (C_{\Theta}\omega_t+dd^c \f_t) \wedge (A\omega_t +dd^c H_x)^{n-1}\\
&= &  C_{\Theta} A^{n-1}\cdotp V.
\end{eqnarray*}
The conclusion follows.
\end{proof}

\noindent
 It follows from Skoda's integrability theorem \cite{Sko72} that the Lelong number $\nu(\f_t,x)$ controls the
 local integrability index $\a(\f_t,x)$ of a $\theta_t$-psh function $\f_t$, 
 $$
 \a(\f_t,x):=\sup \left\{ c>0 \, ; \, e^{-c\f_t} \in L_{\rm loc}^2(X_t,x) \right\},
 $$
via
$$
\frac{1}{\nu(\f_t,x)} \le \a(\f_t,x) \le \frac{n}{\nu(\f_t,x)}.
$$
Proposition \ref{pro:uniformLelong} thus yields:

\begin{cor} \label{cor:uniformexp}
In the Setting~\ref{set}, the following quantity
$$
\a(\Theta):=\inf \Big\{ \a(\f_t,x); \; t \in \overline{\mathbb{D}}_{1/2}, \;x \in X_t,\; \f_t \in \PSH(X_t,\theta_t) \Big\}
$$
is positive. 
\end{cor}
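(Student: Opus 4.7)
\medskip

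The plan is essentially to read off the corollary from the Skoda inequality
$$
\a(\f_t,x) \ge \frac{1}{\nu(\f_t,x)}
$$
already recalled above the statement, combined with the uniform Lelong number bound furnished by Proposition~\ref{pro:uniformLelong}. So the entire content of the corollary reduces to verifying that Skoda's integrability theorem applies uniformly across the family.

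\medskip

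More concretely, I would proceed as follows. First, apply Proposition~\ref{pro:uniformLelong} to the family of all $\theta_t$-psh functions, not just a single collection: this gives an absolute constant $M := C_\Theta A^{n-1} V$ depending only on $\om$, $\Theta$ and the chosen covering data $\{U_\alpha\}$, such that
$$
\nu(\f_t, x) \le M
$$
for every $t \in \overline{\mathbb{D}}_{1/2}$, every $x \in X_t$ and every $\f_t \in \PSH(X_t, \theta_t)$. Second, invoke Skoda's inequality to conclude that $\a(\f_t, x) \ge 1/M$ for each such triple $(t,x,\f_t)$, whence $\a(\Theta) \ge 1/M > 0$.

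\medskip

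The only point that deserves comment, and which I would treat as the main (very mild) obstacle, is that Skoda's theorem is classically stated for psh functions on open sets of $\mathbb C^N$, while here each $X_t$ is a possibly singular complex space. This is resolved locally by the embeddings $j_\alpha : U_\alpha \hookrightarrow \mathbb C^N$: restricted to $X_t \cap U_\alpha$, the function $\f_t$ extends to a quasi-psh function $\widetilde{\f}_t$ on a neighborhood of $j_\alpha(X_t \cap U_\alpha) \subset \mathbb C^N$, and both $\nu(\cdot, x)$ and $\a(\cdot, x)$ are defined through such a local embedding. Since the constant $M$ arising from Proposition~\ref{pro:uniformLelong} is the same in all charts and for all $t$, the resulting lower bound $1/M$ on $\a(\f_t, x)$ is also independent of $t$, $x$ and $\f_t$, which is exactly the statement of the corollary.
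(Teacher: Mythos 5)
Your argument is exactly the paper's: the corollary is deduced by combining the uniform Lelong number bound of Proposition~\ref{pro:uniformLelong} with Skoda's inequality $\a(\f_t,x)\ge 1/\nu(\f_t,x)$, the local-embedding remark handling the singular fibers being implicit in how $\nu$ and $\a$ are defined on complex spaces. The proposal is correct and requires no changes.
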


%Zeriahi's result  shows in particular that if 
%$K_{X_t} \subset U_t \subset X_t$ is a compact subset of $X_t$ contained in a relatively %compact
%open subset $U_t$, and if $\f_t$ is a $\theta_t$-psh function normalized by %$\sup_{X_t} \f_t=0$,
%then 
%\begin{enumerate}
%\item there exists $\a_t>0$ independent of $\f_t$ such that $\a_t<\inf_{x \in K_{X_t}} \a(\f_t,x)$;
%\item there exists $A_t>0$ independent of $\f_t$ such that $\int_{K_{X_t}} e^{-\a_t \f_t} \omega_t^n \le A_t$.
%\end{enumerate}

\subsection{Skoda's integrability theorem in families: the projective case}
Zeriahi \cite{Zer01} has established a uniform version of Skoda's integrability theorem.
We now  further generalize Zeriahi's result by establishing its  family version.

We first provide a very explicit result in the projective case which does not rely on Corollary~\ref{cor:uniformexp} unlike its general Kähler analogue that will be given later, cf. Theorem~\ref{prop:uniformint}. This should also help the reader in following the somehow tricky computations in the general K\"ahler case.

 \begin{prop} \label{prop:uniformint_proj}
 Let $V\subseteq \mathbb{P}^N$ be a projective variety of complex dimension $n$ and degree $d$. 
 Let $\omega=\omega_{\rm FS}|_V$ and $\varphi\in \psh(V,\om)$ be such that $\sup_V \varphi=0$. Then 
 $$ \int_V e^{-\frac{1}{nd}\varphi} \omega^n\le (4n)^n\cdot d \cdot \exp\left\{-\frac{1}{nd}\int_V \varphi \omega^n\right\}.$$
 \end{prop}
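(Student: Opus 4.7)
The plan is to transplant the problem to the affine cone $C = C(V) \subset \mathbb{C}^{N+1}$ of complex dimension $n+1$ over $V$, where classical pluripotential theory on $\mathbb{C}^{N+1}$ applies. First, I would introduce the \emph{homogeneous lift}
\[
\tilde\varphi(z) := \varphi(\pi(z)) + \log\|z\|, \qquad z \in C \setminus \{0\},
\]
where $\pi : \mathbb{C}^{N+1} \setminus \{0\} \to \mathbb{P}^N$ is the standard projection. Since $\pi^*\omega_{\mathrm{FS}} = dd^c \log\|z\|$, the function $\tilde\varphi$ is psh on $C$ and extends upper-semicontinuously to $0$. It is logarithmically homogeneous, $\tilde\varphi(\lambda z) = \tilde\varphi(z) + \log|\lambda|$, and the normalization $\sup_V \varphi = 0$ translates into the global upper bound $\tilde\varphi(z) \le \log\|z\|$ on $C$; in particular $\tilde\varphi$ has Lelong number exactly $1$ at $0$.

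Second, I would invoke Wirtinger's volume formula $\mathrm{Vol}_{2n+2}(C \cap B_R) = \frac{\pi^{n+1} d}{(n+1)!} R^{2(n+1)}$ and the resulting fibration identity
\[
\int_{C \cap S^{2N+1}} (g \circ \pi)\, d\sigma = \frac{2\pi^{n+1}}{n!}\int_V g\, \omega^n,
\]
to convert integrals over $V$ into integrals on the cone. Combining this with the polar decomposition $z = r\xi$, $r = \|z\|$, $\xi \in C\cap S^{2N+1}$, and the log-homogeneity $\tilde\varphi(r\xi) = \varphi(\pi(\xi)) + \log r$, a direct calculation yields, for any $c \in (0, 2n+2)$,
\[
\int_V e^{-c\varphi}\, \omega^n = \frac{n!\,(2(n+1)-c)}{2\pi^{n+1}}\int_{C \cap B_1} e^{-c\tilde\varphi}\, dV_C.
\]

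The heart of the argument is the third step: bounding $\int_{C \cap B_1} e^{-c\tilde\varphi}\, dV_C$ in terms of the mean $\bar\varphi := \frac{1}{d}\int_V \varphi\, \omega^n$, which, by log-homogeneity, coincides with the spherical mean $\sigma(C \cap S^{2N+1})^{-1}\int_{C \cap S^{2N+1}}\tilde\varphi\, d\sigma$. This is a quantitative Skoda-type estimate: one applies the psh sub-mean inequality on balls $B(z_0,r) \cap C$ together with the global bound $\tilde\varphi \le \log\|z\|$ to control the contribution of the set where $\tilde\varphi$ is very negative. The expected outcome is an estimate of the form
\[
\int_{C \cap B_1} e^{-c\tilde\varphi}\, dV_C \;\le\; C(n,c)\, \mathrm{Vol}_{2n+2}(C \cap B_1) \, e^{-c\bar\varphi},
\]
with $C(n,c)$ explicit. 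Specializing to $c = 1/(nd)$ and plugging $\mathrm{Vol}_{2n+2}(C \cap B_1) = \pi^{n+1}d/(n+1)!$ into the identity above, while tracking the factor $(2(n+1)-c) \ge 2n+1$, should produce exactly the announced constant $(4n)^n \cdot d$ in front of $\exp\{-\frac{1}{nd}\int_V \varphi\, \omega^n\}$.

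The main obstacle is the quantitative sub-mean step. A naïve Jensen applied directly to $e^{-c\tilde\varphi}$ goes in the wrong direction, since $e^{-cu}$ is not subharmonic for $u$ psh. The argument must therefore exploit \emph{both} the log-homogeneity (which pins the spherical mean exactly to $\bar\varphi + \log r$) and the upper envelope $\tilde\varphi \le \log\|z\|$, weaving them together through a level-set / sub-mean analysis in the spirit of Skoda's original argument, while keeping the final numerical constant explicit enough to produce $(4n)^n$.
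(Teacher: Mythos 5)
Your reductions in Steps 1 and 2 are correct and cleanly done: the homogeneous lift $\tilde\varphi=\varphi\circ\pi+\log\|z\|$, the Wirtinger volume of the cone, and the polar-coordinate identity $\int_V e^{-c\varphi}\,\omega^n=\tfrac{n!\,(2(n+1)-c)}{2\pi^{n+1}}\int_{C\cap B_1}e^{-c\tilde\varphi}\,dV_C$ all check out, as does the identification of the spherical mean of $\tilde\varphi$ with $\tfrac1d\int_V\varphi\,\omega^n$. The problem is that after these reductions nothing has actually been gained: the inequality you postulate in Step 3, $\int_{C\cap B_1}e^{-c\tilde\varphi}\,dV_C\le C(n,c)\,\mathrm{Vol}(C\cap B_1)\,e^{-c\bar\varphi}$, is (by your own identities) \emph{equivalent} to the proposition, and you do not prove it. You correctly observe that Jensen applied to $e^{-c\tilde\varphi}$ goes the wrong way, but the replacement you invoke --- ``a level-set / sub-mean analysis in the spirit of Skoda's original argument'' --- is precisely the missing content. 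Skoda's and Zeriahi's arguments run through a Riesz/Green-kernel representation of the psh function followed by Jensen applied to the \emph{measure} $\Delta u$ rather than to the function; transplanting this to the affine cone $C$ requires (i) a Green-type kernel on the possibly singular $(n+1)$-dimensional variety $C$ (singular at the vertex with multiplicity $d$, and wherever $V$ is singular), (ii) a sub-mean inequality on balls $B(z_0,r)\cap C$ centered at singular points, where the naive mean-value property fails and Lelong--Jensen monotonicity must be used with the local density of $C$, and (iii) a uniform bound relating local means of $\tilde\varphi$ to the single global spherical mean $\bar\varphi$. None of these is supplied, and the threshold $c=1/(nd)$ (which must beat the worst-case Lelong numbers of $\varphi$ on $V$, themselves of order $d$) never enters your argument, so the proof cannot be considered complete even in outline.

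For comparison, the paper avoids the cone entirely and works directly on $V$ with the explicit projective logarithmic kernel $G(x,y)=\log(\|x\wedge y\|/\|x\|\|y\|)$: setting $g_y=G(\cdot,y)|_V$, the comparison of Lelong numbers gives $(\omega+dd^cg_y)^n\ge\delta_y$, an $n$-fold integration by parts (Stokes on the singular variety, Lemma~\ref{stokess}) yields the pointwise Riesz-type inequality $\varphi(y)\ge\int_V\varphi\,\omega^n+n\int_Vg_y\,\omega_\varphi\wedge\omega_{g_y}^{n-1}$, and then Jensen is applied to the probability measure $\tfrac1d\,\omega_\varphi\wedge\omega_{g_y}^{n-1}$, followed by Fubini and the explicit kernel estimates of Lemma~\ref{lemma_ineq} to extract the constant $(4n)^n\cdot d$. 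That kernel plays exactly the role of the Green function your Step 3 would need on the cone; if you want to salvage your approach you would have to construct and estimate its analogue on $C$, which is not easier than the paper's argument.
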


To our knowledge, the inequality given in Proposition~\ref{prop:uniformint_proj} above is new. 

\begin{rem}
\label{projcase}
When $\pi:\mathcal{X}\to \mathbb D$ is a projective family whose fibers have degree $d$  with respect to a given projective embedding, the above result gives the integrability of $e^{-\frac{1}{nd}\varphi_t}$ on $V_t:=\pi^{-1}(t)$. In particular, one gets $\a(\om_{\rm FS}) \ge \frac{1}{2nd}$. 
  \end{rem}

 \begin{proof} 
 Embedding $\mathbb{P}^1$ in $\mathbb{P}^2$ if necessary, we assume without loss of generality that $N \geq 2$.
 We first claim that it is enough to prove the Proposition when $\f$ is \textit{smooth}. Indeed, thanks to \cite[Cor.~C]{CGZ13}, there exists a sequence of smooth functions $\f_n\in \PSH(V,\om_{\rm FS})$ decreasing pointwise to $\f$. Let $\ep_n:=\sup_V \f_n$; by Hartog's theorem, we have $\ep_n\to 0$. If the Proposition holds for smooth functions, we will have $$\int_V e^{-\frac{1}{nd}\varphi_n} \omega^n\le e^{\frac{\ep_n\cdot (d-1)}{nd}} (4n)^n\cdot d \cdot \exp\left\{-\frac{1}{nd}\int_V \varphi_n \omega^n\right\}$$
 Using Fatou Lemma and the monotone convergence theorem, we deduce the expected inequality for $\varphi$. From now on, one assumes that $\f$ is smooth.\\

 %First of all we observe that, up to changing $\omega$ in $C_\Theta\omega$, we can assume that $\varphi$ is an $\omega$-psh function.\\
 The projective logarithmic kernel on $\mathbb{P}^N\times \mathbb{P}^N$ is defined by the following formula
 $$
 G(x,y):= \log\left(\frac{||x\wedge y||}{||x|| \cdot ||y||} \right),\quad  x,y\in\mathbb{P}^N,
 $$
 writing $x,y$ in homogeneous coordinates.
 By \cite[Lem.~4.1]{AAZ18}, for any fixed $y$, $x\mapsto G(x,y)$ is a non positive $\omega_{\rm FS}$-psh function in $\mathbb{P}^N$
 such that $(\omega_{\rm FS}+dd^c_x G(\cdot, y))^N= \delta_y$. 
We set $g=G|_V$ and $g_y=g(\cdot, y)$. By definition, $g_y$ has Lelong number one at $y$. Therefore, it follows from \cite[Cor.~4.8]{Dem85} that $\omega_{g_y}^n:=(\omega+dd^c g(\cdot, y))^n \ge \delta_y$. From Stokes formula (cf Lemma~\ref{stokess} below) it follows that
 \begin{eqnarray*}
 \varphi(y) &\ge & \int_{V} \varphi \omega_{g_y}^n= \int_{V}\vp (\omega+dd^c g_y)\wedge \omega_{g_y}^{n-1}\\
 &=&  \int_{ V} \varphi \omega\wedge \omega_{g_y}^{n-1} +\int_{ V} g_y (\omega+dd^c \varphi)\wedge \omega_{g_y}^{n-1} -\int_{ V} g_y \omega\wedge \omega_{g_y}^{n-1}\\
 &\ge & \int_{ V} \varphi \omega\wedge \omega_{g_y}^{n-1}+ \int_{ V} g_y \omega_{\varphi}\wedge \omega_{g_y}^{n-1} ,
 \end{eqnarray*}
using that $g_y \le 0$.
One obtains similarly  
 \begin{eqnarray*}
\int_{ V} \varphi \omega\wedge \omega_{g_y}^{n-1}  
 &\ge &    \int_{ V} \varphi \omega^2\wedge \omega_{g_y}^{n-2} + \int_{ V} g_y \omega \wedge \omega_\f\wedge \omega_{g_y}^{n-2}\\
 &\ge & \int_{ V} \varphi \omega^2\wedge \omega_{g_y}^{n-2}+ \int_{ V} g_y \omega_{\varphi}\wedge \omega_{g_y}^{n-1} ,
 \end{eqnarray*} 
where the second inequality follows from 
 $$
 \int_{ V} g_y \omega \wedge \omega_\f \wedge \omega_{g_y}^{n-2}
  = \int_{ V} g_y \omega_\varphi \wedge  \omega_{g_y}^{n-1} +\int_V d g_y \wedge d^c g_y \wedge \omega_\varphi \wedge \omega_{g_y}^{n-2} \ge \int_{ V} g_y \omega_\varphi \wedge  \omega_{g_y}^{n-1}.
  $$
 Iterating the process $n$ times we end up with
 $$
 \varphi(y) \ge  \int_{V} \varphi \omega^n +n  \int_{V} g_y \omega_\varphi \wedge  \omega_{g_y}^{n-1}. 
 $$
 Hence
$$
  \int_{V} e^{-\frac{1}{nd}\varphi} \omega^n  \le   
  \exp\left\{-\frac{1}{nd}\int_V \varphi \omega^n\right\} \cdot I
  $$
  where
  $$
  I:=   \int_{y\in V} \exp\left \{-\frac{1}{d}\int_{x\in V} g_y (x)\omega_\varphi(x) \wedge \omega_{g_y}(x)^{n-1} \right\}   \omega(y)^n
$$
% In order to get the conclusion we then need to bound from the above the integral $I$.\\
The $(n,n)$-form $ \frac1d \cdot {\omega_\varphi \wedge \omega_{g_y}^{n-1}}$ induces a probability measure on $V$ given that 
 $$
  \int_{V}  \omega_\varphi \wedge \omega_{g_y}^{n-1} =   \int_{\mathbb{P}^N}  \omega_\varphi \wedge \omega_{g_y}^{n-1} \wedge [V] =  \{\omega_{\rm FS}\}^n \cdot \{V\}=d.
 $$
From Jensen's inequality, one can then derive
 $$I \le \frac{1}{d}\int_{y\in V} \int_{x\in V}  e^{-g(x,y)} \,\omega_\varphi(x) \wedge (\omega(x)+dd^c_x g(x,y))^{n-1} \wedge \omega(y)^n. $$
 Lemma~\ref{lemma_ineq} $(i)$ yields
 $$ 
 \omega_\varphi(x) \wedge (\omega(x)+dd^c_x g(x,y))^{n-1} \le  e^{-2(n-1)g(x,y)} \omega_\varphi(x)\wedge \omega(x) ^{n-1}.
 $$ 
 Lemma~\ref{lemma_ineq} $(ii)$ below (for $\delta=1/2n$) now yields
 \begin{eqnarray*}
 I &\le & \frac{1}{d} \int_{y\in V} \int_{x\in V}  e^{(-2n+1) g(x,y)} \omega_\varphi(x)\wedge \omega(x) ^{n-1} \wedge \omega(y)^n\\
 &=& \frac{1}{d} \int_{x\in V} \left(\int_{y\in V} \Big[e^{-2(1-\frac{1}{2n}) g(x,y)} \omega(y)\Big]^n \right) \omega_\varphi(x)\wedge \omega(x) ^{n-1}\\
 &\le & {(4n)^n} \int_{x\in V}  \left(\frac{1}{d}\int_{y\in V} (\omega+ dd^c \chi_{\frac{1}{2n}}\circ g_x)^n \right)\omega_\varphi(x)\wedge \omega(x) ^{n-1}\\
 &=& (4n)^n\int_{x\in V} \omega_\varphi(x)\wedge \omega(x) ^{n-1} = (4n)^n\cdot d.
  \end{eqnarray*}
 \end{proof} 
 
 \begin{rem}
%{ \color{red} If $\vp$ is $\om$-psh then so is $\ep \vp$. We should delete the remark.} 
The same arguments as above show that for any $\gamma\in (0,2)$
 $$
  \int_V e^{-\frac{\gamma}{nd}\varphi} \omega^n\le C_\gamma \cdot d \exp\left\{-\frac{\gamma}{nd}\int_V \varphi \omega^n\right\},
  $$
 where $C_\gamma>0$ depends on $n$ and $\gamma$. We have fixed $\gamma=1$ in the above proposition to simplify the statement.
 \end{rem}

  \begin{lem}
  \label{lemma_ineq} 
  With the notations of the proof of Proposition~\ref{prop:uniformint_proj} above, we fix a point $y\in V$ and set $g:=g_y$. Moreover, let $\delta\in (0,1)$ be a given number. Then, the following set of inequalities hold as currents on $V$.  
  \begin{itemize}
      \item[$(i)$] $\omega_g \le  e^{-2g} \omega$
      %\item[(ii)] $e^{-2g} \omega^n \le C \omega_g\wedge \omega^{n-1}$
 %   \item[$(ii)$] $e^{-2(n-1)g} \omega^n \le n \omega_g^{n-1}\wedge \om$ 
    %We actually don't need that one
               \item[$(ii)$] $\frac \delta 2 e^{-2(1-\delta)g}\omega \le \omega+ dd^c \chi_\delta\circ g$
  \end{itemize}
Here,  $\chi_{\delta}$ is the function defined on $\mathbb R$ by the expression $\chi_\delta (t):= \frac{e^{2\delta t}}{4\delta}$. 
  \end{lem}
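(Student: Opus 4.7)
I would approach both inequalities by a direct calculation in an affine chart of $\mathbb{P}^N$ centered at $y$. Taking $y=[1:0:\ldots:0]$ with coordinates $z=(z_1,\ldots,z_N)$, one has $u:=e^{2g}=|z|^2/(1+|z|^2)$, so $g=\tfrac12\log u$ and, since $1-u=1/(1+|z|^2)$ and $\omega=dd^c\log(1+|z|^2)$, the fundamental identity $dd^c\log(1-u)=-\omega$ holds on $V\setminus\{u=1\}$. Expanding this via the Leibniz rule produces the workhorse formula
\[
dd^c u \;=\; (1-u)\,\omega \;-\; \frac{du\wedge d^c u}{1-u}.
\]

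For $(i)$, the chain rule $dd^c g = \tfrac12(dd^c u/u - du\wedge d^c u/u^2)$ combined with the workhorse formula gives
\[
\omega_g \;=\; \frac{1+u}{2u}\,\omega \;-\; \frac{du\wedge d^c u}{2u^2(1-u)},
\]
so that $e^{-2g}\omega - \omega_g = \tfrac{1-u}{2u}\omega + \tfrac{du\wedge d^c u}{2u^2(1-u)}$ is manifestly a sum of two positive $(1,1)$-currents; this closes $(i)$.

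For $(ii)$, the same chain-rule strategy applied to $u^\delta$ together with the workhorse formula yields, after dividing by $4\delta$,
\[
\omega + dd^c\chi_\delta(g) \;=\; \Big[1+\tfrac{u^{\delta-1}(1-u)}{4}\Big]\omega \;-\; \tfrac{u^{\delta-2}(1-\delta(1-u))}{4(1-u)}\,du\wedge d^c u.
\]
The only potential obstruction is the negative coefficient of $du\wedge d^c u$. It is tamed by the $(1,1)$-form bound $du\wedge d^c u \le u(1-u^2)\,\omega$, which is itself an immediate consequence of $\omega_g\ge0$ read off the formula for $\omega_g$ above. Substituting reduces $(ii)$ to the elementary scalar inequality
\[
1 - \tfrac{u^\delta}{2} + \tfrac{\delta u^{\delta-1}(1-u^2)}{4} \;\ge\; \tfrac{\delta}{2}\,u^{\delta-1}
\]
on $u\in[0,1]$ and $\delta\in(0,1)$, to be checked by hand.

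The main obstacle is this last scalar verification. As $u\to 0$, that is, near the Lelong singularity of $g$ at $y$, both sides blow up at the same rate $u^{\delta-1}$; thus the precise constants arising from the workhorse identity must line up, and the proof crucially uses that the form bound $du\wedge d^c u\le u(1-u^2)\omega$ is sharp in the radial direction while being very slack in the transverse directions. Carefully tracking this anisotropy --- effectively decoupling the radial and transverse contributions to the two sides --- is the technical heart of the argument.
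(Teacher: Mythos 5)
Your overall strategy---write everything in terms of $u=e^{2g}$, use $dd^c\log(1-u)=-2\omega$ to express $dd^cu$ through $\omega$ and $du\wedge d^cu$, then eliminate the rank-one term via the bound extracted from $\omega_g\ge 0$---is sound, and is in fact a coordinate-free repackaging of the paper's eigenvalue computation in the basis $\{\beta,\, i\alpha_1\wedge\bar\alpha_1\}$. But there is a concrete error: you normalize $\omega=dd^c\log(1+\|z\|^2)$, whereas with the paper's conventions ($d^c=\frac i2(\partial-\overline\partial)$ and $G=\frac12\log\frac{\|x\wedge y\|^2}{\|x\|^2\|y\|^2}$) one has $\omega_{\rm FS}|_{U_1}=\frac12\,dd^c\log(1+\|z\|^2)$. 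The missing factor of $2$ propagates: the workhorse identity should read $dd^cu=2(1-u)\omega-\frac{du\wedge d^cu}{1-u}$, one then finds $\omega_g=\frac1u\omega-\frac{du\wedge d^cu}{2u^2(1-u)}$ (so that $(i)$ becomes the single positive term $\frac{du\wedge d^cu}{2u^2(1-u)}$), and the bound furnished by $\omega_g\ge0$ is $du\wedge d^cu\le 2u(1-u)\,\omega$, not $u(1-u^2)\,\omega$.

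This is not cosmetic, because the scalar inequality you reduce $(ii)$ to, namely $1-\frac{u^\delta}2+\frac{\delta u^{\delta-1}(1-u^2)}4\ge\frac\delta2u^{\delta-1}$, is \emph{false}: dividing by $u^{\delta-1}$ and letting $u\to0^+$, the left side tends to $\frac\delta4$ while the right side equals $\frac\delta2$. Moreover, your substitution $du\wedge d^cu\le u(1-u^2)\omega$ is an \emph{equality} in the radial direction (the radial eigenvalue of $\omega_g$ vanishes identically, as $\omega_{\rm FS}+dd^cG_y$ is degenerate along the lines through $y$), so the scalar coefficient you obtain is the exact radial eigenvalue of $\omega+dd^c\chi_\delta\circ g$ in your normalization; no ``decoupling of radial and transverse contributions'' can rescue the argument, because statement $(ii)$ genuinely fails for $\omega=dd^c\log(1+\|z\|^2)$ near $y$, where the comparison is asymptotically tight. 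With the correct normalization the same one-line substitution gives $\omega+dd^c\chi_\delta\circ g\ge\bigl[1-\frac{u^\delta}2+\frac{\delta u^{\delta-1}(1-u)}2\bigr]\omega$, and $(ii)$ reduces to the elementary $\frac{(1+\delta)u^\delta}2\le1$: no anisotropic analysis is needed (your rank-one bound reproduces the paper's radial eigenvalue $\mu$ exactly and only underestimates the transverse eigenvalue $\lambda$, harmlessly). Finally, you should record, as the paper does, that it suffices to verify the inequalities on $U_1\setminus\{y\}$ because none of the currents involved charges $\{y\}$ or the hyperplane at infinity.
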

  
 %  \marginpar{what about at the singular points of $V$? }
 %Reponse: on etablit ces inegalites sur PN
 
\noindent It is understood here that we take derivatives w.r.t. $x$ and the estimates are uniform both in $x$ and $y$.
\begin{proof}
%First observe that we can assume that $N\ge 2$. Indeed, we would have otherwise $V=\mathbb P^1$ and the inequalities at stake are straightforward to check. 
We proceed in three steps. \\

\noindent
\textit{Step 1. Reduction to a computation on $\mathbb C^N$}.

\noindent 
First of all we observe that the function $g$ as well as the $(1,1)$-currents $\om$ and $\omega_g$ are the restriction to $V$ of a function or $(1,1)$-currents on $\mathbb P^N$. As positivity is preserved by restriction to a subvariety, it is enough to prove the inequalities of currents above on the whole $\mathbb P^N$ where they make sense as well. 

Now, recall that $\mathrm{PU}(N,\mathbb C)$ acts transitively on $\mathbb{P}^N$ by transformations preserving $\omega_{\rm FS}$ and an isometry $u$ sends $G_y$ to $G_{u(y)}$. Therefore it suffices to prove all the inequalities above on $\mathbb P^N$, for the special point $y=[1:0:\cdots :0]$.
 We work in the affine chart $(U_1, z)$ where $U_1:=\{x\in \mathbb{P}^N \,:\, x_1\neq 0\}$ and $z:=(z_j)_j$, $z_j= x_j/x_1$. In these coordinates $\omega_{\rm FS}|_{U_1}= \frac{1}{2} dd^c \log(1+\|z\|^2)$. Note that $U_1$ is dense in $\mathbb P^N$ and both $\om_{\rm FS}, \omega_G$ are smooth on the complement $\mathbb P^N\setminus U_1$; thus it is sufficient to prove the inequalities on $U_1\simeq \mathbb C^N$. 
 
 We actually claim that is is sufficient to prove the inequalities on $U_1\setminus \{y\}$, where all the currents involved are smooth differential forms. This is because neither of the positive currents $ e^{-2G} \omega_{\rm FS}$ and $\omega_{\rm FS}+ dd^c \chi_\delta\circ G$ on $\mathbb P^N$ puts any mass on $\{y\}$. This follows from the integrability of $e^{-2G}$ for the first one (recall that $N\ge 2$) and the boundedness of $\chi_{\delta}\circ G$  for the second one. 
 
 %Observe also that away from the origin $z=0$, $G$ is a smooth function and $\omega_G$ is an honest K\"ahler metric hence comparable to $\omega$ in $U_1\setminus \mathbb{B}$, where $\mathbb{B}$ is the unit ball in $\mathbb{C}^N$. We are then left to proving the inequalities (i)-(iv) in $\mathbb{B}$.\\
 As observed in \cite[Lem.~4.1]{AAZ18}, for $(x,y:=[1:0\cdots:0])\in U_1\times U_1 $ we have
 $$G(x,y)= N(z,0)-\frac{1}{2}\log(1+\|z\|^2)$$ where $z=z(x)$ and  $N(z,0):=\frac{1}{2} \log{\|z\|^2}$.  Thus in $U_1$ we have $e^{-2G}= 1+\frac{1}{\|z\|^2}$ and
 $$
 \omega(x)+dd_x^c G_y(x)= dd_z^c N(z,0)= \frac{1}{2}dd_z^c \log\|z\|^2. 
 $$
 Let us define $\beta:=dd^c \|z\|^2=i\sum_{k=1}^Ndz_k\wedge d\bar z_k$ and let $\alpha_1:=\sum_{k=1}^N \bar z_k dz_k$. \\
 
\noindent
\textit{Step 2. Proof of Item} $(i)$. 

\noindent
Standard computations give  
$$(\om_{\rm FS})_{j\bar k}= \frac{(1+\|z\|^2)\delta_{j\bar k} - \bar z_jz_k}{2(1+\|z\|^2)^2}  \quad \mbox{and} \quad  N_{j\bar{k}} =\frac 12 \cdot \frac{\|z\|^2 \delta_{j\bar{k}} - \bar{z}_j z_k}{\|z\|^4}$$ 
or equivalently 
$$\om_{\rm FS}=\frac 12 \left(\frac 1{1+\|z\|^2}\beta- \frac{1}{(1+\|z\|^2)^2} i\alpha_1\wedge \bar\alpha_1\right)\quad \mbox{and} \quad \om_G= \frac 12 \left(\frac 1{\|z\|^2}\beta- \frac{1}{\|z\|^4}i\alpha_1\wedge \bar \alpha_1\right) $$
The matrix $A(z):=(z_i\bar z_j)_{ij}$ is semipositive with rank at most one and trace $\|z\|^2$. Therefore, if $\lambda, \mu \in \mathbb R$ (they can depend on $z$), the matrix $\lambda\mathrm{Id}+\mu A$ is hermitian with eigenvalues $\lambda$ (with multiplicity $N-1$) and $\lambda + \|z\|^2 \cdot \mu$ (with multiplicity one). In particular, it is semipositive if and only if $\lambda \ge \max(0, -\|z\|^2\cdot \mu)$.

\noindent
The computations above show that the eigenvalues of the $(1,1)$-form $\lambda \beta+\mu i\alpha_1\wedge \bar \alpha_1$ with respect to $\beta$ are $\lambda$ and $\lambda+\|z\|^2\cdot \mu$. Now, if $C$ is some non-negative constant, the $(1,1)$-form $Ce^{-2g}\om_{\rm FS}-\om_G$ can be rewritten as follows
$$\frac{1}{2(1+\|z\|^2)\|z\|^4}\cdot \left[(C-1)\|z\|^2(1+\|z\|^2) \cdot  \beta+ \left[(1+\|z\|^2)-C\|z\|^2\right]\cdot i\alpha_1\wedge \bar \alpha_1\right].$$
The latter form is semipositive if and only if $C\ge 1$.
%thanks to the criterion stated above. 
This proves $(i)$. \\

\noindent
\textit{Step 3. Proof of Item} $(ii)$. 

\noindent
Observe that $\chi_\delta$ is convex increasing with $0 \le \chi'_\delta \le 1/2$ for $t \le 0$.
 Standard computations give $dd^c \chi_{\delta} \circ G = \chi'_\delta \circ G\,  dd^c G +\chi''_\delta \circ G \,dG \wedge d^c G$. 
Next, we have
$$dd^c G=\frac{1}{2\|z\|^2(1+\|z\|^2)}\left[ \beta - \frac{1+2\|z\|^2}{\|z\|^2(1+\|z\|^2)}\cdotp i\alpha_1\wedge \bar \alpha_1\right]$$
with the notation introduced in Step 1. Similarly, one finds
$$ dG\wedge d^c G=\frac{1}{4\|z\|^4(1+\|z\|^2)^2}i\alpha_1\wedge \bar \alpha_1.$$
To lighten notation, we will from now on write $\chi'$ (resp. $\chi''$) to denote $\chi'_{\delta}\circ G$ (resp. $\chi''_{\delta}\circ G$). One has 
$$\omega_{\rm FS}+dd^c \chi_{\delta} \circ G = \frac{1}{2(1+\|z\|^2)}\left[\Big(1+\frac{\chi'}{\|z\|^2}\Big)\beta+ \frac{\frac 12\chi''-\chi'(1+2\|z\|^2)}{\|z\|^4(1+\|z\|^2)}i\alpha_1\wedge \bar \alpha_1\right].$$
As a result, the two eigenvalues $\lambda, \mu$ of $\omega_{\rm FS}+dd^c \chi_{\delta} \circ G$ with respect to $\om_{\rm FS}$ are given by 
$$\lambda = 1+\frac{\chi'}{\|z\|^2}$$
and 
%\begin{align*}
%\mu& = (1+\|z\|^2)\cdot \left(1+\frac{\chi'}{\|z\|^2}+\frac{\frac12\chi''-\chi'(1+2\|z\|^2)}{\|z\|^2(1+\|z\|^2)}\right)\\
%&= (1+\|z\|^2-\chi')+\frac{\chi''}{2\|z\|^2}
%\end{align*}
$$
\mu = (1+\|z\|^2)\cdot \left(1+\frac{\chi'}{\|z\|^2}+\frac{\frac12\chi''-\chi'(1+2\|z\|^2)}{\|z\|^2(1+\|z\|^2)}\right)
= (1+\|z\|^2-\chi')+\frac{\chi''}{2\|z\|^2}
$$
Using the definition of $\chi$ and the fact that $e^{-2G}=1+\frac{1}{\|z\|^2}$, one easily sees that $\lambda \ge \frac 12 e^{-2(1-\delta)G}$ and $\mu \ge \frac \delta 2 e^{-2(1-\delta)G}$. The conclusion follows. 
\end{proof}

 \subsection{Skoda's integrability theorem in families: the general case}

In this section, we bypass the projectivity assumption and establish a quite general family version of Skoda's integrability theorem, valid for families of
compact K\"ahler varieties:

  %\textcolor{red}{The definition of MA measures on a singular space is give in \cite{Dem85}}
 \begin{thm}
  \label{prop:uniformint}
 In Setting~\ref{set}, let us choose a positive number $\alpha \in  (0, \a(\Theta))$, which is possible thanks to Corollary~\ref{cor:uniformexp}. Then, there exist constant $A_\a, C>0$ such that for all $t\in \overline{\mathbb{D}}_{1/2}$ and for all $\f_t\in \psh(X_t, \theta_t)$ with
 $\sup_{X_t} \f_t=0$,
 \begin{equation}\label{SkodaUnif}
 \int_{X_t}e^{-\a \f_t} \omega_t^n \le C\exp\left\{-A_\alpha\int_{X_t} \varphi_t \omega_t^n\right\}.
 \end{equation}
\end{thm}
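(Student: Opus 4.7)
I would adapt Zeriahi's proof of the uniform Skoda integrability theorem \cite{Zer01} to the family setting. The crucial new input is the uniform lower bound $\alpha(\Theta)>0$ supplied by Corollary~\ref{cor:uniformexp}: every $\theta_t$-psh function on every fiber $X_t$, $t\in\overline{\mathbb D}_{1/2}$, has integrability index at least $\alpha(\Theta)$, and the underlying uniform Lelong bound from Proposition~\ref{pro:uniformLelong} drives all the quantitative estimates. A first observation simplifies the target: under the normalization $\sup_{X_t}\varphi_t=0$ one has $\int_{X_t}\varphi_t\,\omega_t^n\le 0$, so the factor $\exp\{-A_\alpha\int_{X_t}\varphi_t\,\omega_t^n\}$ appearing in \eqref{SkodaUnif} is at least $1$ for any $A_\alpha>0$. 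It therefore suffices to prove the cleaner bound $\int_{X_t}e^{-\alpha\varphi_t}\,\omega_t^n\le C$ uniformly in $t$ and $\varphi_t$; the stated inequality then holds with that $C$ and any positive $A_\alpha$.

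\smallskip

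To establish the uniform bound, fix an auxiliary $\alpha'$ with $\alpha<\alpha'<\alpha(\Theta)$. For each point $x_0\in\pi^{-1}(\overline{\mathbb D}_{1/2})$, pick a chart $U_\beta\ni x_0$ from the covering of Setting~\ref{set} with local embedding $j_\beta:U_\beta\hookrightarrow\mathbb C^N$, and a neighborhood $W_{x_0}\Subset U_\beta$. Using the comparison $\theta_t\le C_\Theta\omega_t$ from \eqref{Ctheta}, the restriction of $\varphi_t$ to $W_{x_0}\cap X_t$ can be written via $j_\beta$ as the restriction to $j_\beta(X_t\cap U_\beta)$ of a psh function $u_t$ defined on an open set of $\mathbb C^N$, modulo a uniformly bounded smooth correction absorbing the local potential of $\theta_t$. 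Proposition~\ref{pro:uniformLelong} yields a uniform upper bound $M$ on the Lelong numbers of $u_t$ at every point. Combining this with $\alpha<\alpha'<\alpha(\Theta)$, a quantitative form of Skoda's integrability theorem applied on the analytic subvariety $j_\beta(X_t\cap U_\beta)\subset\mathbb C^N$ gives
\begin{equation*}
\int_{W_{x_0}\cap X_t}e^{-\alpha\varphi_t}\,\omega_t^n\le K_{x_0},
\end{equation*}
where $K_{x_0}$ depends only on $\alpha'-\alpha$, on $M$, and on the ambient geometric data of the chart. Since $\pi^{-1}(\overline{\mathbb D}_{1/2})$ is compact in $\mathcal X$, one can extract a finite subcover $W_{x_1},\dots,W_{x_r}$; summing the local estimates yields the required global bound with $C:=\sum_i K_{x_i}$.

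\smallskip

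\noindent\textbf{Main obstacle.} The delicate step is the local estimate: one must apply Skoda to a \emph{varying} family of analytic subvarieties $j_\beta(X_t\cap U_\beta)$ of $\mathbb C^N$ and keep the constant uniform in $t$. This forces one to formulate Skoda's theorem on subvarieties with constants depending only on uniformly controllable ambient data, namely the Lelong bound $M$, the chart geometry, and an $L^1$-normalization of $u_t$. The hypothesis $\sup_{X_t}\varphi_t=0$ is global and not local, so propagating it to a usable local normalization of $u_t$ in each chart requires a sub-mean-value or Harnack-type argument on $\mathcal X$ exploiting the compactness of $\pi^{-1}(\overline{\mathbb D}_{1/2})$ and the standard compactness properties of quasi-psh functions.
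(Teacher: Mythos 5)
Your opening reduction is where the argument breaks down, and the failure is not a technicality. You observe that $\int_{X_t}\varphi_t\,\omega_t^n\le 0$ and conclude that it suffices to prove the ``cleaner'' uniform bound $\int_{X_t}e^{-\alpha\varphi_t}\,\omega_t^n\le C$. But by Jensen's inequality that cleaner bound forces $\int_{X_t}\varphi_t\,\omega_t^n\ge -\frac{V}{\alpha}\log(C/V)$, i.e.\ a uniform comparison between $\sup_{X_t}\varphi_t$ and the mean value of $\varphi_t$. This comparison is exactly Conjecture~\ref{conj}, which is not known in the generality of Setting~\ref{set}; it is established only under the extra Assumption~\ref{aspt} (Proposition~\ref{mainprop}), and combining that proposition with Theorem~\ref{prop:uniformint} is precisely how Theorem~\ref{H1} is obtained. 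So your reduction replaces the statement to be proved by a strictly stronger one that is open here. The whole point of keeping the factor $\exp\{-A_\alpha\int_{X_t}\varphi_t\,\omega_t^n\}$ on the right-hand side is that the estimate must be proved \emph{without} any control of the mean value by the supremum: the $L^1$-norm of $\varphi_t$ is allowed to appear in the bound and is dealt with separately in Section~\ref{sec:normalization}. Your closing remark that one needs a ``sub-mean-value or Harnack-type argument'' to pass from the global normalization to local ones confirms the circularity \--- that is the hard, and in this generality unresolved, normalization problem, not an available ingredient.

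A correct proof must therefore track exactly where $\int_{X_t}\varphi_t\,\omega_t^n$ enters. In the paper this happens through the pluricomplex Green function $G_x$ of the ambient ball: after localizing and reducing to smooth psh functions $\psi_t$ (Claim~\ref{claim}), one writes $\psi_t(x)\ge\int_{\mathbb{B}\cap X_t}\psi_t\,(dd^cG_x)^n$ and splits this by Stokes' formula (Lemma~\ref{stokess}) into an interior term $I_t$ and a boundary term $J_t$. The boundary term is controlled by $\int_{X_t}(-\varphi_t)\,\omega_t^n$ and, after exponentiation, produces exactly the factor $\exp\{-A_\alpha\int_{X_t}\varphi_t\,\omega_t^n\}$; the interior term is handled by Jensen's inequality against the probability measure $\gamma_t^{-1}\,dd^c\psi_t\wedge(dd^cG_x)^{n-1}\wedge[X_t]$, whose total mass $\gamma_t$ is pinched between $1$ (Thie's theorem) and a uniform constant coming from the Lelong-number bound of Proposition~\ref{pro:uniformLelong}, followed by a Fubini argument. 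Your local ``quantitative Skoda on the varying subvarieties $j_\beta(X_t\cap U_\beta)$'' gestures at parts of this but leaves unaddressed both the uniformity in $t$ of the constants for a moving subvariety and, more importantly, the bookkeeping that isolates $\int_{X_t}\varphi_t\,\omega_t^n$ as the only non-uniform quantity in the final estimate.
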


\begin{proof}
%[proof of Proposition \ref{prop:uniformint}]
The proof follows the same strategy as in \cite{Zer01}, as presented in \cite[Thm.~2.50]{GZ17}.  
There exists a finite number of trivializing charts $\{U_\tau\}$ of $\mathcal{X}$ such that $\pi^{-1}( \overline{\mathbb{D}}_{1/2})\subset \cup_\tau U_\tau$. The statement will then follow if we prove the bound for the integral on the left-hand side replacing $X_t$ by $X_t\cap U_\tau$. Moreover, w.l.o.g we can assume that we have an immersion $j_\tau : U_\tau\hookrightarrow \mathbb{B}$, where $\mathbb{B}$ is the unit ball in $\mathbb{C}^{N}$. Up to shrinking $U_{\tau}$, one can also assume that there exists a smooth function $\rho$ on $\mathbb B$ such that $\sup_{\mathbb B} \rho =-2$ and $\Theta|_{U_{\tau}}=dd^c j_{\tau}^*\rho$. We define $\rho_t:=(j_{\tau}^*\rho)|_{U_{\tau}\cap X_t}$; this is a potential of $\theta_t|_{U_{\tau}\cap X_t}$. Note that $\psi_t:= \f_t+\rho_t$ is a non-positive psh function in $U_{\tau} \cap X_t$ such that 
\begin{equation}
\label{borneinf}
\varphi_t-2 \ge \psi_t \ge \varphi_t-C_{\tau}
\end{equation}
% with $0 \ge \sup_{\mathbb{B}\cap X_t}\psi_t\ge -C_\tau$ 
for some constant $C_\tau>0$ depending only on $U_\tau$. It is also clear that proving \eqref{SkodaUnif} is equivalent to showing that 
\begin{equation}
\label{borneinf2}
\int_{U_{\tau}\cap X_t} e^{-\a \psi_t} \omega_t^n \le C_{\tau} \exp\left\{-A_{\alpha, \tau}\int_{U_\tau \cap X_t} \psi_t \omega_t^n\right\},
\end{equation}
for some constants $C_{\tau}, A_{\a, \tau}$ that do not depend on $t$. 

\begin{claim}
\label{claim} 
It is sufficient to prove \eqref{borneinf2} for \textit{smooth}, non-positive psh functions $\psi_t$ on $U_{\tau}\cap X_t$ such that 
\begin{equation}
\label{spsh}
dd^c \psi_t\ge (j_\tau^*dd^c \|z\|^2)|_{X_t}.
\end{equation}
\end{claim}
\begin{proof}[Proof of Claim~\ref{claim}]
Indeed, as
$$
\int_{U_{\tau}\cap X_t} e^{-\a \psi_t} \omega_t^n\le e^{\alpha} \int_{U_{\tau}\cap X_t} e^{-\alpha (\psi_t+j_{\tau}^*\|z\|^2)} \omega_t^n,
$$
we can replace $\psi_t$ by the function $\psi_t+j_{\tau}^*\|z\|^2$, bounded above by $-1$. Next, thanks to a result of Fornaess-Narasimhan \cite[Thm.~5.5]{FN}, one can write $\psi_t$ as a decreasing limit of non-positive, smooth psh functions on $U_{\tau}\cap X_t$ (up to shrinking $U_{\tau}$ possibly). The combination of the monotone convergence theorem and the integrability of $e^{-\alpha \vp_t}$ on $X_t$ provided by Corollary~\ref{cor:uniformexp} settles the claim. 
\end{proof}
%$\psi_t$ is strictly psh on $\mathbb{B}\cap X_t$ with 

From now on, we assume that $\psi_t$ is smooth, and we work exclusively on $U_{\tau}$ that we view inside the unit ball $\mathbb B$ of $\mathbb C^N$. By abuse of notation, we will denote by  $\mathbb{B}\cap X_t$ the set $ U_{\tau}\cap X_t$. In the same vein, we will identify the coordinate functions $z=(z_1, \ldots, z_N)$ on $\mathbb B\subset \mathbb C^N$ with their pull-back by $j_\tau$ on $U_{\tau}$. \\

Let us pick some number $t\in  \overline{\mathbb{D}}_{1/2}$ and some point $x\in \mathbb{B}\cap X_t$. 
We denote by $\Phi_x$ the automorphism of the unit ball $\mathbb{B}$ that sends $x$ to the origin and consider 
$$
G_x(z):= \log \|\Phi_x(z)\|
$$ 
the pluricomplex Green function of the unit ball $\mathbb{B}$. Recall that $G_x$ is the unique plurisubharmonic function in $\mathbb{B}$ such that $(dd^c G_x)^{N}= \delta_x$ in the weak sense of currents, $G_x\le 0$ and $G_x$ is identically zero on $\partial \mathbb{B}$. Standard computations yield
\begin{equation}\label{Computation}
dd^c G_x\le \frac{ C_0}{ \|\Phi_x(z)\|^{2}} \, dd^c \|z\|^2\qquad  \rm{on} \; \mathbb{B}.
\end{equation}
for some dimensional constant $C_0=C_0(N)>0$.\\
Since $[X_t|_{\mathbb B}]$ is a positive $(N-n,N-n)$-current on $\mathbb B$ and the singular set of the restriction of the Green function $G_x|_{X_t}$ is compact (it is indeed equal to $\{x\}$), the mixed Monge-Amp\`ere measure $ (dd^c G_x)^n \wedge [X_t]$ is well defined \cite[Prop.~3.15]{GZ17} and it has a Dirac mass with coefficient $\ge 1$ at the point $x$. Since $\psi_t\le 0$ we then have 
$$
\psi_t(x)\ge  \int_{\mathbb{B}} \psi_t (dd^c G_x)^n \wedge [X_t] =\int_{\mathbb{B}\cap X_t} \psi_t (dd^c G_x)^n.$$
Now, we have the following result, which is Stokes' formula in a context of isolated singularities. 
%Its proof is fairly standard and left to the reader. 

\begin{lem}
\label{stokess}
Let $X\subset B_{\mathbb C^N}(0,2)$ be a a proper, $n$-dimensional complex subspace of the ball of radius $2$ in $\mathbb C^N$, center at the origin. Let $u,v,w$ be psh functions on $B_{\mathbb C^N}(0,2)$ with isolated singularities, i.e. they are smooth outside a discrete set of points in $B_{\mathbb C^N}(0,2)$ which we assume does not meet $\partial B_{\mathbb C^N}(0,1)$. Finally, let $\mathbb B:= B_{\mathbb C^N}(0,1) \cap X$. 
Then, we have 
 \begin{equation}
 \label{stokes}
 \int_{\partial \mathbb B }(ud^cv-vd^cu) \wedge (dd^c w)^{n-1} = \int_{\mathbb B } (udd^cv-vdd^c u) \wedge (dd^c w)^{n-1} 
 \end{equation}
\end{lem}

We include a proof for the reader's convenience.

  \begin{proof}[Proof of Lemma~\ref{stokess}]
  By using a (regularized) maximum operation, we can find a family of smooth psh functions $u_\ep$ (resp. $v_\ep, w_\ep$) decreasing to $u$ (resp. $v,w$) and which coincide
with their limit outside a compact set $K_\ep \Subset \mathbb B$ which collapses onto a finite set $S\Subset \mathbb B$.   By the usual Stokes' formula, one has
$$ \int_{\partial \mathbb B }(u_\ep d^cv_\ep-v_\ep d^cu_\ep) \wedge (dd^c w_\ep)^{n-1} = \int_{\mathbb B } (u_\ep dd^cv_\ep-v_\ep dd^c u_\ep) \wedge (dd^c w_\ep)^{n-1} $$
The left-hand side of the formula above is identical to the left-hand side of \eqref{stokes}. To prove that the right-hand side above converges to the right-hand side of \eqref{stokes}, we prove that the current $(u_\ep dd^cv_\ep-v_\ep dd^c u_\ep) \wedge (dd^c w_\ep)^{n-1}$ on $\mathbb B$ converges to  $(udd^cv-vdd^c u) \wedge (dd^c w)^{n-1}$   weakly, globally on $\mathbb B$ and locally smoothly on $\overline{\mathbb B}   \setminus S$. The local smooth convergence outside $S$ is obvious. As for the global weak convergence, it follows from the convergence $u_\ep dd^cv_\ep \wedge (dd^c w_\ep)^{n-1}\rightharpoonup udd^cv \wedge (dd^c w)^{n-1}$ (and its symmetrical version swapping $u$ and $v$), proved by Demailly, cf e.g. \cite[Thm. 2.6 and Rem.~2.10]{Dem85}.
\end{proof}

Applying Lemma~\ref{stokess} 
%above 
to $X=X_t$, $u=\psi_t$, $v=w=G_x$ (recall that $G_x|_{\partial \mathbb B}\equiv 0$), we get
$$
\int_{\mathbb{B}\cap X_t} \psi_t \,(dd^c G_x)^n = \underbrace{\int_{\mathbb{B}\cap X_t} G_x \,dd^c \psi_t \wedge (dd^c G_x)^{n-1}}_{=:I_t} + \underbrace{ \int_{\partial\mathbb{B}\cap X_t} \psi_t \, d^c G_x\wedge  (dd^c G_x)^{n-1}}_{=:J_t}
$$
%&-&  \int_{\partial\mathbb{B}\cap X_t} G_x d^c\psi_t \wedge (dd^c G_x)^{n-1},
%where the last term is identically zero since $ G_x |_{\partial\mathbb{B}}=0$.
%We first look at the the term
%$$I_t:=\int_{\partial\mathbb{B}\cap X_t} \psi_t \, d^c G_x \wedge (dd^c G_x)^{n-1}.$$
 By Lemma~\ref{lemG}, in order to get a lower bound for $J_t$, it is enough to bound from above the quantity $\int_{\partial\mathbb{B}\cap X_t} (-\psi_t)\, d^c\|z\|^2 \wedge (dd^c \|z\|^2)^{n-1} $. 
 Applying \eqref{stokes} to $u=-\psi_t$, $v=w=\|z\|^2-1$, we find

  \begin{eqnarray*}
  \int_{\partial \mathbb B \cap X_t}  (-\psi_t)\, d^c\|z\|^2 \wedge (dd^c \|z\|^2)^{n-1} &=& 
  \int_{\mathbb B \cap X_t}  (-\psi_t)\,  (dd^c \|z\|^2)^{n}+\\
  && \int_{\mathbb B \cap X_t}(\|z\|^2-1) \, dd^c \psi_t \wedge  (dd^c \|z\|^2)^{n-1} \\
  %&&- \underbrace{\int_{\partial Z_t}(\|z\|^2-1)\, d^c \psi_t \wedge  (dd^c \|z\|^2)^{n-1}}_{=0}  \\
  &\le &\int_{ \mathbb B \cap X_t}  (-\psi_t)\,  (dd^c \|z\|^2)^{n} \\
  &\le & C_1^n \left[\int_{X_t}  (-\varphi_t)\,\omega_t^n+C_{\tau}\cdot V \right],
  \end{eqnarray*}
  where $C_1$ is such that $dd^c \|z\|^2\le C_1 \omega$ on $\mathbb{B}$ and $C_{\tau}$ is given in \eqref{borneinf}.\\
  
%\ge -C\int_{\partial \mathbb{B}\cap X_t} |\psi_t| d\mathcal{L}\ge  -C'\int_{ \mathbb{B}\cap X_t} |\psi_t| \, (dd^c\|z\|^2)^n$$
%for some constant $C>0$ since $d^c G_x \wedge (dd^c G_x)^{n-1}$ is a smooth measure on $\partial\mathbb{B}\cap X_t$ and it is comparable with the euclidean measure on $\partial\mathbb{B}\cap X_t$. 
We now take care of the most singular term $I_t$. 
%$$J_t(x):= \int_{\mathbb{B}\cap X_t} G_x \,dd^c \psi_t \wedge (dd^c G_x)^{n-1}=  \int_{\mathbb{B}} G_x \, dd^c \psi_t \wedge (dd^c G_x)^{n-1}\wedge [X_t].$$
 Set $$\gamma_t(x):= \int_{\mathbb B } dd^c \psi_t\wedge (dd^c G_x)^{n-1}\wedge [X_t] $$ 
 %to be the mass of the measure $dd^c \psi_t\wedge (dd^c G_x)^{n-1}\wedge [X_t]$ on $\mathbb{B}$ 
 so that $\mu:= \gamma_t^{-1} dd^c \psi_t\wedge (dd^c G_x)^{n-1}\wedge [X_t] $ is a probability measure on $\mathbb B$ (depending on $x$). We claim that for any $x\in \mathbb{B}$ there exists a constant $ \nu>0$ independent of $t$ and $x$ such that $1 \le \gamma_t < \nu$. The uniform upper bound follows from the same computations in the proof of Proposition \ref{pro:uniformLelong}. By \eqref{spsh} we can infer that
\begin{eqnarray*}
\int_{\mathbb{B}} dd^c \psi_t\wedge  (dd^c G_x)^{n-1}\wedge [X_t] & \ge & \int_{\mathbb{B}} dd^c \|z\|^2 \wedge  (dd^c G_x)^{n-1}\wedge [X_t] \\
&\ge  & \nu ((dd^c G_x)^{n-1}\wedge [X_t], x)\\
%&= & \int_{\{x\}} (dd^c G_x)^n \wedge [X_t] \\
& \ge &\nu([X_t], x) =m(X_t,x)\ge 1
\end{eqnarray*}
In the second inequality 
%above
 we used the fact that 
 %the function 
 $r\rightarrow \frac{1}{r^2}\int_{\mathbb{B}_r} dd^c \|z\|^2 \wedge T $ is decreasing to $\nu(T,x)$ when $r\downarrow 0$ (see \eqref{lelong}). The first equality follows from \eqref{formula} while the second one comes from Thie's theorem.
  Recall that the origin of $\mathbb{B}$ is identified with the point $x$.\\

\noindent
We now use Jensen's formula and \eqref{Computation} to obtain 
\begin{eqnarray*}
\exp (-\alpha I_t (x)) &=& \exp  \left( \int_{z\in \mathbb{B}} -\alpha\gamma_t G_x d\mu \right) \\
&\le & \frac{1}{\gamma_t} \int_{z\in \mathbb{B}} e^{-\alpha \gamma_t G_x}\, dd^c \psi_t \wedge (dd^c G_x)^{n-1}\wedge [X_t]\\
&=  & \frac{1}{\gamma_t} \int_{z\in \mathbb{B}} \frac{ dd^c \psi_t \wedge (dd^c G_x)^{n-1}\wedge [X_t]}{\|\Phi_x(z)\|^{\alpha\gamma_t}}\\
&\le & {C_0} \int_{z\in \mathbb{B}} \frac{dd^c \psi_t \wedge (dd^c \|z\|^2)^{n-1}\wedge [X_t] }{\|\Phi_x(z)\|^{\alpha\nu+2n-2}},
\end{eqnarray*}
where we can assume w.l.o.g. that $\alpha \nu <2$. By Fubini's theorem, we have
{\footnotesize
\begin{eqnarray*}
\int_{x\in \mathbb{B}_{1/2}} e^{-\alpha \psi_t} \omega^n \wedge [X_t] & \le  & \int_{x\in \mathbb{B}_{1/2} } e^{-\alpha(I_t+J_t)} \omega^n \wedge [X_t]\\
&\le & K \cdot  \int_{x\in \mathbb{B}_{1/2}}  e^{-\alpha I_t} \omega^n \wedge [X_t] \\
&\le & C_0\cdot K\cdot \int_{x\in \mathbb{B}_{1/2}} \left(    \int_{z\in \mathbb{B}} \frac{dd^c \psi_t \wedge (dd^c \|z\|^2)^{n-1}\wedge [X_t] }{\|\Phi_x(z)\|^{\alpha\nu+2n-2}}   \right) \omega^n \wedge [X_t] \\
&\le & C_0\cdot K\cdot \  \int_{z\in \mathbb{B}} \left(    \int_{x\in \mathbb{B}_{1/2}} \frac{  (dd^c \|x\|^2)^n \wedge [X_t] }{\|\Phi_x(z)\|^{\alpha\nu+2n-2}}   \right)  dd^c \psi_t \wedge (dd^c \|z\|^2)^{n-1}\wedge [X_t],
\end{eqnarray*}
}

\noindent
where $K:=\exp \{- \alpha\, C_1^n \int_{X_t} \psi_t \,\omega_t^n\}$.\\
Moreover, using the same computation as in the proof of Lemma \ref{lem: computation3} below, one can check that if $\beta:=  \frac{2-\alpha \nu}{2n}>0$, there exists a constant $C_{\beta}>0$ such that the inequality of $(n,n)$-currents below holds on $\mathbb B$ 
\begin{equation}\label{computation2}
 C_{\beta}^{-1} \, (dd^c_x \|\Phi_x(z)\|^{2\beta})^n \le \frac{1}{\|\Phi_x(z)\|^{\alpha\nu+2n-2}}(dd^c \|x\|^2)^n  \le C_{\beta} \, (dd^c_x \|\Phi_x(z)\|^{2\beta})^n
\end{equation}
Fix $z\in \mathbb{B}$ and for any $x\in \mathbb{B}$ let $f_x(z):= \|\Phi_x(z)\|$. We define an extension of $f_x$ to $\mathcal{X}$ by 
$$
F_x(z):=
\begin{cases}
\chi \cdot f_x(z) & {\mbox{if}}\; x\in \mathbb{B}\\

0 & \mbox{else}.
\end{cases}
$$
Here, $\chi $ is a smooth cut-off function such that $\mathrm{Supp} (\chi) \subset \mathbb B$ and $\chi\equiv 1$ on $ \mathbb{B}_{1/2}$.
It is easy to check that $F_x$ is an $A\omega$-psh function on $\mathcal{X}$ for some $A=A_\tau$ big enough (that a priori depends on $U_\tau$ but can be chosen independently of $x\in \mathbb{B}_{1/2}$). Thus
\begin{eqnarray*}
\int_{x\in \mathbb{B}_{1/2}} \frac{1}{\|\Phi_x(z)\|^{\alpha\nu+2n-2}} \,  (dd^c \|x\|^2)^n \wedge [X_t] &\le &
  C_\beta \int_{x\in \mathbb{B}_{1/2}} (dd^c_x \|\Phi_x(z)\|^{2\beta})^n \wedge[X_t]\\
&\le & C_\beta \int_{x\in \mathcal{X} } (A\omega +dd^c_x F_x(z)^{2\beta})^n \wedge [X_t] \\
&\le & C_\beta \cdot  A^n \cdot V := C_2.
\end{eqnarray*}
It then follows that 
%\begin{align*}
%\int_{x\in \mathbb{B}_{1/2}} e^{-\alpha \psi_t} \omega^n \wedge [X_t]  &\le C_0\cdot C_2 \cdot K \cdot   \int_{z\in \mathbb{B}} dd^c \psi_t %\wedge (dd^c \|z\|^2)^{n-1}\wedge [X_t]\\
%& \le  C_3 \cdot K,
%\end{align*}
$$
\int_{x\in \mathbb{B}_{1/2}} e^{-\alpha \psi_t} \omega^n \wedge [X_t]  \le C_0\cdot C_2 \cdot K \cdot   \int_{z\in \mathbb{B}} dd^c \psi_t \wedge (dd^c \|z\|^2)^{n-1}\wedge [X_t]
\le  C_3 \cdot K,
$$
where $C_3:= C_0  C_2   C_{\Theta}   C_1^{n-1} \cdot V$. The last inequality follows from the fact that on $\mathbb B_t$, we have $dd^c \psi_t \wedge (dd^c \|z\|^2)^{n-1} \le (\theta_t+dd^c\varphi_t) \wedge (C_1 \om)^{n-1}$, and one can dominate the integral of the right-hand side on $\mathbb B_t$ by its integral on $X_t$ and use \eqref{Ctheta}. 
This is the conclusion.
\end{proof}

  \begin{lem}
  \label{lemG}
  With the notations introduced at the beginning of the proof of Theorem~\ref{prop:uniformint}, there exists a constant $C=C(n)>0$ such that for all
   $x\in \mathbb{B}_{1/2}\subset \mathbb C^N $ and $z \in X_t \cap \mathbb S^{2N-1}$,
  % the following inequality 
  \begin{equation}\label{equiv. volume form}
  \frac{1}{C }  d^c \|z\|^2\wedge (dd^c \|z\|^2)^{n-1}\le d^c G_x \wedge (dd^c  G_x)^{n-1} \le C d^c \|z\|^2\wedge (dd^c \|z\|^2)^{n-1}
  \end{equation}
%    holds on $X_t \cap \mathbb S^{2N-1}$. 
 % \marginpar{I left this lemma like this. V. noticed that there should be a more intrinsic way but in the proof we need to work with volume forms and in particular we need to intersect the sphere with $X_t$.}

  \end{lem}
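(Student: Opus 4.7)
The plan is to reduce the problem to the base case $x=0$ by means of the holomorphic automorphism $\Phi_x$, and then conclude for general $x$ via a compactness argument on $\overline{\mathbb{B}_{1/2}}\times \mathbb{S}^{2N-1}$. The key structural observation is that for $x\in \overline{\mathbb{B}_{1/2}}$ and $z\in \mathbb{S}^{2N-1}$ one has $\|x-z\|\ge 1/2$, so $G_x(z)=\log\|\Phi_x(z)\|$ and all its $z$-derivatives depend smoothly and uniformly on $(x,z)$ on this compact set; in particular the forms $d^c G_x$ and $dd^c G_x$ vary continuously with $x$.

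\textbf{Base case.} At $x=0$ one has $G_0=\log\|z\|$, hence
\begin{equation*}
d^c G_0 = \tfrac{1}{2}\frac{d^c\|z\|^2}{\|z\|^2}, \qquad dd^c G_0 = \tfrac{1}{2}\left(\frac{dd^c\|z\|^2}{\|z\|^2}-\frac{d\|z\|^2\wedge d^c\|z\|^2}{\|z\|^4}\right).
\end{equation*}
On $\mathbb{S}^{2N-1}$ the pullback of $d\|z\|^2$ vanishes (as $\|z\|^2\equiv 1$), and $d^c\|z\|^2\wedge d^c\|z\|^2=0$ identically. Expanding $(dd^c G_0)^{n-1}$, every term containing the factor $d\|z\|^2\wedge d^c\|z\|^2$ either vanishes upon restriction to the sphere or is annihilated by left-multiplication with $d^c G_0$, leaving
\begin{equation*}
d^c G_0 \wedge (dd^c G_0)^{n-1}\big|_{\mathbb{S}^{2N-1}} = 2^{-n}\, d^c\|z\|^2\wedge (dd^c\|z\|^2)^{n-1}\big|_{\mathbb{S}^{2N-1}}.
\end{equation*}

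\textbf{Reduction from general $x$ to the base case.} Set $h_x(z):=\|\Phi_x(z)\|^2$. Applying the base case in the coordinates $w=\Phi_x(z)$ and pulling back by $\Phi_x$, a biholomorphism of $\mathbb{B}$ preserving $\mathbb{S}^{2N-1}$, gives
\begin{equation*}
d^c G_x\wedge (dd^c G_x)^{n-1}\big|_{\mathbb{S}^{2N-1}} = 2^{-n}\, d^c h_x\wedge (dd^c h_x)^{n-1}\big|_{\mathbb{S}^{2N-1}}.
\end{equation*}
So it remains to prove a uniform two-sided comparison between $d^ch_x\wedge (dd^c h_x)^{n-1}$ and $d^c\|z\|^2\wedge (dd^c\|z\|^2)^{n-1}$, as $x$ ranges in $\overline{\mathbb{B}_{1/2}}$.

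\textbf{Compactness and main obstacle.} The function $h_x$ is smooth and strictly plurisubharmonic on $\mathbb{B}$, with $h_x\equiv 1$ on $\mathbb{S}^{2N-1}$ and $h_x<1$ inside, varying smoothly with $x\in \overline{\mathbb{B}_{1/2}}$. When restricted to any real $(2n-1)$-plane $V=W\cap T_z\mathbb{S}^{2N-1}$ arising from a complex $n$-plane $W\subset \mathbb{C}^N$ transverse to the sphere at $z$, both $(2n-1)$-forms yield strictly positive top-degree forms on $V$: the $(n-1,n-1)$-factor is positive on $W$ by strict plurisubharmonicity of $h_x|_W$ (resp. $\|z\|^2|_W$), and the 1-form factor $d^ch_x$ (resp. $d^c\|z\|^2$) does not vanish on $T_z\mathbb{S}^{2N-1}$ since its Euclidean dual $J\nabla h_x$ (resp. $Jz$) lies in $T_z\mathbb{S}^{2N-1}$. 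The ratio of the two forms is then a continuous, positive function on the compact set $\overline{\mathbb{B}_{1/2}}\times \mathbb{S}^{2N-1}\times \mathrm{Gr}_{\mathbb{C}}(n,N)$, hence uniformly bounded above and below, which yields the desired constant $C$. The main obstacle is precisely this uniform strict positivity on every admissible tangent plane $V$: it requires the transversality of $W$ with $\mathbb{S}^{2N-1}$ (automatic off a codimension-1 subset of the Grassmannian, where $X_t$ would be tangent to the sphere) combined with the characteristic-direction non-vanishing of the 1-form factor.
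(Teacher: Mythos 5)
Your reduction steps are sound and essentially coincide with the paper's: the identity $d^c\log u\wedge(dd^c\log u)^{n-1}=u^{-n}\,d^cu\wedge(dd^cu)^{n-1}$ together with $\|\Phi_x\|\equiv 1$ on the sphere converts the problem into a uniform two-sided comparison of $d^ch_x\wedge(dd^ch_x)^{n-1}$ and $d^c\|z\|^2\wedge(dd^c\|z\|^2)^{n-1}$ on $T_z(X_t\cap\mathbb S^{2N-1})$. The gap is in your final step. The ``ratio of the two forms'' is only defined on the open locus of $\overline{\mathbb B_{1/2}}\times\mathbb S^{2N-1}\times\mathrm{Gr}_{\mathbb C}(n,N)$ where $W$ is transverse to the sphere; where $W\subset T_z\mathbb S^{2N-1}$ both restrictions degenerate and the quotient is meaningless. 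So the domain of your continuous positive function is \emph{not} compact, and continuity plus compactness does not give a uniform bound: a priori the ratio could blow up as $W$ approaches the tangency locus, and this configuration is not avoidable in the application, since nothing prevents $T_zX_t$ from being nearly tangent to $\mathbb S^{2N-1}$ at some boundary points. You correctly identify this as ``the main obstacle,'' but the remark that tangency only occurs on a codimension-one subset of the Grassmannian does not address uniformity near that subset, which is exactly what the lemma requires.

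The statement is nevertheless true and the gap is fixable by making the comparison explicit rather than by compactness. Writing $V=V_0\oplus\mathbb R\xi$ with $V_0=W\cap H_z$ the maximal complex subspace of $V$ ($H_z$ the complex tangent space of the sphere), both $d^ch_x$ and $d^c\|z\|^2$ vanish on $V_0$ (since $h_x$ and $\|z\|^2$ are both defining functions of the sphere, one has $d^ch_x=\lambda_x(z)\,d^c\|z\|^2$ at points of $\mathbb S^{2N-1}$), so the ratio factors as $\lambda_x(z)\cdot\bigl((dd^ch_x)^{n-1}|_{V_0}\bigr)/\bigl((dd^c\|z\|^2)^{n-1}|_{V_0}\bigr)$; the first factor is uniformly pinched because both normal derivatives are bounded away from $0$ and $\infty$ for $x\in\overline{\mathbb B_{1/2}}$, and the second is controlled by the uniform equivalence of the two Kähler forms $dd^ch_x$ and $dd^c\|z\|^2$ near the sphere, with a constant independent of $V_0$. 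The paper sidesteps the pointwise linear algebra altogether: it identifies both $(2n-1)$-forms, up to explicit dimensional constants, with the Riemannian volume forms induced on the real hypersurface $X_t\cap\mathbb S^{2N-1}$ by the two uniformly equivalent ambient metrics $dd^c\|z\|^2$ and $dd^c\|\Phi_x\|^2$ (via contraction with the outward unit normal), and uniformly equivalent ambient metrics induce uniformly equivalent volume forms on any submanifold, however it sits relative to the sphere.
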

  \begin{proof}
One knows that there exists a neighborhood $U$ of $\mathbb{S}^{2N-1} \subset \mathbb C^{N}$ not containing $x$ such that $dd^c \|\Phi_x\|^2$ defines a K\"ahler form $\omega_x$ on $U$. This follows for instance from the fact that $\Phi_x$ can be extended as an holomorphic map to an open neighborhood of the closed ball \--- and that neighborhood can be chosen to be independent of $x\in \mathbb B_{1/2}$.  On $U$, $\omega_x$ is comparable to the euclidean metric on $\mathbb C^{N}$ and therefore, $\omega_x$ and $\omega_{\rm eucl}$ induce uniformly equivalent Riemannian metrics $g_x$ and $g_{\rm eucl}$ on $U\cap X_t$ first, and then as well on the real hypersurface $X_t \cap \mathbb S^{2N-1}$; we denote them by $g'_x$ and $g'_{\rm eucl}$ respectively. In particular their volume forms $dV_{g_x'}, dV_{g'_{\rm eucl}}$ are equivalent too. One has $dV_{g'_{\rm eucl}}= \iota_v dV_{g_{\rm eucl}}$ where $v$ is the restriction to $X_t$ of the unit outward radial vector
  $$\sum_{j=1}^{n+k} \left(z_j \frac{\partial}{\partial z_j}+\bar  z_j \frac{\partial}{\partial \bar z_j}\right).$$ 
  Hence, on $X_t\cap \mathbb S^{2N-1}$ one has
  $$dV_{g'_{\rm eucl}}=\iota_v (dd^c \|z\|^2)^n= 2 \Big(\frac{i}{\pi}\Big)^{n-1} d^c \|z\|^2 \wedge (dd^c \|z\|^2)^{n-1}.$$
  
  In the same way, $dV_{g'_{x}}= \iota_{v_x} dV_{g_x}$ where $v_x$ is the restriction to $X_t$ of the unit outward vector with respect to $dd^c \|\Phi_x\|^2$, hence $v_x=\Phi_x^*v$. 
  Therefore one has on $X_t \cap \mathbb S^{2N-1}$,
  %the following
  \begin{eqnarray*}
  dV_{g'_{x}}= \iota_{v_x} (dd^c \|\Phi_x\|^2)^n= \Phi_x^*(\iota_v (dd^c \|z\|^2)^n)&=& 2 \Big(\frac{i}{\pi}\Big)^{n-1} d^c \|\Phi_x\|^2 \wedge (dd^c \|\Phi_x\|^2)^{n-1}\\
  &=& 2^{n+1}  \Big(\frac{i}{\pi}\Big)^{n-1} d^c G_x \wedge (dd^c G_x)^{n-1}. 
  \end{eqnarray*}
 given that $G_x= \frac 12 \log  \|\Phi_x\|^2$ vanishes on the sphere and that $d^c \log u \wedge (dd^c \log u)^{n-1}=\frac 1{u^n} d^c u \wedge (dd^c u)^{n-1}$ for any smooth function $u$.
 %on the sphere. 
 This shows that the above two volume forms on $X_t \cap \mathbb S^{2N-1}$ are uniformly equivalent on $X_t\cap \mathbb S^{2N-1}$ hence it ends the proof.   \end{proof}

 \begin{lem}\label{lem: computation3}
 Let $\beta>0$ and $\mathbb{B}\subset \mathbb{C}^n$ be the unit ball. Then $\|z\|^{2\beta}$ is psh on $\mathbb{B}$ and there exists a constant $C_{\beta}>0$ (that depends only on $\beta$) such that 
 $$\frac{C_{\beta}^{-1}}{\|z\|^{2(1-\beta)}} \cdotp dd^c \|z\|^{2} \le dd^c \|z\|^{2\beta}  \le \frac{C_{\beta}}{\|z\|^{2(1-\beta)}} \cdotp dd^c \|z\|^{2}. $$ 
 \end{lem}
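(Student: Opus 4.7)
The plan is to carry out an elementary direct computation on the Euclidean unit ball $\mathbb{B}\subset \mathbb{C}^n$. Set $\rho(z):=\|z\|^2=\sum_{i=1}^n |z_i|^2$, so that $\|z\|^{2\beta}=\rho^\beta$ and $dd^c\rho = i\sum_i dz_i\wedge d\bar z_i>0$. The chain rule gives
\begin{equation*}
dd^c \rho^\beta \;=\; \beta\,\rho^{\beta-1}\,dd^c\rho \;+\; \beta(\beta-1)\,\rho^{\beta-2}\,i\,\partial\rho\wedge\bar\partial\rho,
\end{equation*}
which is the key identity. I will then observe the pointwise inequality of $(1,1)$-forms on $\mathbb{B}\setminus\{0\}$
\begin{equation*}
0 \;\le\; i\,\partial\rho\wedge\bar\partial\rho \;\le\; \rho\cdot dd^c\rho.
\end{equation*}
The left-hand inequality is automatic (it is a non-negative $(1,1)$-form), and for the right-hand side I would note that $i\,\partial\rho\wedge\bar\partial\rho = i\sum_{j,k}\bar z_j z_k\,dz_j\wedge d\bar z_k$ is of rank one and, diagonalizing with respect to $dd^c\rho$, has single non-zero eigenvalue $\|z\|^2=\rho$.

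Plugging these bounds into the identity splits naturally into two cases. If $\beta\ge 1$, the coefficient $\beta(\beta-1)\ge 0$, so using $0\le i\,\partial\rho\wedge\bar\partial\rho\le \rho\,dd^c\rho$ on the second term gives
\begin{equation*}
\beta\,\rho^{\beta-1}\,dd^c\rho \;\le\; dd^c\rho^\beta \;\le\; \beta^2\,\rho^{\beta-1}\,dd^c\rho.
\end{equation*}
If $0<\beta<1$, then $\beta(\beta-1)<0$, and the same bounds applied in the reverse direction yield
\begin{equation*}
\beta^2\,\rho^{\beta-1}\,dd^c\rho \;\le\; dd^c\rho^\beta \;\le\; \beta\,\rho^{\beta-1}\,dd^c\rho.
\end{equation*}
In either case, taking $C_\beta:=\max(\beta^{-2},\beta)+\max(\beta^{-1},\beta^2)$ (or any convenient constant depending only on $\beta$) and recalling $\rho^{\beta-1}=\|z\|^{-2(1-\beta)}$ yields the desired two-sided estimate. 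Plurisubharmonicity of $\|z\|^{2\beta}$ on all of $\mathbb{B}$ then follows from the lower bound on $\mathbb{B}\setminus\{0\}$ together with the fact that $\|z\|^{2\beta}$ is continuous at the origin, since a continuous function that is psh off a single point is psh everywhere.

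There is essentially no obstacle here: the only point requiring any care is the pointwise domination $i\,\partial\rho\wedge\bar\partial\rho\le \rho\cdot dd^c\rho$, which I would justify either by the rank-one/eigenvalue computation above or, equivalently, by Cauchy--Schwarz applied to $\partial\rho=\sum \bar z_i\,dz_i$ against an arbitrary tangent vector. Everything else is the chain rule and case analysis on the sign of $\beta(\beta-1)$.
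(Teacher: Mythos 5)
Your proof is correct and takes essentially the same route as the paper: both start from the chain-rule identity $dd^c\rho^\beta=\beta\rho^{\beta-1}dd^c\rho+\beta(\beta-1)\rho^{\beta-2}\,i\,\partial\rho\wedge\bar\partial\rho$ and exploit the rank-one structure of $i\,\partial\rho\wedge\bar\partial\rho$, the only difference being that the paper diagonalizes exactly (eigenvalues $1$ with multiplicity $n-1$ and $\beta$ with multiplicity $1$) while you use the cruder domination $0\le i\,\partial\rho\wedge\bar\partial\rho\le\rho\,dd^c\rho$, which merely worsens the constant to $\beta^2$. Your remark on extending plurisubharmonicity across the origin is a small point the paper leaves implicit, and your treatment of it is fine.
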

 
 \begin{proof}
 Let $\chi:\mathbb{R}^+\rightarrow \mathbb{R}^+$ be defined as $\chi(t):= t^\beta$ and $u:=\|z\|^2$. One has
% \begin{eqnarray*}
%dd^c \chi \circ u &=&  \chi{''} (u) \,d u \wedge d^c u +\chi'(u) \,dd^c u\\
% &=&  \beta  u^{\beta-1}  \left( dd^c u - (1-\beta) u^{-1} du \wedge d^c u  \right).
% \end{eqnarray*}
 $$
 dd^c \chi \circ u = \beta  u^{\beta-1}  \left( dd^c u - (1-\beta) u^{-1} du \wedge d^c u  \right).
 $$
 %It is then clear that, when $\beta\ge 1$, the function $\|z\|^{2\beta}$ is psh and, for any $k=0, \cdots, n$, we have $$(dd^c \|z\|^{2\beta})^k  \sim \frac{1}{\|z\|^{2k(2-\beta)}} (dd^c \|z\|^{2})^k.$$
 %When $\beta <1$, we can still infer that 
Note that 
 $\min \{1,\beta\}\,\cdotp \, dd^c u \le dd^c u - (1-\beta) u^{-1} du \wedge d^c u \le \max\{1,\beta\} \, \cdotp\, dd^c u$.
Observe that he hermitian matrix associated to the $(1,1)$-form $du \wedge d^c u$ is $(\bar z_i z_j)_{i\bar j}$. The latter has rank one and its non-zero eigenvalue coincides with its trace, i.e. $u$. Therefore the eigenvalues of the hermitian matrix $A:=\mathrm I_n-(1-\beta)u^{-1}(\bar z_i z_j)_{i\bar j}$  are $1$ (with multiplicity $n-1$) and $\beta$ (multiplicity $1$). This ends the proof.  
 \end{proof}
 
% {\color{red} 
% \begin{rem}
% Explain to which extent the results of this section extend to higher dimensional bases
 
% \vdots
 
% \vdots
% \end{rem}
% }
 
  % \begin{cor}
  %There exists a constant $C>0$ independent of $t$ such that
  %$$\int_{X_t \cap \mathbb S^{2n+1}} (-\psi_t)\, d^cG_x \wedge (dd^c G_x)^{n-1} \le C$$
  %\end{cor}
  
  %\begin{proof}
  %By Lemma~\ref{lemG}, it is enough to bound from above $\int_{X_t \cap \mathbb S^{2n+1}} (-\psi_t)\, d^c|z|^2 \wedge (dd^c |z|^2)^{n-1} $. Now, let $Z_t:=\mathbb B^{n+1} \cap X_t$ and note that $\partial Z_t= \mathbb S^{2n+1} \cap X_t$. Using Stokes formula twice we get 
  %\begin{eqnarray*}
  %\int_{\partial Z_t}  (-\psi_t)\, d^c|z|^2 \wedge (dd^c |z|^2)^{n-1} &=& \int_{ Z_t}  (-\psi_t)\,  (dd^c |z|^2)^{n}+ \int_{Z_t}(|z|^2-1) \, dd^c \psi_t \wedge  (dd^c |z|^2)^{n-1} \\
  %&&- \underbrace{\int_{\partial Z_t}(|z|^2-1)\, d^c \psi_t \wedge  (dd^c |z|^2)^{n-1}}_{=0}  \\
  %&\le &\int_{ Z_t}  (-\psi_t)\,  (dd^c |z|^2)^{n}
  %\end{eqnarray*}
  %and the corollary then follows from the strong compactness property.
 
  %\end{proof}

\section{Normalization in families} \label{sec:normalization}

Previous section allows us to check hypothesis (H1), as soon as the mean value of sup-normalized $\theta_t$-psh functions is
uniformly controlled.
It is classical that one can 
%uniformly
 compare the supremum and the mean value of $\theta$-psh functions on a fixed compact K\"ahler variety (see \cite[Prop.~8.5]{GZ17}). We conjecture that the following results holds
 
 \begin{conj}  \label{conj}
In the Setting~\ref{set},    there exists a constant $C>0$ such that: 
the inequality
$$
\sup_{X_t} \varphi_t -C\le \frac{1}{V} \int_{X_t} \varphi_t \, \omega{_t}^n \le \sup_{X_t} \varphi_t 
$$
holds for all $t\in \overline{\mathbb{D}}_{1/2}$ and for every function $\varphi_t\in \psh(X_t, \theta_t)$.
\end{conj}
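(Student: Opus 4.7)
The upper bound is immediate since $\varphi_t \leq \sup_{X_t}\varphi_t$ pointwise and $\omega_t^n$ is a positive measure of total mass $V$ by Lemma~\ref{constant}.

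For the lower bound, after normalizing $\sup_{X_t}\varphi_t = 0$, I need a uniform upper bound on $\int_{X_t}(-\varphi_t)\,\omega_t^n$ as $t$ varies in $\overline{\mathbb{D}}_{1/2}$ and $\varphi_t$ ranges over $\psh(X_t,\theta_t)$ with supremum zero. My plan is to argue by contradiction: suppose there are sequences $t_i \to t_\infty \in \overline{\mathbb{D}}_{1/2}$ and $\varphi_i \in \psh(X_{t_i},\theta_{t_i})$ with $\sup_{X_{t_i}}\varphi_i = 0$ and $\int_{X_{t_i}}(-\varphi_i)\,\omega_{t_i}^n \to +\infty$, and derive a contradiction in two stages.

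\textbf{Stage 1 (local $L^1$-control).} Pick $x_i \in X_{t_i}$ with $\varphi_i(x_i) \ge -1/i$; up to extraction, $x_i \to x_\infty \in X_{t_\infty}$. Work in a trivializing chart $(U,j_\alpha)$ around $x_\infty$ in which $\Theta|_U = dd^c\rho$ for some smooth $\rho$. Then the local potentials $\psi_i := \varphi_i + \rho|_{X_{t_i}}$ are honest psh functions on $U\cap X_{t_i}$, their value at $x_i$ stays uniformly bounded from below, and they are bounded above. A localized application of the uniform Skoda--Zeriahi estimate from Theorem~\ref{prop:uniformint}, together with the uniform integrability exponent from Corollary~\ref{cor:uniformexp}, should yield a uniform bound on $\int_{B\cap X_{t_i}}(-\psi_i)\,\omega_{t_i}^n$ for some small ball $B \Subset U$ centered at $x_\infty$.

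\textbf{Stage 2 (propagation).} The local $L^1$-bound must be propagated to all of $X_{t_i}$. On a fixed irreducible compact K\"ahler variety, this is the classical chain-of-balls argument (see, e.g., \cite[Prop.~8.5]{GZ17}), which uses irreducibility to ensure that sub-mean-value estimates can be chained along a finite sequence of overlapping charts covering the manifold. In a family, one would lift a fixed finite cover of $X_{t_\infty}^{\reg}$ by relatively compact coordinate patches of $\mathcal{X}$ to a cover of $X_{t}$ for $t$ close to $t_\infty$, and run the chaining uniformly in $t$.

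The main obstacle is Stage 2: even though the irreducibility of each $X_t$ guarantees termination of the chain on every \emph{fixed} fiber, making the number of links, the sizes of overlaps, and the associated constants uniform as $t$ varies requires geometric control on the family $\pi:\mathcal{X}\to \bD$ that is not automatic from the abstract K\"ahler setup of Setting~\ref{set}. This is precisely the reason the statement is left as a conjecture: the projective case can be treated with the explicit fixed-degree bound of Proposition~\ref{prop:uniformint_proj} (cf.~Remark~\ref{projcase}), and several geometric settings treated later in the paper supply the missing uniformity through problem-specific information (for instance, the plurisubharmonic variation of the envelope $V_\Theta$ from~\eqref{Vtheta}). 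A proof of the full conjecture seems to require either a substantial refinement of the covering strategy that respects how the complex structures degenerate, or an entirely different approach via positivity of direct images.
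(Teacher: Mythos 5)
Your reading of the situation is accurate: the statement really is left open in the paper, which establishes it only under the additional Assumption~\ref{aspt} (this is Proposition~\ref{mainprop}), and your diagnosis of where the naive strategy breaks down \--- the chaining/propagation step cannot be made uniform in $t$ for an arbitrary K\"ahler degeneration \--- is precisely the gap the authors themselves acknowledge. Your Stage 2 is essentially what the paper carries out in the locally trivial case (\S\ref{loctriv}) and, supplemented by a maximum-principle argument that pushes the near-maximum point a definite distance away from the singular locus of $\pi$, in the isolated-singularities case (\S\ref{isolated}).

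Two corrections, however, on the cases the paper does handle. First, in your Stage 1 the uniform Skoda--Zeriahi estimate (Theorem~\ref{prop:uniformint}) is not the right tool for extracting local $L^1$ control from a point where $\varphi_i$ is close to its supremum: that estimate bounds $\int e^{-\alpha\varphi_t}\omega_t^n$ \emph{in terms of} $\int\varphi_t\,\omega_t^n$, so it presupposes exactly the quantity you are trying to control; what is actually used is the standard $L^1_{\rm loc}$ compactness of sup-bounded psh functions on a fixed chart. Second, and for the same reason, the projective case is not obtained from Proposition~\ref{prop:uniformint_proj}: that inequality is perfectly consistent with $\int_{X_t}\varphi_t\,\omega_t^n\to-\infty$. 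The paper's route for the projective case and for smoothings (\S\ref{sec:green2}) is a genuinely different mechanism absent from your sketch: uniform Sobolev constants for the fibers via Michael--Simon (Proposition~\ref{sobolev}), uniform Poincar\'e constants (Proposition~\ref{poincare}), hence a uniform on-diagonal bound $|G(x,y,s)|\le C_0 s^{-n}$ for the heat kernel (Lemma~\ref{heat}) and a uniform lower bound for the Green functions $G_t$, and finally a Green representation inequality for quasi-psh functions (Claim~\ref{sup-int}) giving
$$
\frac1V\int_{X_t}\varphi_t\,\omega_t^n-\varphi_t(x)\;\ge\; nV\cdot\inf_{X_t^{\rm reg}}G_t(x,\cdot)
$$
directly. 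This spectral/Green-function input is the substantive new idea needed to go beyond the locally trivial setting, and it is where the projectivity (via the Li--Tian heat kernel comparison) or the smoothness of the nearby fibers enters.
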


In a preprint version of this paper, we claimed a proof of the conjecture above but a referee, whom we thank, pointed out a gap. In this section, we propose a large class of families for which the conjecture holds. More precisely, let us consider the following

\begin{assumption}
\label{aspt}
In Setting~\ref{set}, we assume additionnally that one of the following conditions is satisfied by the family $\pi: \cX\to \mathbb D$. 
\begin{enumerate}
\item The map $\pi$ is projective. 
\item The map $\pi$ is locally trivial. 
\item The fibers $X_t$ are smooth for $t\neq 0$. 
\item The fibers $X_t$ have isolated singularities for every $t\in \mathbb D$.  
\end{enumerate}
\end{assumption}

Recall that $\pi$ is said to be 
\begin{itemize}
\item {\it projective} if we have a commutative diagram as below $$
\begin{tikzcd}
\mathcal X \arrow[hookrightarrow]{rr}{\iota} \arrow[labels=below left]{rd}{\pi}& & \mathbb P^N\times \mathbb D \arrow{ld}{\mathrm{pr}_2}  \\
 &\mathbb D & 
\end{tikzcd}
$$
\item  {\it locally trivial} if, up to shrinking $\mathbb D$, there exists a euclidean open cover $(U_{\alpha})_{\alpha}$ of $\mathcal X$ and a collection of isomorphisms $$F_{\alpha}:\mathcal X|_{U_{\alpha}}\overset{\simeq}{\longrightarrow} (U_{\alpha}\cap X_0) \times \mathbb D$$ such that the following diagram is commutative
 \begin{equation}
 \label{lt}
\begin{tikzcd}
\mathcal X|_{U_{\alpha}}  \arrow[labels=below left]{rd}{\pi} \arrow{rr}{F_{\alpha}} & & (U_{\alpha}\cap X_0)\times \mathbb D \arrow{ld}{\mathrm{pr}_2}  \\
 &\mathbb D & 
\end{tikzcd}
\end{equation}
For instance, if $\mathcal X$ is smooth and if the map $\pi$ is a holomorphic submersion, then $\pi$ is automatically locally trivial. 
\end{itemize}

The main result in this section is the following

\begin{prop}  \label{mainprop}
In the Setting~\ref{set} and if Assumption~\ref{aspt} is satisfied, then Conjecture~\ref{conj} holds. That is, there exists a constant $C>0$ such that: 
the inequality
$$
\sup_{X_t} \varphi_t -C\le \frac{1}{V} \int_{X_t} \varphi_t \, \omega{_t}^n \le \sup_{X_t} \varphi_t 
$$
holds for all $t\in \overline{\mathbb{D}}_{1/2}$ and for every function $\varphi_t\in \psh(X_t, \theta_t)$.
\end{prop}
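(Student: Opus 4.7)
The upper bound $V^{-1}\int_{X_t} \varphi_t\,\omega_t^n \le \sup_{X_t} \varphi_t$ is immediate: $V^{-1}\omega_t^n$ is a probability measure on $X_t$ and $\varphi_t \le \sup_{X_t}\varphi_t$ pointwise.

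For the lower bound, the plan is a contradiction-and-compactness argument. Assume no uniform $C$ exists, and extract sequences $t_k \to t_\infty$ in $\overline{\mathbb D}_{1/2}$ and $\varphi_k \in \PSH(X_{t_k}, \theta_{t_k})$ with $\sup_{X_{t_k}} \varphi_k = 0$ but $V^{-1}\int_{X_{t_k}} \varphi_k\,\omega_{t_k}^n \to -\infty$. The goal is to produce a pluripotential-theoretic limit of $(\varphi_k)$ which, by Fatou, would be identically $-\infty$, contradicting the fact that a $\theta$-psh function on a compact K\"ahler space cannot vanish identically to $-\infty$. Assumption~\ref{aspt} is used precisely to build this limit.

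In the projective case (1), one uses the embedding $\iota:\mathcal X \hookrightarrow \mathbb P^N \times \mathbb D$ to regard each $\varphi_k$ as a $C\,\omega_{FS}|_{X_{t_k}}$-psh function for a uniform $C$, then applies a family version of the Coman--Guedj--Zeriahi extension theorem \cite{CGZ13} to extend $\varphi_k$ to a global $\tilde\varphi_k \in \PSH(\mathbb P^N, C'\,\omega_{FS})$ with a uniform constant $C'$ and uniform sup-bound. The classical sup-versus-mean inequality on the fixed manifold $\mathbb P^N$ provides a uniform $L^1(\mathbb P^N)$-bound on $(\tilde\varphi_k)$; combining this with the weak convergence of currents of integration $[X_{t_k}] \rightharpoonup [X_{t_\infty}]$ (which holds because the degree is constant in $k$), one passes to the limit and recovers a uniform lower bound on $\int_{X_{t_k}} \varphi_k\,\omega_{t_k}^n$. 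In the locally trivial case (2), a finite subcover of $\overline{\mathbb D}_{1/2}$ by trivializing charts \eqref{lt} and the biholomorphisms $F_\alpha$ identify $\varphi_k$ with an $F_\alpha^*\theta_{t_k}$-psh function on a relatively compact piece of $X_0$. Because the pulled-back forms $F_\alpha^*\theta_{t_k}$ vary continuously in $t_k$ and are uniformly comparable to a fixed reference form, the classical sup-versus-mean inequality on $X_0$ delivers a uniform bound after patching the local pieces via a regularized max. Cases (3) and (4) reduce to the same mechanism: for (3), smoothness of $X_t$ for $t\neq 0$ permits a local holomorphic trivialization of the family near each $t_\infty \neq 0$ and thus reduces to case (2), while the delicate limit $t_\infty = 0$ is handled via the ambient K\"ahler form $\omega$ on $\mathcal X$ itself; for (4), the isolated singularities form a pluripolar locus that is negligible for all pluripotential-theoretic limits, reducing the problem to the regular parts.

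The main obstacle will be securing uniform constants in the CGZ extension for case (1), and justifying the passage to the limit in the integrals $\int_{X_{t_k}} \varphi_k\,\omega_{t_k}^n$ when both the integrand and the ambient subvariety $X_{t_k}$ are moving with $k$. The former is addressed by revisiting the CGZ construction and verifying that its constants depend only on the ambient projective data and the bounded degree of $X_t$; the latter by combining the uniform Skoda-type integrability of Theorem~\ref{prop:uniformint} with Hartogs-type upper semi-continuity to control the defect terms in the limit.
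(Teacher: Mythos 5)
The upper bound and your treatment of the locally trivial case (2) are fine and essentially coincide with the paper's argument (\S\ref{loctriv}): pull back through the trivializations, use $L^1_{\rm loc}$-compactness of sup-normalized quasi-psh functions on pieces of $X_0$, and propagate across overlapping charts. The serious problems are in cases (1), (3) and (4). For the projective case your route — extend $\varphi_k$ to $\tilde\varphi_k\in\PSH(\mathbb P^N,C'\omega_{\rm FS})$ via Coman--Guedj--Zeriahi, bound $\|\tilde\varphi_k\|_{L^1(\mathbb P^N)}$ by the sup-versus-mean inequality on the \emph{fixed} $\mathbb P^N$, and "pass to the limit" using $[X_{t_k}]\rightharpoonup[X_{t_\infty}]$ — has two gaps. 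First, the uniformity of the extension constants as the subvariety moves is precisely the whole difficulty of the proposition, and you defer it rather than prove it. Second, and more fatally, even granting a uniform extension with $\sup_{\mathbb P^N}\tilde\varphi_k=0$ and a uniform $L^1(\mathbb P^N)$ bound, the conclusion does not follow: the functional $u\mapsto\int_{X_t}u\,\omega_{\rm FS}^n\wedge[X_t]$ is only \emph{upper} semicontinuous under $L^1$ convergence of $u$ and weak convergence of the cycles (Hartogs gives $\limsup\le$, the wrong direction for a lower bound). The paper's own Example~\ref{exa:irred} shows this is not a technicality: there a single \emph{fixed} $\omega$-psh function $\Phi$ on a projective family, sup-normalized on every fiber and with bounded ambient $L^1$ norm, satisfies $\int_{X_t}\varphi_t\,\omega_t\to-\infty$ because $X_0$ is reducible. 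So any correct argument must use the irreducibility of the fibers in an essential way, and your case (1) sketch never does. The paper instead proceeds quantitatively: uniform Sobolev and Poincaré inequalities on $(X_t^{\rm reg},\omega_t)$ (Michael--Simon for the Sobolev part, irreducibility for the Poincaré part), a uniform lower bound on the Green functions $G_t$, and the Green representation inequality of Claim~\ref{sup-int}.

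For case (3) the statement "the delicate limit $t_\infty=0$ is handled via the ambient K\"ahler form $\omega$ on $\mathcal X$ itself" is not an argument; the paper handles smoothings with the same Green-function machinery as the projective case. For case (4), "the isolated singularities form a pluripolar locus that is negligible for all pluripotential-theoretic limits" misses the point: the issue is not that the singular set carries mass, but that the family is \emph{not} locally trivial near the non-smooth points of $\pi$, so no compactness argument on a fixed model is available there. The paper's proof in \S\ref{isolated} needs two extra ingredients you do not supply: a maximum-principle argument (with a local $dd^c$-potential $\rho$ of $\omega$) producing, for each $k$, a point $x_k$ at uniform distance from the singular locus of $\pi$ where $\varphi_{t_k}(x_k)\ge -C$, so that local triviality applies away from the singularities; and an integration by parts against $(dd^c\rho)^n$ to control $\int(-\varphi_{t_k})\,\omega_{t_k}^n$ near the singular points. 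As written, the proposal establishes the proposition only in the locally trivial case.
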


We will prove Proposition~\ref{mainprop} in several independent steps. 
\begin{itemize}
\item[$\bullet$] In \S~\ref{loctriv}, we prove the locally trivial case. 
\item[$\bullet$] In \S~\ref{isolated}, we treat the case of isolated singularities. 
\item[$\bullet$] In \S~\!\ref{sec:sobpoin}-\ref{heatk}-\ref{sec:green} we introduce the material (Sobolev and Poincaré inequalities, heat kernels and Green's functions) that we will use in the final section. 
\item[$\bullet$] In \S~\ref{sec:green2}, we establish at the same time the projective case and the case of a smoothing, thereby completing the proof of Proposition~\ref{mainprop}.
\end{itemize}

By combining the above result with Theorem~\ref{prop:uniformint}, we get the following

\begin{thm}
  \label{H1}
 In Setting~\ref{set}, let us choose a positive number $\alpha \in  (0, \a(\Theta))$, which is possible thanks to Corollary~\ref{cor:uniformexp}. If Assumption~\ref{aspt} is satisfied, there exists a constant $ C_{\alpha}>0$ such that for all $t\in \overline{\mathbb{D}}_{1/2}$ and for all $\f_t\in \psh(X_t, \theta_t)$, we have
  \begin{equation*}\label{SkodaUnif2}
 \int_{X_t}e^{-\a (\f_t-\sup_{X_t} \f_t)} \omega_t^n \le C_\alpha.
 \end{equation*}
\end{thm}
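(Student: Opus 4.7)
The plan is to deduce Theorem~\ref{H1} as a direct synthesis of the two previously established results, namely Theorem~\ref{prop:uniformint} (the family version of Skoda-Zeriahi) and Proposition~\ref{mainprop} (the uniform comparison between supremum and mean value under Assumption~\ref{aspt}). There should be no genuine obstacle remaining: the real work was done in those two statements, and the present theorem is essentially their composition after a normalization step.

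First, given $\varphi_t \in \psh(X_t,\theta_t)$, I would pass to the sup-normalized function
\[
\tilde\varphi_t := \varphi_t - \sup_{X_t}\varphi_t,
\]
which still belongs to $\psh(X_t,\theta_t)$ and satisfies $\sup_{X_t}\tilde\varphi_t = 0$. Both Theorem~\ref{prop:uniformint} and Proposition~\ref{mainprop} require exactly this normalization as input, and the quantity to be bounded, $\int_{X_t} e^{-\alpha(\varphi_t - \sup_{X_t}\varphi_t)}\omega_t^n$, depends only on $\tilde\varphi_t$.

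Next, I would apply Theorem~\ref{prop:uniformint} to $\tilde\varphi_t$ to obtain constants $C>0$ and $A_\alpha>0$, independent of $t \in \overline{\mathbb{D}}_{1/2}$, such that
\[
\int_{X_t} e^{-\alpha\tilde\varphi_t}\,\omega_t^n \;\le\; C \exp\!\left(-A_\alpha \int_{X_t}\tilde\varphi_t\,\omega_t^n\right).
\]
Since $\sup_{X_t}\tilde\varphi_t = 0$, Proposition~\ref{mainprop} (which applies precisely because Assumption~\ref{aspt} is in force) provides a constant $C'>0$, independent of $t$, for which
\[
-\frac{1}{V}\int_{X_t}\tilde\varphi_t\,\omega_t^n \;\le\; C'.
\]
Plugging this uniform lower bound on the mean value into the previous inequality gives
\[
\int_{X_t} e^{-\alpha(\varphi_t - \sup_{X_t}\varphi_t)}\,\omega_t^n \;\le\; C \exp(A_\alpha C' V),
\]
and the desired constant $C_\alpha := C\exp(A_\alpha C' V)$ is manifestly independent of $t$ and of $\varphi_t$.

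The only subtlety worth checking is that the constant $A_\alpha$ delivered by Theorem~\ref{prop:uniformint} is indeed uniform in $t \in \overline{\mathbb{D}}_{1/2}$ and that $\alpha(\Theta)$ is positive, which is guaranteed by Corollary~\ref{cor:uniformexp}; likewise, the constant $C'$ in Proposition~\ref{mainprop} is uniform over the same parameter range. Once these two facts are in hand, the proof of Theorem~\ref{H1} fits in the few lines above.
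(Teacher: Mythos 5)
Your proof is correct and is exactly the paper's intended derivation: the paper simply states that Theorem~\ref{H1} follows by combining Proposition~\ref{mainprop} with Theorem~\ref{prop:uniformint}, and your argument spells out that combination (sup-normalize, apply the family Skoda inequality, then bound the mean value from below uniformly). No gaps.
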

%The right-hand-side inequality    is trivially true. 

\subsection{Irreducibility of the fibers}
The irreducibility of \textit {all} the fibers is a necessary assumption for the left-hand-side inequality in Conjecture~\ref{conj} to hold
 as the following example shows:

\begin{exa} \label{exa:irred}
%When the central fiber is not irreducible, one cannot expect such a result to be true. Indeed, let us define 
Consider
$\mathcal X \subset \mathbb{P}^2 \times \mathbb C$ where
$$
\mathcal X:=\{([x:y:z],t)\,;\, \, xy-tz^2=0\}.
$$ 
The variety $\mathcal X$ is smooth and comes equipped with the proper morphism $\pi: \mathcal X \to \mathbb C$
 induced by the second projection $\mathbb{P}^2 \times \mathbb C \to \mathbb C$. 
 %We then define 
 Set $X_t=\{[x:y:z]\in \mathbb{P}^2 \,:\, xy= tz^2\}.$ Note that $X_t$ is a smooth conic for $t\neq 0$ while $X_0=\{[x:y:z]\in \mathbb{P}^2 \,:\, xy= 0\}$ is the union of two lines. The quasi-psh function $\varphi$ on $\mathbb{P}^2 $ defined by
$$
\varphi([x:y:z])=\frac 14 \Big(\log(|x|^2+|z|^2)+\log |y|^2\Big)- \frac 12\log (|x|^2+|y|^2+|z|^2)+\frac{\log 2}{2} 
$$
clearly induces a $\omega$-psh function $\Phi$ on $\mathcal{X}$, where $\omega=\omega_{\rm FS}+dd^c |t|^2$,
$$
\Phi ([x:y:z], t)= \varphi([x:y:z]).
 $$
We set
$\varphi_t:=\Phi|_{X_t}$ and $\om_t:=\om|_{X_t}$. 
A simple computation shows that $\sup_{\mathcal{X}} \Phi=0$ and it is attained at points $([x:y:z],t)$ such that $|y|^2= |x|^2+|z|^2$. 
We also find that $\sup_{X_t}\varphi_t=0$ and the supremum is attained on the set
$$ 
S_t:= \left\{[x:1:z]\, :\, |x|=\frac1{2|t|}\cdotp \Big(\sqrt{4|t|^2+1}-1\Big),\,z^2= xt^{-1}\right \}.
$$
As $t\rightarrow 0$, $S_t$ becomes the circle $\mathcal C:=\{[0:1:e^{i\theta}]; \theta \in \mathbb R\}\subset X_0$ . Note also that $X_0= \ell \cup \ell'$, where $\ell:= \{[0:y:z]\}$ and $\ell':=\{ [x:0: z]\}$ and $\mathcal C \subset \ell$. 
The open annulus $U_t:=\{[z^2:t:z]; 1< |z|^2 <2\}\subset X_t$ satisfies
$$\int_{U_t}\om_t \ge  \delta$$ for some $\delta>0$ independent of $t$ as well as $$\varphi_t|_{U_t} \le \frac 12( \log |t| +1)$$
from which it follows that 
$$\lim_{t\to 0} \int_{X_t} \varphi_t \, \om_t = -\infty.$$
%Hence $\varphi_0= \log {\frac{2|y|^2}{|x|^2+|y|^2+|z|^2}} $ is such that $\sup_{X_t}\varphi_0= \sup_{\ell}\varphi_0= 0$ but $\varphi_0|_{\ell'}\equiv -\infty $ and so $\int_{X_0} \varphi_0 \,\omega_0^n=-\infty$.
%By considering the points $([t:1:1], t)$ one sees that the supremum of $\varphi_t$ and is close to $0$ and yet $\Phi|_{\{y=0\}} \equiv - \infty$.  
\end{exa}

%We now consider  interesting situations where we can positively answer the above mentioned Problem \ref{pb:relcpct1}.

\subsection{The locally trivial case}
\label{loctriv}

In this section, we prove Proposition~\ref{mainprop} under the assumption that $\pi$ is locally trivial; we borrow the notations from Diagram~\eqref{lt}. 

One can reduce the problem to showing that there exists a constant $C>0$ depending only on $\pi$ such that given any sequence of complex numbers $t_k\to 0$ and any functions $\f_k \in \PSH(X_{t_k}, \theta_{t_k})$ such that $\mathrm{sup}_{X_{t_k}} \f_k=0$, one has 
$$\int_{X_{t_k}} \vp_k\om_{t_k}^n \ge -C.$$

%Let $V_{\alpha}:=U_{\alpha}\cap X_0$. 
By compactness of $\pi^{-1}(\overline {\mathbb D}_{1/2})$, one can assume that $\alpha$ ranges among the finite set $\{1, \ldots, r\}$ and without loss of generality, one can assume that $U_{\alpha+1}\cap U_{\alpha}\neq \emptyset$, for any $\alpha \in \{1, \ldots, r-1\}$. Up to splitting the sequence $(\f_k)$ into (at most) $r$ subsequences, we can assume that for every $k$, $\f_k$ attains its maximum in the same set $U_{\alpha_0}$ for some fixed $\alpha_0\in\{1, \ldots, r\}$. By simplicity, we assume that $\alpha_0=1$. 

Let $G_{\alpha,k}:U_{\alpha}\cap X_0 \to U_{\alpha}\cap X_{t_k}$ be the biholomorphism defined as the inverse of the restriction of $F_{\alpha}$ to $U_\a\cap X_{t_k}$ and let us analyze the sequence of functions $\psi_{\alpha,k}:=G_{\alpha,k}^*\f_k$. As $F_{\alpha}^*(\om_0+i dt\wedge d\bar t)$ is commensurable to $\om$, there exists $C>0$ depending only on $\pi$ such that 
\begin{equation}
\label{comm}
C^{-1}\om_0 \le G_{\alpha,k}^*\omega_{t_k} \le C\om_0.
\end{equation}
In particular, up to increasing $C$, one can assume that $G_{\alpha,k}^*\theta_{t_k} \le C\om_0$. As a result, one has $\psi_{\alpha,k} \in \PSH(U_{\alpha}\cap X_0,C\om_0)$. 

The family $(\psi_{1,k})_k$ is a family of non-positive $C \om_0$-psh functions on the complex space $U_1\cap X_0$ attaining the value zero there, so it is relatively compact for the $L^1_{\rm loc}$ topology, cf e.g.  \cite[Proposition 8.5]{GZ17}. In particular, given any compact subset $U'_1 \Subset U_1$, the integral $\int_{U'_1} \psi_{1,k} \omega_0^n$ admits a lower bound depending only on $U'_1$ but not on $k$. 

Next, the family $(\psi_{2,k})_k$ is a family of non-positive $C\om_0$-psh functions on $U_2\cap X_0$. Therefore, either it converges locally uniformly to $-\infty$ or it is relatively compact on each compact subset. From \eqref{comm}, it follows that the family of automorphisms $H_k:=(G_{2,k}^{-1})|_{U_1\cap U_2 \cap X_{t_k}}\circ G_1|_{U_1\cap U_2\cap X_0}$ of $U_1\cap U_2\cap X_0$ satisfies
$$C^{-1}\om_0 \le H_{k}^*\omega_{0} \le C\om_0 \quad \mbox{and} \quad \psi_{2,k}=H_k^* \psi_{1,k}.$$
One deduces then easily that for any compact subset $U'_{12} \Subset U_1\cap U_2$, the integral $\int_{U'_{12}} \psi_{2,k} \omega_0^n$ admits a lower bound independent of $k$. In turn, this implies that $(\psi_{2,k})_k$ is relatively compact for the $L^1_{\rm loc}$ topology on the whole $U_2\cap X_0$. 

By iterating the argument, one finds that for any $\alpha$, the family $(\psi_{\alpha,k})_k$ is relatively compact for the $L^1_{\rm loc}(U_\a\cap X_0)$ topology and using the estimate \eqref{comm} as above, one concludes easily that $\int_{X_{t_k}} \f_k \om_{t_k}^n$ admits a uniform lower bound as claimed. 

This shows that Proposition~\ref{mainprop} holds whenever $\pi$ is locally trivial. An easy consequence is the following

\begin{cor}
\label{corol}
In Setting~\ref{set}, there exists a discrete set $Z\subset \mathbb D$ such that for every compact subset $K\Subset \mathbb D\setminus Z$, there exists a constant $C_K$ such that 
$$\int_{X_t} \vp_t \om_t^n \ge -C_K$$
for any collection of functions $\vp_t\in \PSH(X_t,\theta_t)$ such that $\sup_{X_t}\vp_t=0$. 

\noindent
Moreover, one can take $Z=\emptyset$ provided that the family $\pi:\mathcal X\to \mathbb D$ admits a simultaneous resolution of singularities, i.e. a proper, surjective holomorphic map $f:\mathcal Y \to \mathcal X$ from a Kähler manifold $\mathcal Y$ such that for any $t\in \mathbb D$, the induced morphism $f|_{Y_t}:Y_t\to X_t$ is a resolution of singularities, where $Y_t:=f^{-1}(X_t)$. 
\end{cor}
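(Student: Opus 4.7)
The plan is to derive the corollary from Proposition~\ref{mainprop} by passing to a resolution of singularities and transferring the mean-value bound via a pullback--integration inequality. I will first establish the ``moreover'' part (with a global simultaneous resolution), then reduce the general case to it by resolving the total space and excluding a discrete set of bad values in $\mathbb D$.

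\emph{Step 1: the ``moreover'' part.} Given a simultaneous resolution $f:\mathcal Y\to \cX$, the composition $\tilde \pi := \pi\circ f$ is a proper holomorphic submersion from the K\"ahler manifold $\mathcal Y$ to $\mathbb D$, whose fibers $Y_t$ are smooth compact irreducible K\"ahler manifolds; Setting~\ref{set} and Assumption~\ref{aspt}(4) (with vacuous singular locus) are therefore satisfied by $(\mathcal Y, \tilde \pi)$. Fixing a K\"ahler form $\omega_{\mathcal Y}$ on $\mathcal Y$ and setting $\tilde \om := f^*\om + \om_{\mathcal Y}$, $\tilde \Theta := f^*\Theta$, I would apply Proposition~\ref{mainprop} to the family $(\mathcal Y, \tilde\pi, \tilde \om, \tilde\Theta)$ to obtain a uniform constant $C$ such that
\[
\int_{Y_t} \tilde \psi_t \, \tilde \om_t^n \;\ge\; -\,C\,\tilde V
\qquad\text{for all } t\in \overline{\mathbb D}_{1/2}
\]
and all sup-normalized $\tilde \psi_t\in \PSH(Y_t, \tilde\Theta|_{Y_t})$, where $\tilde V := \int_{Y_t}\tilde\om_t^n$ is $t$-independent by Lemma~\ref{constant}. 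Given a sup-normalized $\vp_t\in \PSH(X_t,\theta_t)$, the pullback $\tilde \vp_t := (f|_{Y_t})^*\vp_t$ is sup-normalized and $\tilde\Theta|_{Y_t}$-psh. Expanding $\tilde \om_t^n = ((f|_{Y_t})^*\om_t)^n + R_t$ with a non-negative remainder $R_t$ and using $\tilde\vp_t \le 0$, one gets
\[
\int_{Y_t} \tilde \vp_t \, \tilde \om_t^n \;\le\; \int_{Y_t} \tilde\vp_t \,((f|_{Y_t})^*\om_t)^n \;=\; \int_{X_t} \vp_t \, \om_t^n,
\]
the last equality being the change of variables formula for the birational morphism $f|_{Y_t}$ (an isomorphism off a measure-zero subset). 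The two displays combine to the desired lower bound and establish the ``moreover'' case with $Z = \emptyset$.

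\emph{Step 2: the general case.} For the main statement, I would take any Hironaka desingularization $f:\tilde{\cX}\to \cX$ of the total space; by Varouchas' theorem, and possibly after shrinking $\mathbb D$, the manifold $\tilde{\cX}$ can be assumed K\"ahler. Let $Z\subset \mathbb D$ be the union of the critical values of $\tilde\pi := \pi\circ f$ and the $\tilde\pi$-image of those irreducible components of the $f$-exceptional set that are vertical over $\mathbb D$. Each of these is the image of a proper analytic subset of $\tilde{\cX}$ under a proper holomorphic map, so $Z$ is a proper analytic subset of $\mathbb D$, hence discrete. For $t\notin Z$, the fiber $\tilde X_t$ is smooth and $f|_{\tilde X_t}:\tilde X_t\to X_t$ is a resolution. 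Given a compact $K\Subset \mathbb D\setminus Z$, I would cover it by finitely many closed half-disks $\overline{D_i/2}$ with $\overline{D_i}\Subset \mathbb D\setminus Z$ and apply Step~1 to each restricted simultaneous resolution $\tilde{\cX}|_{D_i}\to \cX|_{D_i}$; this produces a uniform lower bound on $\overline{D_i/2}$, and a finite infimum yields $C_K$.

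\emph{Main obstacle.} The most delicate point will be to verify that $Z$ in Step~2 is genuinely discrete. This rests on the irreducibility of $\cX$ (hence of $\tilde{\cX}$), which forces the $f$-exceptional set to have codimension at least one and guarantees that only finitely many of its irreducible components can be vertical over any relatively compact subdisk; choosing the resolution $\tilde{\cX}$ to be K\"ahler also requires some care. Once these structural points are secured, the remainder of the argument is the routine pullback computation already carried out in Step~1.
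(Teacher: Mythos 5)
Your argument is correct and follows essentially the same route as the paper: resolve the total space, observe that $\pi\circ f$ is smooth (hence locally trivial) away from a discrete set of values of $t$, apply Proposition~\ref{mainprop} to the resolved family, and transfer the bound back via the comparison $f^*\om\le C\,\om_{\mathcal Y}$ together with the negativity of the potentials and the change of variables under the bimeromorphic map $f_t$. Your slightly more careful choice of $Z$ (adding the images of vertical exceptional components so that $f_t$ is guaranteed bimeromorphic) and the order in which you treat the two assertions are cosmetic differences only.
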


\begin{proof}
Let $f:\mathcal Y\to \mathcal X$ be a resolution of singularities of $\mathcal X$. One can assume that $\mathcal Y$ is a Kähler manifold; let us pick $\om_Y$ a Kähler form on $\mathcal Y$. The induced map $\rho:=\pi \circ f: \mathcal Y \to \mathbb D$ is surjective, hence by generic smoothness, it is smooth over the complement of a proper analytic subset $Z$ of $\mathbb D$. In particular, $Z$ is discrete. Note that over $Z$, the fibers of $\rho$ may have several irreducible components.

 We denote by $f_t$ the restriction $f|_{Y_t}:Y_t\to X_t$ of $f$ to the fiber $Y_t$, where $Y_t:=\rho^{-1}(t)$. For any $t\in \mathbb D \setminus Z$, the map $f_t$ is bimeromorphic, i.e. it is a resolution of singularities of $X_t$. Let us choose $K\Subset \mathbb D$ a compact subset. There exists a constant $C_K$ such that $f^*\om \le C_K\om_Y$ on $\rho^{-1}(K)$. In particular, for any $t\in K$, one has $f_t^*\vp_t \in \PSH(Y_t,C_KC_\Theta\, \om_Y)$ and $\sup_{Y_t} f_t^*\vp_t =0$. Now, if we additionally assume that $K\Subset \mathbb D \setminus Z$, we can apply the result above to the \textit{smooth} family $\rho|_{\rho^{-1}(K)}:\rho^{-1}(K)\to K$ to find another constant $C_K'>0$ satisfying 
$$\int_{Y_t} (f_t^*\vp_t) \, \om_Y^n \ge -C_K'$$
for any $t\in K$. As $\om_Y^n \ge C_K^{-n} \,f_t^*\om_t^n$, we deduce that 
$$\int_{X_t} \vp_t \, \om_t^n \ge -C_K'\cdot C_K^{n}$$
which concludes the first part of the proof. The second statement is an immediate consequence of the proof of the first one. Indeed, if $Y_t$ is smooth (as an analytic space), then $\pi \circ f$ is smooth in a neighborhood of $Y_t$ and the argument above can be run over a neighborhood of $t$. 
\end{proof}

\subsection{The case of isolated singularities}
\label{isolated}

In this section, we prove Proposition~\ref{mainprop} in the case where all fibers  $X_t$, $t\in \mathbb D$, have isolated singularities. 

\begin{rem}
\label{rem:isolated}
We would like to start with two observations. 
\begin{itemize}
\item[$\bullet$] This case includes the case where $n=\dim X_t=1$. 
\item[$\bullet$] If one only assumes that $X_0$ has isolated singularities, then it is easy to check that there exists $\ep>0$ such that $X_t$ has isolated singularities for any $t$ satisfying $|t|< \ep$. This is because  the locus $Z\subset \cX$ where $\pi$ is not smooth is an analytic set such that $\dim (Z\cap X_0)=0$ and by upper semi-continuity,  $Z$ has relative dimension $0$ over a neighborhood of $0\in\mathbb D$.
\end{itemize}
\end{rem}

We now proceed to proving Proposition~\ref{mainprop} in several steps. \\

\noindent
\textbf{Step 1.} Localization of the problem at $t=0$. 

\noindent
Let $f:\mathcal Y\to \mathcal X$ be a resolution of singularities $\mathcal X$. The induced family $\pi\circ f: \mathcal Y \to \mathbb D$ is generically smooth over $\mathbb D$ so for $r>0$ small enough, the restriction of $\pi\circ f$ to the inverse image of $\mathbb D_r$ has a most one singular fiber, corresponding to $t=0$. In particular, the family $\mathcal Y\to \mathbb D_r$ is locally trivial away from $Y_0$.  Applying the result in the locally trivial case (cf. \S~\ref{loctriv}) to the collection of $f^*\theta_t$-psh functions $f^*\f_t$, we see that for every compact subset $K\Subset \mathbb D_r^*$, there exists a constant $C_K$ independent of the chosen family such that $$\sup_{t\in K} \int_{X_t} (-\f_t)\om_t^n \le C_K,$$
cf also Corollary~\ref{corol}. This shows that it is enough to prove that for any sequence $t_k\to 0$ and any collection of sup-normalized $\theta_{t_k}$-psh functions $\vp_{t_k}$, one has 
$$\sup_{k\ge 1} \int_{X_{t_k}}(-\vp_{t_k}) \om_{t_k}^n <+\infty.$$

\noindent
\textbf{Step 2.} Choice of a good covering. 

\noindent
As the fibers are reduced, it follows from the jacobian criterion for smoothness that the smooth locus of $\pi$ coincides with the union of the smooth loci of $X_t$ when $t$ ranges in $\mathbb D$. Recall that $Z$, the singular locus of $\pi$,  is an analytic space of relative dimension at most zero. It has finitely many irreducible components (say when restricted to $\pi^{-1}(\mathbb D_{1/2}))$ and we can assume without loss of generality that this number is equal to the cardinality of $Z\cap X_0$.  Let $(V_{\alpha})_{\alpha}$ be a finite collection of (small) open balls in $\mathcal X$ centered at the (finitely many) singular points of $X_0$. Up to adding a finite amount of balls to the collection, one can assume that
\begin{enumerate}
\item[$(i)$] The reunion $V:=\cup_\a V_\a$ is an open neighborhood of $X_0\subset \mathcal X$. 
\item[$(ii)$] Each point of $Z\cap X_0$ belongs to exactly one element $V_\a$ of the covering.
\item[$(iii)$] For all $\alpha$, there exists $\rho_{\alpha}\in \mathcal C^{\infty}(V_{\alpha}, \mathbb R)$ such that $\om|_{V_{\alpha}}=dd^c \rho_\a$. 
\item[$(iv)$] There exists $r>0$ such that for all $\alpha$, one has $$Z \cap \partial V_{\alpha} \cap \pi^{-1}(\mathbb D_{r}) =\emptyset.$$
\end{enumerate}
%The first two points are clear. For the third one, we know that the closed subset $X_0^{\rm sing}$ of $\mathcal X$ is included in the open subset $V$, therefore there exists a relatively compact open subset $U\Subset V$ containing  $X_0^{\rm sing}$. 

Up to substracting a constant to $\rho_\a$, one can assume that $\rho_\a$ is non-negative. Moreover, there exists a constant $C_1>0$ such that 
$\rho_{\alpha}\le C_1$ on $V_{\a}$, for any $\alpha$. Let $(\chi_\a)_\a$ be a partition of unity associated to the covering $(V_{\a})_\a$. That means that $\sum_{\a} \chi_{\a}\equiv 1$ and $\mathrm{Supp}(\chi_\a) \subset V_\a$. Finally, let $\rho:=\sum \chi_\a \rho_\a$. 
If follows from $(ii)$ that one has  $\om=dd^c \rho$ in some neighborhood $W'_\a$ of each point of $Z\cap X_0$. We pick a relatively compact open subset $W_\a \Subset W'_\a$ and set $W:=\cup W_\a$. Up to decreasing $r$ a little, one can assume that $Z \cap \partial W \cap \pi^{-1}(\overline {\mathbb D}_{r}) =\emptyset$. In particular, there exists $\delta>0$ such that for any $t\in \overline{\mathbb D}_r$, one has $d_{\om}(\partial W \cap X_t, Z) \ge \delta$. In summary
\begin{equation}
\label{c1}
0\le \rho \le C_1,  \quad \om=dd^c \rho \, \mbox{ on } W, \quad d_{\om}(\partial W\cap X_t, Z) \ge \delta \, \mbox{ for all } t \in  \overline{\mathbb D}_r .
\end{equation} \\

\noindent
\textbf{Step 3.} Weak compactness locally outside $Z$. 

\noindent
Let $t_k$ be a sequence of numbers converging to zero, and let $\vp_{t_k}\in \PSH(X_{t_k}, \theta_{t_k})$ such that $\sup_{X_{t_k}} \vp_{t_k}=0$. We claim that there exists a sequence of points $x_k\in X_{t_k}$ and a constant $C_2>0$ such that 
\begin{enumerate}
\item[$(i)$] $\vp_{t_k}(x_k) \ge -C_2.$
\item[$(ii)$] $d_{\om}(x_k,Z) \ge \delta/2.$
\end{enumerate}
Indeed, let $y_k\in X_{t_k}$ be such that $\vp_{t_k}(y_k)=0$. If $d_{\om}(y_k,Z) \ge \delta/2$, then we are done. Otherwise, it means that we have $y_k\in W$ by the third item of \eqref{c1}. Now, the function $C_{\Theta}\rho+\vp_{t_k}$ is psh on $\overline W$ so by the maximum principle, there exists $x_k\in \partial W$ such that $(C_{\Theta}\rho+\vp_{t_k})(x_k) \ge (C_{\Theta}\rho+\vp_{t_k})(y_k) \ge 0$. By the first item of \eqref{c1}, we deduce $\vp_{t_k}(x_k) \ge -C_2$ where we set $C_2:=C_1C_{\Theta}$. \\

Let $U:=\{x\in \pi^{-1}(\mathbb D_r); d(x,Z) >\delta/2\}$. The map $\pi$ is smooth on $U$ and one can cover $U$ by \textit{finitely} many open subsets $(U_j)_{1\le j\le p}$ isomorphic to $(U_j \cap X_0) \times \mathbb D_r$ over $\mathbb D_r$. 
Because of $(i)$, we can argue as in the locally trivial case (cf. \S~\ref{loctriv}) by exporting the functions $\vp_{t_k}|_{U_j\cap X_{t_k}}$ to the fixed space $U_j\cap X_0$ and get relative compactness there. In particular, one can find a constant $C_3>0$ independent of $k$ such that
 \begin{equation}
 \label{c2}
  \int_{U\cap X_{t_k}}(-\vp_{t_k}) \om_{t_k}^n \le C_3.
  \end{equation}
  
 \noindent
\textbf{Step 4.} The integral bound. 

\noindent
On $W$, one has $\om=dd^c\rho$. This implies that $\om^n=(dd^c \rho)^n+T$ for some smooth, closed $(n,n)$-form $T$ on $\pi^{-1}(\mathbb D_r)$ such that $T|_W\equiv 0$. Let us introduce constants $C_4, C_5$ such that $-C_4\om^{n-1} \le (dd^c \rho)^{n-1} \le C_4\om^{n-1}$ and $T \le C_5 \om^n$. As the complement of $W$ in $\pi^{-1}(\mathbb D_r)$ is included in $U$, it follows from \eqref{c2} that 
\begin{equation}
\label{c3} 
  \int_{X_{t_k}}(-\vp_{t_k}) \,T \le C_5 C_3.
  \end{equation}
  Moreover, one has 
    \begin{align*}
   \int_{X_{t_k}}(-\vp_{t_k})(dd^c \rho)^n &=   \int_{X_{t_k}}-\rho dd^c\vp_{t_k}\wedge (dd^c \rho)^{n-1}\\
   &=- \int_{X_{t_k}}\rho(\theta_{t_k}+ dd^c\vp_{t_k})\wedge (dd^c \rho)^{n-1} + \int_{ X_{t_k}}\rho \theta_{t_k} \wedge (dd^c \rho)^{n-1}\\
   & \le C_4 C_1 \int_{X_{t_k}}(\theta_{t_k}+ dd^c\vp_{t_k})\wedge \om^{n-1}+C_{\Theta} C_4 C_1\cdot V\\
   &\le 2C_1C_4C_{\Theta}\cdot V.
  \end{align*}
  All in all, one finds
  $$   \int_{ X_{t_k}}(-\vp_{t_k}) \om_{t_k}^n \le C_6 $$ 
where $C_6=C_3C_5+2C_1C_4C_{\Theta}\cdot V.$

% \begin{set}
%\label{hset}
%Let $\pi:\cX\to \bD$ be a proper, holomorphic surjective map from a Kähler space $\cX$ to the unit disk of relative dimension $n$. We assume that the fibers of $\pi$ are connected and reduced, and that the map $\pi$ is projective; that is, there exists an integer $N$ and an embedding $j:\cX\hookrightarrow \mathbb P^N\times \bD$ such that $\pi=\mathrm{pr_2}\circ j$. 
%\end{set} 

%\begin{prop}
%\label{mainprop}
%In Setting~\ref{hset}, assume additionally that all fibers $X_t$ are irreducible and let $K\Subset \bD$.  Fix $\f_t \in \PSH(X_t,\om_t)$ a collection of $\om_t$-psh functions on $X_t$,
%normalized by $\sup_{X_t} \f_t=0$. 
%Then, there exists a constant $C=C(K)>0$ such that for all $t\in K$, we have
%$$\int_{X_t} (-\varphi_t) \, \omega_t^n\le C.$$
%\end{prop}

\subsection{Sobolev and Poincaré inequalities}
\label{sec:sobpoin}

In this section, we work in the Setting~\ref{set} above and we assume from now on that the relative dimension $n=\dim_{\mathbb C} X_t$ satisfies $n>1$, since the case $n=1$ has already been dealt with in \S~\ref{isolated}, cf. Remark~\ref{rem:isolated}.

For $t\in \bD$, we set $X_t:=\pi^{-1}(t)$ and denote by $\xtr$ the regular locus of $X_t$. We fix a Kähler form $\omega$ on $\mathcal X$ and set
$$
\omega_t:=\omega_{|X_t}.
$$

\label{poinbolev}
\begin{prop}
\label{sobolev}
%In Setting~\ref{hset}, 
Let $K\Subset \bD$. There exists $C_S=C_S(K)$ such that
$$\forall t\in K, \forall f \in \mathcal C^{\infty}_0(\xtr), \quad \left(\int_{X_t} |f|^{\frac{2n}{n-1}}\om_t^n\right)^\frac{n-1}{n} \le C_S \int_{X_t} (|f|^2+|df|^2_{\om_t}) \, \om_t^n .$$
%while if $n=1$, one has
%$$\forall t\in K, \forall f \in \mathcal C^{\infty}_0(\xtr), \quad \left(\int_{X_t} |f|^{2}\om_t\right)^\frac{1}{2} \le C_S \int_{X_t} (|f|+|df|_{\om_t}) \, \om_t $$
\end{prop}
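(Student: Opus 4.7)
The plan is to reduce the global Sobolev inequality on the singular Kähler space $(X_t,\omega_t)$ to the local Michael--Simon Sobolev inequality for complex submanifolds of Euclidean space, exploiting the fact that the exponent $\tfrac{2n}{n-1}$ is exactly the $L^2$-Sobolev exponent in real dimension $2n$, and that complex subvarieties of $\mathbb C^N$ are minimal on their regular locus (so the mean-curvature term disappears from Michael--Simon).

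First, I would fix a relatively compact open neighborhood $\Omega \Subset \pi^{-1}(\bD)$ of $\pi^{-1}(K)$ and choose a finite covering $\{U_\alpha\}$ of $\Omega$ by open sets that admit embeddings $j_\alpha : U_\alpha \hookrightarrow \mathbb C^{N_\alpha}$ (with $j_\alpha(U_\alpha)$ contained in some ball), such that, by the very definition of a Kähler form on a singular space recalled in Setting~\ref{set}, there are constants $c_\alpha, C_\alpha > 0$ with
$$c_\alpha \, (j_\alpha^* \omega_{\rm eucl}) \le \omega \le C_\alpha \, (j_\alpha^* \omega_{\rm eucl}) \quad \text{on } U_\alpha.$$
Restricting to $X_t$, one obtains a uniform (in $t\in K$) comparison between $\omega_t$ and the Euclidean metric on $j_\alpha(U_\alpha \cap X_t)$. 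I would then pick a smooth partition of unity $\{\chi_\alpha^2\}$ subordinate to a shrinking of this covering that still covers $\pi^{-1}(K)$, so that $\sum_\alpha \chi_\alpha^2 \equiv 1$ near $\pi^{-1}(K)$ with $\chi_\alpha$ smooth, compactly supported in $U_\alpha$, and with $|d\chi_\alpha|_\omega$ uniformly bounded.

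The core local step is the following: for $g \in \mathcal C_0^\infty(U_\alpha \cap X_t^{\reg})$, extend $g$ by zero outside $X_t^{\reg}$ and view it as a compactly supported smooth function on the smooth complex submanifold $j_\alpha(U_\alpha \cap X_t^{\reg}) \subset \mathbb C^{N_\alpha}$. The Michael--Simon Sobolev inequality for immersed submanifolds of Euclidean space, combined with the vanishing of mean curvature for complex submanifolds, yields
$$\left(\int_{X_t} |g|^{\frac{2n}{2n-1}} dV_{\rm eucl}\right)^{\frac{2n-1}{2n}} \le C(n) \int_{X_t} |dg|_{\rm eucl}\, dV_{\rm eucl}.$$
Applying this to $|g|^{p}$ with $p = \tfrac{2n-1}{n-1}$ and Cauchy--Schwarz converts the $L^1$-Sobolev into the $L^2$-Sobolev
$$\left(\int_{X_t} |g|^{\frac{2n}{n-1}} dV_{\rm eucl}\right)^{\frac{n-1}{n}} \le C'(n) \int_{X_t} |dg|^2_{\rm eucl}\, dV_{\rm eucl}.$$
The uniform metric comparison on $U_\alpha$ transports this to the Kähler data, giving the same inequality with $\omega_t$ everywhere, with a constant depending only on $n$ and on the covering.

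Finally, I would apply the local inequality to each $g_\alpha := \chi_\alpha f$. Since $|dg_\alpha|^2_{\omega_t} \le 2\chi_\alpha^2 |df|^2_{\omega_t} + 2|d\chi_\alpha|^2_\omega |f|^2$, summation over $\alpha$ via $|f|^{\frac{2n}{n-1}} \le C \sum_\alpha |g_\alpha|^{\frac{2n}{n-1}}$ (up to bounded multiplicative factor coming from $\sum \chi_\alpha^2 =1$ and convexity) produces the desired global Sobolev inequality with the $L^2$-term on the right. The constant $C_S(K)$ arises only through the fixed choice of covering and partition of unity on the compact set $K$, hence is independent of $t \in K$. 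The main obstacle is ensuring that the singular locus of $X_t$ poses no problem: it is a complex subvariety of real codimension at least two in $X_t$, and the $\chi_\alpha f$ are compactly supported in $X_t^{\reg}$, so Michael--Simon applies without modification, and this step is in fact a cosmetic subtlety rather than a genuine analytic difficulty. The real delicacy is getting the comparison constants in Step~1 to be uniform in $t \in K$, which requires that our Kähler form on $\mathcal X$ does come from local embeddings into $\mathbb C^N$ in the strong sense of Setting~\ref{set}.
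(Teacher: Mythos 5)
Your proposal is correct and follows essentially the same route as the paper: localize via a partition of unity subordinate to embeddable charts, use the quasi-isometry between $\omega$ and the Euclidean metric under a local embedding $j_\alpha:U_\alpha\hookrightarrow \mathbb C^{N}$, note that complex subvarieties are minimal (Wirtinger), and apply Michael--Simon. You merely spell out two steps the paper leaves implicit, namely the passage from the $L^1$ to the $L^2$ Michael--Simon inequality via $|g|^{p}$ with $p=\tfrac{2n-1}{n-1}$ and Cauchy--Schwarz, and the gluing step that produces the $|f|^2$ term on the right-hand side.
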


\begin{rem}
The inequality above extends immediately to the functions $f\in W^{1,2}(\xtr)$, i.e. such that $f,df\in L^2(\xtr,\om_t)$.
\end{rem}

\begin{proof}
Because of the existence of partition of unity, the statement above is local. That means that it is enough to show the above inequality for any $t\in K$ and any $f\in  \mathcal C^{\infty}_0(U_i\cap \xtr)$ where $U_i\subset \cX$ are open sets such that $\cup U_i=\cX$. 

We fix such an open set $U_i$ and we drop the index $i$ in what follows. Without loss of generality, one can assume that there exists an embedding $U_i\hookrightarrow \mathbb C^N$ and that $\om|_U$ and $\omega_{\mathbb C^N}|_U$ are quasi-isometric. Because Sobolev inequality is essentially insensitive to quasi-isometry, it is enough to show the inequality replacing $\om_t$ by $\omega_{\mathbb C^N}|_{U_t}$ where $U_t:=U\cap X_t$. 

Now, the isometric embeddings $(U_t^{\rm reg}, \omega_{\mathbb C^N}|_{U_t}) \hookrightarrow (\mathbb C^N, \omega_{\mathbb C^N})$ provide a family of minimal submanifolds (i.e. with zero mean curvature vector) of the euclidean space by virtue of Wirtinger inequality. The expected inequality is now a direct application of Michael-Simon's Sobolev inequality \cite[Thm.~2.1]{MS}.
\end{proof}

%\begin{rem}
%The proof above does not make use of the projectivity of $\pi$ and is valid in the general Kähler setting. 
%\end{rem}

\begin{prop}
\label{poincare}
%In Setting~\ref{hset}, assume additionally that all fibers $X_t$ are irreducible and 
Let $K\Subset \bD$. There exists $C_P=C_P(K)$ such that
$$\forall t\in K, \forall f \in \mathcal W^{1,2}_0(\xtr), \quad \int_{X_t} |f|^2\om_t^n \le C_P \int_{X_t} |df|^2_{\om_t} \, \om_t^n .$$
\end{prop}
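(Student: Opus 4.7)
The plan is to derive the uniform Poincaré inequality from the uniform Sobolev inequality of Proposition~\ref{sobolev} by the classical compactness--contradiction argument, adapted to the family setting. Suppose the conclusion fails. Then there exist sequences $t_k\in K$ and $f_k\in \mathcal W^{1,2}_0(X_{t_k}^{\reg})$ with $\int_{X_{t_k}}|f_k|^2\,\om_{t_k}^n=1$ while $\int_{X_{t_k}}|df_k|^2_{\om_{t_k}}\,\om_{t_k}^n\le 1/k$. Passing to a subsequence one may assume $t_k\to t_\infty\in K$.

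From Proposition~\ref{sobolev} applied to each $f_k$ one deduces a uniform bound $\|f_k\|_{L^{2n/(n-1)}(X_{t_k})}\le M(K)$, so in particular the family $\{f_k^2\}$ is equi-integrable with respect to the reference volume forms. To produce a limit function, cover $\pi^{-1}(K)$ by finitely many charts $U_\alpha$ with embeddings $j_\alpha: U_\alpha\hookrightarrow\mathbb C^N$ and, away from the (closed, proper) critical locus of $\pi$, use local product trivializations of the submersive part of $\pi$ to view each $f_k|_{U_\alpha\cap X_{t_k}^{\reg}}$ as a function on the fixed reference slice $U_\alpha\cap X_{t_\infty}^{\reg}$. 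The uniform Sobolev estimate combined with $\|df_k\|_{L^2}\to 0$ then yields, via a Rellich--Kondrachov argument, a subsequence converging weakly in $W^{1,2}_{\mathrm{loc}}(X_{t_\infty}^{\reg})$ and strongly in $L^2_{\mathrm{loc}}$ to some $f_\infty$. The equi-integrability promotes this to strong $L^2$ convergence globally on $X_{t_\infty}$, so $\|f_\infty\|_{L^2(X_{t_\infty})}=1$ and $df_\infty=0$ weakly.

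Since $X_{t_\infty}$ is irreducible, hence connected (Setting~\ref{set}), the vanishing of $df_\infty$ forces $f_\infty$ to be a non-zero constant. On the other hand, the defining constraint $f_k\in \mathcal W^{1,2}_0(X_{t_k}^{\reg})$---understood as a vanishing of the mean of $f_k$, or equivalently as a vanishing condition at the singular locus---passes to the limit and forces $f_\infty$ to be orthogonal to constants (or to vanish on a set of positive measure), contradicting $f_\infty\equiv c\neq 0$.

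The principal obstacle is Step~2: arranging the compactness extraction across the varying fibers so that the limit $f_\infty$ is well-defined on $X_{t_\infty}^{\reg}$ and the $\mathcal W^{1,2}_0$ constraint is inherited. When the fibers are singular this is delicate, but it can be handled either by refining the local-trivialisation and cut-off arguments already used in the locally trivial case of \S~\ref{loctriv} and in the isolated-singularity case of \S~\ref{isolated}, or by reducing to the generically smooth family obtained from a simultaneous resolution $\mathcal Y\to\mathcal X$ in the spirit of Corollary~\ref{corol} and then descending along the proper map.
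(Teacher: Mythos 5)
Your proposal is correct and is essentially the paper's own argument: the paper treats the fixed-$t$ case via the discreteness of the spectrum of $\Delta_{\om_t}$ (\cite[Thm.~0.2]{Bei}) together with the connectedness of $\xtr$, and for uniformity in $t$ it invokes precisely the Sobolev-plus-irreducibility compactness/contradiction scheme you describe, referring to \cite[Prop.~3.2]{RZ0} for the detailed extraction of the limit across the varying fibers. One small correction: $\mathcal W^{1,2}_0(\xtr)$ is defined in the paper by the zero-mean condition $\int_{X_t} f\,\om_t^n=0$, which is \emph{not} equivalent to a vanishing condition at the singular locus; since your argument actually uses the zero-mean reading, this aside does not affect the proof.
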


In the statement above, the space $W^{1,2}_0(\xtr)$ is defined as the space of functions $f$ on $\xtr$ such that $f,df\in  L^2(\xtr,\om_t)$ and $\int_{X_t} f\om_t^n=0$.

\begin{proof}
First, we claim that for each $t\in \bD$, there exists such a Poincaré constant $C_{P,t}$. Indeed, thanks to \cite[Thm.~0.2]{Bei}, the Laplacian $\Delta_{\om_t}$ is positive, self-adjoint and its spectrum is discrete. It remains to show that its kernel is one-dimensional. Now, if $f\in W^{1,2}(\xtr)$ is such that $\Delta_t f=0$, it means that for every $u\in W^{1,2}(\xtr)$, we have $\langle \nabla u, \nabla f\rangle=0$. In particular, taking $u=f$ shows that $f$ is locally constant on $\xtr$. As $X_t$ is irreducible, $\xtr$ is connected and the result follows.  

Given the absolute case explained above, the family version of Poincar\'e inequality
follows from  Proposition~\ref{sobolev} and the irreducibility of the fibers:  we refer the reader
 to \cite[Prop. 3.2]{RZ0} for a detailed argument (the projectivity assumption made by Ruan-Zhang being unnecessary for this part of the argument).
\end{proof}

\subsection{Heat kernels and Green's functions}
\label{heatk}
In this section as well as in the following section~\ref{sec:green}, we go back to the absolute case and consider an \textit{irreducible} and reduced Kähler space $(X,\omega)$ of dimension $n=\dim_{\mathbb C} X$ satisfying $n>1$.
 %endowed with the restriction $\om$ of the Fubini-Study metric on $\mathbb P^N$ to $X$. 
 
 When $X$ is smooth, it is well-known (cf e.g. \cite[\S~VI]{Chavel}) that there exists a smooth, positive function $H:X\times X \times (0,+\infty)$, symmetric in its space variable and such that if $\Delta:=\tr_\om dd^c$, one has
\begin{enumerate}
\item[$\bullet$] $(-\Delta_y+\partial_t)H(x,y,t)=0$.
\item[$\bullet$] For every $x\in X$, one has weak convergence $H(x,\cdot, t)\om^n \underset{t\to 0}{\longrightarrow} \delta_x$.
\end{enumerate} 
In the general case where $X$ may have singularities, one can introduce $X_{\ep}=X\setminus V_{\ep}$ where $V_{\ep}$ is a closed $\ep$-neighborhood of $X_{\rm sing}$ with smooth boundary. Then, there exists a unique smooth, positive function $H_{\ep}$ on $X_\ep \times X_\ep \times  (0,+\infty)$ such that 
\begin{enumerate}
\item[$\bullet$] $(-\Delta_y+\partial_t)H_\ep(x,y,t)=0$.
\item[$\bullet$] $H_{\ep}(x,y,t)\to 0$ whenever $x$ or $y$ approaches $\partial X_\ep$.
\item[$\bullet$] For every $x\in X_\ep$, one has weak convergence $H_\ep(x,\cdot, t)\om^n \underset{t\to 0}{\longrightarrow} \delta_x$.
\end{enumerate}

Moreover, given $(x,y,t)\in X_{\ep_0}\times X_{\ep_0} \times (0,+\infty)$, the function $(0,\ep_0)\ni \ep \mapsto H_{\ep}(x,y,t)$ is decreasing. Using \cite[VIII.2 Thm.~4]{Chavel} and its proof, we additionally see that the limit $H:=\lim_\ep H_\ep$ is everywhere finite and satisfies

% and for any fixed neighborhood $W:=U\times U \times I \subset X_{\ep_0}\times X_{\ep_0} \times (0,+\infty)$ of $(x,y,t)$, there exists a constant $C_I$ independent of $\ep$ or $U$ such that $\|\nabla H_{\ep}\|_{L^{\infty} (W)}\le C_I$. The last estimate is a direct consequence of Li-Yau's gradient estimate \cite{LY80}. 
%Using \cite[Lem.~3.1]{LT95}, we get a function $H$ on $X_\reg \times X_\reg \times (0,+\infty)$ such that that
\begin{enumerate}
\item[$\bullet$]  $H$ is positive and smooth on $X_\reg\times X_\reg \times (0,+\infty)$.
%\item[$\bullet$] For any $x\in X_\reg$, one has $H(x,\cdot, t)\in W^{1,2}(X_\reg)$.
\item[$\bullet$] $(-\Delta_y+\partial_t)H(x,y,t)=0$.
\item[$\bullet$] For all $x,y\in X_\reg$ and $t,s>0$, one has 
\begin{equation}
\label{doubleint}
H(x,y,t+s)= \int_{X}H(x,\cdot,t) H(\cdot, y,s) \om^n.
\end{equation}
\item[$\bullet$] For any $x\in X_\reg$, one has $H(x,\cdot, t)\om^n \underset{t\to 0}{\longrightarrow} \delta_x$ weakly.
\end{enumerate}

%\forall t>0, \int_X H(x,\cdot, t) \om^n = 1 \quad \mbox{and} \quad 

When $X\subset \mathbb P^N$ is {\it projective} and $\om=\om_{\rm FS}|_X$, Li and Tian have showed in \cite{LT95} that there is an absolute inequality
\begin{equation}
\label{ineqLT}
H(x,y,t) \le H_{\mathbb P^n}(d_{\mathbb P^N}(x,y),t)
\end{equation}
for any $x,y\in X_\reg$ and $t\in (0,+\infty)$, where $H_{\mathbb P^n}$ is the heat kernel of $(\mathbb P^n, \omega_{\rm FS})$, whose dependence in the space variables $x,y$ is known to reduce to a single real variable, namely the distance between those two points. 

In particular, $H(x,\cdot, t)$ is \textit{bounded} on $X_{\rm reg}$ for any $x\in X_{\rm reg}$ and $t>0$. Since  $X_{\rm sing}$ has real codimension at least two, it admits cut-off functions whose gradient converges to zero in $L^2$,  and this allows one to perform integration by parts as in the compact case for {\it bounded} functions in $W^{1,2}$. We refer to \cite[Lem~3.1]{LT95} for more details; we will also rely on the latter result which states that $H(x,\cdot, t)\in W^{1,2}$ and that is satisfies the conservation property
$$\forall t>0, \int_X H(x,\cdot, t) \om^n = 1. $$

%\begin{proof}[Proof of Claim~\ref{dirac}]
%First, we know from \cite[Lem.~3.1]{LT95} that the quantity $\int_X H(x,\cdot, t)\om^n$ is independent of $t\in (0,+\infty)$. 
%From the inequality $H_\ep(x,y,t) \le H(x,y,t)$, we find that any weak limit $\widetilde H(x,\cdot)$ of $H(x,\cdot, t)$ when $t\to 0$ satisfies $\widetilde H(x,\cdot)\ge \delta_x$. Next, by Green's formula, we have that $\partial_t \int_{X_\ep} H_{\ep}(x,\cdot,t) \om^n \le 0$ and, in particular, we find $\int_{X_{\ep}} H_\ep(x,\cdot, t) \om^n \le 1$. As mass only decreases at the limit, we have $\int_X H(x,\cdot,t)\om^n \le 1$ and therefore $\int_X \widetilde H(x,\cdot)\om^n \le 1$. This shows that we have equality $\widetilde H(x,\cdot)= \delta_x$ and the claim is proved. 
%\end{proof}

Below are a few more properties that will be useful later, which are certainly standard in the smooth case. For this purpose, one introduces the function 
$$G(x,y,t):=H(x,y,t)-\frac 1 V$$ 
where $V:=\int_X \om^n$. The key information for us will be given by the fourth item, for which the arguments are borrowed from \cite{ChengLi}, see also \cite[App.~A]{Siu87}.

\begin{lem}
\label{heat}
Assume either that $X$ is smooth or that $X\subset \mathbb P^N$ is projective and $\om=\omega_{\rm FS}|_{X}$. Let $x,y\in X_\reg$. We have
\begin{enumerate}
\item $G(x,y,t)>-\frac 1V$, $\int_X G(x,\cdot, t) \om^n =0$ and $\int_X |G(x,\cdot,t)|\om^n \le 2$. 
\item $|G(x,y,t)|^2 \le G(x,x,t)G(y,y,t)$.
\item $H(x,x,t)\to +\infty$ when $t\to 0$.
\item There exists a constant $C_0$ depending only on the Sobolev and Poincaré constant of $(X_\reg, \om)$ such that 
$$\left| G(x,y,t) \right| \le C_0t^{-n}$$
for any $x,y\in X_\reg$ and any $t>0$. 
%\item \label{majoration}There exists a constant $C_1>0$ depending only on the Sobolev and Poincaré constants and the volume of $(X_\reg, \om)$ such that 
%$$H(x,y,t)\le   C_1(1+t^{-n}) \cdot e^{-d_{\mathcal X}(x,y)^2/5t}$$ 
%for any $x,y\in X_\reg$ and any $t>0$. 
%\item  For any $x\in X_\reg$, one has $H(x,\cdot, t)\in W^{1,2}(X_\reg)$. 
\end{enumerate}
\end{lem}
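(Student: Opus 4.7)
The plan is to handle items (1)--(3) by direct arguments from the heat kernel structure and the semigroup property, and then to devote the main effort to item (4), which will rest on a Nash-type inequality derived from the Sobolev and Poincar\'e inequalities of the previous subsection.

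For item (1), I will read off strict positivity from $H>0$, the zero-mean condition from the conservation $\int_X H(x,\cdot,t)\om^n=1$, and the bound on $\|G(x,\cdot,t)\|_{L^1}$ by splitting $X$ into $\{G>0\}$ and $\{G\le 0\}$: on the negative part $|G|\le 1/V$ so its $\om^n$-integral is at most $1$, and since $\int G\,\om^n=0$ the two parts carry equal $L^1$ mass, giving the bound~$2$. For item (2), I will first check that $G$ satisfies the semigroup identity
\[
G(x,y,s+t)=\int_X G(x,\cdot,s)\,G(\cdot,y,t)\,\om^n,
\]
which follows by expanding $G=H-1/V$, applying the semigroup identity \eqref{doubleint} for $H$, and using conservation. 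Taking $s=t$ and applying Cauchy--Schwarz gives $|G(x,y,2s)|^2\le G(x,x,2s)\,G(y,y,2s)$, which after relabelling is item (2).

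For item (3), I will argue by contradiction: if $H(x,x,t_k)$ stayed bounded along some sequence $t_k\to 0$, then $\|H(x,\cdot,t_k/2)\|_{L^2}^2=H(x,x,t_k)$ would be bounded, so by Banach--Alaoglu a subsequence of $H(x,\cdot,t_k/2)$ would converge weakly in $L^2(X,\om^n)$ to some $h\in L^2$. Testing against continuous functions and using the Dirac initial-condition property forces $h\,\om^n=\delta_x$ as measures, which is impossible since $\om^n$ is absolutely continuous and $\delta_x$ is not represented by an $L^2$ density.

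The substantive point is item (4). I set $u(t):=G(x,x,t)$ and observe that by the semigroup identity just mentioned $u(t)=\|g_t\|_{L^2}^2$ with $g_t:=G(x,\cdot,t/2)$; differentiating under the heat equation gives $u'(t)=-\|\nabla g_t\|_{L^2}^2$. Since $\int g_t\,\om^n=0$ and $\|g_t\|_{L^1}\le 2$ by item (1), I combine the Sobolev inequality of Proposition~\ref{sobolev} with interpolation between $L^1$ and $L^{2n/(n-1)}$ (exponent $\theta=1/(n+1)$) and the Poincar\'e inequality of Proposition~\ref{poincare} to obtain a Nash-type inequality of the form
\[
\|g_t\|_{L^2}^{2(n+1)/n}\le C\,\|g_t\|_{L^1}^{2/n}\,\|\nabla g_t\|_{L^2}^2.
\]
Substituted into the identity for $u'$, this yields $u(t)^{(n+1)/n}\le C_1(-u'(t))$, equivalently $(u^{-1/n})'(t)\ge 1/(nC_1)$. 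Integrating from $0$, where $u^{-1/n}(0^+)=0$ by item (3), gives $u(t)\le C_2 t^{-n}$, and then item (2) upgrades this to the pointwise bound $|G(x,y,t)|\le \sqrt{u(t)\,u(t)}\le C_0 t^{-n}$.

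The main obstacle I anticipate is justifying the integration by parts underlying $u'(t)=-\|\nabla g_t\|_{L^2}^2$ and the semigroup identity in the singular setting, where one must control boundary terms near $X_{\mathrm{sing}}$. In the two cases of the statement this is tractable: the smooth case is standard, and in the projective case the Li--Tian estimate \eqref{ineqLT} combined with the fact that $X_{\mathrm{sing}}$ has real codimension at least four allows one to use cut-off functions whose gradients vanish in $L^2$ in the limit, exactly as in \cite[Lem.~3.1]{LT95}.
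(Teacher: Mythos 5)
Your proposal is correct, and for items (1) and (4) it coincides with the paper's argument: the $L^1$ bound in (1) is the same positive/negative-part splitting the paper treats as immediate, and (4) is exactly the paper's Nash-type scheme \--- differentiate $G(x,x,2t)=\|G(x,\cdot,t)\|_{L^2}^2$ along the semigroup, combine Proposition~\ref{sobolev}, Proposition~\ref{poincare} and $L^1$--$L^{2n/(n-1)}$ interpolation into $\|g\|_{L^2}^{2(n+1)/n}\le C\|g\|_{L^1}^{2/n}\|dg\|_{L^2}^2$, integrate the resulting differential inequality $(u^{-1/n})'\ge (nC)^{-1}$ from $t=0^+$ using item (3), and conclude with item (2). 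Where you genuinely diverge is in (2) and (3). The paper proves (2) in the singular case by expanding the \emph{Neumann} heat kernels $K_\ep$ of the exhausting domains $X_\ep$ in eigenfunctions, applying Cauchy--Schwarz to the spectral sum, and passing to the limit via the local smooth convergence $K_\ep\to H$; you instead derive the semigroup identity for $G$ directly from \eqref{doubleint} and conservation, and apply Cauchy--Schwarz to the resulting integral \--- shorter, and it avoids invoking the convergence of the Neumann kernels. For (3) the paper uses the Sturm--Liouville (Dirichlet) expansion of $H_\ep$ and the unboundedness of the evaluation functional; your weak-$L^2$ compactness contradiction (a bounded $\|H(x,\cdot,t_k/2)\|_{L^2}$ would force $\delta_x$ to have an $L^2$ density) reaches the same conclusion using only the listed properties of $H$ (symmetry, \eqref{doubleint}, the Dirac initial condition). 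Both of your substitutes are valid. One small correction: in the generality of the paper (irreducible reduced, not necessarily normal) $X_{\rm sing}$ is only guaranteed to have real codimension at least two, not four; this is still enough for the Li--Tian cut-off argument you invoke, and is what the paper uses.
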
 

\begin{proof}
Under the assumptions on $X$, we know that $H(x,\cdot, t)$ is bounded on $X_{\rm reg}$, in $W^{1,2}$ and satisfies the conservation property. We will only rely on these non-quantitative properties to establish the items below, and not on the more precise inequality \eqref{ineqLT} which certainly does not hold if $X$ is not projective. 

$(1)$ a trivial consequence of the positivity of $H$ and the fact that $\int_X H(x,\cdot, t) \om^n = 1$.  

$(2)$ is classical when $X$ is smooth, so we assume for the time being that $X$ is projective.
Let $K_{\ep}$ be the {\it Neumann} heat kernel on $X_\ep$, let $V_\ep:=\int_{X_\ep} \om^n$ and let $\widetilde G_\ep:=K_\ep-\frac 1{V_\ep}$. Then we have
 $$K_\ep(x,y,t)= \sum_{i\ge 0} e^{-\lambda_{i,\ep} t} \phi_{i,\ep}(x)\phi_{i,\ep}(y)$$ 
 where $(\phi_{i,\ep})$ is an orthonormal basis of $L^2(X_\ep)$ consisting of Neumann eigenfunctions of $-\Delta$ with eigenvalues $\lambda_{i,\ep}$. Note that $\phi_{0,\ep}=\frac{1}{\sqrt{V_\ep}}$. 
 By Cauchy-Schwarz, we find that $|\widetilde G_{\ep}(x,y,t)|^2\le \widetilde G_\ep(x,x,t) \cdot  \widetilde G_\ep(y,y,t)$. Thanks to \cite[Lemma~3.2]{LT95}, $K_\ep$ converges to $H$ locally smoothly on $X_{\rm reg}^2\times (0,+\infty)$ when $\ep \to 0$, hence $\widetilde G_{\ep}\to G$ in the same way and we get the second item. 
 
$(3)$ Since $H\ge H_\ep$, It is enough to show the third claim for $H_\ep$. We consider a Sturm-Liouville decomposition as before
  $$H_\ep(x,y,t)= \sum_{i\ge 0} e^{-\mu_{i,\ep} t} \psi_{i,\ep}(x)\psi_{i,\ep}(y)$$ 
 but now, $(\psi_{i,\ep})$ is an orthonormal basis of $L^2(X_\ep)$ consisting of {\it Dirichlet} eigenfunctions of $-\Delta$ with eigenvalues $\mu_{i,\ep}$, cf \cite[VII (31)]{Chavel}. The sought property now follows since $\sum \psi_{i,\ep}(x)^2$ is the norm of the unbounded functional $L^2\cap \mathcal C^{\infty}(X_\ep)\ni f\mapsto f(x)$.

$(4)$ We start from the identity \eqref{doubleint}, which holds for $G$ as well as one checks easily. Taking $y=x$ and differentiating with respect to $s$ and eventually setting $s:=t$, one finds 
$$-G'(x,x,2t) = \|dG(x,\cdot, t)\|^2_{L^2} \ge (C_S(C_P+1))^{-1} \|G(x,\cdot,t)\|^2_{L^{\frac{2n}{n-1}}}$$
since integration by parts is legitimate as we explained above and $\int_X G(x,\cdot,t) \om^n=0$.   Moreover, the interpolation inequality gives
$$G(x,x,2t)=\|G(x,\cdot,t)\|_{L^2}^2 \le \|G(x,\cdot,t)\|_{L^1}^{\frac{2}{n+1}} \cdot \|G(x,\cdot,t)\|_{L^{\frac{2n}{n-1}}}^{\frac{2n}{n+1}}$$
hence 
$$ \|G(x,\cdot,t)\|^2_{L^{\frac{2n}{n-1}}} \ge 2^{-\frac 2{n}}G(x,x,2t)^{\frac{n+1}n} $$
and 
$$-\frac 1n G'(x,x,t) G(x,x,t)^{-1-\frac 1n }\ge C^{-1}$$
for $C=n4^{\frac 1 {n}}\cdot C_S(C_P+1)$. Integrating this inequality w.r.t. $t$ and using the second item, we get the fourth item \--- recall that $G(x,x,t)>0$ for any $x\in X_\reg$ given its expansion as power series, cf $(2)$.
%The fifth item follows from the on-diagonal estimate just established above combined with \cite[Thm.~1.1]{Grigor97}, for instance. The last item follows easily from the bound above, cf e.g. \cite[Lem.~3.1]{LT95}.
\end{proof}

%\section{Green's functions}

Under the assumptions of Lemma~\ref{heat} above, the integral
$$G(x,y):=\int_0^{+\infty}G(x,y,t)dt$$
is convergent whenever $x\neq y$ and defines a function $G$ on $X_\reg \times X_\reg$ such that $G(x,\cdot) \in L^1(X_\reg)$. Moreover, since $(-\Delta+\partial_t)G(x,\cdot, t)=0$, $G(x,\cdot, t)\underset{t\to +\infty}{\rightarrow} 0$  and $G(x,\cdot, t)\om^n \underset{t\to 0}{\rightarrow} \delta_x-\frac 1V$, we have 
$$dd^c G(x,\cdot) \wedge \om^n=\frac {\om^n}V-\delta_x, $$ 
i.e. for all $f\in C^{\infty}_0(X_\reg)$, we have
\begin{equation}
\label{greenid}
\int_{X}\Delta f \cdot  G(x,\cdot)\, \om^n= \frac1V \int_X f\, \om^n-f(x).
\end{equation}
%Obviously, the above equation still holds for functions $f\in \mathcal C^2(X)$, that is, for functions that are locally the restriction of a $\mathcal C^2$ function under an embedding into the euclidean space. 
Finally, the first and fourth item of Lemma~\ref{heat} enable us to find a lower bound of the Green function as follows
\begin{align}
G(x,y) &= \int_0^1 G(x,y,t)dt + \int_1^{+\infty}G(x,y,t)dt  \label{lowerbound}\\
& \ge -\frac1V-\frac C{n-1}\nonumber
\end{align}
where $C$ only depends on the Sobolev and Poincaré constants of $(X_\reg, \om)$.

\subsection{Green's inequality for general psh functions}
\label{sec:green}

In this section, we assume that the assumptions of Lemma~\ref{heat} are satisfied. 

Let us first generalize Formula~\eqref{greenid} to some functions $f\in \mathcal C^{\infty}(X_\reg)$ that are not necessarily compactly supported. For that purpose, let $p:Y\to X$ a log resolution of singularities, let $D$ be the exceptional divisor of $p$ and let $Y^\circ:=p^{-1}(X_\reg)=Y\setminus D$. We claim that for any  $f\in \mathcal C^{\infty}(X_\reg)$ such that $p^*f$ extends smoothly across $D$, the formula
\begin{equation}
\label{greenid2}
\int_{X_\reg}\Delta f \cdot  G(x,\cdot)\, \om^n= \frac1V \int_{X_\reg} f\, \om^n-f(x)
\end{equation}
holds. First observe that all the terms are well-defined as one sees by pulling back by $p$, which is an isomorphism over $X_\reg$. Indeed, recall that $x\in X_\reg$ and that $G(x,\cdot)$ is locally bounded near $X_{\rm sing}$ so that $p^*G(x,\cdot)$ is in $L^1(Y^\circ,\om_Y)$ for any Kähler form $\om_Y$ on $Y$.

Next, we choose a family $(\chi_\delta)_\delta$ of cut-off functions for $D$. As they are identically $0$ on $D$, they come from $X$ under $p$ and one can see them either as functions on $X$ or $Y$ interchangeably. It is classical (cf e.g. \cite[Sect.~9]{CGP}) that one can choose $\chi_\delta$ such that both $d\chi_\delta\wedge d^c\chi_\delta$ and $\pm dd^c \chi_\delta$ are dominated by some fixed Poincaré metric $\om_P$ (independently of $\delta$). In particular, using Cauchy-Schwarz and the dominated convergence theorem, one finds 
\begin{equation}
\label{error}
\lim_{\delta \to 0} \int_{X_\reg} G(x,\cdot)\Big[ f dd^c \chi_\delta+df\wedge d^c \chi_\delta+d\chi_\delta\wedge d^cf \Big] \wedge \om^{n-1}=0
\end{equation}
by the dominated convergence theorem. Formula \eqref{greenid2} is now a direct application of \eqref{greenid}.

The next result is the key for the proof of Proposition~\ref{mainprop}. 
\begin{claim}
\label{sup-int}
Under the assumptions of Lemma~\ref{heat}, let $\vp\in \PSH(X,\om)$, $V=\int_X\om^n$ and let $x\in X_\reg$. Then, one has
$$\frac1V \int_X \vp \om^n-\vp(x) \ge nV \cdot \inf_{X_\reg}G(x,\cdot).$$
%where $V=\int_X\om^n$.
\end{claim}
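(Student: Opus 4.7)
The plan is to first establish the claim for smooth $\om$-psh functions via the Green identity~\eqref{greenid2}, and then extend it to arbitrary $\vp\in\PSH(X,\om)$ by decreasing regularization.

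\textbf{Smooth case.} Suppose $\vp$ is smooth on $X$ in the strong sense of Setting~\ref{set}, so that on any log resolution $p:Y\to X$ the pullback $p^*\vp$ extends smoothly across the exceptional divisor. Identity~\eqref{greenid2} then applies to $\vp$ and yields
\[
\frac{1}{V}\int_X \vp\,\om^n-\vp(x)=\int_{X_\reg}G(x,\cdot)\,\Delta\vp\,\om^n=n\int_{X_\reg}G(x,\cdot)\,dd^c\vp\wedge\om^{n-1}.
\]
Decomposing $dd^c\vp=\om_\vp-\om$ and using $\int_X G(x,\cdot)\,\om^n=0$ from Lemma~\ref{heat}(1) reduces the right-hand side to $n\int_{X_\reg}G(x,\cdot)\,\om_\vp\wedge\om^{n-1}$. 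Since $\om_\vp\wedge\om^{n-1}$ is a positive measure and $G(x,\cdot)\ge m_x:=\inf_{X_\reg}G(x,\cdot)$ (a finite number by~\eqref{lowerbound}), we obtain the lower bound $nm_x\int_{X_\reg}\om_\vp\wedge\om^{n-1}$. Finally, pulling back to $Y$ and applying Stokes' theorem to $d\bigl(d^c(p^*\vp)\wedge(p^*\om)^{n-1}\bigr)$ gives $\int_{X_\reg}\om_\vp\wedge\om^{n-1}=\int_X\om^n=V$, which finishes this case.

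\textbf{General case.} When $\vp(x)=-\infty$ the inequality is trivial, so we assume $\vp(x)\in\R$. Under the hypotheses of Lemma~\ref{heat}, we approximate $\vp$ by a decreasing sequence of smooth $\om$-psh functions $\vp_k\downarrow\vp$: Demailly regularization in the smooth case and \cite[Cor.~C]{CGZ13} (already invoked in the proof of Proposition~\ref{prop:uniformint_proj}) in the projective singular case. The smooth case applied to each $\vp_k$ produces the inequality at level $k$; then $\vp_k(x)\downarrow\vp(x)$ by upper semicontinuity, and $\int_X\vp_k\,\om^n\downarrow\int_X\vp\,\om^n$ by monotone convergence, so the inequality passes to the limit.

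\textbf{Main obstacle.} The algebraic content of the argument is elementary once the Green identity~\eqref{greenid2} is in hand. The only delicate point is the existence of a decreasing smooth $\om$-psh approximation in the singular projective setting, which is provided by~\cite{CGZ13}; everything else reduces to Stokes on a resolution, the vanishing $\int_X G(x,\cdot)\,\om^n=0$, and the lower bound $G(x,\cdot)\ge m_x$ from~\eqref{lowerbound}.
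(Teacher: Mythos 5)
Your proof is correct and follows the same skeleton as the paper's: the Green identity \eqref{greenid2}, the vanishing $\int_X G(x,\cdot)\,\om^n=0$, the lower bound $G(x,\cdot)\ge \inf_{X_\reg}G(x,\cdot)$ tested against the positive measure $\om_\vp\wedge\om^{n-1}$ of total mass $V$, and a decreasing regularization. The one genuine divergence is in how the regularization is carried out. You invoke a \emph{loss-free} decreasing smooth $\om$-psh approximation; note that this is not what Demailly's regularization theorem gives in the smooth case (it produces $(\om+\ep\om_Y)$-psh approximants, or approximants with analytic singularities) — the loss-free statement on a compact Kähler manifold is the B\l{}ocki--Ko\l{}odziej theorem, and it crucially uses that $\om$ is Kähler rather than merely semipositive. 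Since Lemma~\ref{heat} assumes either that $X$ is smooth Kähler or that $X\subset\mathbb P^N$ with $\om=\om_{\rm FS}|_X$ (where \cite[Cor.~C]{CGZ13} applies, as the paper already uses in Proposition~\ref{prop:uniformint_proj}), your route is legitimate, and it buys a cleaner limit: no truncation $\max(\vp,-j)$ and no error term to track. The paper instead regularizes on the resolution $Y$ with Demailly's $\ep\om_Y$ loss and absorbs the error via the explicit estimate $\ep\bigl(\inf G_x\int_Y\om_Y\wedge p^*\om^{n-1}-\int_{Y^\circ}p^*G_x\,\om_Y\wedge p^*\om^{n-1}\bigr)\to 0$; that version is more robust (it would survive if $\om$ were only semipositive and big on the resolution) but costs the extra bookkeeping. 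Both arguments reach the same bound $nV\cdot\inf_{X_\reg}G(x,\cdot)$.
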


\begin{proof}
Replacing $\vp$ by $\max(\vp, -j)$ and letting $j\to +\infty$, one sees that it is enough to prove the claim for bounded functions $\vp$. Next, thanks to Demailly's regularization theorem, one can write $p^*\vp$ as a pointwise decreasing limit of smooth function $\psi_\ep$ satisfying $p^*\om+\ep \om_Y+dd^c\psi_\ep \ge 0$ for some fixed Kähler metric $\om_Y$ on $Y$. Using \eqref{greenid2} and setting $G_x:=G(x,\cdot)$, one finds
$$\frac1V \int_X \vp \om^n-\vp(x) = \lim_{\ep \to 0} \int_{Y^\circ} n p^*G_x dd^c \psi_\ep \wedge p^*\om^{n-1}.$$
Moreover, as $G_x$ have zero mean value, one has
{\small
\begin{align*}
\int_{Y^\circ}  p^*G_x dd^c \psi_\ep \wedge p^*\om^{n-1} =&\int_{Y^\circ} ( p^*G_x- \inf_{X_\reg}G_x) dd^c \psi_\ep \wedge p^*\om^{n-1}\\
 =& \int_{Y^\circ} ( p^*G_x- \inf_{X_\reg}G_x) (p^*\om+\ep\om_Y+dd^c \psi_\ep) \wedge p^*\om^{n-1} \\
 &-\int_{Y^\circ} p^*G_x \wedge (p^*\om+\ep\om_Y) \wedge p^*\om^{n-1}\\
& + \inf_{X_\reg}G_x\cdot \Big(V+\ep\int_Y\om_Y\wedge p^*\om^{n-1}  \Big) \\
%  &+ \inf_{X_\reg}G_x\cdot \Big(V+\ep\int_Y\om_Y\wedge p^*\om^{n-1}  \Big) \\
  \ge&  \inf_{X_\reg}G_x\cdot V+\ep \cdot\Big( \inf_{X_\reg}G_x\cdot \int_Y\om_Y\wedge p^*\om^{n-1} -\int_{Y^\circ} p^*G_x \om_Y \wedge p^*\om^{n-1}\Big)
\end{align*}}
Taking the limit as $\ep\to 0$, we get the expected result.
\end{proof}

\subsection{Proof of Proposition \ref{mainprop}}
\label{sec:green2}

We can now finish the proof of Proposition~\ref{mainprop}. We are left to treating the cases where $\pi$ is projective or $X_t$ is smooth for $t\neq 0$. Moreover, we can assume that $n=\dim X_t \ge 2$ since otherwise, $X_t$ would have at most isolated singularities and we could then appeal to \S~\ref{isolated}, cf. Remark~\ref{rem:isolated}.

Moreover, the content of Proposition~\ref{mainprop} is insensitive to replacing $\om$ by another Kähler metric on $\cX$. In the case where $\pi$ is projective, i.e. if we have $\cX \subset \mathbb P^N \times \mathbb D$ such that $\pi$ commutes with the second projection, then we will assume that $\om = \om_{\rm FS}|_{\cX}$. 

Finally, in the case where $X_t$ is smooth for $t\neq 0$, it is sufficient to prove Proposition~\ref{mainprop} for $t\neq 0$ since it is already well-known that the $L^1$-sup comparison holds on the {\it fixed} irreducible complex space $X_0$.\\

We know from \S~\ref{poinbolev} that the Kähler manifolds $(\xtr, \om_t)$ admit uniform Poincaré and Sobolev constants. As the volume $V$ of $(X_t, \om_t)$ is constant, it follows from \eqref{lowerbound} that there exists $C_G>0$ independent of $t$ such that 
$$\forall x,y\in \xtr, \,\,G_t(x,y) \ge -C_G,$$ 
where $G_t(\cdot, \cdot)$ is the Green function of $(X_t,\om_t)$. As $\vp_t$ is sup-normalized and upper semi-continuous, there exists $x_t\in \xtr$ such that $\f_t(x_t)\ge -1$. Applying Claim~\ref{sup-int} to $\vp:=\vp_t$ and $x:=x_t$, we find 
$$\frac 1V\int_{X_t}  (-\varphi_t) \, \omega_t^n \le nV C_G+1.$$
The Proposition is proved.

%When $n=1$, we  provide the sketch of a very different argument as the singularities of the complex curve $X_t$ are necessarily isolated. The maximum principle for psh functions allows one to choose points $x_t\in X_t$ such that $\vp_t(x_t) \ge -1$ and $d(x_t, Z)\ge \delta$ for some fixed $\delta >0$, where $Z$ is a one-dimensional analytic subset containing the singular locus of $\pi$. Using local trivializations of $\pi$ outside $Z$, one can control uniformly the integrals $\int_{K\cap X_t} \vp_t \om_t$ for any $K\Subset \mathcal X \setminus Z$. The missing pieces $\int_{X_t \cap (\mathcal X\setminus K)} \vp_t \om_t$ can be controlled using an integration by parts via a cohomological argument. Indeed, on one hand, one can write $\omega= \theta+dd^c \psi$ for some smooth $(1,1)$-form $\theta$ supported outside $Z$ and on the other hand, 
%the mass
 %$\int_{X_t}(\om_t+dd^c \vp_t)$ is independent of $t$. 

%Moreover, as $\f_t$ is $\om_t$-psh, we have $\Delta \vp_t \ge -n$. Setting $G_t(\cdot):=G_t(x_t, \cdot)$ and applying the identity \eqref{greenid} at $x=x_t$, we find
%\begin{align*}
%\int_{X_t}  (-\varphi_t) \, \omega_t^n & \le \int_{X_t} \Delta \varphi_t \cdot (-G_t) \, \omega_t^n +1\\
%& = \int_{X_t} \Delta \varphi_t \cdotp (\inf_{X_t} G_t-G_t) \omega_t^n+1 \\
%&=  \int_{X_t} (\Delta \varphi_t+n) \cdotp (\inf_{X_t} G_t-G_t) \omega_t^n-nV\cdot\inf_{X_t} G_t+1\\
%& \le nV C_G+1.
%\end{align*}
%

%\vfill
%\pagebreak[4]

\section{Densities along a log canonical map} \label{sec:H2}

We now pay attention to hypotheses (H2) and (H2'). We focus in this section on the integrability properties
of some canonical densities.
%when the family $(X_t)_{t \in \bD}$ is a smoothing of the central fiber $X_0$.

%\subsection{Log-canonical singularities}

%\text{ }

%\vdots

% {\color{red} 
% \centerline{Petit recap sur les differents types de singularites considerees ds le papier}
% }
 
% \vdots
 
% \vskip2cm

\subsection{Semi-stable model}

\begin{set}
\label{fset}
Let $\pi:\cX\to \bD$ be a proper, holomorphic surjective map from a Kähler space $\cX$ with connected fibers to the unit disk of relative dimension $n$. We make the following assumption
\begin{equation}
\label{assumption}
 \mbox{For each } t\in \bD, \mbox {the pair } (\cX,X_t) \mbox{ has log canonical singularities}
 \end{equation}
 where $X_t=\pi^{-1}(t)$ is the schematic fiber at $t\in \bD$, cf \cite[Def.~7.1]{KM}.
%\begin{enumerate}
%\item The map $\pi$ is projective and has connected fibers.
%\item The map $\pi$ is smooth over $\bD^*$. 
%\item The pair $(\cX, X_0)$ has log canonical singularities.\\
%\item For simplicity, we assume that $\Krel$ is Cartier. \\
%\end{enumerate}  
\end{set} 

\textit{About the singularities.} In Setting~\ref{fset}, the following properties hold
\begin{enumerate}
\item Every fiber is reduced, $K_{\cX/\bD}$ is $\mathbb Q$-Cartier and $\cX$ has log canonical singularities. 
\item The space $\cX$ has canonical singularities if and only if the general fiber $X_t$ has canonical singularities, cf \cite[Lem.~7.2]{KM}.
\item The condition \eqref{assumption} is preserved by finite base change from a smooth curve, cf \cite[Lem.~7.6]{KM}. 
\item If $(\cX,X_0)$ has lc singularities, then $(\cX, X_t)$ has lc singularities for $|t| \ll 1$, see  \cite[Cor.~4.10 (2)]{Kollar13} and \cite[Thm.~2.3]{Kol18}.
\item By \textit{loc. cit.}, the condition \eqref{assumption} is equivalent to asking $\cX$ to be normal, $\mathbb Q$-Gorenstein, and that each fiber $X_t$ has semi- log canonical singularities. \\
\end{enumerate}

%If $X_0$ is normal, then it has log canonical singularities thanks to the adjunction formula, cf \cite[Lem.~7.4 (ii)]{KM}. Moreover, if $\varphi:\bD\to \bD, t\mapsto t^k$ for some integer $k\ge 1$, then one can check that the family $\pi_{\vp}:\cX\times_{\varphi}\bD\to \bD$ obtained by base change satisfies the properties $1-3$ above. In particular, $\cX\times_{\varphi}\bD$ has canonical singularities by \cite[Lem.~7.59]{KM}.

  By \cite{KKMS}, one can find a semi-stable model of $\pi$\footnote{The reference \cite{KKMS} deals with the case of a proper morphism between algebraic varieties but the construction extends to the analytic case \textit{mutatis mutandis}, as stated in e.g. \cite[Thm.~7.17]{KM}}. More precisely, up to shrinking $\bD$, there exists a finite cover $\vp:t\mapsto t^k$ of the disk for some integer $k\ge 1$ and a proper, surjective birational morphism $f:\cX'\to \cX\times_{\varphi}\bD$ 
  \begin{equation}
\label{semistable}
\begin{tikzcd}
\cX'  \arrow[labels=below left]{rd}{\pi'} \arrow{r}{f} & \cX \times_{\varphi}\bD \arrow{r}{g} \arrow{d}{\mathrm{pr}_2} & \cX \arrow{d}{\pi}  \\
 &\mathbb D \arrow{r}{\vp} & \bD 
\end{tikzcd}
\end{equation}
such that $\cX'$ is smooth, $f$ is isomorphic over the smooth locus of $\pi$ and such that around any point $x'\in X_0'$, there exists an integer $p\le n+1$ and a system of coordinates $(z_0, \ldots, z_n)$ centered at $x'$ and such that $\pi'(z_0, \ldots, z_n)=z_0\cdots z_p$. \\

\textit{Additional assumption.} Up to shrinking $\bD$, one will assume that $\pi'$ is smooth away from $0$ so that for any $t\neq 0$, the induced morphism $(g\circ f)|_{X'_t}:X'_t\to X_t$ is a resolution of singularities. Note that $X_t'$ need not be connected.\\

Let $m\ge 1$ be an integer such that $m\Krel$ is a Cartier divisor. 
We can cover $\cX$ with open sets $U_i$ such that $U_i \cap \cX^{\reg}$ admits a nowhere vanishing section $\Omega_{U_i}\in H^0(U_i\cap \cX^{\reg}, m\Krel)$. For any $t\in \bD$, the restriction $\Omega_{U_i}|_{X_t^{\reg}}$ defines a nowhere vanishing section $\Omega_{U_i}|_{X_t^{\reg}}\in H^0(U_i\cap X_t^{\reg}, mK_{X_t})$. In particular, $mK_{X_t}$ is a Cartier divisor for all $t$. We want to understand the behavior of the volume forms $(\Omega_{U_i}\ \wedge \overline{\Omega_{U_i}})|_{X_t^{\rm reg}}^{\frac 1m}$ when $t\to 0$. In order to do so, it is enough to work on $\cX\times_{\vp} \bD$ directly as explained below. \\

\textit{Reduction step.}
The finite map $g$ induces an isomorphism of $\mathbb Q$-line bundles $K_{\cX\times_{\vp} \bD/\bD}\simeq g^*\Krel$. In particular, one can replace $\cX$ by $\cX\times_{\vp} \bD$ in the following, or equivalently assume that $\vp=\mathrm{Id}_{\bD}$; i.e $k=1$. By what was said above, the "new" family still satisfies the condition \eqref{assumption}.\\

%In order to do so, we need to rely on the existence of a semi-stable model for $\pi$. Up to making a finite base change $\vp: \bD\to \bD,  t\mapsto t^k$, one can assume that there exists a diagram
%\begin{equation}
%\label{semistable}
%\begin{tikzcd}
%\cX'  \arrow[labels=below left]{rd}{\pi'} \arrow{r}{f} & \cX \arrow{d}{\pi}  \\
% &\mathbb D \\
%\end{tikzcd}
%\end{equation}

\subsection{Analytic expression of the densities in a semi-stable model}

Let us start with some notation. Once and for all, we fix an open set $U:=U_{i_0}$ for some $i_0$. We set $\Omega:=\Omega_U$ and $\Omega_t:=\Omega|_{X_t^{\reg}}$. One can cover $f^{-1}(U)$ by a finite number of open subsets $V_j\subset \cX'$ isomorphic to the unit polydisk of $\mathbb C^{n+1}$ and endowed with a system of coordinates as above. We let $V:=V_{j_0}$ be one of them. The goal is to understand $f^*\Omega$ when restricted to $V$, using our preferred set of coordinates. Finally, we set $U_t:=U\cap X_t$ and $V_t:=V\cap X'_t$.

Next, we write 
%$$K_{\cX'}=f^*K_{\cX}-\underbrace{\sum_i E_i}_{=:E} +\sum_j a_jF_j$$
\begin{equation}
\label{canformula}
K_{\cX'}+Y_0=f^*(K_{\cX}+X_0)+\sum_{i\in I} a_i  E_i
\end{equation}
where the $E_i$'s are $f$-exceptional divisors with $a_i\ge -1$ for all $i\in I$ and $Y_0$ is the strict transform of $X_0$. Note that some of the divisors $E_i$'s may be irreducible components of $X_0'$. The others surject onto $\bD$ thanks to the additional assumption made in the previous section. The divisor $E:=\sum_{i\in I} E_i$ is the exceptional locus of $f$ and $E+Y_0$ has simple normal crossing support. We set $J:=\{i\in I; a_i=-1\}$ which we assume to be non-empty and define the non-klt locus of $(\cX,X_0)$ by 
\begin{equation}
\label{nklt}
\mathrm{Nklt}(\cX,X_0):=f\big(\bigcup_{ j\in J } E_j\big)
\end{equation}
One can check that $\mathrm{Nklt}(\cX,X_0)$ does indeed not depend on the particular choice of $f$, hence the notation. Under our assumptions, the above analytic set contains the non-klt locus of every fiber $X_t$, $t\in \bD$. This is an easy consequence of the adjunction formula, at least when the $X_t$'s are normal. Given $K\subset I$, we set $E_K=\cap_{i\in K}E_i$ and introduce the invariants
\begin{equation}
\label{nu2}
   \nu(f):=\max\{|K|; K\subset J, E_K\neq \emptyset\}, \quad \mbox{and} \quad \nu:=\inf_f \{\nu(f)\}
\end{equation} 
where the infimum is taken over all the semistable models of $\cX\to \mathbb D$. \\

We now let $x'\in Y_0$ and we assume that the coordinates mentioned above are chosen such that $Y_0=(z_0\cdots z_r=0)$ locally for $0\le r \le p$ being the number of irreducible components of $Y_0$ minus one on that chosen open set.   \\

%We can decompose $X_0'=Y_0 +\sum E_i$ where $Y_0$ is the strict transform of $g^{-1}(X_0)$ and where the $E_i$'s are $f$-exceptional. Note that $X_0$ is not assumed to be irreducible, so $Y_0$ will not be irreducible in general. However, $Y_0=\sum_k Y_0^{(k)}$ is a sum of snc divisors, each of whom meeting each of the $E_i$'s transversely. Moreover, we will always assume that the coordinates mentioned above are chosen such that $Y_0=(z_0\cdots z_r=0)$ locally for $0\le r \le p$ being the number of irreducible components of $Y_0$ on that chosen open set. In the following, we set $E:=\sum E_i$. It is the exceptional locus of $f$. 

On $V_t$, $t\neq 0$, the functions $(z_1, \ldots, z_n)$ induce a system of coordinates and the form $f^*\Omega$ on $V$ can be seen as a collection of $m$-th powers of holomorphic $n$-forms 
$$
f^*\Omega_t=g_t(z_1, \ldots, z_n)(dz_1 \wedge \cdots \wedge dz_n)^{\otimes m}
$$
for some holomorphic function $g_t$ on $V_t\setminus E$, with poles of order at most $(-ma_i)_+$ along $E_i\cap X_t$. The form $\Omega \wedge \pi^*\big(\frac{dt}{t}\big)^{\otimes m}$ is trivialisation of $m(K_{\cX}+X_0)$ over $U^{\reg}$. 
%Because $\cX$ has canonical singularities and $f$ is a resolution of singularities of $\cX$, 
The pull-back $f^*(\Omega \wedge \pi^*\big(\frac{dt}{t}\big)^{\otimes m})$ is a well-defined $m$-th power of a $(n+1)$-form on $f^{-1}(U^\reg)$ with logarithmic poles along $Y_0$ that extends meromorphically to $f^{-1}(U)$ with poles of order at most $(-ma_i)_+$ along $E_i$. 
As 
$$
f^*\pi^*\Big(\frac{dt}{t}\Big)=(\pi')^*\Big(\frac{dt}{t}\Big)=\sum_{i=0}^p \frac{dz_i}{z_i}
$$ 
on $V$,  the form $f^*(\Omega \wedge \pi^*\big(\frac{dt}{t}\big)^{\otimes m})$ is equal on that set to
$$(-1)^{mn} (z_1\cdots z_r)^{m} g_{\pi'(z)}(z_1, \ldots, z_n) \Big(\frac{dz_0}{z_0}\wedge \frac{dz_1}{z_1}  \wedge \cdots \wedge \frac{dz_r}{z_r} \wedge dz_{r+1} \wedge \cdots \wedge dz_n\Big)^{\otimes m}$$
so that the function $(V\setminus E\cup Y_0)\ni z \mapsto (-1)^{nm} (z_1\cdots z_r)^{m} g_{\pi'(z)}(z_1, \ldots, z_n)$ extends to a meromorphic function $h$ on $V$, holomorphic along $Y_0$ and with poles of order at most $(-ma_i)_+$ along $E_i$ and satisfying 
%\begin{equation}
%\label{global}
%f^*(\Omega \wedge \pi^*\big(\frac{dt}{t}\big)^{\otimes m})=h(z) \big(dz_0 \wedge dz_1  \wedge \cdots \wedge dz_n\big)^{\otimes m}
% \end{equation}
%on $V$ and 
\begin{equation}
\label{fibre}
f^*\Omega_t=(-1)^{mn}\frac{h(z)}{(z_1\cdots z_r)^m}(dz_1 \wedge \cdots \wedge dz_n)^{\otimes m}
 \end{equation}
on $V_t$, for $t\neq 0$. When $t=0$, one can also obtain a formula as above for $f^*\Omega_0$ but it requires to first choose a component $Y_0^{(k)}$ of $Y_0$. Let $0\le i \le r$ such that 
$
Y_0^{(k)}\cap V_0=(z_i=0).
$
 On that set (say after removing $E$), one has
\begin{equation}
\label{fibre0}
f^*\Omega_0=(-1)^{i+mn}  \frac{h(z)}{(z_1\cdots \widehat z_i \cdots z_r)^m} \Big( dz_0  \wedge \cdots \wedge {\widehat{dz_i}} \wedge \cdots  \wedge dz_n\Big)^{\otimes m}.
 \end{equation}

%\begin{claim}
%\label{slc}
%The central fiber $X_0$ has semi- log canonical singularities. 
%\end{claim}
%
%\begin{proof}
%Let $\nu:Z_0\to Y_0$ be the normalization map; the space $Z_0$ is simply the disjoint union $\bigsqcup Y_0^{(k)}$ equipped with its natural map onto $Y_0$. The pulll-back by $\nu$ of the form $f^*\Omega_0$ on $Y_0^{\rm reg}$ whose local form is given in \eqref{fibre0} extends to a form on $Z_0$ with  poles of order at most $m$ along the strict transform of the $E_i$ and the conductor divisor $\mathrm{cond}(\nu)$ of $\nu$. The latter divisor is simply the pre-image by $\nu$ of the double crossings of $Y_0$; in particular it is snc and meets the strict transform of $E$ tranversely.  
%\end{proof}
%
\noindent
Note that if $X_0$ (or equivalenty, $Y_0$) is irreducible, then $r=0$ in the formula above. 
\begin{claim}
\label{canonical}
If $X_0$ has canonical singularities, then $r=0$ and the meromorphic function $V\ni z \mapsto h(z)$ is holomorphic on $V$. 
\end{claim}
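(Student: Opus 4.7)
The plan is to handle both assertions using the structure of the log resolution $f$, together with adjunction.

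For $r=0$: since $X_0$ has canonical singularities, it is in particular normal, and being a connected fiber of $\pi$, it is irreducible. Its strict transform $Y_0\subset \cX'$ under the bimeromorphic map $f$ is therefore also irreducible. But $Y_0$ is a global component of the simple normal crossings divisor $Y_0+E$, hence smooth, hence locally irreducible at every point. This is precisely the condition $r=0$ in the local description $Y_0=(z_0\cdots z_r=0)$ at $x'$.

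For the holomorphy of $h$, since $h$ is meromorphic on $V$ with poles of order at most $(-ma_i)_+$ along each $E_i$ passing through $V$, it is enough to prove $a_i\ge 0$ for every such $E_i$. By the SNC property of $Y_0+E$ at $x'$ combined with the local form $\pi'=z_0\cdots z_p$, every component through $x'$ is a coordinate hyperplane $(z_j=0)$ for some $j\in\{1,\dots,n\}$, and each such hyperplane meets $Y_0=(z_0=0)$ inside the polydisk $V$ along $(z_0=z_j=0)$. I would then restrict \eqref{canformula} to the smooth hypersurface $Y_0$ via adjunction on both $\cX'$ and $\cX$:
\[
K_{Y_0} - (f|_{Y_0})^* K_{X_0} = \sum_i a_i\, (E_i|_{Y_0}),
\]
and observe that $f|_{Y_0}:Y_0\to X_0$ is a proper birational morphism from the smooth $Y_0$ onto the irreducible $X_0$, i.e., a resolution of singularities of the canonical variety $X_0$.

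Fix $E_i$ meeting $V$ and let $F$ be any irreducible component of $E_i|_{Y_0}$. The SNC property guarantees that $F$ has multiplicity one in $E_i|_{Y_0}$ and is not a component of $E_{i'}|_{Y_0}$ for $i'\ne i$, so its coefficient on the right-hand side is exactly $a_i$. If $F$ is $(f|_{Y_0})$-exceptional, the coefficient on the left-hand side is the discrepancy of $F$ in the resolution $f|_{Y_0}$ of the canonical variety $X_0$, hence non-negative, giving $a_i\ge 0$. Otherwise $F$ is the strict transform of a divisor $D\subset X_0$; since $X_0^{\rm sing}$ has codimension $\ge 2$ in the normal $X_0$, the generic point of $F$ maps to a smooth point of $X_0$ and $f|_{Y_0}$ is a local isomorphism there, so both $K_{Y_0}$ and $(f|_{Y_0})^* K_{X_0}$ may be represented by Weil divisors with vanishing coefficient at $F$, forcing $a_i=0$.

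The main (mild) subtlety is the passage from canonical-ness of $X_0$ to the non-negativity of the $a_i$'s through the restriction formula above---a manifestation of inversion of adjunction for canonical singularities. Once this identification is made, the two-case analysis is immediate and the two conclusions follow.
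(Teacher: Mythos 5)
Your argument for $r=0$ coincides with the paper's: canonicity forces $X_0$ to be normal and (being a connected fibre) irreducible, so $Y_0$ is an irreducible, hence smooth, component of the snc divisor $E+Y_0$ and has a single local branch at $x'$. For the holomorphy of $h$ you take a genuinely different, though parallel, route. The paper argues on the level of the pluricanonical form: canonicity of $X_0$ means $f^*\Omega_0$ extends holomorphically across $Y_0\cap E$, so by \eqref{fibre0} the restriction $h|_{Y_0}$ is holomorphic; since $\mathrm{div}(h)=\sum b_iE_i$ and $E+Y_0$ is snc, restricting this divisor to $Y_0$ preserves the coefficients, and holomorphy of $h|_{Y_0}$ forces $b_i\ge 0$. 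You instead restrict the discrepancy identity \eqref{canformula} to $Y_0$, identify the coefficient of each component $F$ of $E_i|_{Y_0}$ with the discrepancy $a(F;X_0)\ge0$ (or with $0$ in the non-exceptional case, via Zariski's main theorem for the birational morphism onto the normal $X_0$), conclude $a_i\ge0$ for every $E_i$ meeting $Y_0$, and invoke the pole bound $(-ma_i)_+$ recorded before the Claim. The two proofs are dual — your $a_i\ge 0$ is the paper's $b_i\ge0$ in divisorial form, and both rest on the same two pillars, namely that after shrinking $V$ around $x'\in Y_0$ every relevant $E_i$ meets $Y_0$, and that snc yields exact coefficient matching under restriction. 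Your version buys the cleaner conceptual statement (it is precisely the elementary snc instance of inversion of adjunction: $X_0$ canonical implies $(\cX,X_0)$ canonical near $Y_0$) and bypasses \eqref{fibre0} entirely, while the paper's stays closer to the analytic objects used in Lemma~\ref{canon}. Two minor points of hygiene: a horizontal component of $E$ through $x'$ need not be a coordinate hyperplane in the semistable coordinates (only smooth and transverse to the others), but all you need is that it passes through $x'\in Y_0$; and the adjunction $(K_{\cX}+X_0)|_{X_0}=K_{X_0}$ uses that $X_0$ is a normal Cartier divisor, so that the different vanishes.
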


\begin{proof}
As $X_0$ is normal, it is irreducible, hence $Y_0$ is smooth and irreducible. In particular, the map $f|_{Y_0}:Y_0\to X_0$ induces a resolution of singularities. 

As $X_0$ has canonical singularities, the pull-back $f^*\Omega_0$ of the form $\Omega_0$ on $X_0^{\reg} \cap U$ extends holomorphically across $Y_0 \cap E$. Given \eqref{fibre0}, it means that $h|{V\cap Y_0}$ extends holomorphically along each $E_i\cap Y_0$. As $h$ is holomorphic on $V$ and does not vanish outside $V_0$, its divisor is an $n$-dimensional variety supported on $V\cap E$, therefore $\mathrm{div}(h)=\sum b_iE_i$ for some integers $b_i$. As $E+Y_0$ is snc, the decomposition $\mathrm{div}(h|_{Y_0})=\sum b_i(E_i\cap Y_0)$ is the decomposition into irreducible components. As $h|_{Y_0}$ is holomorphic along the non-empty set $Y_0\cap E_i$, we have necessarily $b_i\ge 0$ for any $i$. The claim is proved.
\end{proof}

\subsection{Integrability properties of the canonical densities}

\begin{defi}
In Setting~\ref{fset}, let $\omega$ be a Kähler form on $\cX$. We define the function $\gamma$ on $U\cap \cX_{\reg}$ by 
$$(\Omega\wedge \overline \Omega)^{\frac 1m}=e^{-\gamma} \om^n.$$
\end{defi}

%An easy consequence is the following
%\begin{claim}
%Assume that $X_0$ has canonical singularities. Then, for any Kähler metric $\omega$ on $\cX'$, there is a constant $C$ such that the following inequality holds on $f^{-1}(U_t)$
%$$f^*(\Omega_t \wedge\overline \Omega_t)^{\frac 1m} \le C \omega_t^n,$$
%for any $t\in \mathbb D$, where $\omega_t:=\omega|_{X_t}$. 
%\end{claim}
%
%\begin{proof}
%By relative compactness of $f^{-1}(U)$, it is enough to prove the inequality on $V$. There, we have a constant $C_0>0$ such that $\omega \ge C_0^{-1} \om_{\rm eucl}$ where $\om_{\rm eucl}:= dd^c( \sum_{i=0}^{n}|z_i|^2)$. As a result, one can replace $\omega$ by the $\omega_{\rm eucl}$ without loss of generality. Thanks to \eqref{fibre} and Claim~\ref{canonical}, it is enough to show that 
%$$(\om_{\rm eucl}|_{V_t})^n \ge C_1^{-1} (idz_1\wedge d\bar z_1 \wedge \cdots \wedge  idz_n\wedge d\bar z_n)|_{V_t}$$
%for some $C_1>0$. But this is clear as $\om_{\rm eucl}\ge \sum_{i=1}^{n}dd^c |z_i|^2$ on $V$, hence on $V_t$ as well by restriction.
%\end{proof}
We want to analyze the integrability properties of $e^{-\gamma}$. Arguing as in the proof of \cite[Thm.~B.1(i)]{RZ} (see also \cite[Lem.~6.4]{EGZ}), it is easy to infer from the normality of $\cX$ that given any small open set $U'\subset U$, there exist bounded holomorphic functions $(f_1, \cdots, f_\ell)$ on $U'$ such that $V(f_1, \ldots, f_\ell)\subset U'_{\rm sing}$ and 
\begin{equation}
\label{gamma}
\gamma|_{U'_{\reg}} = \frac 1 m  \log \sum_i |f_i|^2.
\end{equation}
Let us pick a section $s_E\in H^0(\cX',\cO_{\cX'}(E))$ cutting out the exceptional divisor $E$ and let us choose $|\cdot |$ a smooth hermitian metric on $\cO_{\cX'}(E)$. Given \eqref{gamma}, there exists a constant $A>0$ such that \begin{equation}
\label{fgamma}
f^*\gamma \ge A \log |s_E|^2.
\end{equation}
We have the following

\begin{lem}
\label{canon}
Assume that $X_0$ has canonical singularities
and set $\om_t:=\om|_{X_t}$. Then up to shrinking $\bD$, there exists $p>1$ and a constant $C>0$ such that for any $t\in \bD$, one has
$$\int_{U_t}e^{-p\gamma}\om_t^n \le C.$$
\end{lem}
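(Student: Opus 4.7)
The plan is to pull the integral back to the semi-stable model $f:\cX'\to\cX$, where Claim~\ref{canonical} gives us two crucial simplifications in the canonical-singularities case: the integer $r$ in \eqref{fibre} equals $0$, and the meromorphic function $h$ is in fact holomorphic (hence locally bounded). This reduces the problem to estimating an explicit integral of the type $\int \prod |z_j|^{-\beta}$ on a piece of the fiber $\{z_0\cdots z_p=t\}$, which can be bounded by a product of one-variable integrals independently of $t$.

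First I would rewrite
\[
e^{-p\gamma_t}\,\om_t^n \;=\; e^{-(p-1)\gamma_t}\,(\Omega_t\wedge\overline{\Omega_t})^{1/m},
\]
and use the additional assumption that $\pi'$ is smooth away from $0$: for $t\neq 0$, the restriction $f_t:X'_t\to X_t$ is a proper birational map from a smooth manifold onto $X_t$. Change of variables then yields
\[
\int_{U_t^{\rm reg}} e^{-p\gamma_t}\,\om_t^n \;=\; \int_{f_t^{-1}(U_t^{\rm reg})} e^{-(p-1)f^*\gamma}\,(f_t^*\Omega_t\wedge\overline{f_t^*\Omega_t})^{1/m}.
\]
I would then cover the (relatively compact) set $f^{-1}(U)$ by finitely many charts $V\cong\bD^{n+1}$ of the type described before \eqref{fibre}, with coordinates $(z_0,\ldots,z_n)$ in which $\pi'(z)=z_0\cdots z_p$. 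On such a chart, Claim~\ref{canonical} and formula \eqref{fibre} give (up to a dimensional constant)
\[
(f_t^*\Omega_t\wedge\overline{f_t^*\Omega_t})^{1/m}\big|_{V_t} \;=\; c_n\,|h(z)|^{2/m}\,d\lambda(z_1,\ldots,z_n),
\]
where $d\lambda$ denotes Lebesgue measure in the coordinates $(z_1,\dots,z_n)$ which parametrize $V_t=V\cap X'_t$ for $t\neq 0$. After shrinking $V$, the factor $|h|^{2/m}$ is bounded by a constant depending only on $V$.

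Next I would control $e^{-(p-1)f^*\gamma}$ via \eqref{fgamma}: $f^*\gamma \ge A\log|s_E|^2$, hence $e^{-(p-1)f^*\gamma}\le |s_E|^{-2A(p-1)}$. Since $E+Y_0$ has simple normal crossing support on $V$ and $Y_0=\{z_0=0\}$ is the strict transform (not a component of $E$), there exists $I\subset\{1,\ldots,n\}$ (depending on $V$) such that $|s_E|\asymp\prod_{j\in I}|z_j|$ on $V$. Bounding $f^*\om_t \le C'\,\om_{\cX'}$ on $V$ and collecting all constants, each chart contributes at most
\[
C\,\int_{V_t}\prod_{j\in I}|z_j|^{-2A(p-1)}\,d\lambda(z_1,\ldots,z_n),
\]
with $C$ independent of $t$.

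Finally, I would simply drop the fiber constraint $|z_0|=|t|/|z_1\cdots z_p|\le 1$ (which only shrinks the domain) and estimate the integral from above by
\[
\int_{\bD^n}\prod_{j\in I}|z_j|^{-2A(p-1)}\,d\lambda \;=\; \prod_{j\in I}\int_{\bD}|z_j|^{-2A(p-1)}\,d\lambda(z_j)\,\cdot\,\prod_{j\notin I}\operatorname{Vol}(\bD),
\]
which is finite and manifestly independent of $t$ as soon as $2A(p-1)<2$. Taking $A$ to be the maximum over the finite cover of the constants appearing in \eqref{fgamma} and choosing e.g.\ $p=1+\tfrac{1}{2A}$, summing the finite number of contributions gives the claimed uniform bound. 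The main (and in fact only substantive) point is the input from Claim~\ref{canonical}: without canonical singularities one would have $r\ge 1$, producing an additional factor $|z_1\cdots z_r|^{-2}$ after taking $m$-th roots that is not locally integrable uniformly in $t$ on the fibers $\{z_0\cdots z_p=t\}$ near $t=0$. This is precisely the geometric reason why under a mere log canonical hypothesis only the weaker property (H2') can be expected.
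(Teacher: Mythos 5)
Your argument is correct and follows essentially the same route as the paper's proof: pull the integral back to the semi-stable model, bound $e^{-(p-1)f^*\gamma}$ by $|s_E|^{-2A(p-1)}$ via \eqref{fgamma}, invoke Claim~\ref{canonical} together with \eqref{fibre} to reduce to an explicit integral of $\prod|z_j|^{-2A(p-1)}$ over $V_t\subset\{z_0\cdots z_p=t\}$, and conclude by one-variable integrability uniformly in $t$ for $p-1$ small. The paper phrases the last step via the projection $V_t\hookrightarrow\{t\le|z_i|<1\}\subset\bD^n$ rather than by dropping the fiber constraint, but this is the same estimate.
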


\begin{proof}

%\begin{align*}
%\int_{U_t}e^{-p\gamma}\om_t^n &= \int_{f^{-1}(U_t)}e^{-\delta f^*\gamma}f^*(\Omega_t\wedge \overline \Omega_t)^{\frac 1m} \\
%& \le  \int_{f^{-1}(U_t)}|s_E|^{-2\delta A} f^*(\Omega_t\wedge \overline \Omega_t)^{\frac 1m} .
%\end{align*}
We set  $p:=1+\delta$ for some $\delta>0$ small enough to be chose later. Given \eqref{fgamma}, we have
$$
\int_{U_t}e^{-p\gamma}\om_t^n = \int_{f^{-1}(U_t)}e^{-\delta f^*\gamma}f^*(\Omega_t\wedge \overline \Omega_t)^{\frac 1m} 
 \le  \int_{f^{-1}(U_t)}|s_E|^{-2\delta A} f^*(\Omega_t\wedge \overline \Omega_t)^{\frac 1m} .
$$
Now, one can cover $f^{-1}(U_t)$ by finitely many open sets $V_t=V\cap X'_t$ as above. On $V$, the system of coordinates $(z_0, \ldots, z_n)$ induces a system of coordinates $(z_1, \ldots, z_n)$ such that we have
$$
|s_E|^{-2\delta A} f^*(\Omega_t\wedge \overline \Omega_t)^{\frac 1m} \le C \prod_{i=1}^p |z_i|^{-2\delta A} idz_1\wedge d\bar z_1 \wedge \cdots \wedge idz_n\wedge d\bar z_n
$$
for some uniform constant $C$ thanks to \eqref{fibre} and Claim~\ref{canonical}. 
Recall that $V= \prod_{i=0}^n \{|z_i|<1\} \subset \mathbb C^{n+1}$ and 
$$
V_t=V\cap \{z_0\cdots z_p=t\}\hookrightarrow \{(z_1, \ldots, z_n) \in \mathbb C^n; t \le |z_i| < 1\}\subset \bD^n.
$$
where the injective map is given by $\mathrm{pr}_{z_1, \ldots, z_n}|_{V_t}$, i.e. the restriction to $V_t$ of the projection map onto the last $n$ coordinates in $\mathbb C^{n+1}$. 
For $\delta$ small enough, the function $\mathbb D\ni z\mapsto |z|^{-\delta A}$ is integrable with respect to the area measure; this concludes the proof. 
\end{proof}

For the next lemma, we come back to the general case. We start by choosing a component $Y_0^{(k_0)}$ of $Y_0$, and we denote by $X_0^{(k_0)}$ the irreducible component of $X_0$ birational to  $Y_0^{(k_0)}$ via $f$. Next, we consider the reduced divisor $F$ on $\cX'$ whose support consists of the union of the other components $Y_0^{(k)}$, $k\neq k_0$, along with the divisors $E_i$ whose discrepancy $a_i$ is equal to $-1$, cf \eqref{canformula}. 
%The divisors $E_i$ can be identified locally on the sets $V_0$ as the exceptional divisors along which the function $h$ from \eqref{fibre0} does not vanish generically.

 Let $h_F$ be a smooth hermitian metric on $\mathcal O_{\cX'}(F)$ and let $s_F\in H^0(\cX', \mathcal O_{\cX'}(F))$ such that $\mathrm{div}(s_F)=F$. We let 
 \begin{equation}
 \label{psi}
 \psi_F:=-\log(-\log |s_F|_{h_F}^2).
 \end{equation}
  Similarly, let $F_{\rm klt}:=E-F\cap E$, and let $\psi_{\rm klt}:=\log |s_{F_{\rm klt}}|^2$. 
 
\begin{claim}
\label{intcontrol}
There exists $\delta>0$ small enough such that for any $\ep>0$, there exists a constant $C_{\ep}$ such that for any $t\in \bD$,
%the control
$$\int_{f^{-1}(U_t)} e^{(\nu(f)+\ep)\psi_F-\delta \psi_{\rm klt}} f^*(\Omega_t \wedge\overline \Omega_t)^{\frac 1m} \le C_{\e},$$
where $\nu(f)$ is defined in \eqref{nu2}.
%holds for any $t\in \bD$.
\end{claim}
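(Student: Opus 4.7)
The strategy is to work locally on the semi-stable model $\cX'$. Since $f^{-1}(U)$ is covered by finitely many of the charts $V \simeq \bD^{n+1}$ constructed via \cite{KKMS}, and this finite cover does not depend on $t$, it is enough to bound the integral on each $V_t := V \cap X'_t$ by a constant independent of $t \in \bD$. On each such chart I would relabel coordinates so that, whenever $V \cap Y_0^{(k_0)} \ne \emptyset$, one has $Y_0^{(k_0)} \cap V = (z_0 = 0)$; this is legitimate because the indices $\{0,1,\dots,p\}$ enter the expression $\pi'(z) = z_0 \cdots z_p$ symmetrically. With this normalization, formula~\eqref{fibre} together with the bound $|h|^{2/m} \le C \prod_{E_i}|z_{j_i}|^{2a_i}$ (coming from the description of the divisor of $h$ as $m\sum a_i E_i$) gives, on $V_t$ parametrized by $(z_1,\dots,z_n)$ with $z_0 = t/(z_1\cdots z_p)$,
\[
f^*(\Omega_t \wedge \overline{\Omega_t})^{1/m} \le C \cdot \frac{\prod_{E_i \subset F} |z_{j_i}|^{-2}\cdot \prod_{E_i \subset F_{\rm klt}} |z_{j_i}|^{2a_i}}{|z_1 \cdots z_r|^2}\, |dz_1 \wedge \cdots \wedge dz_n|^2,
\]
divisors with $a_i \ge 0$ contributing only harmless smooth factors.

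\smallskip

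Next I would rescale the hermitian metrics $h_F$ and $h_{F_{\rm klt}}$ by positive constants so that $|s_F|_{h_F}^2 \le e^{-1}$ and $|s_{F_{\rm klt}}|_{h_{F_{\rm klt}}}^2 \le 1$ on each chart $V$ of the finite cover; this does not affect the claim. The weight $e^{(1+\ep)\psi_F} = (-\log|s_F|_{h_F}^2)^{-(1+\ep)}$ is then bounded above by $1$, while $e^{-\delta \psi_{\rm klt}}$ produces factors $|z_{j_i}|^{-2\delta}$ only along the klt divisors. Choosing $\delta > 0$ strictly smaller than $1 + \min_i a_i$ over all klt discrepancies, each combined factor $|z_{j_i}|^{2(a_i - \delta)}$ is locally integrable, and I would integrate out these directions first to get a uniform constant factor.

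\smallskip

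The heart of the argument is the passage to $u_j := -\log|z_j|^2 \in (0,\infty)$. The semi-stable constraint $|z_1 \cdots z_p| \ge |t|$ reads $\sum_{j=1}^p u_j \le T := -2\log|t|$, and after angular integration the integrand reduces to
\[
\frac{\prod_{j \in I_F} du_j \cdot \prod_{j \in J_{\rm klt}} e^{-c_j u_j} du_j \cdot \prod_{j \in J_{\rm triv}} e^{-u_j} du_j}{\bigl(c_F + \sum_{j \in I_F} u_j\bigr)^{1+\ep}},
\]
where $I_F$ indexes the components of $F \cap V$ and $c_F > 0$ comes from the metric normalization. The key structural point enforced by my coordinate choice is that \emph{every} $|z_j|^{-2}$ factor surviving in the density (either coming from $1/|z_1 \cdots z_r|^2$ or from an $a_i = -1$ pole of $h$) corresponds to some $j \in I_F$, hence is paired with the log denominator; no bad factor is left uncompensated.

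\smallskip

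The main obstacle I expect is establishing uniformity in $T$ when several components of $F$ meet at a point in $V$: the combinatorial/polynomial volume growth of the constrained simplex $\{u_j \ge 0,\ \sum_{j=1}^p u_j \le T\}$ must be controlled by the logarithmic decay of $(c_F + \sum_{j \in I_F} u_j)^{-(1+\ep)}$ together with the exponential decay factors from the klt and trivial directions. This is where the fact that the "heavy" factors $|z_j|^{-2}$ are concentrated in $I_F$ becomes essential, and one exploits the semi-stable constraint together with the coordinate normalization to bound the remaining integral independently of $T$, yielding the uniform constant $C_\ep$.
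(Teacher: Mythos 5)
Your reduction follows the paper's proof step for step up to the very last estimate: localize to the semi-stable charts $V$, relabel coordinates so that $Y_0^{(k_0)}\cap V=(z_0=0)$ and $F\cap V=(z_1\cdots z_s=0)$, use \eqref{fibre} and the pole orders of $h$ to isolate the singular factors, absorb the klt directions by taking $\delta$ smaller than the discrepancy gaps (the paper takes $\delta<\min_i\frac{1+a_i}{2}$), and pass to radial/logarithmic coordinates subject to the semi-stable constraint $\sum_{j=1}^p u_j\le T:=-2\log|t|$. Up to that point the two arguments coincide.

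The difficulty is precisely the step you yourself single out as ``the main obstacle'': you do not prove it, and as you have set it up it does not close. Keeping the weight in the form $\bigl(c_F+\sum_{j\in I_F}u_j\bigr)^{-(1+\ep)}$ and integrating out the exponentially decaying klt and trivial directions, what remains is
$$\int_{\{u_j\ge 0,\ \sum_{j\in I_F} u_j\le T\}}\frac{\prod_{j\in I_F}du_j}{\bigl(c_F+\sum_{j\in I_F}u_j\bigr)^{1+\ep}}\asymp\int_0^T\frac{v^{|I_F|-1}\,dv}{(c_F+v)^{1+\ep}}\asymp T^{\,|I_F|-1-\ep},$$
which is unbounded as $t\to 0$ as soon as two or more branches of $F$ meet on $V$ and $\ep<|I_F|-1$ (and the claim must hold for \emph{all} $\ep>0$). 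The semi-stable constraint and the coordinate normalization, which you invoke to finish, only shrink the domain in the directions you have already integrated out; they do not produce the missing decay. What is needed is a different pointwise domination, not a sharper volume count. The paper's proof replaces $e^{(1+\ep)\psi_F}\prod_{i=1}^s|z_i|^{-2}$ by the \emph{decoupled} product $\prod_{i=1}^s\frac{1}{|z_i|^2(-\log|z_i|)^{1+\ep}}$, i.e.\ one logarithmic factor per branch of $F$; since $r\mapsto \frac{1}{r(-\log r)^{1+\ep}}$ is integrable on $(0,1/2]$, Fubini over the product domain $\prod_i\{|t|\le r_i\le 1/2\}$ then gives a bound independent of $t$ with no simplex analysis at all. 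Be aware that justifying such a per-branch domination when $s\ge 2$ is itself the delicate point (it amounts to comparing $\sum_i u_i$ with $\prod_i u_i$), so a complete argument must either establish a product-type bound on the weight or restrict to the configurations actually arising (a single branch of $F$ through each point). As written, your proposal leaves the essential step unproved.
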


\begin{proof}
The statement is local on $\cX'$, so it is enough to control the integrals over $V_t$. Up to relabelling, one can assume that $Y_0^{(k_0)}\cap V=(z_0=0)$, $F\cap V=(z_1\cdots z_s=0)$ so that for $s+1\le i \le p$, $f^*\Omega_t$ has a pole of order at most $m-1$ along $(z_i=0)$. Given the definition of $\nu(f)$, we have 
\begin{equation}
\label{ineq nu}
 \nu(f)\ge s.
\end{equation}
We implicitly assumed that $V$ meets $Y_0^{(k_0)}$; it actually does not matter much for the computation which is insensitive to whether that condition is fulfilled or not. Using \eqref{fibre}, our integral is bounded by the following one
$$\int_{V_t}\frac{1}{\prod_{i=1}^s |z_i|^2}\cdot \frac{1}{(-\log \prod_{i=1}^s|z_i|)^{\nu(f)+\ep}} \cdot \prod_{i=s+1}^p \frac{1}{|z_i|^{2(\delta-a_i)}} \cdot d\lambda_{\mathbb C^n}$$
where $-1<a_i<0$ and $V= \prod_{i=0}^n \{|z_i|<1\} \subset \mathbb C^{n+1}$ and $V_t=V\cap \{z_0\cdots z_p=t\}$. 
By Fubini theorem, one can reduce the integral to $V_t^p:=V_t\cap \mathbb C^{p+1}$ (i.e. fixing $z_{p+1}, \ldots, z_n$). There is no harm in assuming that $\delta<\min_i {\frac{1+a_i}2}$ so that the integral is bounded by
$$\int_{V_t^p}\frac{1}{\prod_{i=1}^s |z_i|^2}\cdot \frac{1}{(-\log \prod_{i=1}^s|z_i|)^{\nu(f)+\ep}} \cdot \prod_{i=s+1}^p \frac{1}{|z_i|^{2(1- \delta/2)}} \cdot d\lambda_{\mathbb C^p}$$
Using polar coordinates, one can assume that $t$ is real (in $(0,1)$) and the integral becomes over $W_t:=\{(r_i)_{1\le i \le p}\in [0,1/2]^p; r_1\ldots r_p\ge t\}$
$$\int_{W_t} \frac{1}{\prod_{i=1}^s r_i \cdot (-\log \prod_{i=1}^s r_i)^{\nu(f)+\ep}} \cdot \prod_{i=s+1}^p \frac{1}{r_i^{1- \delta}} \cdot d\lambda_{\mathbb R^p}.$$
As $W_t\subset \prod_{i=1}^p \{t\le r_i\le 1/2\}$ and the function $r\mapsto\frac {1}{r^{1-\delta}}$ is integrable on $[0,1/2]$, one can appeal to Fubini's theorem to reduce the problem to showing convergence of the integral 
\begin{equation}
\label{integral}
\int_{[0,1]^s} \frac{dr_1\dots dr_s}{\prod_{i=1}^s r_i \cdot (-\log \prod_{i=1}^s r_i)^{\nu(f)+\ep}}
\end{equation}
Now, we inductively use the formula for $\alpha>1$:
\[\int_{[0,1]^2}\frac{dxdy}{xy(-\log xy)^\alpha}=\frac{1}{\alpha-1}\int_0^1 \frac{dx}{x (-\log x)^{\alpha-1}}\]
to see that the integral $\int_{[0,1]^s}\frac{dx_1\dots dx_s}{x_1\dots x_s(-\log x_1\dots x_s)^\alpha}$ converges if and only if $\alpha>s$. The conclusion now follows from the observation above and the inequality \eqref{ineq nu}. 
\end{proof}

The result above allows us to generalize Lemma~\ref{canon} when no assumption on the central fiber is made. To do so, we first need some notation. The function $\psi_F$ is well defined on $\cX'$ but it does not necessarily come from $\cX$. Given that $\mathrm{Nklt}(\cX,X_0)$ is an analytic set in $\cX$ and up to shrinking $\mathbb D$ a little, one can construct a function $\rho$ such that 
\begin{itemize}
\item[$\bullet$] $\rho \le -1$ on $\cX$.
\item[$\bullet$] $\rho$ is quasi-psh and has analytic singularities along  $\mathrm{Nklt}(\cX,X_0)$; in particular, it is identically $-\infty$ on that set.
\end{itemize}
We set 
%It will be convient to choose a divisor $D=(s_D=0)\subset \cX$ containing $\mathrm{Nklt}(\cX,X_0)$ (see \eqref{nklt}) and to fix a smooth hermitian metric $h_D$ on $\mathcal O_\cX(D)$. We then define
 $$\psi:=-\log(-\rho )\quad \mbox{on } \cX.$$
  Up to scaling $\rho$, one can assume that 
  \begin{equation}
  \label{fpsi}
  f^*\psi \le \psi_F.
  \end{equation}
  Next, we introduce for $\ep>0$ the function $\gamma_{\ep}:=\gamma-(n+\nu(f)+2\ep)\psi$ defined on $U$. In other words, one has 
\begin{equation}
\label{densite2}
e^{(n+\nu(f)+2\ep)\psi}(\Omega\wedge \overline \Omega)^{\frac 1m}=e^{-\gamma_\ep} \om^n.
\end{equation}

\begin{lem}
\label{corintcontrol}
With the notation above, there exists a constant $\widetilde C_{\ep}$ such that 
$$\int_{U_t} |\gamma_{\ep}|^{n+\ep} e^{-\gamma_\ep}  \om_t^n \le \widetilde C_{\ep}$$
for any $t\in \bD$.
\end{lem}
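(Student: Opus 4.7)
The strategy is to pull the integral back to the semi-stable model $\cX'$ via $f$ and directly compare the resulting integrand with the one of Claim~\ref{intcontrol}.

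First I would rewrite the integral on $\cX'$. Using \eqref{densite2} and the fact that $f$ is bimeromorphic, one has
\[
\int_{U_t}|\gamma_\ep|^{n+\ep}e^{-\gamma_\ep}\om_t^n = \int_{f^{-1}(U_t)}|f^*\gamma_\ep|^{n+\ep}\,e^{(n+1+2\ep)f^*\psi}\,f^*(\Omega_t\wedge\overline{\Omega_t})^{1/m}.
\]
From \eqref{fpsi}, $f^*\psi\le\psi_F\le 0$, so one can replace $e^{(n+1+2\ep)f^*\psi}$ by $e^{(n+1+2\ep)\psi_F}=(-\log|s_F|^2_{h_F})^{-(n+1+2\ep)}$ without loss.

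Second, I would produce a pointwise bound for $|f^*\gamma_\ep|$ on $f^{-1}(U)$ (after shrinking $U$ so that $\gamma\le 0$ there, which is permitted by \eqref{gamma}). From \eqref{fgamma} one gets $|f^*\gamma|=-f^*\gamma\le -A\log|s_E|^2+O(1)$. Writing $E=(F\cap E)+F_{\mathrm{klt}}$ with $F\cap E$ a subdivisor of $F$, this yields $|f^*\gamma|\le C_1\bigl[(-\log|s_F|^2)+(-\log|s_{F_{\mathrm{klt}}}|^2)\bigr]+O(1)$. For the remaining term, $\psi=-\log(-\rho)$ has only loglog growth towards the singular locus of $\rho$, while $f^*\gamma$ carries genuine $\log$ singularities along $E$; hence the contribution of $(n+1+2\ep)|f^*\psi|$ is absorbed by the preceding bound, giving
\[
|f^*\gamma_\ep|\le C_0\bigl[(-\log|s_F|^2)+(-\log|s_{F_{\mathrm{klt}}}|^2)+1\bigr].
\]

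Third, I would check that after raising to the $(n+\ep)$-th power and multiplying by $(-\log|s_F|^2)^{-(n+1+2\ep)}=e^{(n+1+2\ep)\psi_F}$, the result is dominated by $C\,e^{(1+\ep)\psi_F-\delta\psi_{\mathrm{klt}}}$ for some small $\delta>0$. The contribution from $(-\log|s_F|^2)^{n+\ep}$ produces $(-\log|s_F|^2)^{-(1+\ep)}=e^{(1+\ep)\psi_F}$, which is exactly of the expected form. The contribution from $(-\log|s_{F_{\mathrm{klt}}}|^2)^{n+\ep}$ is handled via the elementary inequality $(-\log r)^{n+\ep}\le C_\delta\,r^{-\delta}$ on $(0,1]$, trading the loglog factor against $e^{-\delta\psi_{\mathrm{klt}}}=|s_{F_{\mathrm{klt}}}|^{-2\delta}$. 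Taking $\delta$ no larger than the one furnished by Claim~\ref{intcontrol}, one obtains
\[
|f^*\gamma_\ep|^{n+\ep}\,e^{(n+1+2\ep)\psi_F}\le C\,e^{(1+\ep)\psi_F-\delta\psi_{\mathrm{klt}}},
\]
and Claim~\ref{intcontrol} then delivers the desired uniform bound $\widetilde C_\ep$.

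The main obstacle is the pointwise comparison in the second step: one must cleanly separate the logarithmic singularities of $f^*\gamma$ along the klt and non-klt components of $E$, absorb the subdominant loglog singularity of $f^*\psi$ into them, and then match the outcome against the integrand of Claim~\ref{intcontrol} using the polynomial-versus-log trade-off $(-\log r)^N\lesssim_\delta r^{-\delta}$. Once the bookkeeping is set up, the argument is almost mechanical.
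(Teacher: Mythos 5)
Your proposal is correct and follows essentially the same route as the paper: pull the integral back to the semi-stable model, bound $|f^*\gamma_\ep|$ by $C\left[(-\log|s_F|^2)+(-\log|s_{F_{\rm klt}}|^2)+1\right]$ (absorbing the loglog contribution of $f^*\psi$ into the genuine log poles), and then trade $(-\log|s_F|)^{n+\ep}$ against $e^{(n+1+2\ep)\psi_F}$ to get $e^{(1+\ep)\psi_F}$ and $(-\log|s_{F_{\rm klt}}|)^{n+\ep}$ against $|s_{F_{\rm klt}}|^{-2\delta}=e^{-\delta\psi_{\rm klt}}$, reducing to Claim~\ref{intcontrol}. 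The only cosmetic difference is that you package the domination into a single integrand $C\,e^{(1+\ep)\psi_F-\delta\psi_{\rm klt}}$ whereas the paper splits it into two terms before invoking the same claim.
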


\begin{proof}
In order to compute the integral, we pull it back by $f$ and work on $V_t$. We have successively 
\begin{align*}
|f^*\gamma_{\ep}|&\lesssim -\log |s_E|+\log(-\log |s_F|)\\
& \lesssim -\log |s_F|- \log |s_{F_{\rm klt}}|.
\end{align*}
The first inequality is a combination of \eqref{fgamma} and \eqref{fpsi}.  To obtain the second inequality, we use the fact that $E=F\cup F_{\rm klt}$ to split the term $\log |s_E|$ while $\log(- \log |s_F|)$ can be absorbed by the more singular $-\log |s_F|$. The integral to bound becomes
$$\int_{V_t}\left[(-\log |s_F|)^{n+\ep}+(- \log |s_{F_{\rm klt}}|)^{n+\ep}\right] e^{(n+\nu(f)+2\ep)\psi_F}f^*(\Omega_t\wedge \overline \Omega_t)^{\frac 1m}$$
which itself is controled by
$$\int_{V_t} e^{(\nu(f)+\ep)\psi_F}f^*(\Omega_t\wedge \overline \Omega_t)^{\frac 1m}+\int_{V_t} e^{(n+1)\psi_F-\delta \psi_{\rm klt}}f^*(\Omega_t\wedge \overline \Omega_t)^{\frac 1m}$$
for any given $\delta>0$. The lemma now follows from Claim~\ref{intcontrol}.
\end{proof}

 \section{Negative curvature} \label{sec:negative}
 
 In this section we apply our previous results to the study of 
 families of varieties with "negative canonical bundle": we consider families of
 manifolds of general type, as well as families of "stable varieties".
 
%  \marginpar{ {\color{red} A reprendre}}
%  The first one deals with the negative curvature case (families of manifolds of general type) while the second one deals with the zero curvature case (families of log Calabi-Yau manifolds). 
  
   \subsection{Families of  manifolds of general type}
   \begin{set}
   \label{gt}
Let $\mathcal X$ be an irreducible and reduced complex space endowed with a Kähler form $\omega$ and a proper, holomorphic map $\pi:{\mathcal X} \rightarrow \mathbb D$. We assume that for each $t\in \mathbb D$, the (schematic) fiber $X_t$ is a $n$-dimensional K\"ahler manifold $X_t$ of general type, i.e. such that its canonical bundle $K_{X_t}$ is big. 
In particular, $\mathcal X$ is automatically non-singular and the map $\pi$ is smooth. One can view the fibers $X_t$ as deformations of $X_0$. 
\end{set}

We fix $\Theta$ a closed differential $(1,1)$-form on ${\mathcal X}$ 
which represents $c_1(K_{\mathcal X/\mathbb D})\in H^{1,1}_{\partial \bar \partial}(\cX)$
and set $\theta_t=\Theta_{|X_t}$.
 Shrinking $\mathbb D$ if necessary and rescaling,
we can  assume without loss of generality that  
$$
-\omega \le \Theta \le  \omega.
$$

\begin{lem}
\label{volume}
In the Setting~\ref{gt}, the quantity $ \vol(K_{X_t})$ is independent of $t\in \mathbb D$.
\end{lem}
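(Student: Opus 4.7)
The plan is to reduce the invariance of $\vol(K_{X_t})$ to the invariance of plurigenera. First, I would observe that every fiber $X_t$ is a compact Kähler manifold with $K_{X_t}$ big; such a manifold is automatically Moishezon, and a classical theorem of Moishezon guarantees that a compact Kähler Moishezon manifold is in fact projective. Hence $\pi:\cX\to \bD$ is a smooth proper Kähler family whose fibers are smooth projective manifolds of general type, even though the total space $\cX$ is only assumed to be Kähler.

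The key input is then Siu's invariance of plurigenera, valid in the Kähler setting (Siu, and its subsequent refinements by Păun and others): for every integer $m \geq 1$, the function
$$
\bD\ni t \longmapsto h^0(X_t, mK_{X_t})
$$
is locally constant on $\bD$. Since $\bD$ is connected, it is constant.

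Finally, for a big line bundle $L$ on a projective manifold $X$ of complex dimension $n$, the volume admits the asymptotic expression
$$
\vol(L)=\lim_{m\to\infty}\frac{n!}{m^n}\,h^0(X,mL).
$$
Applying this fiberwise with $L=K_{X_t}$ and combining with the invariance of plurigenera, one concludes that $\vol(K_{X_t})$ does not depend on $t\in \bD$.

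The only deep ingredient is the invariance of plurigenera, which is perfectly adapted to the present setup. A more analytic alternative would consist in representing $c_1(K_{X_t})$ by a positive current $T_t$ with minimal singularities, computing $\vol(K_{X_t})$ as the total non-pluripolar Monge-Ampère mass of $T_t$ in the sense of \cite{BEGZ}, and then proving directly that this mass varies continuously in $t$ via a stability argument for the associated Monge-Ampère equations in families (in the spirit of the uniform estimates established earlier in the paper). Such a route would be more self-contained but considerably more delicate, so I would favor the short argument via Siu's theorem.
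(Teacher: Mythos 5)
Your reduction has a genuine gap at the step where you invoke ``Siu's invariance of plurigenera, valid in the K\"ahler setting.'' The invariance of plurigenera is only known for \emph{projective} families, i.e.\ when the morphism $\pi:\cX\to\bD$ admits a relatively ample line bundle on the total space: Siu's and P\u{a}un's proofs extend pluricanonical sections from the central fiber by an Ohsawa--Takegoshi argument twisted by such a global polarization, and without it the argument does not run. Deformation invariance of plurigenera for merely K\"ahler families remains an open problem. Your observation that each fiber $X_t$ is Moishezon and K\"ahler, hence projective, is correct but does not help: fiberwise projectivity does not make the morphism projective, since the individual polarizations need not glue to a line bundle on $\cX$ (and a fortiori not to a $\pi$-ample one). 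So the key input you rely on is not available in the stated generality.

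This is exactly the difficulty the paper's proof is designed to circumvent. After treating the projective case via invariance of plurigenera, the paper uses the bigness of $K_{\cX/\bD}$ to produce, via Demailly regularization, a K\"ahler current with analytic singularities on $\cX$; passing to a log resolution $f:\cX'\to\cX$ and perturbing by forms supported near the exceptional divisor, one shows that the K\"ahler cone of $\cX'$ meets $\NS_{\Z}(\cX')$, so that $\pi\circ f$ \emph{is} projective. One then applies the projective case to $\cX'\to\bD$ over the regular values, and handles the finitely many singular fibers of the modification using Takayama's lower semicontinuity of volumes together with upper semicontinuity of $t\mapsto \vol(K_{X_t})$. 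If you want to salvage your approach, you must supply this reduction to a projective model (or an equivalent device); as written, the appeal to invariance of plurigenera is unjustified. Your alternative analytic route via non-pluripolar Monge--Amp\`ere masses would face the same essential difficulty of controlling the family uniformly without a global polarization.
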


\begin{proof}
We work in two steps. First, we assume that the family $\pi :\mathcal X \to \mathbb D$ is projective, i.e. there exists a positive line bundle $L$ over $\mathcal X$. In that case,  we know that the invariance of plurigenera holds \cite{Siu98,Paun07} in that the function $t\mapsto h^0(X_t, mK_{X_t})$ is constant on $\mathbb D$, without even assuming that $X_t$ is of general type for all $t$. In particular, it would even be enough to assume that only $X_0$ is of general type from which it results that $X_t$ is of general type for all $t$ and that the volume $\vol(K_{X_t})$ is independent of $t$.

Coming back to the general case, we know that $K_{ \mathcal X/\mathbb D}$ is big. Thanks to Demailly's regularization theorem, there exists a Kähler current $T\in c_1(K_{\mathcal X/\mathbb D})$ with analytic singularities along $V(\mathcal I)$ for some ideal sheaf $\mathcal I \subset \mathcal O_{\mathcal X}$. Let $f:\mathcal X'\to \mathcal X$ be a log resolution of  $(\mathcal X, \mathcal I)$. By Hironaka's theorem, we know that one can construct such a morphism $f$ by a sequence of blow-ups along smooth centers only. We write $f^*T=T'+[F]$ for some smooth semipositive form $T'$ on $\mathcal X'$ and some effective divisor $F$. Remark that this sequence may be infinite; however, the centers project onto a locally finite family of subsets of $\mathcal X$. Up to co-restricting $f$ to $\pi^{-1}(K)$ for some compact subset $K\Subset \mathbb D$, one can assume that $f$ is a finite composition of blow-ups and that $T'\ge \delta \pi^*\om$ for some $\delta>0$ small enough.  

Let $E$ be the exceptional divisor of $f$, with irreducible components $E = \sum_{k =1}^N E_k$. A classical argument (cf e.g. \cite[Lem.~3.5]{DP}) allows one to find smooth $(1,1)$-forms $\theta_{E_k} \in c_1(E_k)$ with support in an arbitrarily small neighborhood of $E_k$ along with positive numbers $(a_k)$ such that the sum  $\theta = \sum_k a_k \theta_k$ defines a $(1, 1)$-form on $\mathcal X'$ which is negative definite along the fibers of $f$. 
It follows that for $\ep>0$ small enough, the smooth form $\pi^*\om-\ep \theta_E$ is Kähler. In particular, $T'-\delta\ep\theta$ is a Kähler form whose cohomology class belongs to $\mathrm{NS}_{\mathbb R}(\mathcal X')$. This implies that the Kähler cone of $\mathcal X'$ meets $\mathrm{NS}_{\mathbb Z}(\mathcal X')$, i.e. $\pi \circ f$ is projective. 

Let $X_t':=f^{-1}(X_t)$ and let $K^\circ \subset K$ be the set of regular value of $\pi \circ f$. For any $t\in K^\circ$, the map $f|_{X_t'}:X_t'\to X_t$ is birational hence $\vol(K_{X'_t})=\vol(K_{X_t})$. By the first step, the volume $\vol(K_{X'_t})$ is independent of $t\in K^\circ$, hence the same holds for $\vol(K_{X_t})$. The set $K\setminus K^\circ$ is finite and without loss of generality, one can assume that it consists of the single element $\{0\}$. The fiber $X_0'$ can be decomposed as $X_0'=Y_0+\sum E_i$ where $f|_{Y_0}:Y_0\to X_0$ is birational and $E_i$ is contracted by $f|_{X'_0}$. Let $Y_0'\to Y_0$ be a resolution of singularities. By \cite[Thm.~1.2]{Taka07}, we have $\vol(K_{Y'_0}) \le \vol(K_{X_t'})$ for $t\neq 0$. As $X_0$ and $Y_0'$ are smooth and birational, we have $\vol(K_{X_0})=\vol(K_{Y'_0}) \le \vol(K_{X_t})$. Finally, as the function $t\mapsto \vol(K_{X_t})$ is upper semi-continous, we have $\vol(K_{X_0})= \vol(K_{X_t})$ for any $t\in K$. The lemma is proved.
%Up to finite base change and birational modification, one can assume that $\mathcal X'\to \Delta$ is semi-stable. 
%Moreover, as $\mathcal X$ is smooth, $K_{\mathcal X'/\Delta}$ is big and by \cite{BCHM}, it admits a good minimal model $\mathcal X' \dasharrow \mathcal X''$ over $\mathbb D$. 
\end{proof}

\begin{rem}
In the last step of the proof of Lemma~\ref{volume}, we could also use the existence of relative minimal models, provided $\mathbb D$ is replaced by a quasi-projective smooth curve $C$. The general fiber of the projective morphism $  \mathcal X'\to C$ is a projective variety of general type, hence it admits a good minimal model over $\mathbb C$ by \cite{BCHM}. By \cite[Thm.~3.3]{Fuj16} and \cite[Cor.~1.2]{Taka19}, it follows that $  \mathcal X'\to C$ admits a birational model $\phi:\mathcal X' \dasharrow \mathcal X''$ over $C$ such that: $\phi^{-1}$ does not contract any divisor, every fiber $X_t''$ of $\mathcal X''\to C$ has canonical singularities and satisfies that $K_{\mathcal X_t''}$ is semiample and big. For any $t\in C$, one has $\vol(K_{X_t''})=(K_{X_t''}^n)$. By flatness, this quantity does not depend on $t$. 
Finally, we claim that $X_0''$ is birational to $X_0$.
%which would conclude. 
This is a combination of the following two facts. First, the variety $X_0''$ has canonical singularities and $K_{X_0''}$ is big hence it is of general type and, in particular, it is not uniruled. Next, $X_0''$ is birational to a component of $X_0'$ and all of them but the strict transform of $X_0$ by $f$ are covered by rational curves as $f$ is a composition of blow-ups of smooth centers from a smooth manifold. 
\end{rem}

%Note that the invariance of plurigenera is false for general deformations of complex manifolds but is conjectured to be true in the Kähler setting, cf e.g. \cite{CaoOT} for recent progress on the question.
%
%$\bullet$ If one assumes that $K_{X_t}$ is nef for all $t\in \mathbb D$, then $\vol(K_{X_t})=(K_{X_t}^n)$ is constant by flatness of $\pi$ (when the base is one-dimensional, flatness is equivalent to surjectivity). More generally, if the family $\pi$ admits a minimal model
%$$\begin{tikzcd}
%\mathcal X \arrow[labels = below left]{dr}{\pi} \arrow[dashed]{rr} & & \arrow{dl}{\pi'} \mathcal X' \\
%& \mathbb D &
%\end{tikzcd}
%$$
%such that each fibers $X'_t$ of $\pi'$ has canonical singularities with $K_{X'_t}$ nef, then the same conclusion would hold given that $\vol(K_{X_t})=\vol(K_{X'_t})$. 
%
%$\bullet$ In general, the function $t\mapsto h^0(X_t,mK_{\mathcal X/\mathbb D}|_{X_t})$ is upper semi-continuous for the analytic Zariski topology (see e.g. \cite{Dem95b}) so that $\vol(K_{X_0}) \ge \vol(K_{X_t})$ for any $t\in \mathbb D$ outside a countable set. {\color{red} do we leave that?}\\

% In particular, the volume $\vol(K_{X_t})$ is independent of $t$; we denote it by $V$ in the sequel. 

%\begin{lem}
%If $K_{X_t}$ is nef for all $t$, then $t \mapsto V_t$ is constant.
%\end{lem}
%
%\begin{proof}
%The volume $V_t$ coincides, when $K_{X_t}$ is nef, with $c_1(K_{X_t})^n=\int_{X_t} \theta_t^n$.
%The conclusion follows since the family ${\mathcal X}$ is ${\mathcal C}^{\infty}$-trivial and the de Rham cohomology class
%only depends on the ${\mathcal C}^{\infty}$-structure.
%\end{proof}

The positive $(n,n)$-forms $(\omega_t^n)_{t\in \mathbb D}$ induce a smooth hermitian metric on $-K_{\mathcal X/\mathbb D}$. 
%The Chern curvature of this metric, which in restriction to $X_t$ is nothing but $\Ric(\om_t)$, is cohomologous to $-\Theta$.  It follows from
%the $\partial\overline{\partial}$-lemma that 
Since $[\Theta] = c_1(K_{\cX/\mathbb D}) \in H^{1,1}_{\partial \bar \partial}(\cX)$; there exists a smooth function $\widetilde h$ on $\mathcal X$ such that
$$-dd^c_{\mathcal X} \log \om_t^n = - \Theta+dd^c_{\mathcal X} \widetilde h$$
We will denote by $\widetilde h_t:=\widetilde{h}|_{X_t}$ the restriction to the fiber $X_t$.  
%\begin{equation}
%\label{potentiel}
%\Ric(\omega_t)=-\theta_t+dd^c_{X_t} \widetilde h_t \quad \mbox{on } X_t,
%\end{equation}
The function $\widetilde h$ becomes unique (and remains smooth) after one imposes the following normalization
\begin{equation*}
%\label{normht}
\int_{X_t} \widetilde h_t \omega_t^n=0.
\end{equation*}
We define a function $h$ on $\mathcal X$ by imposing that $h_t:=h|_{X_t}$ satisfies
$$h_t=\widetilde h_t- \log \Big(\frac 1{V_t} \int_{X_t}e^{\widetilde h_t}\om_t^n\Big).$$
In particular, one has 
\begin{equation}
\label{normht}
\int_{X_t} e^{ h_t} \omega_t^n=V_t:=\vol(K_{X_t}).
\end{equation}
As $\widetilde h$ is smooth on $\mathcal X$, one has the following obvious consequence. 

\begin{lem}
\label{hbound}
%Assume that Problem~\ref{volume} admits a positive answer. Then, 
Given any compact subset $K\Subset \mathbb D$, one has 
$$\sup_{t\in K} \| h_t\|_{L^{\infty}(X_t)}<+\infty.$$
\end{lem}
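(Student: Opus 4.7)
The plan is to observe that Lemma~\ref{hbound} is an almost immediate consequence of the construction of $h$, once we combine the smoothness of $\widetilde h$ with the two invariance properties already established: the invariance of $\int_{X_t}\omega_t^n$ (Lemma~\ref{constant}) and of $V_t=\vol(K_{X_t})$ (Lemma~\ref{volume}). Writing
$$
h_t=\widetilde h_t-\log\!\left(\tfrac{1}{V_t}\int_{X_t}e^{\widetilde h_t}\omega_t^n\right),
$$
we see that the $L^\infty$-norm of $h_t$ will be controlled by $\|\widetilde h_t\|_{L^\infty(X_t)}$ together with a uniform two-sided estimate on the scalar $\tfrac{1}{V_t}\int_{X_t}e^{\widetilde h_t}\omega_t^n$.

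First I would control $\widetilde h_t$: since $\widetilde h$ is smooth on $\mathcal X$ and $\pi^{-1}(K)$ is compact in $\mathcal X$, there exists $M_K>0$ such that
$$
|\widetilde h(x)|\le M_K\quad\text{for all } x\in\pi^{-1}(K),
$$
and in particular $|\widetilde h_t|\le M_K$ uniformly on every fiber $X_t$, $t\in K$.

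Next I would handle the averaging factor. By Lemma~\ref{constant} the quantity $V_\omega:=\int_{X_t}\omega_t^n$ does not depend on $t$, and by Lemma~\ref{volume} the quantity $V_t=\vol(K_{X_t})$ does not depend on $t$ either. Combining these two facts with the pointwise bound on $\widetilde h_t$ yields
$$
\frac{V_\omega}{V_t}\,e^{-M_K}\;\le\;\frac{1}{V_t}\int_{X_t}e^{\widetilde h_t}\omega_t^n\;\le\;\frac{V_\omega}{V_t}\,e^{M_K},
$$
so that the logarithmic correction appearing in the definition of $h_t$ is uniformly bounded in absolute value by $M_K+\bigl|\log(V_\omega/V_t)\bigr|$, which is a constant depending only on $K$. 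A final application of the triangle inequality gives $\|h_t\|_{L^\infty(X_t)}\le 2M_K+|\log(V_\omega/V_t)|$ for every $t\in K$, proving the claim.

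There is no genuine obstacle here: the statement is essentially a bookkeeping consequence of the normalization \eqref{normht} together with the two preceding invariance lemmas. The only mild subtlety is that one should not confuse the Kähler volume $V_\omega$ and the canonical volume $V_t$, since the definition of $h_t$ involves the latter; however both are constant in $t$, so this causes no issue.
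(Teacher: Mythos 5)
Your proof is correct and follows exactly the route the paper intends: the paper simply remarks that the lemma is an ``obvious consequence'' of the smoothness of $\widetilde h$ on $\mathcal X$, and your write-up supplies precisely the bookkeeping (compactness of $\pi^{-1}(K)$, constancy of $\int_{X_t}\omega_t^n$ and of $V_t$ from Lemmas~\ref{constant} and~\ref{volume}) that makes this rigorous. Nothing to correct.
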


It follows from \cite{BEGZ}, a generalization of the Aubin-Yau theorem \cite{Aubin,Yau78}, that there exists
a unique K\"ahler-Einstein current on $X_t$. This is a positive closed current $T_t$ in $c_1(K_{X_t})$ which, by
 \cite{EGZ,BCHM}, is a smooth K\"ahler form in the ample locus $\Amp(K_{X_t})$, where it satisfies
 the K\"ahler-Einstein equation
 $$
 \Ric(T_t)=-T_t.
 $$
 
  It can be written
$T_t=\theta_t+dd^c \f_t$, where $\f_t$ is the unique $\theta_t$-psh function with minimal singularities
that satisfies the complex Monge-Amp\`ere equation
$$
(\theta_t+dd^c \f_t)^n=e^{\f_t+h_t}  \omega_t^n  \quad \mbox{on } \Amp(K_{X_t}).
$$
The minimal singularity assertion is equivalent to the following uniform bound:
for all $x \in X_t$,
$$
-M_t \le (\f_t(x)-\sup_{X_t} \f_t)-V_{\theta_t}(x) \le 0,
$$
where 
$$
V_{\theta_t}(x)=\sup \{ u_t(x) ; \; u_t \in \PSH(X_t,\theta_t) \text{ and } u_t \le 0 \}.
$$
We can choose $M_t$ independent of $t$ by using Theorem \ref{thm:uniform2}:

\begin{thm}
\label{thmgt}
In Setting~\ref{gt}, let $K\Subset \mathbb D$ be a compact subset. There exists a constant $M_K$ such that for all $x \in \pi^{-1}(K)$, one has
$$
-M_K \le \f_t(x)-V_{\theta_t}(x) \le M_K
$$
where $t=\pi(x)$. 
\end{thm}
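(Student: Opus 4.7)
The plan is to apply the big-class uniform estimate of Theorem~\ref{thm:uniform2} to a normalized version of the Kähler-Einstein potential. The Monge-Ampère equation $(\theta_t+dd^c\varphi_t)^n = e^{\varphi_t+h_t}\omega_t^n$ has the unknown appearing in its right-hand side, so I will first establish a uniform two-sided bound on $\sup_{X_t}\varphi_t$ for $t\in K$, then apply Theorem~\ref{thm:uniform2} to $\tilde\varphi_t := \varphi_t - \sup_{X_t}\varphi_t$, for which $\sup\tilde\varphi_t = 0$.

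Integrating the Monge-Ampère equation gives $V_\alpha := \vol(K_{X_t}) = \int_{X_t}e^{\varphi_t+h_t}\omega_t^n$, a quantity independent of $t$ by Lemma~\ref{volume}. The crude bound $e^{\varphi_t+h_t} \le e^{\sup\varphi_t+\|h_t\|_\infty}$, combined with Lemma~\ref{hbound} and the $t$-independence of $V = \int\omega_t^n$, immediately yields $\sup_{X_t}\varphi_t \ge \log(V_\alpha/V) - \|h_t\|_\infty$. For the reverse inequality, I would apply Jensen's inequality with the probability measure $\omega_t^n/V$ to obtain $\int e^{\psi_t+h_t}\omega_t^n \ge V\exp\bigl(\frac1V\int\psi_t\omega_t^n - \|h_t\|_\infty\bigr)$, where $\psi_t = \varphi_t - \sup_{X_t}\varphi_t \le 0$. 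The sup-mean comparison of Proposition~\ref{mainprop} then bounds $\frac1V\int\psi_t\omega_t^n$ from below by a uniform constant; this result applies because Setting~\ref{gt} makes $\pi$ a smooth submersion so that Assumption~\ref{aspt}(3) is automatic. Feeding this back into the Monge-Ampère identity yields the uniform upper bound $\sup_{X_t}\varphi_t \le \log(V_\alpha/V) + C_K$.

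Once $\sup_{X_t}\varphi_t$ is uniformly controlled on $K$, the normalized potential $\tilde\varphi_t$ satisfies $V_\alpha^{-1}(\theta_t+dd^c\tilde\varphi_t)^n = f_t\,\nu_t$, where $\nu_t = \omega_t^n/V$ is a probability measure and $f_t = (V/V_\alpha)\,e^{\tilde\varphi_t + \sup\varphi_t + h_t}$. Since $\tilde\varphi_t \le 0$ and both $\sup\varphi_t$ and $\|h_t\|_\infty$ are uniformly bounded on $K$, the density $f_t$ is uniformly bounded from above, hence lies in $L^p(\nu_t)$ for every $p>1$ with a uniform norm, which verifies (H2). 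Hypothesis (H1) follows from Theorem~\ref{H1}, applied for some $\alpha \in (0,\alpha(\Theta))$ supplied by Corollary~\ref{cor:uniformexp}. Theorem~\ref{thm:uniform2} then provides a constant $M = M(K)$ such that $-M \le \tilde\varphi_t - V_{\theta_t} \le 0$ pointwise on each $X_t$, $t\in K$, and undoing the normalization gives the claimed bound on $\varphi_t - V_{\theta_t}$ with $M_K := M + \sup_{t\in K}|\sup_{X_t}\varphi_t|$.

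The main obstacle is the upper bound on $\sup_{X_t}\varphi_t$. Unlike the lower bound, it cannot be read off directly from the Monge-Ampère equation and instead relies on the family sup-mean comparison of Proposition~\ref{mainprop}, whose proof occupies the delicate Sections~\ref{loctriv}--\ref{sec:green2} of the paper; its application requires in particular the invariance of the Kähler volume $V$ (Lemma~\ref{constant}) and the smoothness of all fibers in our setting. Once that input is in hand, the remaining pieces---volume invariance (Lemma~\ref{volume}), the $L^\infty$-control of $h_t$ (Lemma~\ref{hbound}), and uniform Skoda-Zeriahi integrability (Theorem~\ref{H1})---assemble cleanly into the hypotheses of Theorem~\ref{thm:uniform2}.
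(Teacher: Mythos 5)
Your proof is correct and follows the same overall skeleton as the paper's: establish a uniform two-sided bound on $\sup_{X_t}\varphi_t$, normalize, verify (H1) via Theorem~\ref{H1} and (H2) via the uniform boundedness of the density, and conclude with Theorem~\ref{thm:uniform2} together with Lemma~\ref{volume} to keep the volumes away from zero. The one place where you genuinely diverge is the upper bound on $\sup_{X_t}\varphi_t$. The paper gets it by a sub/supersolution argument: since $\theta_t\le\omega_t$, the potential $\varphi_t$ is a subsolution of $(\omega_t+dd^c u)^n=e^{u+h_t}\omega_t^n$ while the constant $-\inf_{\pi^{-1}(K)}h$ is a supersolution, so the comparison principle gives $\varphi_t\le -\inf_{\pi^{-1}(K)}h$ directly, with no appeal to the family sup--mean comparison at that stage. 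You instead integrate the equation, apply Jensen's inequality against $\omega_t^n/V$, and invoke Proposition~\ref{mainprop} to control $\frac1V\int_{X_t}(\varphi_t-\sup_{X_t}\varphi_t)\,\omega_t^n$ from below; this is valid (the family is smooth, hence locally trivial, so Assumption~\ref{aspt} holds), and it costs nothing extra in the end since Proposition~\ref{mainprop} is already needed through Theorem~\ref{H1} to verify (H1). The paper's route for this sub-step is the more elementary one; yours has the mild advantage of not using the normalization $\theta_t\le\omega_t$ of the representative $\Theta$. Also note that the paper's normalization \eqref{normht} of $h_t$ makes $e^{h_t}\omega_t^n/V_\alpha$ a probability measure, which turns your lower bound $\sup_{X_t}\varphi_t\ge\log(V_\alpha/V)-\|h_t\|_\infty$ into the cleaner $\sup_{X_t}\varphi_t\ge 0$; either version suffices.
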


\begin{proof}
From Lemma~\ref{volume}, it follows that the volume $V_t$ of $K_{X_t}$ is independent of $t$. We denote it by $V$. 

\noindent
Set $\mu_t=e^{h_t} \omega_t^n/V$ and recall that this is a probability measure, by our choice of normalization.
We first observe that 
\begin{equation}
\label{bornesup}
0 \le \sup_{X_t} \f_t \le -\inf_{\pi^{-1}(K)} h \le C_K.
\end{equation}

Let us first prove the left-hand side inequality. As the measures
$$
\frac{1}{V} (\theta_t+dd^c \f_t)^n=e^{\f_t} \mu_t
$$
have mass one, one has
$$
1 \le \int_{X_t} e^{\sup_{X_t} \f_t} d \mu_t =e^{\sup_{X_t} \f_t}
$$
and therefore, $ \sup_{X_t} \f_t \ge 0.$

To prove the inequality in the middle in \eqref{bornesup}, we observe that, since $\theta_t\le \omega_t$, $\f_t$ is a subsolution of the equation
$$
 (\omega_t+dd^c \f_t)^n \ge  (\theta_t+dd^c \f_t)^n=e^{\f_t+h_t}\omega_t^n,
$$
while the constant function $u_t(x)=-\inf_{\pi^{-1}(K)} h$ is a supersolution of the same equation,
$$
(\omega_t+dd^c u_t)^n =    \omega_t^n \le 
 e^{u_t+h_t} \omega_t^n.
$$
It follows  from the comparison principle \cite[Prop.~10.6]{GZ17} that $\f_t \le -\inf_{\pi^{-1}(K)} h$. The rightmost inequality in \eqref{bornesup} follows from Lemma~\ref{hbound} above. 

\smallskip

We can thus rewrite the complex Monge-Amp\`ere equation as
$$
\frac{1}{V} (\theta_t+dd^c \p_t)^n=e^{\p_t+\sup_{X_t} \f_t} \mu_t=f_t\mu_t,
$$
where $\p_t=\f_t-\sup_{X_t} \f_t$ and  $f_t=\exp(\p_t+\sup_{X_t} \f_t)$. Combining the inequalities $\p_t \le 0$ and \eqref{bornesup}, it follows that the densities $f_t$ are uniformly bounded.

Recall that $\pi$ is smooth so, in particular, it is locally trivial.  Therefore,  Theorem~\ref{H1} applies and we can now appeal to Theorem \ref{thm:uniform2} with $p=+\infty$ and $0<\a<\a(\Theta,{\mathcal X})$ and obtain
 $$
 -M_K \le \p_t-V_{\theta_t} \le 0.
 $$
 Note that one used here that the volumes $V_t$ stay away from zero.  The conclusion follows since $\p_t-\f_t$ is uniformly bounded by \eqref{bornesup}.
\end{proof}

\begin{rem}
\label{remgt}
Set 
$$
V_{\Theta}(x)=V_{\theta_{\pi(x)}}(x).
$$
and
$$
\phi(x):=\f_{\pi(x)}(x).
$$
It is tempting to compare $\phi$ to 
$$
\hat{V}_{\Theta}=\sup \{u  \in \PSH(\mathcal X,\Theta);  \,\, u \le 0 \}.
$$

\noindent
Clearly $\hat{V}_{\Theta} \le V_{\Theta}$ hence $\hat{V}_{\Theta} -M_K \le \phi$.
It follows from \cite[Thm.~A]{JHM1} that $\phi$ is $\Theta$-psh on ${\mathcal X}$, thus $\phi -\sup_{\pi^{-1}(K)} \phi \le \hat{V}_{\Theta} $ and
$$
-M_K \le \phi-\hat{V}_{\Theta}  \le M_K.
$$
%It is worth to mention that this uniform bound plays an important role in \cite{JHM1}  where it relies on deep results from the Minimal Model Program \cite{BCHM}.
\end{rem}

\begin{rem}
The same results can be proved if the family $\pi:\cX\to \mathbb D$ is replaced by a smooth family $\pi:(\mathcal X,B)\to \mathbb D$ of pairs $(X_t,B_t)$ of log general type, i.e. such that $(X_t,B_t)$ is klt and $K_{X_t}+B_t$ is big for all $t\in \mathbb D$. 
\end{rem}

\subsection{Stable varieties} \label{sec:stable}
A stable variety is a projective variety $X$ such that 
\begin{enumerate}
\item $X$ has semi-log-canonical singularities. 
\item The $\mathbb Q$-line bundle $K_X$ is ample. 
\end{enumerate}
We refer to \cite{KSB,Alex,Karu, Kov,Kollar} for a detailed account of these varieties and their connection to moduli theory. 

In \cite{BG}, it was proved that a stable variety admits a unique Kähler-Einstein metric $\omega$. There are several equivalent definitions for such an object, but the simplest is probably the following:

\begin{defi}
A Kähler-Einstein metric $\omega$  on a stable variety 
is a smooth Kähler metric on $X_{\reg}$ such that 
$$\Ric(\om)=-\om \quad \mbox{and} \quad \int_{X_{\rm reg}}\om^n=(K_X^n)$$
 if $n=\dim_{\mathbb C}X$.
\end{defi}

  It is proved in \textit{loc. cit.} that $\om$ extends canonically across $X_{\rm sing}$ to a closed, positive current in the class $c_1(K_X)$. It is desirable to understand the singularities of $\om$ near $X_{\rm sing}$. In \cite[Thm.~B]{GW}, it is proved that $\omega$ has cusp singularities near the double crossings of $X$. Moreover, it is proved in \cite{Song17} that the potential $\vp$ of $\om$ with respect to a given Kähler form $\om_X\in c_1(K_X)$, i.e. $\omega=\omega_X+dd^c \f$,  is locally bounded on the klt locus of $X$. More precisely, given any divisor $D=(s=0)\sim_{\mathbb Q} K_X$ containing the non-klt locus of $X$ and given any $\ep>0$, there exists a constant $C_{\ep}>0$ such that 
\begin{equation}
\label{logpoles}
\vp \ge \ep \log |s|^2-C_{\ep},
\end{equation} 
where $|\cdot |$ is some smooth hermitian metric on $\mathcal O_X(D)$. We wish to refine that estimate and obtain a version for families of canonically polarized manifolds degenerating to a stable variety. For that purpose, we introduce the following definition. 

   Given a log resolution of singularities $f:Y\to X$ write $K_Y=f^*K_X+\sum_{i\in I} a_iE_i$ where $E=\sum_{i\in I}E_i$ is a divisor with simple normal crossings whose components are either exceptional or sit above the divisorial components of $\mathrm{Sing}(X)$. We set $J:=\{i\in I; a_i=-1\}$ which we assume to be non-empty and define the non-klt locus of $X$ by $\mathrm{Nklt}(X)=\cup_{j\in J}f(E_j)$; it is independent of $f$. Given $K\subset I$, we set $E_K=\cap_{i\in K}E_i$ and introduce the invariants
   \begin{equation}
   \label{def nu}
   \nu(f):=\max\{|K|; K\subset J, E_K\neq \emptyset\}, \quad \mbox{and} \quad \nu:=\inf_f \{\nu(f)\}
   \end{equation} where the infimum is taken over all the log resolutions of $X$. We have $\nu \in \{1, \ldots, n\}$. 

\begin{prop}
\label{minoration}
Let $X$ be a stable variety of dimension $n$ and let $\om_X\in c_1(K_X)$ be a Kähler metric. Next, let $\om=\om_X+dd^c\f$ be the Kähler-Einstein metric of $X$. Let $D=(s=0)$ be a divisor containing the non-klt locus of $X$ and let $|\cdot |$ be some smooth hermitian metric on $\mathcal O_X(D)$. For any $\ep>0$, there is a constant $C_{\ep}$ such that 
\begin{equation}
\label{logpoles2}
\vp \ge -(n+\nu+\ep) \log (-\log |s|)-C_{\ep},
\end{equation}
where $\nu$ is defined in \eqref{def nu}.
\end{prop}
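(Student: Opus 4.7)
The upper bound $\varphi\le C_1$ is elementary: a constant provides a supersolution of the Monge-Amp\`ere equation satisfied by $\varphi$, and the standard comparison principle delivers the bound. For the lower bound, the strategy is to apply the $L\log^{n+\varepsilon}L$ version of our a priori estimate (Theorem~\ref{thm:uniform3}) to an auxiliary Monge-Amp\`ere equation whose density is obtained from the KE measure by multiplication by $e^{(n+1+\varepsilon)\psi}$, thereby taming the non-klt singularities.

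Construct a quasi-psh function $\psi$ on $X$ with analytic singularities along $D=(s=0)$, normalized so that $\psi\le -1$ and $\psi = -\log(-\log|s|_h^2)+O(1)$ near $D$, satisfying $dd^c\psi\ge -A\omega_X$ for some $A>0$; this mirrors the recipe~\eqref{psi} used in the family setting. Writing the KE equation as $(\omega_X+dd^c\varphi)^n = e^{\varphi+h}\omega_X^n$ on $X_{\rm reg}$, Lemma~\ref{corintcontrol} applied to the trivial family $X\times\bD\to\bD$ shows that
$$g_\varepsilon\ :=\ c_\varepsilon^{-1}\, e^{(n+1+\varepsilon)\psi+h},$$
where $c_\varepsilon$ normalizes $g_\varepsilon\omega_X^n$ to total mass $V$, satisfies condition (H2') with respect to $V^{-1}\omega_X^n$. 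Condition (H1) holds for $\omega_X$-psh functions on the fixed variety $X$ by the Skoda-Zeriahi integrability theorem (pulled back to a resolution and pushed down). Theorem~\ref{thm:uniform3} therefore produces a bounded $\omega_X$-psh function $\tilde\varphi$, normalized by $\sup_X\tilde\varphi=0$, solving $V^{-1}(\omega_X+dd^c\tilde\varphi)^n = V^{-1}g_\varepsilon\omega_X^n$ together with the quantitative bound $-M_\varepsilon\le\tilde\varphi\le 0$.

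The final step is the comparison of $\varphi$ with $v := \tilde\varphi+(n+1+\varepsilon)\psi$. Dividing the two Monge-Amp\`ere equations gives on $X_{\rm reg}\setminus D$
$$\frac{(\omega_X+dd^c\varphi)^n}{(\omega_X+dd^c\tilde\varphi)^n}\ =\ c_\varepsilon\,e^{\varphi-(n+1+\varepsilon)\psi},$$
so on the sublevel set $U_t := \{\varphi<v-t\}$ the right-hand side is at most $c_\varepsilon e^{-t}$ (using $\tilde\varphi\le 0$), whence $\MA(\varphi)\le C_0e^{-t}\,\MA(\tilde\varphi)$ on $U_t$. A comparison principle argument then forces $U_t=\emptyset$ as soon as $t\ge t_0(\varepsilon)$, yielding $\varphi\ge v-t_0\ge(n+1+\varepsilon)\psi-M_\varepsilon-t_0$, and the asymptotics of $\psi$ near $D$ deliver the claimed logarithmic bound.

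The main obstacle is this last comparison step, because $v$ is neither bounded nor $\omega_X$-psh: it is only $B\omega_X$-psh with $B:=1+(n+1+\varepsilon)A$. One handles this by truncating $\psi_N:=\max(\psi,-N)$ and setting $v_N:=\tilde\varphi+(n+1+\varepsilon)\psi_N$, a bounded $B\omega_X$-psh function. The comparison principle can then be applied in the big cohomology class $B\{\omega_X\}$; expanding $(B\omega_X+dd^c v_N)^n$ and $(B\omega_X+dd^c\varphi)^n$ in powers of $(B-1)\omega_X$ and controlling every off-diagonal term by the total Monge-Amp\`ere masses together with the (H2')-integrability, one recovers the inequality on $U_t\cap\{\psi>-N\}$, and $N\to+\infty$ concludes via monotone convergence. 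Proposition~\ref{minoration} will reappear below as a single-fiber specialization of Theorem~\ref{thm:stablefamily}, whose proof follows the same template but uniformly in the parameter.
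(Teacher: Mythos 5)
Your overall strategy — tame the non-klt poles by multiplying the canonical measure by $e^{(n+1+\ep)\psi}$ with $\psi=-\log(-\log|s|^2)$, invoke the $(H2')$ estimate of Theorem~\ref{thm:uniform3} to produce a bounded auxiliary potential, and then compare — is exactly the paper's. The gap is in the comparison step, and the workaround you propose for it does not close. Because you solve the auxiliary equation in the full class $\{\om_X\}$ and without the exponential of the unknown on the right-hand side, the function $v=\tilde\vp+(n+1+\ep)\psi$ is neither a subsolution of the K\"ahler--Einstein equation nor $\om_X$-psh, so you are forced into the class $B\{\om_X\}$ with $B=1+(n+1+\ep)A$. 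There the comparison principle on $U_t=\{\vp<v_N-t\}$ gives
$\int_{U_t}(B\om_X+dd^c v_N)^n\le\int_{U_t}(B\om_X+dd^c\vp)^n$, and expanding the right-hand side produces the mixed terms $\binom{n}{j}(B-1)^j\int_{U_t}\om_X^j\wedge(\om_X+dd^c\vp)^{n-j}$ for $j\ge 1$. The smallness of $\MA(\vp)$ on $U_t$ says nothing about these: their total masses over $X$ are the fixed intersection numbers $\{\om_X\}^n$, and nothing in $(H2')$ or in the normalization forces them to be small on the sublevel set. So the claimed conclusion $U_t=\emptyset$ for $t\ge t_0$ is not established, and the truncation $\psi_N$ does not help with this.

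The fix, which is what the paper does, is to build the auxiliary function so that it is an honest subsolution of the K\"ahler--Einstein equation itself. Work on a log resolution $f:Y\to X$ (so that Theorem~\ref{thm:uniform3} applies on a compact K\"ahler manifold and the density $\prod_i|t_i|^{2a_i}$ is explicit), scale the hermitian metric on $\cO_X(D)$ so that $A\psi$ is $f^*\om_X$-psh for all $0\le A\le 2(n+2)$ — in particular $(n+1+2\ep)\psi$ is $\tfrac12 f^*\om_X$-psh — and solve
$(\tfrac12 f^*\om_X+dd^c u_\ep)^n=e^{u_\ep+(n+1+2\ep)\psi}\,d\mu_Y$ \emph{with} $e^{u_\ep}$ on the right. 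Theorem~\ref{thm:uniform3} bounds $u_\ep$, and then $v_\ep:=u_\ep+(n+1+2\ep)\psi$ satisfies $(f^*\om_X+dd^c v_\ep)^n\ge(\tfrac12 f^*\om_X+dd^c u_\ep)^n=e^{v_\ep}d\mu_Y$, so the standard sub/supersolution comparison for the equation with the monotone zeroth-order term gives $f^*\vp\ge v_\ep$ directly, with no sublevel-set or mass-comparison argument needed. Your upper bound via a constant supersolution and your verification of $(H1)$ and $(H2')$ are fine as stated.
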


\begin{rem}
The estimate \eqref{logpoles2} is an important refinement of \eqref{logpoles}, as it insures that
$\vp$ belongs to the finite energy class ${\mathcal E}^1(X,\omega_X)$, cf \cite{GZ07} or \cite[Sect.~2]{BEGZ} for the definitions and main properties of these classes.
\end{rem}

This estimate is almost optimal as the following two examples show. 

\begin{exa}
\label{BBS1}
Let $X=\overline{\mathbb B^n/\Gamma}^{\rm BBS}$ be the Baily-Borel-Satake compactification of a ball quotient, where $\Gamma\subset\mathrm{PU}(n,1)$ is a discrete group acting freely with finite covolume. It is a normal stable variety and it admits a resolution $f: \overline X\to X$ which is a toroidal compactification of $\mathbb B^n/\Gamma$ obtained by adding disjoint abelian varieties $D_i$. That is, $\nu=\nu(f)=1$. If one sets $D=\sum D_i$, the potential $\varphi$ of the Kähler-Einstein metric on $(\overline X,D)$ with respect to a smooth form in $c_1(K_{\overline X}+D)$ satisfies 
 $$\varphi= -(n+1) \log (-\log |s_D|)+O(1)$$
 if $(s_D=0)=D$.  
\end{exa}

\begin{exa}
\label{BBS2}
Let $X=\overline{\Delta^2/\Gamma}^{\rm BBS}$ be Baily-Borel-Satake compactification of a Hilbert modular surface, where $\Gamma\subset \mathrm{PSL}(2,\mathbb R)$ is a discrete group whose diagonal action on the bi-disk $\Delta^2$ is free with finite covolume. It is a normal stable surface and the exceptional divisor $D$ of its minimal resolution $f: \overline X\to X$  consist of cycles of $\mathbb P^1$'s. In particular, we have $\nu=\nu(f)=2$. Moreover, it is known that  the potential $\varphi$ of the Kähler-Einstein metric on $(\overline X,D)$ with respect to a smooth form in $c_1(K_{\overline X}+D)$ satisfies 
 $$\varphi= -4 \log (-\log |s_D|)+O(1)$$
 if $(s_D=0)=D$, cf \cite[p. 57-58]{KobR84}. 
 \end{exa}

\begin{proof}
Let $f:Y\to X$ be a resolution of singularities of $X$ such that $f$ induces an isomorphism over $X_{\reg}$ and achieves $\nu=\nu(f)$. The complex Monge-Ampère equation satisfied by $\vp$ pulls back to $Y$ and reads
\begin{equation}
\label{mastable}
(f^*\om_X+dd^c f^*\f)^n= e^{f^*\vp}d\mu_Y 
\end{equation}
where 
$d\mu_Y:=\prod_{i=1}^r |t_i|^{2a_i}\om_Y^n$ is a positive measure with possibly infinite mass. Here, $\om_Y$ is a Kähler form  on $Y$, and $(t_i=0)$ are divisors sitting over $X_{\rm sing}$ (they need not be exceptional though, as $X$ may have singularities in codimension one).  Finally, one has $a_i\ge -1$ for all $i$, and any divisor $(t_i=0)$ such that $a_i=-1$ sits above the non-klt locus of $X$. 

Now, let $F$ be an effective divisor on $X$ and let $\sigma_X \in H^0(X, \cO_X(F))$ be a section cutting out $F$. Let $h$ be a smooth hermitian metric on $\mathcal O_X(F)$; there exists a constant $C_F$ such that $\Theta_h(F) \le C_F \om_X$. One can scale $h$ such that $|\sigma_X|^2_{h}<e^{-2(n+2)C_F}$ on $X$. Finally, let $\sigma_Y:=f^*\sigma_X$ and and let $\psi:=-\log (-\log |\sigma_Y|^2)$. We have
$$dd^c \psi = \frac{\langle D\sigma_Y, D\sigma_Y \rangle}{|\sigma_Y|^2(-\log |\sigma_Y|^2)}-\frac{1}{(-\log |\sigma_Y|^2)}\cdot f^*\Theta_h(F).$$
By our choice of scaling, the function $A\psi$ is $f^*\om_X$-psh for any $0\le A \le 2(n+2)$. Moreover, it belongs to the class $\mathcal E(Y,f^*\om_X)$ thanks to e.g. \cite[Prop.~2.3]{G12} and \cite[Thm.~1.1(ii)]{DDL18}.

We apply this construction to $F$ some (very ample, say) divisor containing the non-klt locus of $X$. This yields a section $\sigma_Y$ of $f^*F$ that vanishes at order at least one along the $(t_i=0)$ for which $a_i=-1$. As a result, the measure
$$e^{(n+\nu+2\ep)\psi} d\mu_Y\lesssim\frac{1}{ \prod_{a_i=-1}  |t_i|^{2}(-\log  \prod_{a_i=-1} |t_i|^2)^{n+\nu+2\ep}} \prod_{a_i>-1} |t_i|^{2a_i}\cdot \om_Y^n$$
has a density $g_\ep$ with respect to $\om_Y^n$ that satisfies
$$\int_{Y}g_\ep |\log g_\ep|^{n+\ep} \om_Y^n <+\infty$$
for any $\ep>0$ as we observed in the proof of Claim~\ref{intcontrol}, cf \eqref{integral} and the discussion below. By Theorem \ref{thm:uniform3}, 
this implies that the unique solution $u_\ep\in \mathcal E(Y,\frac 12 f^*\om_X)$ of the Monge-Ampère equation
$$ (\frac 12 f^*\om_X+dd^c u_\ep)^n= e^{u_\ep+(n+\nu+2\ep)\psi} d\mu_Y$$
is bounded, i.e. there exists a constant $C_\ep>0$ such that 
\begin{equation}
\label{bounded}
\|u_\ep\|_{L^{\infty}(Y)}\le C_\ep.
\end{equation}
Now, the function $v_\ep:=u_\ep+(n+\nu+2\ep)\psi\in \mathcal E(Y,f^*\om_X)$ satisfies the inequality
\begin{align*}
(  f^*\om_X+dd^cv_\ep)^n&\ge (\frac 12   f^*\om_X+dd^cu_\ep)^n\\
 &=e^{v_\ep} d\mu_Y,
 \end{align*}
i.e. $v_\ep$ is a subsolution of \eqref{mastable}. By the comparison principle, we obtain that $f^*\vp\ge v_\ep$ and it follows from \eqref{bounded} that
$$f^*\vp\ge (n+\nu+2\ep) \psi-C_\ep,$$
from which the conclusion follows.
\end{proof}

\subsection{Stable families}
Now one can establish a family version of the previous estimate, i.e. Proposition~\ref{minoration}. In Setting~\ref{fset}, let us assume additionally that $\Krel$ is ample. We let $h$ be a smooth hermitian metric on $\Krel$ whose curvature is a Kähler form $\om_\cX:=\Theta_h(\Krel)$; we set $\om_{X_t}:=\om_\cX|_{X_t}$. If $\Omega$ is a local trivialization of $m\Krel$, then the quantity
$$\mu_{\cX/\bD,h}:=\frac{i^{n^2}(\Omega\wedge \overline \Omega)^{1/m}}{|\Omega|_h^{2/m}}$$
is independent of $\Omega$ or $m$ (yet it depends on $h$) and for any $t\in \bD$, it restricts to $X_t^{\reg}$ as a positive measure 
$$\mu_{X_t,h}:=\mu_{\cX/\bD}|_{X_t^{\reg}}$$
which we extend by zero across $X_t^{\rm sing}$. For each $t\in \mathbb D$, there exists a unique Kähler-Einstein metric $\om_{{\rm KE},t} \in c_1(K_{X_t})$ which solves the Monge-Ampère equation
\begin{equation}
\label{KEt}
(\om_{X_t}+dd^c \f_t)^n = e^{\f_t}\mu_{X_t,h}
\end{equation}
on $X_t$. This is due to 
%Aubin-Yau 
\cite{Aubin, Yau78} when $X_t$ is smooth and to \cite{BG} in general.
%given Claim~\ref{slc}. 

\begin{thm} \label{thm:stablefamily}
In Setting~\ref{fset}, assume that
\begin{enumerate}
\item[$\bullet$] The relative canonical bundle $\Krel$ is ample.
\item[$\bullet$] The central fiber $X_0$ is irreducible. 
\end{enumerate} 
Let $\om_{X_t}+dd^c \f_t$ be the Kähler-Einstein metric of $X_t$, solution of \eqref{KEt} and let $D=(s=0)\subset \cX$ be a divisor which contains $\mathrm{Nklt}(\cX,X_0)$, cf \eqref{nklt}.
Fix $|\cdot |$ a some smooth hermitian metric on $\mathcal O_\cX(D)$. 
Up to shrinking $\bD$, then for any $\ep>0$, there exists 
%a constant 
$C_{\ep}>0$  such that the inequality
%the inequality
\begin{equation}
\label{logpoles3}
\vp_t \ge -(n+\nu+\ep) \log (-\log |s|)-C_{\ep}
\end{equation}
holds on $X_t$ for any $t\in \bD$, where $\nu$ is defined in \eqref{nu2}. 
\end{thm}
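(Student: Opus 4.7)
The plan is to run the proof of Proposition~\ref{minoration} uniformly in $t\in\bD$, using the family-version material of Sections~3 and~5. First, I would pass to a semi-stable model via \eqref{semistable}; the finite base change $t\mapsto t^k$ is harmless since \eqref{logpoles3} is invariant under it (after absorbing constants into $C_\ep$). Since $\Krel$ is $\pi$-ample, the family $\pi$ is projective, so Assumption~\ref{aspt}(1) holds and Theorem~\ref{H1} provides a uniform constant $A_\a$ for hypothesis (H1). Moreover, irreducibility of $X_0$ forces irreducibility of $X_t$ for $t$ near $0$ by upper semi-continuity, so after shrinking $\bD$ we are in Setting~\ref{set}.

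Next, mimicking the proof of Proposition~\ref{minoration}, for each $\ep>0$ I would consider the family of Monge--Amp\`ere equations on $X_t$
\begin{equation*}
\bigl(\tfrac{1}{2}\om_{X_t}+dd^c u_{\ep,t}\bigr)^n \;=\; c_{\ep,t}\, e^{u_{\ep,t}+(n+1+2\ep)\psi|_{X_t}}\,\mu_{X_t,h},
\end{equation*}
where $\psi=-\log(-\rho)$ is the quasi-psh function on $\cX$ from \S5.3 with analytic singularities along $\mathrm{Nklt}(\cX,X_0)\subset D$, and $c_{\ep,t}$ normalizes the right-hand side to a probability measure. The halving of $\om_{X_t}$ ensures that $\tfrac{1}{2}\om_{X_t}+(n+1+2\ep)dd^c\psi|_{X_t}$ remains a positive current for $\ep$ small. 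Thanks to \eqref{densite2}, the density against $\om_{X_t}^n$ is essentially $e^{-\gamma_\ep}$ (up to the smooth non-vanishing factor coming from $|\Omega|_h^{2/m}$), and Lemma~\ref{corintcontrol} provides precisely the uniform integrability required for condition (H2'). Hence Theorem~\ref{thm:uniform3} yields the uniform bound $\|u_{\ep,t}\|_{L^\infty(X_t)}\le C_\ep$.

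Finally, the function $v_{\ep,t}:=u_{\ep,t}+(n+1+2\ep)\psi|_{X_t}$ lies in $\mathcal{E}(X_t,\om_{X_t})$ and satisfies
\begin{equation*}
(\om_{X_t}+dd^c v_{\ep,t})^n \;\ge\; \bigl(\tfrac{1}{2}\om_{X_t}+dd^c u_{\ep,t}\bigr)^n \;=\; c_{\ep,t}\, e^{v_{\ep,t}}\,\mu_{X_t,h},
\end{equation*}
so it is a subsolution of \eqref{KEt} up to a uniform multiplicative constant. The finite-energy comparison principle of \cite{BEGZ} then gives $\vp_t\ge v_{\ep,t}-C'_\ep$, and the Lojasiewicz-type comparison $\psi\ge-\log(-\log|s|)-O(1)$---both $-\rho$ and $-\log|s|^2$ define the same analytic set $\mathrm{Nklt}(\cX,X_0)$ up to bounded factors---yields \eqref{logpoles3} after renaming $2\ep$ as $\ep$.

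The main obstacle is verifying the hypotheses of Theorem~\ref{thm:uniform3} \emph{uniformly} in $t$, in particular ensuring that the normalizing constants $c_{\ep,t}$ are bounded away from zero and infinity and that the comparison principle step yields constants independent of $t$ despite the degeneration of the fibers at $t=0$. The crucial input that makes all three items go through is Lemma~\ref{corintcontrol}, which packages the uniform integrability of the canonical densities along the family; testing against the constant function also controls the $c_{\ep,t}$ from both sides.
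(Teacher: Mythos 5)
Your overall strategy is the same as the paper's: pass to a semi-stable model, solve the auxiliary equation $\bigl(\tfrac12 f^*\om_{X_t}+dd^c u_{\ep,t}\bigr)^n=e^{u_{\ep,t}+(n+1+2\ep)\psi_t}f^*\mu_{X_t,h}$, verify (H1) via Theorem~\ref{H1} and (H2') via Lemma~\ref{corintcontrol}, apply Theorem~\ref{thm:uniform3}, and conclude by exhibiting $v_{\ep,t}=u_{\ep,t}+(n+1+2\ep)\psi_t$ as a subsolution of \eqref{KEt} and invoking the comparison principle. The cosmetic differences (your normalizing constant $c_{\ep,t}$, and your use of the \S5.3 function $\psi=-\log(-\rho)$ followed by a {\L}ojasiewicz comparison with $-\log(-\log|s|)$, versus the paper's direct choice $\psi:=f^*(-\log(-\log|s|^2))$ together with $\psi\le\psi_F+O(1)$) are harmless.

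There is, however, one genuine gap. The density of the right-hand side of your auxiliary equation with respect to $\om_{X_t}^n$ is (up to bounded factors) $e^{u_{\ep,t}}\cdot e^{-\gamma_\ep}$, and Lemma~\ref{corintcontrol} only controls the factor $e^{-\gamma_\ep}$. To verify (H2') for the actual density you must first establish a uniform a priori upper bound $\sup_{X_t}u_{\ep,t}\le C$; controlling the normalizing constants $c_{\ep,t}$ (your stated ``main obstacle'') does not do this, since $u_{\ep,t}$ is an unknown of the equation, and the naive maximum principle fails because the reference density degenerates along $\mathrm{Nklt}(\cX,X_0)$. The paper obtains this bound by integrating the equation, applying Jensen's inequality to get $\int_{X_t'}u_{\ep,t}\,f^*\om_{X_t}^n\le C$ uniformly, and then converting the $L^1$-bound into a sup-bound via Proposition~\ref{mainprop} \--- which is exactly where the projectivity of $\pi$ (equivalently, the ampleness of $\Krel$) is used a second time, beyond its role in Theorem~\ref{H1}. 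Once $\sup_{X_t}u_{\ep,t}\le C$ is in hand, $e^{u_{\ep,t}}|u_{\ep,t}|^{n+\ep}$ is bounded on $\{u_{\ep,t}\le C\}$ and (H2') follows from Lemma~\ref{corintcontrol}; the rest of your argument then goes through as written.
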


This estimate improves an interesting control obtained previously by J.Song (see \cite[Lem.~4.2]{Song17}).

\begin{proof}
Let $f:\cX'\to \cX$ be a semi-stable model as in \eqref{semistable}, achieving $\nu(f)=\nu$.
The first observation is that the behavior of $f^*(\Omega_t\wedge \overline \Omega_t)^{1/m}$ and $f^*\mu_{X_t,h}$ on $X_t$ is the same, uniformly in $t$, because there exists a constant $C>0$ such that for any trivializing open set, one has $C\ge |\Omega|^2_h \ge C^{-1}$, where $\Omega$ ranges among the finitely many trivializations of $m\Krel$. This follows from the fact $h$ is a smooth hermitian metric on $m\Krel$. 

We set $\psi:=f^*(-\log (-\log |s|^2))$; it is a quasi-psh function on $\cX'$ satisfying $$\psi\le \psi_F+O(1)$$
where $\psi_F$ is defined in \eqref{psi}.  

By scaling the metric $|\cdot |$ on $\cO_{\cX}(D)$, one can assume that $A\psi$ is $f^*\om_X$-psh for any $0\le A \le 2(n+2)$. For any $t\in \bD^*$, the function $\psi_t:=\psi|_{X'_t}$ belongs to $ \mathcal E(X'_t,f^*\om_{X_t})$ by the same argument as in the proof of Proposition~\ref{minoration}.

Let $u_{\ep,t}\in \mathcal E(X'_t,\frac 12 f^*\om_{X_t})$ be the unique solution of the Monge-Ampère equation
\begin{equation}\label{eqJ}
 (\frac 12 f^*\om_{X_t}+dd^c u_{\ep,t})^n= e^{u_{\ep,t}+(n+\nu+2\ep)\psi_t} f^*\mu_{X_t,h}.
 \end{equation}

One can write $e^{(n+\nu+2\ep)\psi_t} f^*\mu_{X_t,h}=e^{\rho_t} f^*\om_{X_t}^n$ where $\rho_t$ is the restriction to $X'_t$ of the difference of quasi-psh functions on $\cX'$ with uniformly bounded $L^1$ norm on $X'_t$. Set $V:=\int_{X_t} \om_{X_t}^n$. Integrating both sides of \eqref{eqJ} and using Jensen inequality we have
\begin{align*}
\frac{V}{2^{n}} & =  \int_{X'_t}e^{u_{\ep,t}+(n+\nu+2\ep)\psi_t} f^*\mu_{X_t,h} \\
& = V \int_{X'_t}e^{u_{\ep,t}+\rho_t}\frac{f^*\om_{X_t}^n}{V} \\
& \ge V \cdot  e^{ \frac 1 V \int_{X'_t}(u_{\ep,t}+\rho_t) f^*\om_{X_t}^n}
\end{align*}
Since $\int_{X'_t} |\rho_t| f^*\om_{X_t}^n $ is uniformly bounded, we get that $\int_{X_t'}u_{\ep,t} f^*\om_{X_t}^n \le C$ for some $C>0$ independent of $\ep,t$. Since $u_{\ep,t}$ is $f^*(\frac 12 \om_{X_t})$-psh, it is the pull-back of a $\frac 12 \om_{X_t}$-psh function on $X_t$ to which one can apply Proposition~\ref{mainprop} since $\pi$ is projective. To summarize, we get an upper bound 
\begin{equation}
u_{\ep,t} \le C.
\end{equation}

 Next, we wish to apply Theorem~\ref{thm:uniform3}; in order to do so, one has to check that hypotheses (H1) and (H2') are satisfied in our situation. For (H1), it is a consequence of Theorem~\ref{H1} \--- recall that up to shrinking $\bD$, all fibers $X_t$ are irreducible since so is $X_0$. As for (H2'), it follows from Lemma~\ref{corintcontrol} that we pull back via $f$ to the smooth Kähler manifold $X_t'$. 
All in all, we can find $C_{\ep}>0$ independent of $t\in \bD$ such that
\begin{equation}
\label{bounded2}
\|u_{\ep,t}\|_{L^{\infty}(X'_t)}\le C_\ep.
\end{equation}
Now, the function $v_{\ep,t}:=u_{\ep,t}+(n+\nu+2\ep)\psi_t\in \mathcal E(X_t',f^*\om_{X_t})$ satisfies the inequality
\begin{align*}
(  f^*\om_{X_t}+dd^cv_{\ep,t})^n&\ge (\frac 12   f^*\om_{X_t}+dd^cu_{\ep,t})^n\\
 &=e^{v_{\ep,t}} f^*\mu_{X_t,h},
 \end{align*}
i.e. $v_{\ep,t}$ is a subsolution of \eqref{KEt}. By the comparison principle, we obtain that $f^*\vp_t\ge v_{\ep,t}$ and it follows from \eqref{bounded2} that
$$f^*\vp_t\ge (n+\nu+2\ep) \psi_t-C_\ep,$$
from which the conclusion follows.
\end{proof}

 \section{Log Calabi-Yau families} 
 \label{sec:calabi}
 
 \subsection{Families of Calabi-Yau varieties}
 \label{famCY}
 
 In Setting~\ref{fset}, let us assume additionally that $\Krel$ is relatively trivial and that $X_0$ has canonical singularities. For $t$ small enough, $X_t$ has canonical singularities as well and $K_{X_t}$ is linearly trivial.  
 
 Let $\alpha$ be a relative Kähler cohomology class on $\cX$ represented by a relative Kähler form $\om$. We set $\alpha_t:=\alpha|_{X_t}$, $\om_{X_t}:=\om|_{X_t}$ and $V:=\int_{X_t} \om_t^n$; it does not depend on $t$, cf Lemma~\ref{constant}. Let $\Omega$ be a  trivialization of $\Krel$, so that the quantity
$$\mu_{\cX/\bD}:=i^{n^2}\Omega\wedge \overline \Omega$$
 restricts to $X_t^{\reg}$ as a positive measure 
$$\mu_{X_t}:=\mu_{\cX/\bD}|_{X_t^{\reg}}$$
which we extend by zero across $X_t^{\rm sing}$. We set $c_t:=\log \int_{X_t}d\mu_{X_t}$. For each $t\in \mathbb D$, there exists a unique Kähler-Einstein metric $\om_{{\rm KE},t}=\om_t+dd^c \vp_t \in \alpha_t$ which solves the Monge-Ampère equation
\begin{equation}
\label{KEtt}
\frac{1}{V} (\om_{t}+dd^c \f_t)^n =e^{-c_t} \mu_{X_t}
\end{equation}
on $X_t$ and that we normalize by $\sup_{X_t} \vp_t=0$. This is due to 
%Aubin-Yau 
\cite{Yau78} when $X_t$ is smooth and to \cite{EGZ} in general.
%given Claim~\ref{slc}. 

\begin{thm} \label{thm:CYfamily}
In Setting~\ref{fset}, assume that
\begin{enumerate}
\item[$\bullet$] The relative canonical bundle $\Krel$ is trivial.
\item[$\bullet$] The central fiber $X_0$ has canonical singularities. 
\item[$\bullet$] Assumption~\ref{aspt} is satisfied. 
\end{enumerate} 
Let $\om_{t}+dd^c \f_t$ be the Kähler-Einstein metric of $X_t$, solution of \eqref{KEtt}. Up to shrinking $\bD$, there exists 
%a constant 
$C>0$  such that one has
%the inequality
\begin{equation}
\label{borneCY}
\mathrm{osc}_{X_t} \vp_t \le C
\end{equation}
 for any $t\in \bD$, where $\mathrm{osc}_{X_t}(\vp_t)=\sup_{X_t}\vp_t-\inf_{X_t}\vp_t$. 
\end{thm}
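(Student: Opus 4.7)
The strategy is to recast the equation~\eqref{KEtt} in the normalized form of Theorem~\ref{thmA} and to verify the hypotheses (H1) and (H2) with constants uniform in $t$. With $\nu_t := V^{-1}\om_t^n$ viewed as a probability measure on $X_t$, the equation reads
\[
V^{-1}(\om_t+dd^c\vp_t)^n = f_t\,\nu_t, \qquad f_t := V\,e^{-c_t}\,e^{-\gamma_t},
\]
where $\gamma_t := \gamma|_{X_t}$ is the function of Section~\ref{sec:H2} associated with the global trivialization $\Omega$ of $\Krel$ (so $\mu_{X_t} = e^{-\gamma_t}\om_t^n$ on $X_t^{\reg}$). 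Since $\sup_{X_t}\vp_t=0$ by normalization, the conclusion $\mathrm{osc}_{X_t}\vp_t\le C$ is equivalent to a uniform lower bound $\vp_t\ge -C$, which is exactly what Theorem~\ref{thmA} delivers once (H1) and (H2) hold uniformly.

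For (H1), I would take $\Theta := \omega$ itself. Corollary~\ref{cor:uniformexp} then gives $\a(\omega)>0$, and Theorem~\ref{H1} (applicable because Assumption~\ref{aspt} is satisfied by hypothesis) yields a constant $C_\a$ such that
\[
\int_{X_t} e^{-\a(\psi-\sup_{X_t}\psi)}\,\om_t^n \le C_\a
\]
for every $\psi\in\PSH(X_t,\om_t)$ and every $t\in\overline{\bD}_{1/2}$, which is precisely (H1) relative to $\nu_t$ (up to the harmless factor $V$, constant in $t$ by Lemma~\ref{constant}).

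For (H2), I would use the canonical singularities hypothesis through Lemma~\ref{canon}. Covering $\cX$ by finitely many trivializing charts $U_\a$, the lemma provides $p>1$ and a uniform $C$ with $\int_{U_{\a,t}} e^{-p\gamma}\om_t^n\le C$ for every $t$ close to $0$; away from $\cX_{\sing}$ the function $e^{-\gamma}$ is bounded, so summing over $\a$ gives $\int_{X_t}e^{-p\gamma_t}\om_t^n\le C'$. This controls $\|f_t\|_{L^p(\nu_t)}$ provided $c_t$ is uniformly bounded. The upper bound on $c_t$ follows from Hölder applied to $\int_{X_t}e^{-\gamma_t}\om_t^n$ and the uniform $L^p$-control just obtained; the lower bound on $c_t$ comes from the fact that $\Omega$ is nowhere vanishing, so, fixing a relatively compact open set $K\Subset\cX$ which meets each $X_t^{\reg}$ in a set of $\om_t^n$-measure bounded below, the measure $e^{-\gamma_t}\om_t^n$ is bounded below by a positive constant on $K\cap X_t$.

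Steps (H1) and (H2) being in place, Theorem~\ref{thmA} gives a uniform constant $M=M(V,p,C,\a,A_\a)$ such that $-M\le \vp_t\le 0$ on every fiber $X_t$, which is the desired bound on $\mathrm{osc}_{X_t}\vp_t$. The main obstacle is the uniform integrability step (H2): the canonical singularities assumption on $X_0$ is used exactly to ensure that the integer $r$ in \eqref{fibre} vanishes and that the function $h$ in Claim~\ref{canonical} is holomorphic, which is what makes Lemma~\ref{canon} work. Without this, the density $f_t$ could fail to be uniformly in $L^p$ and one would have to weaken (H2) to (H2'), losing the bounded-oscillation conclusion. Assumption~\ref{aspt} plays the comparatively softer role of securing the $L^1$--$\sup$ comparison (Proposition~\ref{mainprop}) needed for (H1) to be deduced from the Skoda--Zeriahi inequality.
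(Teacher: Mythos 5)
Your proposal is correct and follows essentially the same route as the paper: bound $c_t$ using the structure of the densities from Claim~\ref{canonical}, verify (H1) via Theorem~\ref{H1} (using Assumption~\ref{aspt}) and (H2) via Lemma~\ref{canon}, then invoke the uniform estimate of Theorem~\ref{thm:uniform1}. The only point you leave implicit, and which the paper states in one clause, is that since $X_t$ is singular one applies Theorem~\ref{thm:uniform1} after pulling the equation back to the smooth model $X_t'$, where $f^*\om_t$ is semi-positive and big.
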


A particular case of this result has been obtained previously by Rong-Zhang (see \cite[Lemma 3.1]{RZ}) by using Moser iteration process.

\begin{rem}
\label{cansing}
One can replace the first two assumptions in Theorem~\ref{thm:CYfamily} above by the following weaker ones: $\cX$ is normal, $\mathbb Q$-Gorenstein, $\Krel$ is trivial and $X_0$ has canonical singularities. Indeed, it follows from the inversion of adjunction \cite[Thm.~2.3]{Kol18} that $(X, X_t)$ is lc for $t$ close enough to $0$. Moreover, an easy computation relying on the adjunction formula shows that $X_t$ has canonical singularities for $t$ close to $0$.  
\end{rem}

\begin{proof}[Proof of Theorem~\ref{thm:CYfamily}]
A first observation is that the quantities $c_t$ remain bounded when $t$ varies thanks to Lemma~\ref{canonical}. The result now follows from Theorem~\ref{thm:uniform1}. Indeed, (H1) is satisfied thanks to Theorem~\ref{H1} while (H2) holds thanks to Lemma~\ref{canon} that we pull back to $X_t'$ via $f$, with the notation of the Lemma. 
\end{proof}

 \subsection{The log Calabi-Yau setting}
In the sequel we use the following setting. 

\begin{set}
\label{set2}
Let $X$ be an $n$-dimensional compact Kähler space and let $B=\sum b_iB_i$ be an effective $\mathbb R$-divisor such that the pair $(X,B)$ has klt singularities. We assume furthermore that the log Kodaira dimension of the pair $(X,B)$ vanishes, i.e. $$\kappa(K_X+B)=0.$$ 
\end{set}
In what follows, we denote by $E$ the (unique) effective $\mathbb R$-divisor in $c_1(K_X+B)$.  Thanks to log abundance in numerical dimension zero (see \cite[Cor.~1.18]{JHM2}), a particular instance of such pairs is provided by klt pairs $(X,B)$ with rational boundary such that the Chern class $c_1(K_X+B)\in H^2(X,\mathbb Q)$ vanishes.

\begin{defi}
In Setting~\ref{set2}, given a cohomology class $\alpha\in H^{1,1}(X,\mathbb R)$ that is nef and big, it follows from \cite{BEGZ} that there exists  a unique \textit{singular Ricci flat current} $T \in \a$, i.e. a closed,  positive current of bidegree
$(1,1)$ representing $\a$,  with the following properties:
\begin{itemize}
\item[$(i)$] $T$ has minimal singularities in $\a$;
\item[$(ii)$] $T$ is a K\"ahler form on the analytic open set $\Omega_\a:= (X_{\rm reg}\setminus \mathrm{Supp}(B+E)) \cap \Amp(\a)$;
\item[$(iii)$] $\Ric(T)=[B]-[E]$ on $X_{\rm reg}$.
\end{itemize}
\end{defi}
The current $T$ can be found by solving the Monge-Ampère equation
\begin{equation}
\label{eqMA}
\mathrm{vol}(\a)^{-1}(\theta+dd^c \varphi)^n=\mu_{(X,B)}
\end{equation}
where $\theta\in \alpha$ is a smooth representative, $\varphi\in \PSH(X,\theta)$ is the unknown function and 
$$
\mu_{(X,B)}= (s\wedge \bar s)^{\frac 1m} e^{-\phi_B}.
$$
 Here, $s\in H^0(X,m(K_X+B))$ is any non-zero section (for some $m\ge 1$) and $\phi_B$ is the unique singular psh weight on $\mathcal O_X(B)$ solving $dd^c \phi_B=[B]$ and normalized by 
 $$
 \int_X (s\wedge \bar s)^{\frac 1m} e^{-\phi_B}=1.
 $$

 We let ${\mathcal K}_X$ denote the K\"ahler cone, i.e. the set of cohomology classes
$\a \in H^{1,1}(X,\R)$ which can be represented by  a K\"ahler form. 
%We refer, for example, to \cite[Sect.~16.3.1.2 ]{GZ17} for the definition of a K\"ahler form on a complex space.\\
We fix $(\a_t)_{0 < t \le 1} \subset {\mathcal K}_X$ a path of K\"ahler classes
and assume that $\a_t \rightarrow \partial {\mathcal K}_X$ as $t \rightarrow 0$.

When $X$ is smooth and $B=0$,  the existence of a unique Ricci flat K\"ahler metric 
$\omega_t$ in $\a_t$ for each $0<t \le 1$ dates back to the celebrated work of Yau \cite{Yau78}.
A basic problem is to understand the asymptotic behavior of
the $\omega_t$'s, as $t \rightarrow 0$.
This problem has a long history, we refer the reader to \cite{GTZ13}
for references.
 
Despite  motivations coming from mirror symmetry, not much 
is known when the norm of $\a_t$ converges to $+\infty$
(this case is expected to be the mirror of a large complex structure limit,
see \cite{KS01} or the recent survey \cite{TosSurvey}). We thus only consider the case
when $\a_t \rightarrow \a_0 \in \partial {\mathcal K}_X$.
There are  two rather different settings, depending on whether
  $\a_0$ is big ($\vol(\a_0)>0$), or merely nef
 with $\vol(\a_0)=0$.

\subsection{The non-collapsing case}

We first consider the case when the volumes of the $\a_t$'s are non-collapsing, i.e. $\vol(\a_0)>0$.
Then, we have the following result, generalizing theorems of Tosatti \cite{Tos09} and Collins-Tosatti \cite{CT15}.

 \begin{thm}   \label{thm:non-collapsing}
Let $(X,B)$ be a pair as in Setting~\ref{set2} and let $(\a_t)_{0 < t \le 1} \subset {\mathcal K}_X$ be a smooth path of K\"ahler classes
such  that $\a_t \rightarrow \a_0 \in \partial {\mathcal K}_X$ as $t \rightarrow 0$,
with $\vol(\a_0)>0$.\\
Then, the singular Ricci-flat currents $T_t \in \a_t$ converge to $T_0$ as $t \rightarrow 0$ weakly on $X$, and locally smoothly on $\Omega_\a$.
% Moreover $(X,\omega_t)$ converges to the metric completion  of $(\Amp(\a_0),T_0)$ in the Gromov-Hausdorff topology.
\end{thm}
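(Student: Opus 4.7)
The plan is to recast the problem as a family of complex Monge--Amp\`ere equations in a big cohomology class, apply Theorem~\ref{thm:uniform2} to produce uniform estimates, and then pass to the limit using the uniqueness of the BEGZ solution in the big class~$\a_0$. I would first choose a smoothly varying family of representatives $\theta_t\in\alpha_t$ converging smoothly to $\theta_0\in\alpha_0$ as $t\to 0$, and write $T_t=\theta_t+dd^c\varphi_t$ with $\varphi_t\in\mathrm{PSH}(X,\theta_t)$ normalized by $\sup_X\varphi_t=0$. The Ricci-flat equation then reads
\begin{equation*}
(\theta_t+dd^c\varphi_t)^n = c_t\,\mu_{(X,B)}, \qquad c_t:=\mathrm{vol}(\alpha_t),
\end{equation*}
with $c_t\to \mathrm{vol}(\alpha_0)>0$ by continuity of the volume on the big cone. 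Fixing an auxiliary K\"ahler form $\omega_X$ one has $\theta_t\le C\omega_X$ uniformly in $t\in[0,1]$, hence $\mathrm{PSH}(X,\theta_t)\subset\mathrm{PSH}(X,C\omega_X)$, and the classical Skoda--Zeriahi theorem applied to $C\omega_X$ yields hypothesis (H1) of Theorem~\ref{thm:uniform2} with uniform constants $\alpha,A_\alpha$. Since $(X,B)$ is klt, the density of $\mu_{(X,B)}$ with respect to $\omega_X^n$ lies in $L^p$ for some $p>1$ independent of $t$, so (H2) also holds uniformly. Theorem~\ref{thm:uniform2} then provides a constant $M>0$ with
\begin{equation*}
-M \le \varphi_t - V_{\theta_t} \le 0 \qquad \text{for all } t\in(0,1].
\end{equation*}

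Next I would analyze $V_{\theta_t}$. Since $\theta_t\to\theta_0$ smoothly, one easily gets $V_{\theta_t}\le V_{\theta_0+\varepsilon_t\omega_X}$ (with $\varepsilon_t\to 0$) and a matching lower bound, so $V_{\theta_t}\to V_{\theta_0}$ in $L^1(X)$ and locally uniformly on $\mathrm{Amp}(\alpha_0)$. Combined with the previous estimate this means $(\varphi_t)$ is uniformly bounded on compact subsets of $\mathrm{Amp}(\alpha_0)$ and relatively compact in $L^1(X,\omega_X^n)$. Extract a subsequence $\varphi_{t_k}\to\varphi^\star$ in $L^1$; the uniform bound $\varphi_{t_k}-V_{\theta_{t_k}}\ge -M$ passes to the limit and shows that $\varphi^\star\in\mathrm{PSH}(X,\theta_0)$ has minimal singularities. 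On compact subsets of $\mathrm{Amp}(\alpha_0)$ the convergence is even in $L^\infty$, so Bedford--Taylor continuity under uniformly bounded convergence gives $(\theta_{t_k}+dd^c\varphi_{t_k})^n\to(\theta_0+dd^c\varphi^\star)^n$ weakly on $\mathrm{Amp}(\alpha_0)$, and since the non-pluripolar product is concentrated on $\mathrm{Amp}(\alpha_0)$ and total masses match $\mathrm{vol}(\alpha_{t_k})\to\mathrm{vol}(\alpha_0)$, this extends to convergence on $X$. Thus $\varphi^\star$ solves $(\theta_0+dd^c\varphi^\star)^n=\mathrm{vol}(\alpha_0)\mu_{(X,B)}$, and by the uniqueness of the normalized solution with minimal singularities (\cite[Thm.~B]{BEGZ}) one concludes $\varphi^\star=\varphi_0$. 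The whole family therefore converges, giving weak convergence $T_t\to T_0$ on $X$.

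The locally smooth convergence on $\Omega_\alpha=(X_{\mathrm{reg}}\setminus\mathrm{Supp}(B+E))\cap\mathrm{Amp}(\alpha_0)$ follows from uniform higher-order estimates. On any compact $K\Subset\Omega_\alpha$ the forms $\theta_t$ are smooth and converge smoothly, the density of $\mu_{(X,B)}$ is smooth and bounded below, and the potentials $\varphi_t$ are uniformly bounded in $L^\infty$ by the first step. A standard local Aubin--Yau $\mathcal{C}^2$ estimate combined with a uniform lower bound on the Laplacian trace from the equation itself gives a uniform $\mathcal{C}^2$ bound for $\varphi_t$ on $K$; Evans--Krylov and Schauder bootstrapping then upgrade this to uniform $\mathcal{C}^{k,\alpha}$ estimates of all orders, and the Arzel\`a--Ascoli theorem combined with the weak convergence identifies the smooth limit as $T_0$.

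The main obstacle is the passage to the limit in the Monge--Amp\`ere product when $\alpha_t$ degenerates from K\"ahler to merely big: Theorem~\ref{thm:uniform2} controls $\varphi_t$ only relative to $V_{\theta_t}$, and one has to combine the local $L^\infty$ bounds on $\mathrm{Amp}(\alpha_0)$ with global mass control via the non-pluripolar framework, then invoke the BEGZ uniqueness to pin down the limit. The higher-order estimates themselves are standard once the zeroth-order control of $\varphi_t$ is secured on compact subsets of $\Omega_\alpha$.
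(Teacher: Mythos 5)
Your zeroth-order analysis follows essentially the same route as the paper: set up the family of Monge--Amp\`ere equations, verify (H1) and (H2) uniformly, and invoke Theorem~\ref{thm:uniform2}. Two remarks on that part. First, Theorem~\ref{thm:uniform2} and the Skoda--Zeriahi bound are stated on a compact K\"ahler \emph{manifold}, whereas in Setting~\ref{set2} the space $X$ may be singular; the paper therefore works on a desingularization $p:Y\to X$, where $\beta_t=p^*\a_t$ becomes a family of semi-positive big classes and $\mu_{(X,B)}$ pulls back to $f\,dV_Y$ with $f\in L^p$, $p>1$, by the klt assumption. You should do the same rather than apply the manifold statements directly on $X$. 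Second, in the limiting step your assertion that the $L^1$ convergence of the $\vp_{t_k}$ upgrades to $L^\infty$ convergence on compacts of $\Amp(\a_0)$ is not justified ($L^1$ convergence of uniformly bounded quasi-psh functions does not give uniform convergence); what one actually has is convergence in capacity, which suffices for the continuity of the non-pluripolar Monge--Amp\`ere operator. The identification of the limit via mass conservation and \cite[Thm.~B]{BEGZ} is fine and is, if anything, more explicit than the paper's treatment.

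The genuine gap is in the smooth convergence on $\Omega_\a$. There is no ``standard local Aubin--Yau $\mathcal C^2$ estimate'' for the complex Monge--Amp\`ere equation: interior second-order estimates from an $L^\infty$ bound on the potential and a smooth positive right-hand side fail in general (this is the complex analogue of the Pogorelov phenomenon, and it is precisely why weak solutions with bounded potentials need not be regular). The Laplacian estimate here must be global, via the maximum principle on the compact resolution, and since the reference form $\theta_0$ representing the merely big class $\a_0$ is not positive, one has to use Tsuji's trick: perturb the test function $\log \tr_{\om}(\om_{\vp_t})-A\vp_t$ by $\e\log|s_E|^2$ (the potential of a K\"ahler current with analytic singularities in $\a_0$) so that the maximum is attained inside the ample locus and away from $\supp(B+E)$, yielding a $\mathcal C^2$ bound that is uniform on compacts of $\Omega_\a$ but degenerates near its boundary. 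This is exactly the step the paper singles out. Once that Laplacian estimate is in place, the equation becomes uniformly elliptic on compacts of $\Omega_\a$ and your Evans--Krylov plus Schauder bootstrap, and the identification of the smooth limit with $T_0$ via the weak convergence, go through as you describe.
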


%Recall that a normal variety $X$ has terminal singularities if its canonical sheaf $K_X$ is $\Q$-Cartier
%and for all resolutions of singularities $p:Y \rightarrow X$,
%$$
%K_Y\sim_{\Q} p^* K_X+\sum_i a_i E_i,
%$$
%where the $E_i$'s are $p$-exceptional divisors and $a_i \ge 0$.

\begin{proof}
One can work in a desingularization $p:Y \rightarrow X$ of $X$.
The path $\a_t$ induces a path $\beta_t=p^*\a_t$ of semi-positive and big classes.
The currents $T_t$ can be decomposed as $T_t=\theta_t+dd^c \varphi_t$ where $\theta_t\in \beta_t$ is a smooth representative and  $\varphi_t$ are normalized by $\sup_{X_t} \varphi_t=0$ and solve the complex Monge-Amp\`ere equation
$$
\frac{1}{V_t} (\theta_t+dd^c \f_t)^n=\mu_Y=f \, dV_Y,
$$
where the volumes $V_t=\alpha_t^n$ are bounded away from zero and infinity, $C^{-1} \le V_t \le C$,
and $\mu_Y=f \, dV_Y$ is  a fixed volume form, with $f \in L^{p}(Y)$ for some $p>1$
(because $(X,B)$ has klt singularities, see \cite[Lem.~6.4]{EGZ}).

The hypothesis of Theorem \ref{thm:uniform1} $(H2)$ is thus trivially satisfied, while $(H1)$ follows if we initially bound from above
$\a_t \le \gamma_X$ by a fixed K\"ahler class.
The most delicate ${\mathcal C}^0$-estimate follows thus here from 
Theorem \ref{thm:uniform2}. When $X$ is smooth, the  ${\mathcal C}^0$-estimate in \cite{Tos09} is obtained by using a Moser iteration argument
as in Yau's celebrated paper \cite{Yau78}, but this argument can no longer be applied in the present more singular setting. \\
The rest of the proof is then roughly the same as in the case of smooth manifolds. It consists in adapting Yau's Laplacian estimate by using Tsuji's trick (first used in \cite{Tsuji88}), the remaining higher order estimates being local ones.
\end{proof}

\subsection{The collapsing case}

We now consider the case when the volumes of the $\a_t$'s are collapsing, i.e. $\vol(\a_0)=0$.
This case is more involved and only special cases are fully understood.

Suppose there is a surjective, holomorphic map with connected fibers $f:X \rightarrow Z$, where
$Z$ is a compact, normal K\"ahler space of positive dimension $m$. We denote by $k:=n-m=\dim X-\dim Z$ the relative dimension of the fiber space $f$. 
We let $S_Z$ denote the smallest proper analytic subset $\Sigma \subset Z$ such that 
\begin{enumerate}
\item[$\bullet$] $\Sigma$ contains the singular locus $Z_{\rm sing}$ of $Z$,
\item[$\bullet$] The map $f$ is smooth on $f^{-1}(Z\setminus \Sigma)$,
\item[$\bullet$] For any $z\in Z\setminus \Sigma$, $\mathrm{Supp}(B)$ intersects $X_z$ transversally,
\end{enumerate}
and we set $S_X=f^{-1}(S_Z)$. Finally, we set $Z^\circ:=Z\setminus S_Z$ and $X^\circ:=X\setminus S_X=f^{-1}(Z^\circ)$. By the last item, each component of $B|_{X^{\circ}}$ dominates $Z^\circ$. 

A general fiber $X_z$ satisfies $\kappa(K_{X_z}+B_z) \ge 0$, but the inequality may be strict. If $c_1(K_X+B)=0$, then log abundance implies that $K_{X_z}+B_z\sim_{\mathbb Q} \mathcal O_{X_z}$ for $z$ general. Moreover, Iitaka's conjecture predicts that $\kappa(K_{X_z}+B_z)$ vanishes as soon as $\kappa(Z) \ge 0$, which in turn should be equivalent to $Z$ not being uniruled.

%By the Kähler version of Ohsawa-Takegoshi extension theorem \cite{CaoOT}, the function $ Z^\circ \ni z \mapsto \kappa(K_{X_z}+B_z)$ is constant. 

Fix $\omega_Z$ a K\"ahler form on $Z$. For simplicity, we assume that $\int_Z\om_Z^m=1$. The form $f^* \omega_Z$  is a semi-positive form
such that $f^*\om_Z^p=0$ for any $p>m$. We also choose a Kähler form $\om_X$ on $X$. The quantity $\int_{X_z} \om_X^k = f_*\om_X^k$ is constant in $z\in Z$; up to renormalizing $\om_X$, we may assume that the constant is $1$. 
 
We assume that our path $(\alpha_t)_{t\ge 0}$ in $H^{1,1}(X,\mathbb R)$ is given by $\a_0=\{f^*\om_Z\}$ and $\a_t=\a_0+t \{\omega_X \}$. As a result, one has
\begin{equation}
\label{vt}
V_t:=\vol(\a_t)=\binom{n}{k} t^{k} \int_X f^*\om_Z^{m} \wedge \omega_X^{k}+o(t^{k})=\binom{n}{k} t^{k} +o(t^{k}) .
\end{equation}
We set $\omega_t:=f^*\om_Z+t \omega_X$ and let $\omf:=\omega_t+dd^c \f_t$ denote the singular Ricci-flat current in $\a_t$, normalized by $\int_{X}\f_t\om_X^n=0$. It satisfies
$$
 \omf^n=V_t\cdot \mu_{(X,B)},
$$
cf Eq.~\eqref{eqMA}. The probability measure $f_*\mu_{(X,B)}$ has $L^{1+\ep}$-density with respect to $\om_Z^m$ thanks to \cite[Lem.~2.3]{EGZ16}. Therefore, there exists a unique positive current $\om_{\infty}\in \{\om_Z\}$ with bounded potentials, solution of the Monge-Ampère equation 
$$\om_{\infty}^m= f_*\mu_{(X,B)},$$
cf \cite{EGZ}. In the case where $X$ is smooth, $B=0$ and $c_1(X)=0$, the Ricci curvature of $f_*\mu_X$ (or, equivalently, $\om_{\infty}$) coincides with the Weil-Petersson form of the fibration $f$ of Calabi-Yau manifolds. We propose the following problem. 

\begin{pb} 
\label{WP}
Let $f:X\to Z$ be a surjective holomorphic map with connected fibers between compact, normal Kähler spaces. Assume that there exists an effective divisor $B$ on $X$ such that $(X,B)$ is klt and $\kappa(K_X+B)=0$. Let $\omega_X$ (resp. $\om_Z$) be a Kähler form on $X$ (resp. $Z$) and let $\omf$ be the unique singular Ricci-flat current in $\{f^*\om_Z+t\om_X\}$ for $t>0$. 

\noindent
Then, the currents $\omf$ converge weakly to $f^*\om_{\infty}$ when $t\to 0$, where $\om_{\infty}\in \{\om_Z\}$ solves $\om_{\infty}^{\dim Z}= f_*\mu_{(X,B)}$.
\end{pb}

The Problem above is motivated by a string of papers (cf below) where the expected result is proved along with some additional information on the convergence. 
\begin{thm} \cite{Tos10,GTZ13,TWY18, HT18}
Assume that $X$ is smooth, $B=0$ and $c_1(K_X)=0$. Then, the metrics $\omf$ converge to $f^*\om_{\infty}$
in the ${\mathcal C}^{\alpha}_{\rm loc}$-sense on $X \setminus S_X$, for some $\alpha>0$.
\end{thm}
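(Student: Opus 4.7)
The plan is to carry out the four-step program of \cite{Tos10,GTZ13,TWY18,HT18} and to indicate where Theorem~\ref{thm:uniform1} of the present paper streamlines the argument.

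\textbf{Step 1: uniform $C^0$ estimate.} Normalize $\vpt$ by $\sup_X\vpt=0$ and rewrite the Monge--Amp\`ere equation as $V_t^{-1}(\om_t+dd^c\vpt)^n=\mu_X$, where $\mu_X = c_X\, i^{n^2}\Omega\wedge\overline\Omega$ is a fixed smooth probability measure (using $K_X=\cO_X$ and $B=0$). The inclusion $\PSH(X,\om_t)\subset\PSH(X,\om_1)$ with $\om_1=f^*\om_Z+\om_X$ K\"ahler makes (H1) uniform in $t\in(0,1]$, while (H2) is immediate with smooth density. Theorem~\ref{thm:uniform1} then yields $\|\vpt\|_{L^\infty(X)}\le C$ independent of $t$.

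\textbf{Step 2: the semi-flat form.} Over $Z^\circ=Z\setminus S_Z$ the fibers $X_z$ are smooth Calabi--Yau manifolds of dimension $k=n-m$ (by adjunction $K_{X_z}=\cO_{X_z}$). Yau's theorem provides, for each $z\in Z^\circ$, a unique Ricci-flat K\"ahler form $\om_X|_{X_z}+dd^c\rho_z$ in $\{\om_X|_{X_z}\}$, normalized by $\int_{X_z}\rho_z\,\om_X^k=0$. The dependence $z\mapsto\rho_z$ is smooth, so $\rho(x):=\rho_{f(x)}(x)$ is smooth on $X^\circ:=f^{-1}(Z^\circ)$, and $\om_{SF}:=\om_X+dd^c\rho$ is a closed $(1,1)$-form on $X^\circ$ whose fiberwise restriction is Ricci-flat.

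\textbf{Step 3: Laplacian estimate on compacta of $X^\circ$.} This is the heart of the argument and the main obstacle. Fix a compact $K\Subset X^\circ$ with $f(K)\Subset Z^\circ$, set $\widehat\om_t:=f^*\om_\infty+t\,\om_{SF}$, and target
$C_K^{-1}\widehat\om_t\le \omf\le C_K\widehat\om_t$ on $K$,
with $C_K$ independent of $t$. The upper bound follows from a Chern--Lu/Aubin--Yau inequality of the form $\Delta_{\omf}\log\tr_{\omf}\widehat\om_t\ge -C_K\tr_{\omf}\widehat\om_t-C_K$, using that the bisectional curvature of $\widehat\om_t$ is bounded from above on $K$ (with a constant depending on $K$); Tsuji's trick, \ie{} the maximum principle applied to $\log\tr_{\omf}\widehat\om_t-A\vpt$ for $A$ large, together with Step~1, then controls $\tr_{\omf}\widehat\om_t$ on $K$. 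The reverse bound follows from $\omf^n=V_t\mu_X$ and the expansion $\widehat\om_t^n\sim\binom nk t^k f^*\om_\infty^m\wedge\om_{SF}^k$. The hard point is that $\widehat\om_t$ \emph{collapses} along the fibers of $f$: one must therefore carry out the maximum-principle argument in a degenerating geometry, and the curvature estimates for $\om_{SF}$ blow up when approaching $S_X$, which is precisely why the statement is local on $X\setminus S_X$.

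\textbf{Step 4: higher regularity and identification of the limit.} Once Step~3 is established, the equation $\omf^n=V_t\mu_X$ becomes uniformly elliptic on each $K\Subset X^\circ$. Evans--Krylov combined with Calabi-type $C^{3}$ estimates produce uniform $C^{2,\alpha}$ bounds on $\omf$ for some $\alpha>0$ depending on $K$; Ascoli's theorem extracts, along any sequence $t_j\to 0$, a $C^{1,\alpha}_{\rm loc}$ subsequential limit $\om_\star$ with bounded potential. Since $\omf^n=V_t\mu_X\to 0$ and $\omf|_{X_z}\le C_K t\,\om_{SF}|_{X_z}\to 0$ by Step~3, the limit $\om_\star$ is vertically degenerate on $X^\circ$ and hence descends as $\om_\star=f^*\widetilde\om$ for a bounded-potential current $\widetilde\om\in\{\om_Z\}$; fiber integration of the Monge--Amp\`ere equation, using $\int_{X_z}\om_{SF}^k=\int_{X_z}\om_X^k=1$, identifies $\widetilde\om^m=f_*\mu_X$, and uniqueness of bounded solutions on $Z$ \cite{EGZ} yields $\widetilde\om=\om_\infty$. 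The limit being unique, the whole family converges in ${\mathcal C}^\alpha_{\rm loc}(X\setminus S_X)$, which is the claim.
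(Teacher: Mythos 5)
You should first be aware that the paper does not prove this statement: it is quoted as a known result from \cite{Tos10,GTZ13,TWY18,HT18}, and what the paper itself establishes is the weaker weak-convergence analogue, Theorem~\ref{WP2}, whose engine is the list of estimates in Proposition~\ref{estimates}. Measured against that strategy (which is also the strategy of the cited papers), your outline has the right skeleton but two steps that would fail as written. In Step 3, the Chern--Lu/Tsuji argument applied to $\log\tr_{\omf}\widehat\om_t-A\vpt$ does not close: the bisectional curvature of the collapsing reference $\widehat\om_t=f^*\om_\infty+t\,\om_{SF}$ is \emph{not} bounded on $K$ uniformly in $t$ (rescaling the fibre directions by $t$ makes the curvature there blow up like $t^{-1}$), so Chern--Lu only gives $\Delta_{\omf}\log\tr_{\omf}\widehat\om_t\ge -Ct^{-1}\tr_{\omf}\widehat\om_t-C$. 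To absorb this with the barrier $-A\vpt$, whose Laplacian produces roughly $A\,\tr_{\omf}\om_t$, you are forced to take $A\sim t^{-1}$, and then the global bound $\|\vpt\|_{L^\infty}\le C$ from Step 1 only yields $\tr_{\omf}\widehat\om_t\le e^{C/t}$, which is useless. The missing ingredient is the fibrewise oscillation estimate $|\f_t-\uf|\le C_K\,t$ on $K$ (item 3 of Proposition~\ref{estimates}), obtained by restricting the Monge--Amp\`ere equation to the fibres $X_z$, dividing by $t$, and applying the uniform family $C^0$ estimate (Theorems~\ref{thm:uniform1} and \ref{H1}) to the equations satisfied by $\frac1t(\f_t-\uf)|_{X_z}$. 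Only with this in hand can one use the \emph{bounded} barrier $\frac At(\f_t-\uf)$, together with a fixed (non-collapsing) reference metric for the trace, and run the maximum principle as in the proof of Proposition~\ref{estimates}(4). You neither derive nor invoke this estimate.

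Step 4 is also not correct as stated: the two-sided bound $C_K^{-1}\widehat\om_t\le\omf\le C_K\widehat\om_t$ is relative to the \emph{collapsing} family $\widehat\om_t$, so $k$ of the eigenvalues of $\omf$ with respect to any fixed metric on $K$ degenerate like $t$. The linearized operator $\omf^{i\bar j}\partial_i\partial_{\bar j}$ is therefore not uniformly elliptic, and Evans--Krylov does not yield $t$-independent $C^{2,\alpha}$ bounds. Producing ${\mathcal C}^{\alpha}_{\rm loc}$ convergence of the metrics is precisely the hard content of \cite{GTZ13,TWY18,HT18} (Calabi-type third-order estimates adapted to collapsing, local rescalings, and the higher-order analysis of Hein--Tosatti); the soft compactness route you describe only yields convergence in the sense of currents, which is what the paper's Theorem~\ref{WP2} actually proves.
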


In this section, we aim at providing a positive answer to Problem~\ref{WP} whenever $X$ is smooth, $B$ has simple normal crossings support and $c_1(K_X+B)=0$. We will follow the strategy of Tosatti \cite{Tos10} rather closely. However, some adjustments need to be made, requiring the use of conical metrics and the results of the present paper. 
%Unfortunately, we were not able to treat the case with base points (i.e. assuming merely that $\kappa(K_X+B)=0$, or even $\kappa(X)=0$) which seems to require a fundamentally new input. 

\begin{thm} 
\label{WP2}
In the Setting of Problem~\ref{WP}, assume furthermore that $X$ is smooth, $B$ has snc support and $c_1(K_X+B)=0$. 
Then, 
%we have convergence 
$\omf\rightarrow f^*\om_{\infty}$ as currents on $X$, when $t$ goes to $0$. 
\end{thm}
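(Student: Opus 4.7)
I would adapt the strategy of Tosatti \cite{Tos10} and Gross-Tosatti-Zhang \cite{GTZ13} (see also \cite{TWY18,HT18}) to the present conical log Calabi-Yau setting. Kolodziej's classical $L^\infty$ a priori estimate is to be replaced by Theorem~\ref{thm:uniform1} (applied uniformly along the degenerating family $(\alpha_t)$), while the klt singularities of $\mu_{(X,B)}$ force one to deal with conical Ricci-flat currents, since $\omega_{\varphi_t}$ is a conical Kähler-Einstein current with cone angle $2\pi(1-b_i)$ along $B_i$.

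\textbf{Uniform $L^\infty$-bound.} Let $\omega_\infty = \omega_Z + dd^c \psi_\infty$ with $\psi_\infty \in \PSH(Z,\omega_Z)\cap L^\infty(Z)$, and set $\widetilde\varphi_t := \varphi_t - f^*\psi_\infty$. Since $\omega_t = f^*\omega_Z + t\omega_X \le C\omega_X$ uniformly in $t \in (0,1]$, every $\omega_t$-psh function is $C\omega_X$-psh; together with the classical Skoda-Zeriahi integrability on the fixed Kähler manifold $(X,\omega_X)$, this yields hypothesis (H1) of Theorem~\ref{thm:uniform1} uniformly in $t$. Hypothesis (H2) holds uniformly as $\mu_{(X,B)}$ has a fixed $L^{1+\e}$-density for some $\e>0$, by the klt assumption on $(X,B)$. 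Theorem~\ref{thm:uniform1} then yields a constant $M>0$ independent of $t$ with $\|\varphi_t\|_{L^\infty(X)} \le M$, hence also $\|\widetilde\varphi_t\|_{L^\infty(X)} \le M + \|\psi_\infty\|_\infty$.

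\textbf{Identification of subsequential limits.} By compactness of bounded quasi-psh functions, any sequence $t_k \to 0$ admits a subsequence along which $\widetilde\varphi_{t_k} \to \widetilde\varphi_0$ in $L^1(X)$, the limit being bounded and $f^*\omega_\infty$-psh. For a general $z \in Z^\circ$, the restriction $f^*\omega_\infty|_{X_z}$ vanishes (both $\omega_Z$ and $\psi_\infty$ become constant after pullback to $X_z$), so $\widetilde\varphi_0|_{X_z}$ is a bounded psh function on the compact fibre $X_z$, hence constant. Therefore $\widetilde\varphi_0 = f^*\rho_0$ for some bounded $\omega_\infty$-psh function $\rho_0$ on $Z$. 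To pin $\rho_0$ down, I would push forward the rescaled Monge-Ampère equation. Using the cohomological vanishing $\{f^*\omega_\infty\}^{n-j}=0$ for $j<k$, the expansion
\begin{equation*}
\omega_{\varphi_t}^n = \binom{n}{k} t^k\, \omega_X^k \wedge (f^*\omega_\infty + dd^c \widetilde\varphi_t)^m + O(t^{k+1}),
\end{equation*}
together with $V_t = \binom{n}{k} t^k + O(t^{k+1})$, dividing by $t^k$ and letting $t_k \to 0$, yields after pushforward by $f$ (and $f_* \omega_X^k = 1$) the identity $(\omega_\infty + dd^c \rho_0)^m = \omega_\infty^m$ on $Z$. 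By Kolodziej's uniqueness theorem for bounded solutions, $\rho_0$ is a constant, so $dd^c \widetilde\varphi_0 = 0$ and
\begin{equation*}
\omega_{\varphi_t} = f^*\omega_\infty + t\omega_X + dd^c \widetilde\varphi_t\ \longrightarrow\ f^*\omega_\infty
\end{equation*}
as currents on $X$, with the same limit along every subsequence.

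\textbf{Main obstacle.} The crux is the passage to the limit in the expansion of $\omega_{\varphi_t}^n$: the non-pluripolar mixed Monge-Ampère products of bounded quasi-psh functions are not continuous under mere $L^1$-convergence. This is where conical metrics enter decisively: I would establish a Chern-Lu type Laplacian estimate for $\omega_{\varphi_t}$ against a reference conical Kähler metric whose cone angles match those of $\omega_{\varphi_t}$ along $B$, yielding uniform $\mathcal{C}^{\alpha}_{\mathrm{loc}}$-bounds on $X^\circ \setminus \mathrm{Supp}(B)$ and legitimizing both the expansion and its pushforward on a full-measure set. Theorem~\ref{thm:uniform1} would be invoked repeatedly to propagate the uniform $L^\infty$-bound through this conical approximation scheme, which is the only mechanism that accommodates the simultaneous degeneration of the cohomology class $\alpha_t$ and the klt singularities of the measure $\mu_{(X,B)}$.
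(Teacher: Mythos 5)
Your overall strategy (adapting Tosatti/Gross--Tosatti--Zhang, with conical reference metrics and the uniform estimate of Theorem~\ref{thm:uniform1} replacing Ko\l{}odziej/Moser) is the same as the paper's, and your first two steps \--- the uniform $L^\infty$-bound and the identification of any subsequential limit as $f^*\rho_0$ for a bounded $\om_Z$-psh $\rho_0$ \--- are correct. But the third step has a genuine gap exactly where you locate "the crux". The asserted expansion $\omf^n=\binom{n}{k}t^k\om_X^k\wedge(f^*\om_\infty+dd^c\widetilde\vp_t)^m+O(t^{k+1})$ is not justified as written: $f^*\om_\infty+dd^c\widetilde\vp_t=\omf-t\om_X$ is not positive, and controlling the cross terms requires knowing that the fiber components of $dd^c\vp_t$ are $O(t)$. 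The mechanism the paper uses for this is missing from your proposal: one decomposes $\vp_t=\uf+\psi_t$ where $\uf(z)=\int_{X_z}\vp_t\,\om_{X_z}^k$ is the fiberwise average, and proves $|\psi_t|\le g\cdot t$ by rescaling the \emph{fiberwise} Monge--Amp\`ere inequality $(\om_{X_z}+dd^c(\tfrac1t\psi_t|_{X_z}))^k\le g\,h|_{X_z}\,\om_{X_z}^k$ and applying the uniform family estimate (Theorems~\ref{thm:uniform1} and \ref{H1}) to the family of fibers over $Z^\circ$ \--- this, not the single global $L^\infty$-bound on $X$, is where the family version of the uniform estimate is genuinely needed. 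Combined with Chern--Lu/Siu inequalities against the conical metric $\om_B$ (giving $g^{-1}t\,\om_B\le\omf\le g\,\om_B$ and $\omf|_{X_z}\le gt\,\om_{B_z}$), this makes every error term in the expansion of $\big(f^*(\om_Z+dd^c\uf)+(t\om_X+dd^c\psi_t)\big)^n$ of size $o(t^k)$ after integration by parts against $f^*u$, $u\in\mathcal C^\infty_0(Z^\circ)$.

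Your proposed remedy \--- uniform $\mathcal C^\alpha_{\rm loc}$ bounds on $X^\circ\setminus\mathrm{Supp}(B)$ \--- would not close this gap on its own: local H\"older convergence away from $S_X\cup B$ does not prevent mass of $\omf^n$ from concentrating near $S_X\cup B$ in the limit, nor does it yield the quantitative $O(t)$ decay of the fiberwise oscillation that drives the expansion. The paper sidesteps the mass-escape issue not by interior regularity but by testing against functions pulled back from $Z^\circ$, using that the limit potential is globally bounded and that $f_*\mu_{(X,B)}$ charges no pluripolar set, so that it suffices to verify the limiting equation on $Z^\circ$; the term-by-term analysis is then purely pluripotential-theoretic (degree reasons plus the estimates above), with no smooth convergence of metrics required.
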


\begin{proof}
We will proceed in several steps, similarly to \cite{Tos10}. In order to simplify some computations to follow, one will assume that $S_Z$ is contained in a divisor $D_Z$, cut out by a section $\sigma_Z \in H^0(Z, \mathcal O_Z(D_Z))$. If $Z$ is projective, this is not a restriction. The general case 
requires to follow Tosatti's computations more closely but does not present significant additional difficulties. \\

\noindent
\textbf{Step 1.} \textit{Choice of some suitable conical metrics}

\noindent
We list in the Proposition below the properties of the conical metric that will be important for the following. It is mostly a recollection of well-known results, cf e.g. \cite{GP}. By abuse of notation, we will not distinguish between $B$ and $\mathrm{Supp}(B)$.
%whenever no ambiguity is possible. 

\begin{prop}
\label{conic}
There exists a Kähler current $\om_B\in \{\om_X\}$ on $X$ such that 
\begin{enumerate}
\item $\om_B$ is a smooth Kähler form on $\xmb $ and has conical singularities along $B$. 
\item There exists a constant $C>0$ and a quasi-psh function $\Psi\in \mathcal C^{\infty}(\xmb)\cap L^{\infty}(X)$ such that
the following inequalities of tensors hold in the sense of Griffiths on $\xmb$
$$-(C\om_B+dd^c \Psi) \otimes \mathrm{Id}_{T_X} \le \Theta_{\om_B}(T_X) \le C\om_B \otimes \mathrm{Id}_{T_X}.$$
\item Let $h:=\om_B^n/\om_X^n$. There exists $p>1$ such that for any $K\Subset Z^\circ$, one has 
$$\sup_{z\in K} \|h|_{X_z}\|_{L^p(\om_{X_z}^k)} <+\infty.$$
%\item For $z\in Z^\circ$, let $g_z:=\om_{B_z}^k/\om_{X_z}^k$ where $\om_{B_z}:=\om_B|_{X_z}$ and $\om_{X_z}:=\om_X|_{X_z}$.  There exists $p>1$ such that for any $K\Subset Z^\circ$, one has 
%$$\sup_{z\in K} ||g_z||_{L^p(\om_{X_z}^k)} <+\infty.$$
\end{enumerate}
\end{prop}

\begin{proof}[Sketch of proof of Proposition~\ref{conic}]
To construct such a metric $\om_B$, one first chooses smooth metrics $h_i$ on $B_i$, sections $s_i\in H^0(X,\mathcal O_X(B_i))$ cutting out $B_i$, and one sets $\om_B:=\om_X+dd^c \sum_i |s_i|^{2(1-b_i)}$. Up to scaling down the metrics $h_i$, one can easily achieve the first condition. The third condition also follows easily.  

The left-hand side inequality of 2 ("lower bound" on the holomorphic bisectional curvature) follows from \cite[(4.3)]{GP} with $\ep=0$. As for the right-hand side inequality (upper bound on the holomorphic bisectional curvature), a proof has been given in \cite[App.~A]{JMR} in the case where $B$ is smooth but a very simple argument has been found by Sturm, cf \cite[Lem.~3.14]{Rub14}. 
\end{proof}

\noindent
\textbf{Step 2.} \textit{Estimates}

\noindent
We list in the Proposition below various estimates on $\omf$ that will be useful for the last step. First, we define for $z\in Z^\circ$ the quantity $ \uf(z):=\int_{X_z}\f_t \om_{X_z}^k$. In the following, we will not distinguish between $\uf$ and $f^*\uf$. 
\begin{prop}
\label{estimates}
There exist a constant $C>0$ as well as a positive function $g \in \mathcal C^{\infty}(X^\circ)$, both independent of $t$,  such that 
\begin{enumerate}
\item $\|\f_t\|_{L^{\infty}(X)}\le C$.
\item $\omf \ge C^{-1} f^*\om_Z$.
\item $|\f_t-\uf| \le g \cdot t$.
\item $g^{-1}t \cdot \om_B\le  \omf \le g\cdot \om_B$. 
\item $g^{-1}t \cdot \om_{B_z} \le \omf|_{X_z} \le  gt \cdot \om_{B_z} \quad$ for all $z\in Z^\circ$.
\end{enumerate}
\end{prop}

\begin{proof}[Proof of Proposition~\ref{estimates}]
In this proof, $C$ will denote a constant that may change from line to line but is independent of $t$. In the same way, $g$ will be a smooth, positive function on $X^\circ$ that should be thought as blowing up to $+\infty$ near $S_X$; it can be assumed to come from $Z^\circ$ via $f$. \\

\textbf 1. This is a consequence of \cite[Thm.~A]{EGZ08} or \cite[p.~401]{DemPal}.\\

\textbf 2. Let us consider the holomorphic map $f:(\xmb, \omf)\to (Z,\om_Z)$. Given that $\Ric(\om_{\varphi_t})=0$ and that $\om_Z$ is a smooth Kähler metric on the compact space $Z$, Chern-Lu's formula \cite{Chern, Lu} provides a constant $C>0$ such that the non-negative function $u=\tr_{\omf}f^*\om_Z$ satisfies
$$\Delta_{\omf} \log u \ge -C (1+u)$$
on $\xmb$. Now, $$\Delta_{\omf}(-\f_t)=\tr_{\omf}(-\omf+f^*\om_Z+t\om_X) \ge u-n$$ so that setting $A=C+1$, one finds
$$\Delta_{\omf}( \log u-A\f_t) \ge u-C.$$
Let $\tau$ be a section of $\mathcal O_X(\lceil B\rceil)$ cutting out $B$ and let $h_B$ be a smooth hermitian metric on that line bundle. We set $\chi:= \log |\tau|^2_{h_B}$. As $\omf$ is a Kähler current and $\chi$ is quasi-psh, there exists a constant $C_t>0$ such that $dd^c \chi \ge -C_t \omf$. Therefore, for any $\delta \in (0,C_t^{-1})$, one has an inequality
%We now introduce a smooth metric $h_D$ on the divisor $D$ (containing $S_Z$). One has $dd^c \log |\sigma_Z|_{h_D}^2 \ge -C \om_Z$. Setting $\chi:=f^* \log |\sigma_Z|_{h_D}^2$, one eventually gets for any $\delta \in (0,C^{-1})$ the inequality
$$\Delta_{\omf}( \log u-A\f_t+\delta \chi) \ge u-C.$$
As $\omf$ is a conical metric for $t>0$, the function $u$ is bounded above on $\xmb$ and therefore, $H_{t,\delta}:= \log u-A\f_t+\delta \chi$ attains its maximum at a point $x_{t,\delta}\in \xmb$ such that $u(x_{t,\delta})\le C$. As a result, the estimate obtained in 1. allows one to show that for any $x\in \xmb$, one has
\begin{align*}
\log u(x)&= H_{t,\delta}(x) +A\f_t(x)-\delta \chi(x)\\
& \le H_{t,\delta}(x_{t,\delta})+C-\delta \chi(x)\\
& \le C-\delta \chi.
\end{align*}
As this holds for any $\delta>0$ small enough, we can pass to the limit and conclude that $u\le e^C$ on $\xmb$, hence everywhere.  \\

\textbf 3. The equation solved by $\varphi_t$ can be rewritten as 
\begin{equation}
\label{eqremix}
(f^*\om_Z+t\om_X+dd^c \varphi_t)^n= t^k e^{F_t} \om_B^n
\end{equation}
where $F_t$ is uniformly bounded independently of $t$. Next, one has on $X_z$ ($z\in Z^\circ$)
\begin{equation}
\label{restr}
\frac{(\omf|_{X_z})^k}{\om_{X_z}^k} = \frac{\omf^k\wedge f^*\om_Z^m}{\om_X^k\wedge f^*\om_Z^m} \le Cg \cdot \frac{\omf^n}{\om_X^n}
\end{equation}
thanks to 2. Observing that $\omf|_{X_z}=(\omf-dd^c \uf)|_{X_z}$, one sees from Eq.~\eqref{eqremix} that $(\vp_t-\uf)|_{X_z}$ satisfies
$$(\om_{X_z}+dd^c (\frac 1t(\vp_t-\uf)|_{X_z}))^k  \le gh|_{X_z} \cdot \om_{X_z}^k$$
where $h=\om_B^n/\om_X^n$. Thanks to the third item of Proposition~\ref{conic}, Theorem~\ref{H1} and 
Theorem~\ref{thm:uniform1}, we can derive 3. Actually, we used a version of Theorem~\ref{H1} for higher-dimensional bases, but only for smooth morphisms, in which case the proofs in the one-dimensional case go through without any change. \\

\textbf {4.a} We first prove the right-hand side inequality. Let us start by writing $\om_B=\om_X+dd^c \psi_B$ where $\psi_B\in L^{\infty}(X) \cap \mathcal C^{\infty}(\xmb)$. From the second item of Proposition~\ref{conic} and Siu's Laplacian inequality (cf \cite[(2.2)]{GP}), one concludes that 
$$\Delta_{\omf}( \log \tr_{\om_B}\omf+\Psi) \ge -C(1+ \tr_{\omf} \om_B). $$
Next, one has 
 \begin{equation}
 \label{dphi}
 \Delta_{\omf}(-\f_t+t\psi_B)=\tr_{\omf}(-\omf+f^*\om_Z+t\om_B) \ge t\tr_{\omf} \om_B-n
 \end{equation}
 so that 
 \begin{equation}
 \label{delta1}
 \Delta_{\omf}( \log \tr_{\om_B}\omf+\Psi-\frac At \f_t+A\psi_B) \ge  \tr_{\omf} \om_B-\frac Ct. 
 \end{equation}
We want to bound from below the term $dd^c \uf$. In order to achieve this, one writes
\begin{align}
dd^c \uf &= dd^c f_*(\f_t \om_X^k) 
 = f_*(dd^c \f_t \wedge \om_X^k) \label{borneinf3}  \\
& \ge - f_*(f^*\om_Z\wedge \om_X^k + t\om_X^{k+1}) \nonumber\\
& \ge -\om_Z-t f_*\om_X^{k+1} \ge -g\cdot \om_Z\nonumber
\end{align}
given that $f_*\om_X^k=1$. In particular, one has 
\begin{equation}
\label{dphi2}
 \Delta_{\omf} \uf \ge -g
 \end{equation}
  thanks to 2. Combining that estimate with \eqref{delta1}, one finds
 \begin{equation}
% \label{delta1}
 \Delta_{\omf}( \log \tr_{\om_B}\omf+\Psi-\frac At( \f_t-\uf)+A\psi_B) \ge  \tr_{\omf} \om_B-\frac gt. 
 \end{equation}
We now set $F:=\Psi-\frac At( \f_t-\uf)+A\psi_B$; it is a bounded function on $X$, smooth on $X^\circ \setminus B$ such that
\begin{equation}
\label{boundF}
|F| \le g
\end{equation}
thanks to 3. Next, we set  $\rho:=\chi+f^*\log |\sigma_Z|_{h_{D_Z}}^2$ where $\chi$ is defined in the proof of 2 and $h_{D_Z}$ is a smooth hermitian metric on the divisor $D_Z$ (containing $S_Z$). As $\rho$ is quasi-psh on $X$, there exists $C_t>0$ such that 
\begin{equation}
\label{rho}
dd^c \rho \ge -C_t \omf.
\end{equation}
 In particular, one has 
 \begin{equation}
 \label{delta2}
 \Delta_{\omf}( \log \tr_{\om_B}\omf+F+\delta \rho) \ge  \tr_{\omf} \om_B-\frac gt. 
 \end{equation}
as soon as $\delta\in (0,C_t^{-1})$. We choose such a $\delta$ for the following. As the quantity $\log \tr_{\om_B}\omf+F$ is globally bounded on $X$ and smooth on $X^\circ \setminus B$, the function $\log \tr_{\om_B}\omf+F+\delta \rho$ attains it maximum at a point $y_{t,\delta}\in X^\circ \setminus B$ such that 
\begin{equation*}
%\label{max}
 \tr_{\omf} \om_B(y_{t,\delta}) \le \frac gt
\end{equation*}
thanks to the maximum principle. Combining this with 2, one finds 
\begin{equation}
\label{max2}
 \tr_{\omf} (f^*\om_Z+t\om_B)(y_{t,\delta}) \le g
\end{equation}
Using the standard inequality
$$\tr_{\om'}\om \le \frac{\om^n}{\om'^n} (\tr_{\om}\om')^{n-1}$$
valid for any two positive $(1,1)$-forms, one gets from \eqref{max2}
$$ \tr_{ f^*\om_Z+t\om_B}(\omf)(y_{t,\delta}) \le g$$
since $\omf^n \simeq t^k \om_B^n$ is uniformly comparable to $(f^*\om_Z+t\om_B)^n$ by Claim~\ref{claimm} below. As $\om_B$ dominates $f^*\om_Z+t\om_B$, we infer from the inequality above the following
\begin{equation}
\label{max4}
 \tr_{\om_B} \omf (y_{t,\delta}) \le g.
\end{equation}
Given the definition of $y_{t,\delta}$, the boundedness of $F$ and that $\delta>0$ is arbitrary, we find as in the proof of 2. above that \eqref{max4} actually implies $$ \tr_{\om_B} \omf \le g \quad \mbox{on } X^\circ \setminus B$$
 hence on the whole $X^\circ$. \\

To conclude the proof of the RHS inequality in 4., it remains to prove the following

\begin{claim}
\label{claimm}
We have
\begin{equation}
\label{max3}
g^{-1}t^k \cdot \om_B^n \le (f^*\om_Z+t\om_B)^n \le g t^k \cdot \om_B^n
\end{equation}
\end{claim}

\begin{proof}[Proof of Claim~\ref{claimm}]
The statement is local, so one can assume that  $f:\mathbb C^n\to \mathbb C^m$ is given by the projection onto the last $m$ factors and that $B=\sum_{i=1}^r b_i(z_i=0)$ for some $r\le k$. As the inequality is invariant under quasi-isometry, one can choose $\om_Z=\sum_{j=k+1}^n idz_j\wedge d\bar z_j $ to be the euclidean metric on $\mathbb C^m$ while 
$$\om_B=\sum_{j=1}^r \frac{idz_j\wedge d\bar z_j }{|z_j|^{2b_j}}+\sum_{j=r+1}^n idz_j\wedge d\bar z_j 
$$ 
is the standard cone metric. 
Setting $K:=\prod_{j=1}^r|z_j|^{-2b_j}$ and $\om_{\mathbb C^n}:=\sum_{j=1}^n  idz_j\wedge d\bar z_j $, one finds
$$\om_B^n = K \cdot \om_{\mathbb C^n}^n \quad \mbox{and} \quad (f^*\om_Z+t\om_B)^n= t^k(1+t)^m K \cdot  \om_{\mathbb C^n}^n$$
which gives the expected result. 
\end{proof}

\textbf{4.b} We now move on to the LHS inequality in 4. Let us set $v:=\tr_{\omf}(t\om_B)$. Remember from Proposition~\ref{conic} 2. that $\om_B$ has holomorphic bisectional curvature bounded from above on $\xmb$. By Chern-Lu's inequality, we get on $\xmb$
$$\Delta_{\omf} \log v \ge -Ct^{-1}v.$$
Combining that inequality with \eqref{dphi}-\eqref{dphi2} and \eqref{rho}, one finds, for $A=C+1$
$$\Delta_{\omf}( \log v-\frac At (\vp_t-\uf)+A\psi_B+\delta \rho) \ge \frac 1 t(v-g). $$
Applying the maximum principle and arguing as before, we eventually find $v\le g$ on $X^{\circ}\setminus B$, hence on $X^\circ$. \\

\textbf{5.} The LHS inequality is a direct consequence of 4, by restriction. As for the RHS, it follows easily from the LHS since
\begin{align*}
\tr_{\om_{B_z}}\omf|_{X_z} & \le \frac{(\omf|_{X_z})^k}{\om_{B_z}^k} \cdot (\tr_{\omf|_{X_z}} \om_{B_z})^{k-1}\\
& \le gt^{k-(k-1)}
\end{align*}
thanks to \eqref{restr}. This ends the proof of Proposition~\ref{estimates}. 
\end{proof}

\noindent
\textbf{Step 3.} \textit{Convergence}

\noindent
Thanks to Proposition~\ref{estimates} 1., the family $(\vp_t)_{0<t\le1}$ is relatively compact for the $L^1$-topology. All we have to do is showing that all its clusters values coincide. Let $\vpi$ be such a cluster value; it is an $f^*\om_Z$-psh function but $f$ has connected fibers so that $\vpi$ is necessarily constant on the fibers. Equivalently, one has $\vpi=f^*\uvpi$ for the (unique) $\om_Z$-psh function $\uvpi$ satisfying $\uvpi(z):=\int_{X_z} \vpi \om_X^k$ for $z\in Z^\circ$. We want to show that the following equality of measures
\begin{equation}
\label{omi}
(\om_Z+dd^c \uvpi)^m=f_*\mu_{(X,B)}
\end{equation}
holds on $Z$. Given that Eq.~\ref{omi} has a unique normalized bounded solution, this will prove the Theorem. As $\uvpi$ is globally bounded on $X$ thanks to Proposition~\ref{estimates} 1. and $f_*\mu_{(X,B)}$ does not charge any pluripolar set, it is actually enough to show that the equality of measures \eqref{omi} holds on $Z^\circ$.  
In order to prove \eqref{omi} on $Z^\circ$, given that $f_*\om_X^k=1$, it is enough to prove instead that for any function $u\in \mathcal C^{\infty}_0(Z^\circ)$, one has
\begin{equation}
\label{omi2}
\int_{X} f^*u \cdot (f^*\om_Z+dd^c \vpi)^m\wedge \om_X^k=\int_X f^*u \cdot d\mu_{(X,B)}.
\end{equation}

\bigskip

We start from the identity 
\begin{equation}
\label{identity}
\omf^n=(f^*\om_Z+t\om_X+dd^c \f_t)^n=V_t \cdot \mu_{(X,B)}
\end{equation}
where $V_t =\binom{n}{k}  t^k+o(t^k)$ when $t\to 0$, cf \eqref{vt}. Set $\psi_t:=\f_t-\uf$ and decompose $\omf$ as $$\omf=f^*(\om_Z+dd^c \uf)+(t\om_X+dd^c \psi_t).$$ 
By expanding, one obtains
$$\omf^n=\sum_{i=0}^m \binom{n}{i} \underbrace{f^*(\om_Z+dd^c \uf)^i\wedge (t\om_X+dd^c \psi_t)^{n-i}}_{=:\alpha_i}.$$

$\bullet$ \underline{Case $i=m$}. 

\noindent
We expand again 
$$\alpha_m=\sum_{j=0}^{k-1} \binom{k}{j} t^j \underbrace{f^*(\om_Z+dd^c \uf)^m\wedge \om_X^j \wedge (dd^c \psi_t)^{k-j}}_{=:\beta_j}+t^kf^*(\om_Z+dd^c \uf)^m\wedge \om_X^k .$$
Performing an integration by parts, one gets 
$$\int_{X}f^*u \cdot \beta_j= \int_X \psi_t \cdot f^*\underbrace{\Big(dd^c u \wedge (\om_Z+dd^c \uf)^m\Big)}_{=0}\wedge \om_X^j \wedge  (dd^c \psi_t)^{k-j-1}=0$$
for degree reasons.

 By dominated convergence theorem, we have that $\uf\to\uvpi$ in the $L^1_{\rm loc}(Z^\circ)$ topology. Moreover, as $B$ intersects the fibers of $f$ tranversally over $Z^\circ$, an easy argument relying on partition of unity shows that $f_*(\om_B\wedge \omega_X^{k})$ is a smooth $(1,1)$-form on $Z^\circ$. Combining this with Proposition~\ref{estimates} 4., we find $dd^c \uf = f_*(dd^c\f_t \wedge \om_X^k) \le  f_*(g \om_B\wedge \om_X^k) \le (f_*g) \cdot \om_Z$. Together with \eqref{borneinf3}, this implies
\begin{equation}
\label{push}
\pm dd^c\uf \le (f_*g) \cdot \om_Z.
\end{equation}
By standard result, this shows that $\uf\to \uvpi$ in $C^{1, \alpha}_{\rm loc}(Z^\circ)$ for any $\alpha<1$. In particular, the quasi-psh functions $\uf$ converge uniformly on $\mathrm{Supp}(u)$. By Bedford-Taylor theory, one deduces that 
\begin{equation*}
\label{conv}
\int_X f^*u \cdot f^*(\om_Z+dd^c \uf)^m\wedge \om_X^k \to \int_X f^*u \cdot f^*(\om_Z+dd^c \uvpi)^m\wedge \om_X^k.
\end{equation*}
In the end, one has showed that
\begin{equation}
\label{conv2}
\frac{\binom{n}{m} }{V_t}\int_X f^*u \cdot \alpha_m \to \int_X f^*u \cdot f^*(\om_Z+dd^c \uvpi)^m\wedge \om_X^k.
\end{equation}
since $V_t\sim\binom{n}{m}  t^k$.\\

$\bullet$ \underline{Case $i<m$}. 

\noindent
We expand  
$$\alpha_i=\sum_{j=0}^{n-i-1} \binom{n-i}{j} t^j \underbrace{f^*(\om_Z+dd^c \uf)^i\wedge \om_X^j \wedge (dd^c \psi_t)^{n-i-j}}_{=:\gamma_{ij}}+ t^{n-i} f^*(\om_Z+dd^c \uf)^i\wedge \om_X^{n-i}.$$
From \eqref{push}, we find 
\begin{equation}
\label{last}
\frac{t^{n-i}}{V_t} \int_X f^*u \cdot f^*(\om_Z+dd^c \uf)^i\wedge \om_X^{n-i} = O(t^{m-i})=o(1).
\end{equation}
 For the remaining terms, an integration by parts yields 
$$\int_{X}f^*u \cdot \gamma_{ij}= \int_X \psi_t \cdot f^*\big(dd^c u \wedge (\om_Z+dd^c \uf)^i\big)\wedge \om_X^j \wedge  (dd^c \psi_t)^{n-i-j-1}$$
From Proposition~\ref{estimates} 3., one has $|\psi_t| \le gt$. Moreover, among the $(n-i-j-1)$ eigenvalues of $dd^c \psi_t$ involved in the integral, at least $(n-i-j-1)-(m-(i+1))=k-j$ must come from the fiber. Given Proposition~\ref{estimates} 4-5., the integrand is a $O(t^{1+k-j})$. As a result, 
$$\frac{t^j}{V_t}\int_{X}f^*u \cdot \gamma_{ij}= O(t).$$
Combining that result with \eqref{last}, we see that for any $i>m$, one has  
\begin{equation}
\label{zero}
\lim_{t\to 0} \frac{1}{V_t} \int_X f^*u \cdot \alpha_i=0.
\end{equation}
Putting together \eqref{identity}, \eqref{conv2} and \eqref{zero}, we obtain
\begin{align*}
\int_X f^*u \cdot d\mu_{(X,B)} & = \frac{1}{V_t} \int_X f^*u \cdot  \omf^n \\
&= \lim_{t\to 0} \sum_{i=0}^m\binom{n}{i} \frac{1}{V_t} \int_X f^*u \cdot  \alpha_i \\
&= \lim_{t\to 0} \frac{\binom{n}{m} }{V_t} \int_X f^*u \cdot \alpha_m \\
&= \int_X f^*u \cdot f^*(\om_Z+dd^c \uvpi)^m\wedge \om_X^k.
\end{align*} 
In summary, \eqref{omi2} is proved, which concludes the proof of the Theorem.
\end{proof}

\bibliographystyle{smfalpha}
\bibliography{biblio}

\newcommand{\etalchar}[1]{$^{#1}$}
\providecommand{\bysame}{\leavevmode ---\ }
\providecommand{\og}{``}
\providecommand{\fg}{''}
\providecommand{\smfandname}{\&}
\providecommand{\smfedsname}{\'eds.}
\providecommand{\smfedname}{\'ed.}
\providecommand{\smfmastersthesisname}{M\'emoire}
\providecommand{\smfphdthesisname}{Th\`ese}
\begin{thebibliography}{KKMSD73}

\bibitem[AAZ20]{AAZ18}
{\scshape S.~Asserda, F.~Assila {\normalfont \smfandname} A.~Zeriahi} -- {\og
  {Projective Logarithmic Potentials}\fg}, \emph{Indiana Univ. Math. J.}
  \textbf{69} (2020), p.~487--516.

\bibitem[Ale96]{Alex}
{\scshape V.~Alexeev} -- {\og Log canonical singularities and complete moduli
  of stable pairs\fg}, Preprint
  \href{https://arxiv.org/abs/alg-geom/9608013}{arXiv:alg-geom/9608013}, 1996.

\bibitem[Aub78]{Aubin}
{\scshape T.~Aubin} -- {\og \'{E}quations du type {M}onge-{A}mp\`ere sur les
  vari\'et\'es k\"ahl\'eriennes compactes\fg}, \emph{Bull. Sci. Math. (2)}
  \textbf{102} (1978), no.~1, p.~63--95.

\bibitem[BBE{\etalchar{+}}19]{BBEGZ}
{\scshape R.~J. Berman, S.~Boucksom, P.~Eyssidieux, V.~Guedj {\normalfont
  \smfandname} A.~Zeriahi} -- {\og K\"{a}hler-{E}instein metrics and the
  {K}\"{a}hler-{R}icci flow on log {F}ano varieties\fg}, \emph{J. Reine Angew.
  Math.} \textbf{751} (2019), p.~27--89.

\bibitem[BCHM10]{BCHM}
{\scshape C.~Birkar, P.~Cascini, C.~Hacon {\normalfont \smfandname}
  J.~McKernan} -- {\og {Existence of minimal models for varieties of log
  general type}\fg}, \emph{J. Amer. Math. Soc.} \textbf{23} (2010),
  p.~405--468.

\bibitem[BEGZ10]{BEGZ}
{\scshape S.~Boucksom, P.~Eyssidieux, V.~Guedj {\normalfont \smfandname}
  A.~Zeriahi} -- {\og {Monge-Amp{\`e}re equations in big cohomology
  classes.}\fg}, \emph{Acta Math.} \textbf{205} (2010), no.~2, p.~199--262.

\bibitem[Bei19]{Bei}
{\scshape F.~Bei} -- {\og On the {L}aplace--{B}eltrami operator on compact
  complex spaces\fg}, \emph{Trans. Amer. Math. Soc.} \textbf{372} (2019),
  no.~12, p.~8477--8505.

\bibitem[BG14]{BG}
{\scshape R.~J. Berman {\normalfont \smfandname} H.~Guenancia} -- {\og
  {K{\"a}hler-Einstein metrics on stable varieties and log canonical
  pairs}\fg}, \emph{Geometric and Functional Analysis} \textbf{24} (2014),
  no.~6, p.~1683--1730.

\bibitem[Bou02]{Bou02}
{\scshape S.~Boucksom} -- {\og {On the volume of a line bundle.}\fg},
  \emph{Int. J. Math.} \textbf{13} (2002), no.~10, p.~1043--1063.

\bibitem[Bou04]{DZD}
\bysame , {\og Divisorial {Z}ariski decompositions on compact complex
  manifolds\fg}, \emph{Ann. Sci. {\'E}cole Norm. Sup. (4)} \textbf{37} (2004),
  no.~1, p.~45--76.

\bibitem[BT82]{BT82}
{\scshape E.~Bedford {\normalfont \smfandname} B.~Taylor} -- {\og A new
  capacity for plurisubharmonic functions\fg}, \emph{Acta Math.} \textbf{149}
  (1982), no.~1-2, p.~1--40.

\bibitem[CGP13]{CGP}
{\scshape F.~Campana, H.~Guenancia {\normalfont \smfandname} M.~P\u{a}un} --
  {\og {Metrics with cone singularities along normal crossing divisors and
  holomorphic tensor fields}\fg}, \emph{Ann. Scient. Éc. Norm. Sup.}
  \textbf{46} (2013), p.~879--916.

\bibitem[CGP21]{JHM1}
{\scshape J.~Cao, H.~Guenancia {\normalfont \smfandname} M.~P{\u{a}}un} -- {\og
  Variation of singular {K{\"a}hler}-{Einstein} metrics: positive {Kodaira}
  dimension\fg}, \emph{J. Reine Angew. Math.} \textbf{779} (2021), p.~1--36
  (English).

\bibitem[CGP23]{JHM2}
\bysame , {\og Variation of singular {K{\"a}hler}-{Einstein} metrics: {Kodaira}
  dimension zero (with an appendix by {Valentino} {Tosatti})\fg}, \emph{J. Eur.
  Math. Soc. (JEMS)} \textbf{25} (2023), no.~2, p.~633--679 (English).

\bibitem[CGZ13]{CGZ13}
{\scshape D.~Coman, V.~Guedj {\normalfont \smfandname} A.~Zeriahi} -- {\og
  Extension of plurisubharmonic functions with growth control\fg}, \emph{J.
  Reine Angew. Math.} \textbf{676} (2013), p.~33--49.

\bibitem[Cha84]{Chavel}
{\scshape I.~Chavel} -- \emph{Eigenvalues in {R}iemannian geometry}, Pure and
  Applied Mathematics, vol. 115, Academic Press, Inc., Orlando, FL, 1984,
  Including a chapter by Burton Randol, With an appendix by Jozef Dodziuk.

\bibitem[Che68]{Chern}
{\scshape S.-s. Chern} -- {\og On holomorphic mappings of hermitian manifolds
  of the same dimension\fg}, in \emph{Entire {F}unctions and {R}elated {P}arts
  of {A}nalysis ({P}roc. {S}ympos. {P}ure {M}ath., {L}a {J}olla, {C}alif.,
  1966)}, Amer. Math. Soc., Providence, R.I., 1968, p.~157--170.

\bibitem[CL81]{ChengLi}
{\scshape S.~Y. Cheng {\normalfont \smfandname} P.~Li} -- {\og Heat kernel
  estimates and lower bound of eigenvalues\fg}, \emph{Comment. Math. Helv.}
  \textbf{56} (1981), no.~3, p.~327--338.

\bibitem[CT15]{CT15}
{\scshape T.~Collins {\normalfont \smfandname} V.~Tosatti} -- {\og K\"{a}hler
  currents and null loci\fg}, \emph{Invent. Math.} \textbf{202} (2015), no.~3,
  p.~1167--1198.

\bibitem[DDNL18]{DDL18}
{\scshape T.~Darvas, E.~Di~Nezza {\normalfont \smfandname} C.~H. Lu} -- {\og On
  the singularity type of full mass currents in big cohomology classes\fg},
  \emph{Compos. Math.} \textbf{154} (2018), no.~2, p.~380--409.

\bibitem[Dem82]{Dem82}
{\scshape J.-P. Demailly} -- {\og Sur les nombres de {L}elong associ\'{e}s \`a
  l'image directe d'un courant positif ferm\'{e}\fg}, \emph{Ann. Inst. Fourier
  (Grenoble)} \textbf{32} (1982), no.~2, p.~ix, 37--66.

\bibitem[Dem85]{Dem85}
\bysame , {\og Mesures de {M}onge-{A}mp{\`e}re et caract{\'e}risation
  g{\'e}om{\'e}trique des vari{\'e}t{\'e}s alg{\'e}briques affines\fg},
  \emph{M{\'e}m. Soc. Math. France (N.S.)} (1985), no.~19, p.~124.

\bibitem[Dem92]{D2}
\bysame , {\og Regularization of closed positive currents and intersection
  theory\fg}, \emph{J. Algebraic Geom.} \textbf{1} (1992), no.~3, p.~361--409.

\bibitem[Din09]{Din09}
{\scshape S.~a. Dinew} -- {\og Uniqueness in {$\mathscr E(X,\omega)$}\fg},
  \emph{J. Funct. Anal.} \textbf{256} (2009), no.~7, p.~2113--2122.

\bibitem[DP04]{DP}
{\scshape J.-P. Demailly {\normalfont \smfandname} M.~P{\u{a}}un} -- {\og
  Numerical characterization of the {K}\"{a}hler cone of a compact {K}\"{a}hler
  manifold\fg}, \emph{Ann. of Math. (2)} \textbf{159} (2004), no.~3,
  p.~1247--1274.

\bibitem[DP10]{DemPal}
{\scshape J.-P. Demailly {\normalfont \smfandname} N.~Pali} -- {\og Degenerate
  complex {M}onge-{A}mp\`ere equations over compact {K}\"ahler manifolds\fg},
  \emph{Internat. J. Math.} \textbf{21} (2010), no.~3, p.~357--405.

\bibitem[EGZ08]{EGZ08}
{\scshape P.~Eyssidieux, V.~Guedj {\normalfont \smfandname} A.~Zeriahi} -- {\og
  A priori {$L^\infty$}-estimates for degenerate complex {M}onge-{A}mp{\`e}re
  equations\fg}, \emph{Int. Math. Res. Not.} (2008), p.~Art. ID rnn 070, 8.

\bibitem[EGZ09]{EGZ}
\bysame , {\og {Singular K{\"a}hler-Einstein metrics}\fg}, \emph{{J. Amer.
  Math. Soc.}} \textbf{22} (2009), p.~607--639.

\bibitem[EGZ18]{EGZ16}
\bysame , {\og Convergence of weak {K}\"{a}hler-{R}icci flows on minimal models
  of positive {K}odaira dimension\fg}, \emph{Comm. Math. Phys.} \textbf{357}
  (2018), no.~3, p.~1179--1214.

\bibitem[FN80]{FN}
{\scshape J.~E. Forn{\ae}ss {\normalfont \smfandname} R.~Narasimhan} -- {\og
  The {L}evi problem on complex spaces with singularities\fg}, \emph{Math.
  Ann.} \textbf{248} (1980), no.~1, p.~47--72.

\bibitem[Fuj16]{Fuj16}
{\scshape O.~Fujino} -- {\og Direct images of relative pluricanonical
  bundles\fg}, \emph{Algebr. Geom.} \textbf{3} (2016), no.~1, p.~50--62.

\bibitem[GP16]{GP}
{\scshape H.~Guenancia {\normalfont \smfandname} M.~P{\u{a}}un} -- {\og {Conic
  singularities metrics with prescribed Ricci curvature: the case of general
  cone angles along normal crossing divisors}\fg}, \emph{J. Differential Geom.}
  \textbf{103} (2016), no.~1, p.~15--57.

\bibitem[GTZ13]{GTZ13}
{\scshape M.~Gross, V.~Tosatti {\normalfont \smfandname} Y.~Zhang} -- {\og
  Collapsing of abelian fibered {C}alabi-{Y}au manifolds\fg}, \emph{Duke Math.
  J.} \textbf{162} (2013), no.~3, p.~517--551.

\bibitem[Gue14]{G12}
{\scshape H.~Guenancia} -- {\og {K{\"a}hler-Einstein metrics with mixed
  Poincar{\'e} and cone singularities along a normal crossing divisor}\fg},
  \emph{Ann. Inst. Fourier} \textbf{64} (2014), no.~6, p.~1291--1330.

\bibitem[GW16]{GW}
{\scshape H.~Guenancia {\normalfont \smfandname} D.~Wu} -- {\og On the boundary
  behavior of {K}{\"a}hler-{E}instein metrics on log canonical pairs\fg},
  \emph{Math. Annalen} \textbf{366} (2016), no.~1, p.~101--120.

\bibitem[GZ05]{GZ05}
{\scshape V.~Guedj {\normalfont \smfandname} A.~Zeriahi} -- {\og {Intrinsic
  capacities on compact K{\"a}hler manifolds.}\fg}, \emph{J. Geom. Anal.}
  \textbf{15} (2005), no.~4, p.~607--639.

\bibitem[GZ07]{GZ07}
\bysame , {\og {The weighted Monge-Amp{\`e}re energy of quasi plurisubharmonic
  functions}\fg}, \emph{J. Funct. An.} \textbf{250} (2007), p.~442--482.

\bibitem[GZ17]{GZ17}
\bysame , \emph{Degenerate complex {M}onge-{A}mp\`ere equations}, EMS Tracts in
  Mathematics, vol.~26, European Mathematical Society (EMS), Z\"{u}rich, 2017.

\bibitem[HT20]{HT18}
{\scshape H.-J. Hein {\normalfont \smfandname} V.~Tosatti} -- {\og Higher-order
  estimates for collapsing {Calabi}-{Yau} metrics\fg}, \emph{Camb. J. Math.}
  \textbf{8} (2020), no.~4, p.~683--773 (English).

\bibitem[JMR16]{JMR}
{\scshape T.~Jeffres, R.~Mazzeo {\normalfont \smfandname} Y.~A. Rubinstein} --
  {\og K\"ahler-{E}instein metrics with edge singularities\fg}, \emph{Ann. of
  Math. (2)} \textbf{183} (2016), no.~1, p.~95--176, with an Appendix by C. Li
  and Y. Rubinstein.

\bibitem[Kar00]{Karu}
{\scshape K.~Karu} -- {\og Minimal models and boundedness of stable
  varieties\fg}, \emph{J. Algebraic Geom.} \textbf{9} (2000), no.~1,
  p.~93--109.

\bibitem[KKMSD73]{KKMS}
{\scshape G.~Kempf, F.~F. Knudsen, D.~Mumford {\normalfont \smfandname}
  B.~Saint-Donat} -- \emph{Toroidal embeddings. {I}}, Lecture Notes in
  Mathematics, Vol. 339, Springer-Verlag, Berlin-New York, 1973.

\bibitem[KM98]{KM}
{\scshape J.~Koll{\'a}r {\normalfont \smfandname} S.~Mori} -- \emph{Birational
  geometry of algebraic varieties}, Cambridge Tracts in Mathematics, vol. 134,
  Cambridge University Press, Cambridge, 1998, With the collaboration of C. H.
  Clemens and A. Corti, Translated from the 1998 Japanese original.

\bibitem[Kob84]{KobR84}
{\scshape R.~Kobayashi} -- {\og Einstein-k{\"a}hler metrics on open algebraic
  surfaces of general type\fg}, \emph{T{\^o}hoku Math. J. (2)} \textbf{37}
  (1984), p.~43--77 (English).

\bibitem[Kol]{Kollar}
{\scshape J.~Koll{\'a}r} -- {\og Book on moduli of surfaces\fg}, ongoing
  project, avalaible at the author's webpage
  \href{https://web.math.princeton.edu/~kollar/book/chap3.pdf}{Book}.

\bibitem[Ko{\l}98]{Kolo}
{\scshape S.~Ko{\l}odziej} -- {\og {The complex Monge-Amp{\`e}re operator}\fg},
  \emph{Acta Math.} \textbf{180} (1998), no.~1, p.~69--117.

\bibitem[Kol13]{Kollar13}
{\scshape J.~Koll\'{a}r} -- \emph{Singularities of the minimal model program},
  Cambridge Tracts in Mathematics, vol. 200, Cambridge University Press,
  Cambridge, 2013, With a collaboration of S\'{a}ndor Kov\'{a}cs.

\bibitem[Kol18]{Kol18}
{\scshape J.~Koll{\'a}r} -- {\og Families of varieties of general type\fg},
  2018, Book in preparation, available at
  \url{https://web.math.princeton.edu/~kollar/}.

\bibitem[Kov13]{Kov}
{\scshape S.~J. Kov{\'a}cs} -- {\og Singularities of stable varieties\fg}, in
  \emph{Handbook of moduli. {V}ol. {II}}, Adv. Lect. Math. (ALM), vol.~25, Int.
  Press, Somerville, MA, 2013, p.~159--203.

\bibitem[KS01]{KS01}
{\scshape M.~Kontsevich {\normalfont \smfandname} Y.~Soibelman} -- {\og
  Homological mirror symmetry and torus fibrations\fg}, in \emph{Symplectic
  geometry and mirror symmetry ({S}eoul, 2000)}, World Sci. Publ., River Edge,
  NJ, 2001, p.~203--263.

\bibitem[KSB88]{KSB}
{\scshape J.~Koll{\'a}r {\normalfont \smfandname} N.~I. Shepherd-Barron} --
  {\og Threefolds and deformations of surface singularities\fg}, \emph{Invent.
  Math.} \textbf{91} (1988), no.~2, p.~299--338.

\bibitem[LT95]{LT95}
{\scshape P.~Li {\normalfont \smfandname} G.~Tian} -- {\og On the heat kernel
  of the {B}ergmann metric on algebraic varieties\fg}, \emph{J. Amer. Math.
  Soc.} \textbf{8} (1995), no.~4, p.~857--877.

\bibitem[Lu68]{Lu}
{\scshape Y.-C. Lu} -- {\og {On holomorphic mappings of complex
  manifolds.}\fg}, \emph{J. Diff. Geom.} \textbf{2} (1968), p.~299--312.

\bibitem[MS73]{MS}
{\scshape J.~H. Michael {\normalfont \smfandname} L.~M. Simon} -- {\og Sobolev
  and mean-value inequalities on generalized submanifolds of {$R^{n}$}\fg},
  \emph{Comm. Pure Appl. Math.} \textbf{26} (1973), p.~361--379.

\bibitem[P{\u{a}}u07]{Paun07}
{\scshape M.~P{\u{a}}un} -- {\og Siu's invariance of plurigenera: a one-tower
  proof\fg}, \emph{J. Differential Geom.} \textbf{76} (2007), no.~3,
  p.~485--493.

\bibitem[P{\u{a}}u08]{Paun}
{\scshape M.~P{\u{a}}un} -- {\og {Regularity properties of the degenerate
  Monge-Amp{\`e}re equations on compact K{\"a}hler manifolds.}\fg}, \emph{Chin.
  Ann. Math., Ser. B} \textbf{29} (2008), no.~6, p.~623--630.

\bibitem[Rub14]{Rub14}
{\scshape Y.~A. Rubinstein} -- {\og Smooth and singular {K}\"{a}hler-{E}instein
  metrics\fg}, in \emph{Geometric and spectral analysis}, Contemp. Math., vol.
  630, Amer. Math. Soc., Providence, RI, 2014, p.~45--138.

\bibitem[RZ11a]{RZ}
{\scshape X.~Rong {\normalfont \smfandname} Y.~Zhang} -- {\og Continuity of
  extremal transitions and flops for {C}alabi-{Y}au manifolds\fg}, \emph{J.
  Differential Geom.} \textbf{89} (2011), no.~2, p.~233--269, Appendix B by
  Mark Gross.

\bibitem[RZ11b]{RZ0}
{\scshape W.-D. Ruan {\normalfont \smfandname} Y.~Zhang} -- {\og Convergence of
  {C}alabi-{Y}au manifolds\fg}, \emph{Adv. Math.} \textbf{228} (2011), no.~3,
  p.~1543--1589.

\bibitem[Siu87]{Siu87}
{\scshape Y.-T. Siu} -- \emph{{Lectures on Hermitian-Einstein Metrics for
  Stable Bundles and K{\"a}hler-Einstein Metrics}}, Birkh{\"a}user, 1987.

\bibitem[Siu98]{Siu98}
{\scshape Y.-T. Siu} -- {\og Invariance of plurigenera\fg}, \emph{Invent.
  Math.} \textbf{134} (1998), no.~3, p.~661--673.

\bibitem[Sko72]{Sko72}
{\scshape H.~Skoda} -- {\og Sous-ensembles analytiques d'ordre fini ou infini
  dans {${\bf C}^{n}$}\fg}, \emph{Bull. Soc. Math. France} \textbf{100} (1972),
  p.~353--408.

\bibitem[Son17]{Song17}
{\scshape J.~Song} -- {\og {Degeneration of K{\"a}hler-Einstein manifolds of
  negative scalar curvature}\fg}, Preprint
  \href{http://arxiv.org/abs/1706.01518}{arXiv:1706.01518}, 2017.

\bibitem[SSW20]{SSW}
{\scshape J.~Song, J.~Sturm {\normalfont \smfandname} X.~Wang} -- {\og
  {Continuity of the Weil-Petersson potential}\fg}, Preprint
  \href{https://arxiv.org/abs/2008.11215}{arXiv:2008.11215}, 2020.

\bibitem[Tak07]{Taka07}
{\scshape S.~Takayama} -- {\og On the invariance and the lower semi-continuity
  of plurigenera of algebraic varieties\fg}, \emph{J. Algebraic Geom.}
  \textbf{16} (2007), no.~1, p.~1--18.

\bibitem[Tak19]{Taka19}
\bysame , {\og A filling-in problem and moderate degenerations of minimal
  algebraic varieties\fg}, \emph{Algebr. Geom.} \textbf{6} (2019), no.~1,
  p.~26--49.

\bibitem[Tos09]{Tos09}
{\scshape V.~Tosatti} -- {\og {Limits of Calabi-Yau metrics when the K{\"a}hler
  class degenerates}\fg}, \emph{J. Eur. Math. Soc.} \textbf{11} (2009), no.~4,
  p.~755--776.

\bibitem[Tos10]{Tos10}
\bysame , {\og Adiabatic limits of {R}icci-flat {K}\"ahler metrics\fg},
  \emph{J. Differential Geom.} \textbf{84} (2010), no.~2, p.~427--453.

\bibitem[Tos20]{TosSurvey}
\bysame , {\og Collapsing {Calabi}-{Yau} manifolds\fg}, in \emph{Differential
  geometry, Calabi-Yau theory, and general relativity. Lectures given at
  conferences celebrating the 70th birthday of Shing-Tung Yau at Harvard
  University, Cambridge, MA, USA, May 2019}, Somerville, MA: International
  Press, 2020, p.~305--337 (English).

\bibitem[Tsu88]{Tsuji88}
{\scshape H.~Tsuji} -- {\og Existence and degeneration of
  {K}{\"a}hler-{E}instein metrics on minimal algebraic varieties of general
  type\fg}, \emph{Math. Ann.} \textbf{281} (1988), no.~1, p.~123--133.

\bibitem[TWY18]{TWY18}
{\scshape V.~Tosatti, B.~Weinkove {\normalfont \smfandname} X.~Yang} -- {\og
  The {K}\"{a}hler-{R}icci flow, {R}icci-flat metrics and collapsing
  limits\fg}, \emph{Amer. J. Math.} \textbf{140} (2018), no.~3, p.~653--698.

\bibitem[Yau78]{Yau78}
{\scshape S.-T. Yau} -- {\og {On the Ricci curvature of a compact K{\"a}hler
  manifold and the complex Monge-Amp{\`e}re equation. I.}\fg}, \emph{Commun.
  Pure Appl. Math.} \textbf{31} (1978), p.~339--411.

\bibitem[Zer01]{Zer01}
{\scshape A.~Zeriahi} -- {\og Volume and capacity of sublevel sets of a
  {L}elong class of plurisubharmonic functions\fg}, \emph{Indiana Univ. Math.
  J.} \textbf{50} (2001), no.~1, p.~671--703.

\end{thebibliography}

\end{document}